\documentclass[11pt]{article}
\usepackage[utf8]{inputenc}
\usepackage{cite}

\title{{\Large \vspace{-0.5cm} 
On symplectic aspects of $SU(2)$ character varieties for punctured surfaces  }}
\author{Aliakbar Daemi and Christopher Scaduto} 
\date{}

\usepackage[a4paper, total={6in, 8.5in}]{geometry}
\usepackage{graphicx}
\usepackage{times}
\usepackage{amssymb}
\usepackage{tikz-cd}
\usepackage{titling}
\usepackage{mathrsfs} 
\usepackage{booktabs}
\usepackage[all]{xy}
\usepackage{amsthm}
\usepackage{diagbox}
\usepackage{tabularx}
\usepackage{amscd}
\usepackage{caption}
\usepackage{nicefrac}
\usepackage{amsmath}
\usepackage{mathtools}
\usepackage{tikz}
\usepackage{mathabx}
\usepackage{dsfont}
\usepackage{lipsum}
\usepackage{mwe}
\usepackage{slashed}
\usepackage{rotating}
\usepackage{subcaption}
\usepackage[colorlinks,pagebackref,hypertexnames=false]{hyperref} \usepackage[alphabetic,backrefs,msc-links]{amsrefs}
\usepackage{epstopdf,pinlabel}
\usepackage{enumitem} 
\definecolor{mint}{HTML}{239B56}
\hypersetup{
    colorlinks=true,
    linkcolor=blue,
    filecolor=magenta,      
    urlcolor=cyan,
    citecolor=mint
}
\usepackage{pgfplots}
\pgfplotsset{compat=newest}
\usetikzlibrary{shapes.geometric}

\definecolor{greenish}{rgb}{0.01, 0.75, 0.24}
\definecolor{blueish}{rgb}{0.0, 0.72, 0.92}
\definecolor{orangeish}{rgb}{1.0, 0.55, 0.0}

\newcolumntype{Y}{>{\centering\arraybackslash}X}

\usepackage{sectsty}

\sectionfont{\fontsize{12}{15}\selectfont}

\newcommand{\R}{\mathbb{R}}
\newcommand{\C}{\mathbb{C}}
\newcommand{\Z}{\mathbb{Z}}
\newcommand{\F}{\mathbb{F}}
\newcommand{\Q}{\mathbb{Q}}

\newcommand{\bA}{{\bf A}}

\newtheorem{theorem}{Theorem}[section]
\newtheorem{prop}[theorem]{Proposition}
\newtheorem{lemma}[theorem]{Lemma}

\newtheorem{corollary}[theorem]{Corollary}
\newtheorem{remark}[theorem]{Remark}

\newcommand{\Addresses}{{
 \bigskip
 \footnotesize
 Aliakbar Daemi, \textsc{Department of Mathematics, Washington University in St. Louis, One Brookings drive, Room 212,
 St. Louis, MO 63130}\par\nopagebreak
 \textit{E-mail address}: \texttt{adaemi@wustl.edu}
 \vspace{.4cm}

Christopher Scaduto, \textsc{Department of Mathematics, University of Miami, 1365 Memorial Dr 515, Coral Gables, FL 33124}\par\nopagebreak
 \textit{E-mail address}: \texttt{cscaduto@miami.edu}
}}

\begin{document}

\maketitle

\vspace{-0.75cm}

\begin{abstract}{
For a surface with an odd number of punctures, the moduli space of flat $SU(2)$ connections with traceless holonomy around each puncture is a symplectic manifold. When the moduli space is nonempty, there is a natural homomorphism from the mapping class group of the punctured surface to the symplectic mapping class group of this moduli space. It is shown that this homomorphism is injective if and only if the dimension of the moduli space is greater than $2$. This generalizes work of Seidel and Wehrheim--Woodward. Also given is a complete classification of Lagrangian spheres in the projective plane blown up at $5$ points with its monotone symplectic structure, which is the moduli space for the 5-punctured sphere. Furthermore, it is determined when two such Lagrangian spheres can be displaced by a symplectic isotopy. Results are also obtained regarding Lagrangian spheres in the intersection of two quadrics in $\mathbb{C}\mathbb{P}^5$. The proofs involve instanton Floer theory and results on Heegaard splittings. A main technical result establishes the approximation of any Hamiltonian isotopy of the $SU(2)$ moduli space by holonomy perturbations which are used in instanton homology. 
}
\end{abstract}

\vspace{.5cm}
\hypersetup{linkcolor=black}
\tableofcontents
\hypersetup{linkcolor=blue}

%!TEX root = main.tex

\section{Introduction}

Let $\Sigma_{g,n}$ be a genus $g$ Riemann surface with $n$ marked points, where $n$ is odd. The moduli space $M_{g,n}$ of flat $SU(2)$ connections on the punctured surface, with traceless holonomy around each puncture, is a smooth monotone symplectic manifold of dimension $6g+2n-6$. A concrete description of $M_{g,n}$ is as the space of conjugacy classes of homomorphisms from the fundamental group of the punctured surface to $SU(2)$ for which each loop encircling a puncture is sent to a traceless element. The moduli space $M_{g,n}$ may also be identified with a moduli space of rank $2$ stable parabolic bundles on $\Sigma_{g,n}$, see for example \cite{mehta-seshadri}. In this paper, we obtain information on the symplectic mapping class group of $M_{g,n}$ and about Lagrangians in $M_{g,n}$ using a combination of tools from instanton Floer theory and 3-manifold topology. Results are also obtained for a variation of $M_{g,n}$ given by the {\emph{odd character variety}} of $\Sigma_g$, the moduli space $M_g^{\text{odd}}$ of flat $SU(2)$ connections on $\Sigma_{g,n}$ with $-1$ holonomy around each puncture. It is a smooth monotone symplectic manifold of dimension $6g-6$ and does not depend on $n$, apart from $n$ being odd.

\subsubsection*{The symplectic mapping class group of $M_{g,n}$}

A diffeomorphism $\phi$ of $\Sigma_{g,n}$ induces a symplectomorphism of $M_{g,n}$ which only depends on the class of $\phi$ in the mapping class group $\text{Mod}(\Sigma_{g,n})$. Consider the induced homomorphism
\begin{equation}\label{eq:homfrommappingclassgroup}
	\text{Mod}(\Sigma_{g,n}) \to \pi_0 \text{Symp}(M_{g,n})
\end{equation}
to the symplectic mapping class group of $M_{g,n}$. In fact, there is a group extension
\begin{equation*}
\begin{tikzcd}
1   \arrow[r] &  H^1(\Sigma_{g,n};\Z/2) \arrow[r] &   \Gamma_{g,n}  \arrow[r]  & \text{Mod}(\Sigma_{g,n})   \arrow[r]  & 1
\end{tikzcd}\label{eq:defnofgamma}
\end{equation*}
where $\Sigma_{g,n}$ in the first term is viewed as a punctured surface and $ \Gamma_{g,n}$ is the semi-direct product associated to the action of  $\text{Mod}(\Sigma_{g,n})$ on $H^1(\Sigma_{g,n};\Z/2)$. Then \eqref{eq:homfrommappingclassgroup} extends to a homomorphism
\[
	\rho_{g,n}: \Gamma_{g,n} \to \pi_0 \text{Symp}(M_{g,n}).
\]

\begin{theorem}\label{thm:mappingclassgroupaction}
		For $n$ odd and $3g+n>4$, the homomorphism $\rho_{g,n}$ is injective.
\end{theorem}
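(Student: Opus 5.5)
The approach follows Seidel's strategy for $\pi_0\mathrm{Symp}$, with Lagrangian Floer homology replaced by instanton Floer homology of 3-manifolds built by gluing. Let $\gamma=(\phi,\chi)\in\Gamma_{g,n}$ be nontrivial, and suppose for contradiction that $\rho_{g,n}(\gamma)$ is symplectically isotopic to the identity. Since $M_{g,n}$ is simply connected and monotone (in fact $H^1(M_{g,n};\R)=0$), every symplectic isotopy can be taken Hamiltonian. The main technical result mentioned in the abstract says that any Hamiltonian isotopy of $M_{g,n}$ is approximated by holonomy perturbations. So the first step is to show that whenever $\rho_{g,n}(\gamma)$ is Hamiltonian isotopic to the identity, any instanton invariant computed from the mapping torus $Y_\gamma$ of $\phi$ (with the bundle data twisted by $\chi$, and the punctures giving an $n$-stranded braid/link $L_\phi$ in $Y_\phi$) agrees with the one computed from the product $\Sigma_g\times S^1$ with $n$ circle strands and untwisted bundle. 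More flexibly, for handlebody-type Lagrangians $L_{H}\subset M_{g,n}$ (the flat connections extending over a handlebody $H$ with $\partial H=\Sigma_g$ containing tangles), the instanton homology of the closed manifold $H\cup_\phi H'$ (with its tangle and bundle) is then isomorphic to that of $H\cup_{\mathrm{id}} H'$ for all choices of $H,H'$. The reason is that the perturbed Chern--Simons critical points and flow lines are controlled by the Lagrangian intersections $L_H\cap\rho(\gamma)(L_{H'})$, and these can be moved by the approximating holonomy perturbations, which do not change the instanton homology.

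The second step is topological. I want a pair of handlebodies with tangles (or compression bodies) $H,H'$ such that $H\cup_{\mathrm{id}}H'$ and $H\cup_\phi H'$ are distinguished by instanton homology. The natural choice is to take $H'=H$ so that $H\cup_{\mathrm{id}}H$ is a connected sum of copies of $S^1\times S^2$ with an unlink-type tangle. For this doubled manifold the relevant instanton homology is large; with $n$ odd, one gets something like $H_*(L_H)\cong H_*((S^2)^{k})$ or a tensor power of $\C^2$. For nontrivial $\phi$, I will use results on Heegaard splittings, namely Waldhausen's uniqueness for splittings of $\#S^1\times S^2$ and the triviality of the Goeritz-type stabilizers, to choose a handlebody $H$ (with a tangle) such that $\phi$ does not extend over $H$. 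Then $H\cup_\phi H$ is not the standard doubled manifold, and I need instanton homology to detect this. I expect to use known detection results: instanton homology with $w_2$ or with an odd link is of minimal rank only for products, and Kronheimer--Mrowka-type detection of $S^1\times S^2$ and of the unknot/unlink. Alternatively, I can compare ranks through sutured instanton homology, which detects product sutured manifolds. The $\chi$ factor is treated by the bundle twist, which changes $w_2$ along a class and typically kills or alters the homology, since $\mathbb{Z}/2$ twists act as the sign automorphisms on $M_{g,n}$. When $\phi$ is trivial and $\chi\ne 0$, it suffices to find $H$ such that $L_H$ is not preserved up to isotopy by the twist; Floer homology of $L_H$ with $\chi(L_H)$ then vanishes, because the corresponding 3-manifold carries a nontrivial admissible bundle with no flat connections.

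The main obstacle is the second step: for every nontrivial $\phi$, producing tangle-handlebodies whose glued manifold is provably different at the level of instanton homology. Irreducible or hyperbolic $\phi$ are handled fairly directly. The subtle cases are reducible mapping classes and those commuting with hyperelliptic-type involutions. The hyperelliptic involution acts trivially on $M_{2,0}$, which is why low dimensions fail. Here the hypothesis $3g+n>4$ (dimension greater than $2$) must enter: the relevant Heegaard/bridge splitting must be rigid enough that distinct gluings are distinguished. I would first try $H$ chosen among many different handlebodies and use that a mapping class extending over all handlebodies, compatibly with the tangles, is trivial once $3g+n>4$. This reduces the problem to showing that instanton homology detects the trivial gluing $H\cup_{\mathrm{id}}H$ among gluings $H\cup_\psi H$.
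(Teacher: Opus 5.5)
\textbf{There is a genuine gap, and it starts in your first step.} You claim that if $\rho_{g,n}(\gamma)$ is Hamiltonian isotopic to the identity, then $I(H\cup_\phi H')\cong I(H\cup_{\mathrm{id}}H')$, because ``flow lines are controlled by the Lagrangian intersections.'' That is an Atiyah--Floer statement for singular connections. It is open; the paper records it only as a conjecture.

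Approximating a Hamiltonian isotopy by holonomy perturbations in $[-1,1]\times\Sigma_{g,n}$ only identifies the perturbed critical set, i.e.\ the generators, with (two copies of) a Lagrangian fiber product. It says nothing about the instantons that define the differential, and $\mathbb{R}\times(H\cup_\phi H')$ and $\mathbb{R}\times(H\cup_{\mathrm{id}}H')$ are different 4-manifolds. What you actually get is the one-sided bound $\mathfrak{n}(\Lambda_1,\Lambda_2)\ge\frac12\operatorname{rank}I(Y,L,w)$, plus the fact that $I=0$ forces the unperturbed Lagrangians to be disjoint.

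With only a lower bound, your second step points the wrong way. Taking $H'=H$ with no bundle twist makes $I$ of the identity gluing large, so showing that $I(H\cup_\phi H)$ is merely \emph{different} yields no contradiction. The argument must be arranged so that the identity gluing has $I=0$ (the Lagrangians are literally disjoint, so $\mathfrak{n}=0$) while the $\phi$-gluing has $I\neq0$.

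The paper does this as follows.
\begin{itemize}
\item It takes $w_1$ to be a circle through the core of a $1$-handle, or an arc from a $\partial_+$-parallel arc to another strand. Such a choice exists exactly when $3g+n>4$. Then the double contains a sphere missing $L$ with odd pairing with $w$, so the identity gluing has $I=0$.
\item It proves a detection theorem: $I(Y,L,w)=0$ if and only if some sphere meets $L$ once, or misses $L$ and pairs oddly with $w$. The proof uses taut sutured manifolds and Kronheimer--Mrowka non-vanishing.
\item Via a Haken-type lemma for tangles (Hayashi--Shimokawa), this becomes the curve-complex condition $d(\mathscr{D},\phi(\mathscr{D}))>1$, which guarantees $I\neq0$ for every choice of $w_1,w_2$.
\end{itemize}

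\textbf{Your topological step is also left open exactly where it is hard.} Finding, for each nontrivial $\phi$, a splitting whose glued manifold instanton homology distinguishes from the double would need detection results for $\#^k S^1\times S^2$ with tangles that are not available. It would also need a treatment of reducible classes, which you do not supply. The paper never handles individual mapping classes:
\begin{itemize}
\item $K=\ker\rho_{g,n}\cap\mathrm{Mod}(\Sigma_{g,n})$ is normal.
\item By Ivanov, a finite normal subgroup is central, and the center is trivial when $3g+n>4$ and $n$ is odd. An infinite normal subgroup contains a pseudo-Anosov $\phi$.
\item For such $\phi$, some conjugate power satisfies $d(\mathscr{D},\psi\phi^m\psi^{-1}(\mathscr{D}))>1$ (Hempel, Abrams--Schleimer, Ichihara--Saito).
\item Hence $\rho(\psi)\rho(\phi)^m\rho(\psi)^{-1}\neq1$, so $\rho(\phi)\neq1$.
\end{itemize}

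Your treatment of the $H^1(\Sigma^\circ;\mathbb{Z}/2)$ part is close to the paper's and does work with only the inequality. One has $\zeta(\mathfrak{X}(C,T))\cap\mathfrak{X}(C,T)=\emptyset$, while $\mathfrak{n}(\mathfrak{X}(C,T),\mathfrak{X}(C,T))\ge\frac12\operatorname{rank}I(Y_{\mathrm{id}},L_{\mathrm{id}})$. You still have to prove that this last group is nonzero. The paper gets this from the detection theorem: in type I, a sphere pairing oddly with $L$ meets the three parallel strands at least three times; in type II, it uses the commutator knot $aba^{-1}b^{-1}$.
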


If $n$ is odd and $3g+n\leq 4$, then $(g,n)$ is one of $(0,1)$, $(0,3)$, or $(1,1)$. The moduli space $M_{0,1}$ is empty, and so $\rho_{0,1}$ is not even defined.  In the other cases, $M_{0,3}$ is a point and $M_{1,1}$ is a $2$-sphere, so that the associated symplectic mapping class groups are trivial, while $\text{Mod}(\Sigma_{0,3})\cong S_3$ and $\text{Mod}(\Sigma_{1,1})\cong SL(2,\Z)$ are nontrivial. Thus in these cases $\rho_{g,n}$ is not injective.

In previous work \cite{dsodd}, the authors proved a version of Theorem \ref{thm:mappingclassgroupaction} for $M_g^{\text{odd}}$, answering a question of Dostoglou and Salamon \cite{dostoglou-salamon}. The proof of Theorem \ref{thm:mappingclassgroupaction} also adapts to provide an alternative and more self-contained proof of the results in \cite{dsodd}, as discussed below.

Theorem \ref{thm:mappingclassgroupaction} is a generalization of a result due to Seidel \cite{seidel-4d}, who proved injectivity in the case $(g,n)=(0,5)$, that of the $5$-punctured sphere. In fact, in this case $\rho_{0,5}$ is also surjective and hence an isomorphism, a result that follows from the work of Evans \cite{evans-stein}; see also \S \ref{sec:lagspherecomp}. This case is exceptional in that $M_{0,5}$ may be identified as a symplectic manifold with the del Pezzo surface given by the projective plane blown up at $5$ generic points:
\[
	M_{0,5} = \text{Bl}_5\mathbb {CP}^2
\]
Here the symplectic structure on $\text{Bl}_5\mathbb {CP}^2$ is monotone, given by an anticanonical K\"{a}hler form, which is unique up to symplectomorphism. In another direction, building on Seidel's work, Wehrheim and Woodward \cite{ww-floerfield} previously exhibited many elements of $\Gamma_{0,n}$ not in the kernel of $\rho_{0,n}$.

\subsubsection*{Lagrangian spheres in the $5$-point blowup of $\mathbb{CP}^2$}

 The methods of this paper also yield information about Lagrangians in $M_{g,n}$. The information in the case $M_{0,5}=\text{Bl}_5\mathbb {CP}^2$ is rather definitive. To set the stage, for a closed symplectic manifold $M$ define the {\emph{Lagrangian sphere complex}} to be the simplicial complex $\mathcal{L}(M)$ whose vertices are Hamiltonian isotopy classes of embedded Lagrangian spheres in $M$; a collection of distinct vertices span a simplex if and only if they can be represented by pairwise disjoint Lagrangians. Note that Hamiltonian isotopy and symplectic isotopy are equivalent notions for $M_{g,n}$ as it is simply-connected.
 
Define the {\emph{signed curve complex}} $\widetilde{\mathcal{C}}(\Sigma_{g,n})$ to be the simplicial complex whose vertices are pairs $(\gamma,\epsilon)$ where $\gamma$ is the isotopy class of an essential and non-peripheral simple closed curve on the punctured surface $\Sigma_{g,n}$ and $\epsilon\in \{-,+\}$; a collection of distinct vertices span a simplex if and only if the isotopy classes can be represented by pairwise disjoint curves, a condition independent of the signs. The same definition without the signs $\epsilon$ gives the ordinary curve complex $\mathcal{C}(\Sigma_{g,n})$. Note $\dim \mathcal{C}(\Sigma_{g,n})=3g +n -4$ and $\dim \widetilde{\mathcal{C}}(\Sigma_{g,n}) = 2\dim \mathcal{C}(\Sigma_{g,n}) + 1$. See Figure \ref{fig:maxsimplex5puncturedsphere}.

 \begin{theorem}\label{thm:lagrangianspheres}
		There is a simplicial isomorphism between the signed curve complex of the sphere with $5$ marked points and the Lagrangian sphere complex of the del Pezzo surface $\textup{Bl}_5\mathbb {CP}^2$:
		\[
			 \widetilde{\mathcal{C}}(\Sigma_{0,5}) \cong \mathcal{L}(\textup{Bl}_5\mathbb {CP}^2)
		\]
\end{theorem}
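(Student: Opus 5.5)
The map is built from curves to Lagrangians, then shown to be injective, surjective, and simplicial in both directions.

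\emph{Construction.} An essential non-peripheral simple closed curve $\gamma\subset\Sigma_{0,5}$ separates the five punctures into groups of two and three. The associated Lagrangian is the locus in $M_{0,5}$ where the holonomy along $\gamma$ equals $\pm 1$. On the $2$-punctured side, a product of two traceless elements equal to $\pm1$ forces the second element to be $\mp$ the inverse of the first. The locus therefore fibers over the traceless $SU(2)$ conjugacy class $S^2$ with trivial fibre. The resulting Lagrangian sphere is $L_{\gamma,\epsilon}$, with $\epsilon$ recording the sign $\pm 1$. In the model $\mathrm{Bl}_5\mathbb{CP}^2$, the two signs correspond to the two classes $E_i-E_j$ and $H-E_i-E_j-E_k$, the complementary root. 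Hence
\begin{itemize}
\item the ten curves give the twenty roots of the $D_5$ lattice $\pm(E_i-E_j)$, $\pm(H-E_i-E_j-E_k)$ up to sign;
\item the two spheres for a given $\gamma$ are disjoint, since their holonomies along $\gamma$ differ;
\item disjoint curves give disjoint spheres, by the same holonomy argument along each curve.
\end{itemize}
The map $(\gamma,\epsilon)\mapsto L_{\gamma,\epsilon}$ is equivariant for $\Gamma_{0,5}$. Here the $H^1(\Sigma_{0,5};\Z/2)$ part flips signs, and the Dehn twist $\tau_\gamma$ acts as the Dehn--Seidel twist along $L_{\gamma,\pm}$.

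\emph{Surjectivity.} Evans \cite{evans-stein} shows that $\mathrm{Symp}$ of $\mathrm{Bl}_5\mathbb{CP}^2$ is homotopy equivalent to the configuration space model. Combined with the classification of Lagrangian spheres in del Pezzo surfaces by homology class (Evans, Li--Wu style), any Lagrangian sphere is Hamiltonian isotopic to a standard one in its homology class. The homology class is a root of $D_5$, and all roots are realized by some $L_{\gamma,\epsilon}$, so every vertex is hit.

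\emph{Injectivity.} Distinct pairs $(\gamma,\epsilon)\ne(\gamma',\epsilon')$ must give non-isotopic spheres. The homology classes alone do not separate them, since infinitely many curves map to finitely many roots. I would therefore use Floer theory, comparing $HF(L_{\gamma,\epsilon},L_{\gamma',\epsilon'})$ with instanton homology of a $3$-manifold with tangle. Gluing the two handlebodies or tangles determined by $\gamma$ and $\gamma'$ yields a two-bridge link, or a pair of $3$-balls with rational tangles. The paper's approximation of Hamiltonian isotopies by holonomy perturbations then shows that this Floer group is an invariant of the Hamiltonian isotopy classes. Nonvanishing of the Floer homology with itself, together with an instanton computation (rank roughly equal to the determinant or intersection number), distinguishes $\gamma\neq\gamma'$ and detects the sign.

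\emph{Simplicial in both directions.} Suppose $\gamma,\gamma'$ intersect essentially. Then the instanton homology of the glued tangle is nonzero, a nontrivial two-bridge link having nonzero framed instanton homology. So $HF\ne0$, and the Lagrangians cannot be displaced by Hamiltonian isotopy. Hence a simplex in $\mathcal{L}$ pulls back to pairwise disjoint curves.

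I expect the main obstacle to be this Floer-theoretic step. It requires both the identification of Lagrangian Floer homology with instanton homology, which needs the holonomy-perturbation approximation and monotonicity, and the nonvanishing for all intersecting pairs. I would first try to derive it from the Heegaard-splitting results, showing the glued manifold is never $S^3$ with an unknotted tangle unless the curves are disjoint.
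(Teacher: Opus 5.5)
Your proposal has two genuine gaps: the non-displaceability step fails for curves that are far apart, and the surjectivity step rests on a misstated classification.

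\textbf{Non-displaceability.} This step carries both injectivity and the simplicial structure, and it does not work as planned. Do not route it through an identification of $HF(L_{\gamma,\epsilon},L_{\gamma',\epsilon'})$ with instanton homology. That is the Atiyah--Floer isomorphism \eqref{eq:atiyahfloerintro}, which the paper does not establish. The holonomy-perturbation approximation gives instead the inequality $\mathfrak{n}(\Lambda_1,\Lambda_2)\ge\frac12\operatorname{rank}I(Y,L,w)$ of Theorem \ref{thm:lagrangianineq}, and that suffices.

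The real gap is showing $I(Y,L,w)\neq0$ for every essentially intersecting pair. The glued pair $(C_\gamma,T_\gamma)\cup_{\Sigma_{0,5}\sqcup\Sigma_{0,3}}(C_{\gamma'},T_{\gamma'})$ always lives in $S^1\times S^2$. Its identification with $(S^3,L_{p/q})\#S^1\times\Sigma_{0,3}$ is available only when some essential curve misses both $\gamma$ and $\gamma'$, which is the hypothesis of Theorem \ref{thm:preciseintersectionresult}. For curves at distance $\ge 3$ in $\mathcal{C}(\Sigma_{0,5})$ you have no two-bridge description and no computation.

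The missing idea is to replace computation by detection:
\begin{itemize}
\item By Theorem \ref{thm:detectionintro}, which rests on Kronheimer--Mrowka's nonvanishing for taut sutured manifolds, $I(Y,L,w)=0$ if and only if some $2$-sphere meets $L$ once, or misses $L$ with odd pairing with $w$.
\item Hayashi--Shimokawa then moves such a sphere to meet $\Sigma_{0,5}$ in a single essential curve. By Proposition \ref{prop:instantonheegaarddetection} this gives an element of $(\mathscr{D}_1\cap\mathscr{A}_2)\cup(\mathscr{A}_1\cap\mathscr{D}_2)\cup(\mathscr{D}^0_1\cap\mathscr{D}^1_2)\cup(\mathscr{D}^1_1\cap\mathscr{D}^0_2)$.
\item Lemma \ref{lemma:hamiltoniandisplacementviacurvesets} gives $\mathscr{D}(C_\gamma,T_\gamma)=\{\gamma\}$, via normal closures in $F_4$, the Hopfian property and Magnus' theorem. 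It also shows that every curve in $\mathscr{A}(C_\gamma,T_\gamma)$ can be isotoped off $\gamma$.
\end{itemize}
Hence the two spheres are displaceable if and only if $\gamma$ and $\gamma'$ can be made disjoint.

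Injectivity then needs no Floer computation. Suppose $\Lambda^\epsilon_\gamma\simeq\Lambda^{\epsilon'}_{\gamma'}$. Then $\Lambda^{-\epsilon}_\gamma$, being disjoint from $\Lambda^\epsilon_\gamma$, is displaceable from $\Lambda^{\epsilon'}_{\gamma'}$, so $\gamma$ and $\gamma'$ are disjoint. Unless the signed curves are isotopic, this makes two isotopic spheres disjoint, which is impossible since $[\Lambda]^2=-2$. Your holonomy argument for disjoint curves is fine, and it is what makes $\mathcal{L}$ a flag complex.

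\textbf{Surjectivity.} This step is wrong as stated. The claim ``any Lagrangian sphere is Hamiltonian isotopic to a standard one in its homology class'' would leave only finitely many vertices. The theorem instead puts infinitely many non-isotopic spheres in each root class (up to sign), since pure mapping classes act trivially on $H_2$.

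Borman--Li--Wu give only that $\pi_0\mathrm{Symp}_h(M_{0,5})$ acts transitively on isotopy classes of spheres in a fixed class. You therefore also need $\pi_0\mathrm{Symp}_h(M_{0,5})=\rho_{0,5}(\mathrm{PMod}(\Sigma_{0,5}))$. Evans' computation, as used here, supplies only an abstract isomorphism $\pi_0\mathrm{Symp}_h(M_{0,5})\cong\mathrm{PMod}(\Sigma_{0,5})$, not one induced by $\rho_{0,5}$.

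The paper closes this in Proposition \ref{rho-0-5-iso}. It combines three inputs: injectivity of $\rho_{0,5}$ (Theorem \ref{thm:mappingclassgroupaction}, proved via the pseudo-Anosov and Heegaard-distance argument), the co-Hopfian property of $\mathrm{PMod}(\Sigma_{0,5})$, and a count in the finite quotient $S_5\ltimes(\Z/2)^4$. So surjectivity of your vertex map genuinely depends on the injectivity theorem, which your plan never invokes.

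A minor point: there are infinitely many curves, but only ten $2+3$ splittings of the punctures.
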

 
 \noindent  In fact, there is an action of $\pi_0\text{Symp}(\textup{Bl}_5\mathbb {CP}^2)$ on 
 $\mathcal{L}(\textup{Bl}_5\mathbb {CP}^2 )$ and one of $\Gamma_{0,5}$ on $\widetilde{\mathcal{C}}(\Sigma_{0,5})$, and these actions are intertwined under the simplicial equivalence of Theorem \ref{thm:lagrangianspheres} by the isomorphism $\rho_{0,5}$. Given a vertex $(\gamma,\epsilon)$ of the signed curve complex where $\epsilon=+$ (resp. $\epsilon=-$), the associated Lagrangian $\Lambda_\gamma^\epsilon$ in $M_{0,5}=\text{Bl}_5\mathbb {CP}^2$ is defined using the space of flat $SU(2)$ connections on the $5$-punctured sphere that extend to a flat $SO(3)$ connection on the trivial (resp. non-trivial) $SO(3)$-bundle over the $3$-manifold obtained by attaching a $2$-handle along $\gamma$.
 
 A key ingredient in the proof of Theorem \ref{thm:lagrangianspheres} is that  $\pi_0\text{Symp}(\textup{Bl}_5\mathbb {CP}^2)$ acts transitively on the symplectic isotopy classes of Lagrangian spheres, a result due to Borman--Li--Wu \cite[Corollary 1.2]{borman-li-wu}. From this and the fact that $\rho_{0,5}$ is an isomorphism, it can be seen that every Lagrangian sphere in $M_{0,5}$ is symplectic isotopic to one of the Lagrangians $\Lambda_{\gamma}^\epsilon$. The new input of the current paper which leads to Theorem \ref{thm:lagrangianspheres} is a method that determines exactly when two such Lagrangians are symplectic isotopic and when they can be displaced by a symplectic isotopy.
 
As a point of comparison,
for any symplectic structure on either $S^2\times S^2$ or $\text{Bl}_n \mathbb {CP}^2 $ with $n\leq 4$, there exist only finitely many Lagrangian spheres up to symplectic isotopy, as follows from \cite[Corollary 1.5]{li-li-wu}; see also the earlier works \cite[Theorem 1.4]{evans-delpezzo} and \cite{hind}.
As far as the authors are aware, Theorem \ref{thm:lagrangianspheres} gives the first classification of Lagrangian spheres, up to symplectic isotopy, in a symplectic manifold that has dimension bigger than $2$ and where the number of Lagrangian spheres up to symplectic isotopy is infinite.  
 
 \begin{figure}[t]
\centering
\labellist
  \pinlabel {${\color{red}(\gamma_1,+)}$} [r] at 110 290
    \pinlabel {${\color{blue}(\gamma_1,-)}$} [r] at 178 340
      \pinlabel {${\color{red}(\gamma_2,+)}$} [l] at 336 290
    \pinlabel {${\color{blue}(\gamma_2,-)}$} [l] at 270 340
\endlabellist
\includegraphics[scale=0.45]{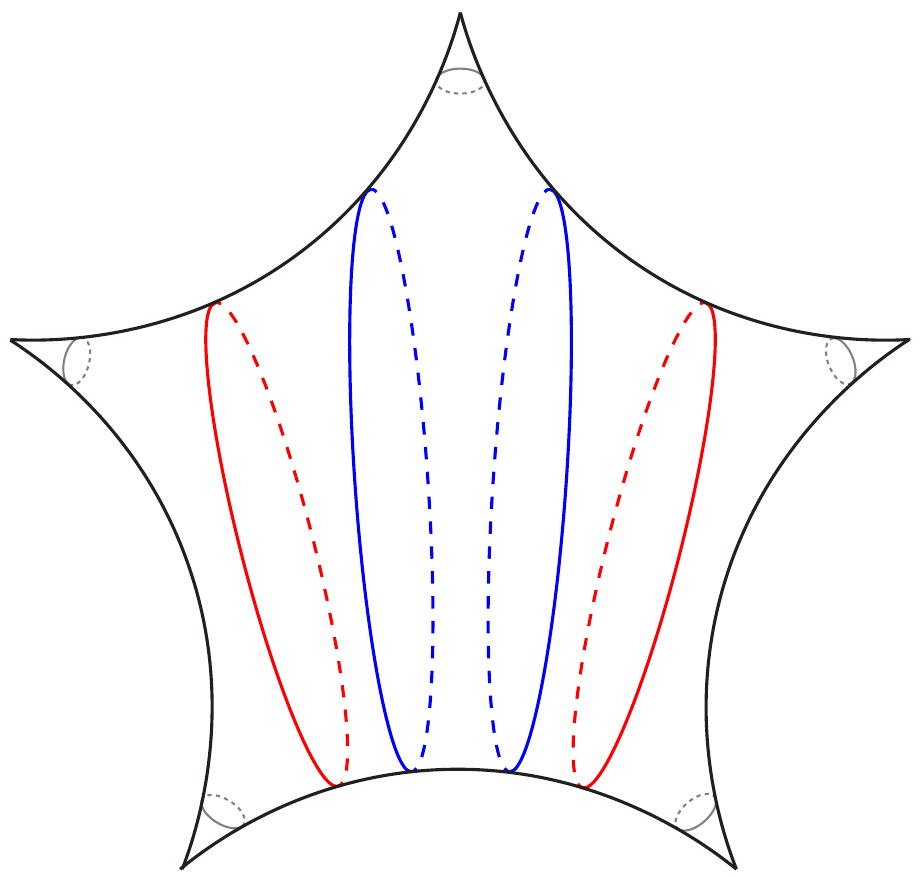}
\caption{\small The signed curve complex $\dim \widetilde{\mathcal{C}}(\Sigma_{0,5})$ has dimension $3$. A maximal simplex is determined by four vertices $(\gamma_1,\pm )$, $(\gamma_2,\pm )$ where $\gamma_1$ and $\gamma_2$ are represented by disjoint essential simple closed curves. }
\label{fig:maxsimplex5puncturedsphere}
\end{figure}

The signed curve complex of $\Sigma_{g,n}$ is closely related to its ordinary curve complex. The ordinary curve complex embeds inside $\widetilde{\mathcal{C}}(\Sigma_{g,n})$ as the subcomplex determined by vertices $(\gamma,\epsilon)$ with $\epsilon=+1$, and $\widetilde{\mathcal{C}}(\Sigma_{g,n})$ deformation retracts onto this subcomplex. Thus Theorem \ref{thm:lagrangianspheres} gives:

 \begin{corollary}\label{cor:lagrangianspheres}
		There is a homotopy equivalence $\mathcal{L}(\textup{Bl}_5\mathbb {CP}^2 ) \simeq {\mathcal{C}}(\Sigma_{0,5})$.
\end{corollary}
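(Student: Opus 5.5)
By Theorem \ref{thm:lagrangianspheres} it suffices to show that $\widetilde{\mathcal{C}}(\Sigma_{0,5})$ deformation retracts onto the subcomplex $\mathcal{C}_+$ spanned by the vertices $(\gamma,+)$. Since a set of vertices spans a simplex only according to the underlying curves, $\mathcal{C}_+$ is canonically isomorphic to $\mathcal{C}(\Sigma_{0,5})$. Composing the resulting equivalence with the simplicial isomorphism of Theorem \ref{thm:lagrangianspheres} then gives $\mathcal{L}(\textup{Bl}_5\mathbb{CP}^2)\simeq \mathcal{C}(\Sigma_{0,5})$.

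To build the retraction, define the simplicial map $r:\widetilde{\mathcal{C}}(\Sigma_{0,5})\to \mathcal{C}_+$ by $r(\gamma,\epsilon)=(\gamma,+)$. It is simplicial because if distinct vertices $(\gamma_i,\epsilon_i)$ span a simplex, the curves $\gamma_i$ are pairwise disjoint or equal, so the images $\{(\gamma_i,+)\}$, after removing repetitions, span a simplex of $\mathcal{C}_+$. Let $\iota$ be the inclusion. Then $r\circ\iota=\mathrm{id}$.

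It remains to show $\iota\circ r\simeq \mathrm{id}$ rel $\mathcal{C}_+$. For this, use the standard contiguity criterion. For any simplex $\sigma$ of $\widetilde{\mathcal{C}}$, the vertex set $\sigma\cup r(\sigma)$ still spans a simplex. Indeed, its underlying curves are those of $\sigma$, which are pairwise disjoint or equal, and in the signed complex $(\gamma,+)$ and $(\gamma,-)$ are joined by an edge because a curve is disjoint from itself (this is the convention implicit in the dimension count $2\dim\mathcal{C}+1$ and in Figure \ref{fig:maxsimplex5puncturedsphere}). Hence $\iota\circ r$ and $\mathrm{id}$ are contiguous. The straight-line homotopy $H_t(x)=(1-t)x+t\,\iota r(x)$ is therefore well defined on the geometric realization, since $x$ and $\iota r(x)$ lie in the common closed simplex $\sigma\cup r(\sigma)$. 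It fixes $\mathcal{C}_+$ pointwise, so it is a strong deformation retraction. The same argument works for any $\Sigma_{g,n}$.

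The only point needing care is confirming that the simplex condition allows $(\gamma,+)$ and $(\gamma,-)$ to be adjacent, i.e.\ that ``distinct vertices'' with the same underlying curve count as disjoint representatives. This is forced by the stated dimension formula, so no real obstacle remains.
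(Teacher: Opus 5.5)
Your proposal is correct and is essentially the paper's proof. The paper deduces the corollary from Theorem~\ref{thm:lagrangianspheres} together with a lemma that $\widetilde{\mathcal{C}}(\Sigma_{g,n})$ deformation retracts onto the subcomplex of $+$-signed vertices, using the explicit retraction $f(s,\sum t_i^\epsilon(\gamma_i,\epsilon))=\sum(t_i^++st_i^-)(\gamma_i,+)+(1-s)\sum t_i^-(\gamma_i,-)$ on each simplex $\Delta_\star$; this is exactly your straight-line homotopy $(1-s)x+s\,\iota r(x)$, which you justify by contiguity of $\iota\circ r$ and $\mathrm{id}$ where the paper instead checks the compatibility $(\Delta\cap\Delta')_\star=\Delta_\star\cap\Delta'_\star$.
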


 Theorem \ref{thm:lagrangianspheres} may be combined with known results on curve complexes to yield interesting geometric information on $\mathcal{L}(\textup{Bl}_5\mathbb {CP}^2)$. For example, Masur and Minsky showed in \cite{masur-minsky} that for $3g+n-4>0$, the curve complex of $\Sigma_{g,n}$ is Gromov hyperbolic and has infinite diameter. The inclusion of the ordinary curve complex into the signed curve complex mentioned in the previous paragraph is easily seen to be a quasi-isomorphism, and so we obtain:

\begin{corollary}\label{cor:diameterdelpezzo}
	$\mathcal{L}(\textup{Bl}_5\mathbb {CP}^2)$ is Gromov hyperbolic and has infinite diameter.
\end{corollary}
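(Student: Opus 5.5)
Corollary \ref{cor:diameterdelpezzo} follows by transporting the Masur--Minsky result through Theorem \ref{thm:lagrangianspheres}. Both Gromov hyperbolicity and infinite diameter are invariant under quasi-isometry of the $1$-skeleta with their path metrics. A simplicial isomorphism is an isometry of $1$-skeleta. So it suffices to show that the inclusion $\iota:\mathcal{C}(\Sigma_{0,5})\hookrightarrow \widetilde{\mathcal{C}}(\Sigma_{0,5})$, $\gamma\mapsto(\gamma,+)$, is a quasi-isometry. For $\Sigma_{0,5}$ we have $3g+n-4=1>0$, so Masur--Minsky \cite{masur-minsky} shows that $\mathcal{C}(\Sigma_{0,5})$ is Gromov hyperbolic with infinite diameter.

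\textbf{Caveat on the curve complex.} In the case $3g+n-4=1$, the curve complex in the usual sense has no edges. Here one uses the Farey-type modification, in which curves are joined when they intersect minimally (twice). This is the version to which Masur--Minsky's theorem applies. I would take $\widetilde{\mathcal{C}}(\Sigma_{0,5})$ with the corresponding convention. That is, $(\gamma,\epsilon)$ and $(\gamma',\epsilon')$ are adjacent whenever $\gamma=\gamma'$, or $\gamma,\gamma'$ are adjacent in $\mathcal{C}$. Note that $\dim\mathcal{C}(\Sigma_{0,5})=1$ fits the convention where disjoint curves give the simplices of Figure \ref{fig:maxsimplex5puncturedsphere}. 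I would simply check that whichever convention the paper uses is the same on both sides.

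\textbf{The quasi-isometry.} Under either convention, there is a forgetful map $\pi:\widetilde{\mathcal{C}}\to\mathcal{C}$, $(\gamma,\epsilon)\mapsto\gamma$. Its properties:
\begin{itemize}
\item $\pi$ is simplicial, since adjacency of signed vertices depends only on the underlying curves, and distinct vertices $(\gamma,\pm)$ map to a single vertex. Hence $\pi$ is $1$-Lipschitz on $1$-skeleta.
\item $\pi\circ\iota=\mathrm{id}$.
\item $\iota$ is also $1$-Lipschitz, since an edge $\gamma\gamma'$ of $\mathcal{C}$ gives the edge $(\gamma,+)(\gamma',+)$.
\end{itemize}
Therefore $d_{\mathcal{C}}(\gamma,\gamma')=d_{\widetilde{\mathcal{C}}}(\iota\gamma,\iota\gamma')$, so $\iota$ is an isometric embedding. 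Every vertex $(\gamma,-)$ is adjacent to $(\gamma,+)=\iota(\gamma)$. Indeed, $(\gamma,+)$ and $(\gamma,-)$ are distinct vertices represented by the same (disjoint, after pushing off) curve, so they span an edge. Thus the image of $\iota$ is $1$-dense, and $\iota$ is a quasi-isometry.

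\textbf{Conclusion.} Combining, $\mathcal{L}(\textup{Bl}_5\mathbb{CP}^2)\cong\widetilde{\mathcal{C}}(\Sigma_{0,5})$ is quasi-isometric to $\mathcal{C}(\Sigma_{0,5})$. Hyperbolicity and infinite diameter transfer, because diameter changes by at most a bounded multiplicative and additive amount under quasi-isometry.

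The main obstacle is only the bookkeeping in the caveat: making sure the adjacency convention in the $\dim\mathcal{C}=1$ case matches Masur--Minsky's setting, so that their theorem applies.
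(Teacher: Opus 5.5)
Your argument is correct and is essentially the paper's. The paper composes the simplicial isomorphism of Theorem~\ref{thm:lagrangianspheres} with the fact that $\gamma\mapsto(\gamma,+)$ is a quasi-isometry $\mathcal{C}(\Sigma_{0,5})\hookrightarrow\widetilde{\mathcal{C}}(\Sigma_{0,5})$ and then applies Masur--Minsky; your forgetful retraction $\pi$ makes the paper's ``easily seen'' step precise, and even shows it is an isometric embedding with $1$-dense image.

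Your caveat, however, misidentifies the sporadic case, and following it would break the proof. For $\Sigma_{0,5}$ one has $3g+n-4=1>0$, so the ordinary disjointness complex already has edges (e.g.\ curves enclosing $\{p_1,p_2\}$ and $\{p_3,p_4\}$), and Masur--Minsky applies to it directly. The Farey-type modification is needed only when $3g+n-4=0$, i.e.\ for $\Sigma_{0,4}$ and $\Sigma_{1,1}$. Adopting it here would break the step through Theorem~\ref{thm:lagrangianspheres}, since edges of $\mathcal{L}(\mathrm{Bl}_5\mathbb{CP}^2)$ record disjointness, not minimal intersection.
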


	Theorem \ref{thm:lagrangianspheres} implies that two Lagrangian spheres $\Lambda_{\gamma}^\epsilon$ and $\Lambda_{\gamma'}^{\epsilon'}$ in $\textup{Bl}_5\mathbb {CP}^2$ can be displaced by a Hamiltonian isotopy if and only if either $\gamma$ and $\gamma'$ are isotopic and $\epsilon\neq \epsilon'$, or $\gamma$ and $\gamma'$ are not isotopic and can be displaced by an isotopy. A natural question arises as to whether there is a general relationship between the number of possible intersection points between $\Lambda_{\gamma}^\epsilon$ and $\Lambda_{\gamma'}^{\epsilon'}$ and $i(\gamma,\gamma')$, the minimal number of intersection points between $\gamma$ and $\gamma'$ among all isotopies of these curves in $\Sigma_{0,5}$. The following confirms this for a large class of examples. 
	
	\begin{theorem}\label{thm:preciseintersectionresult}
		Suppose the signed curves $(\gamma,\epsilon)$ and $(\gamma',\epsilon')$ are not isotopic, and there is an essential non-peripheral simple closed curve in $\Sigma_{0,5}$ which is disjoint from $\gamma$ and $\gamma'$. Then
		\begin{equation}\label{eq:moreexactdeterminationofintnumbers}
			\mathfrak{n}(\Lambda_{\gamma}^{\epsilon},\Lambda_{\gamma'}^{\epsilon'}) = \frac{1}{2} i(\gamma,\gamma'),
		\end{equation}
		where the left side is the the minimal number of intersection points of $\Lambda_{\gamma}^\epsilon$ and $\Lambda_{\gamma'}^{\epsilon'}$ in $\textup{Bl}_5\mathbb {CP}^2$ among all Hamiltonian isotopies for which their intersection is transverse, see  \eqref{eq:minimallagrangianintersectiondefn}.
	\end{theorem}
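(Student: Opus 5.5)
The plan is to sandwich $\mathfrak{n}(\Lambda_\gamma^\epsilon,\Lambda_{\gamma'}^{\epsilon'})$ between an explicit upper bound from geometry and a matching lower bound from instanton Floer homology.

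\textbf{Reduction to a normal form.} Let $\delta$ be an essential non-peripheral curve disjoint from $\gamma$ and $\gamma'$. In $\Sigma_{0,5}$, $\delta$ bounds a disk with two punctures on one side and three on the other. Since $\gamma$ and $\gamma'$ are essential and disjoint from $\delta$, both lie in the side with three punctures, which is a $4$-punctured sphere. Curves there are indexed by slopes $p/q$. Using the $\Gamma_{0,5}$-action, which intertwines with $\pi_0\mathrm{Symp}$ via $\rho_{0,5}$ and preserves both sides of \eqref{eq:moreexactdeterminationofintnumbers}, we normalize to $\gamma$ of slope $0$ and $\gamma'$ of slope $p/q$, so that $i(\gamma,\gamma')=2|q|$ (or $2|p|$, depending on convention). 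Sign changes by $H^1(\Sigma;\Z/2)$ elements adjust $\epsilon,\epsilon'$. In the case $\gamma=\gamma'$ with $\epsilon\ne\epsilon'$, both sides vanish: the right side because $i(\gamma,\gamma)=0$, the left side by disjointness.

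\textbf{Upper bound.} For the upper bound, write $M_{0,5}$ near the relevant region using the decomposition along $\delta$. Holonomy around $\delta$ gives a map to $[0,\pi]$, and the fibre over a generic value is the reduced space of the $4$-punctured piece, a pillowcase $S^2$, fibred with a circle from the $\delta$-twist. The Lagrangians $\Lambda_\gamma^\pm$ and $\Lambda_{\gamma'}^\pm$ should then be described as fibred over arcs or circles in the pillowcase, given by the traceless character-variety images of the rational tangles of slope $0$ and $p/q$. In the pillowcase, these arcs meet minimally in $|q|$ points, which is half of $i(\gamma,\gamma')$. One then checks that the circle/sign data contributes exactly one transverse point over each such point, after a small perturbation. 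This gives
\[
\mathfrak{n}\le \tfrac12 i(\gamma,\gamma').
\]

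\textbf{Lower bound.} For the lower bound, use the paper's main technical result: Hamiltonian isotopies of $M_{0,5}$ are approximated by holonomy perturbations. Consequently, transverse intersection points of Hamiltonian-perturbed $\Lambda_\gamma^\epsilon$ and $\Lambda_{\gamma'}^{\epsilon'}$ correspond to generators of a chain complex computing the instanton homology of the closed $3$-manifold or link obtained by gluing the two handlebodies (equivalently, the $2$-fold branched data) with the $SO(3)$ bundle prescribed by $\epsilon,\epsilon'$. For the two-bridge-type configuration arising here, with a trivial strand from the two-puncture side, this is a lens space $L(p,q)$-type double branched cover or a two-bridge link with an extra marking. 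Its framed or $w_2$-twisted instanton homology has rank $|q|$ (equivalently, $\det/$ order of $H_1$), matching the count of flat connections, which are nondegenerate. Hence
\[
\mathfrak{n}\ge \operatorname{rank} I \;=\; \tfrac12 i(\gamma,\gamma').
\]

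\textbf{Main obstacle.} The main obstacle is the lower bound: identifying the Lagrangian intersection count with an instanton chain complex, uniformly in the perturbation, and computing the relevant instanton group precisely for both sign choices. The first thing to try is to use the fact that all flat connections are nondegenerate and lie in one $\Z/2$-grading. Then the differential vanishes, so the rank equals the number of unperturbed intersections.
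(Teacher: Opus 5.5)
Your sandwich strategy (Morse--Bott upper bound, instanton lower bound via Theorem~\ref{thm:lagrangianineq}) has the right shape, but the lower bound fails as you propose to carry it out.

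\textbf{The lower bound.} You suggest that all unperturbed critical points lie in one $\Z/2$-grading, so the differential vanishes and the rank is the number of points. This is false here. Each irreducible traceless representation of the two-bridge link $L_{p/q}$ contributes a \emph{circle} of critical points. In $M_{0,5}$ this is a circle of clean intersection, the orbit of the twist along $\delta$. Perturbing it yields two generators whose gradings differ by one. So the Euler characteristic only counts reducibles: it is $\pm 1$ when $p$ is odd, and $|\chi|\le 2$ always. This is consistent with the two spheres representing roots of the negative definite $D_5$ lattice. Since the target is $|p|$, nothing forces $d=0$.

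The ``everything in even degree'' argument is the one for framed instanton homology of lens spaces. The paper uses it for $M_2^{\mathrm{odd}}$ (Lemma~\ref{lemma:lagrangianssu2cyclic}), where the critical manifolds are points and $2$-spheres. It does not transfer: the group appearing here is singular instanton homology of the glued link, not $I^\#$ of the branched double cover $L(p,q)$.

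What is actually needed:
\begin{itemize}
\item Theorem~\ref{thm:lagrangianineq} gives $\mathfrak{n}\ge \frac12\operatorname{rank} I\big((S^3,L_{p/q},w)\# S^1\times\Sigma_{0,3}\big)$. Note the factor $\frac12$, which you dropped.
\item Proposition~\ref{prop:connectedsumtheoremforinatural} identifies this with $\operatorname{rank} I^\natural(L_{p/q},w)$.
\item For $p$ odd, you then need Kronheimer--Mrowka's bound $\operatorname{rank} I^\natural(K)\ge \det(K)$, which comes from the Alexander grading.
\item For $p$ even, you need $\dim I^\natural(L_{p/q},w;\Z/2)=|p|$. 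This follows from $I^\#\cong I^\natural(w)\oplus I^\natural(w_0)$, Gong's result that $I^\#$ of a non-split alternating link is independent of $w$, and the alternating-link computation of Kronheimer--Mrowka.
\end{itemize}
The case $p$ even with $\epsilon\neq\epsilon'$ cannot be normalized away. There $[\gamma]=[\gamma']$ in $H_1(\Sigma^\circ;\Z/2)$, so the $H^1$-action flips both signs at once and $w$ stays non-trivial.

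\textbf{The upper bound.} This is closer, but the bookkeeping is wrong.
\begin{itemize}
\item Both Lagrangians lie in the special level set $\{\operatorname{tr}_\delta=0\}$, not a generic fibre.
\item That level set is a circle bundle over the traceless pillowcase whose circles collapse over the four corners.
\item $\Lambda_\gamma^\epsilon\cap\Lambda_{\gamma'}^{\epsilon'}$ is the whole preimage of $a\cap a'$.
\item The arcs meet in $(|p|+c)/2$ points, where $c\in\{0,1,2\}$ is the number of corners hit. That is about $\frac14 i(\gamma,\gamma')$, not $\frac12 i(\gamma,\gamma')$.
\item Over each interior point the intersection is a whole circle, which contributes two points after Pozniak's perturbation (Lemma~\ref{lemma:pozniak}). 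Each corner contributes one point.
\end{itemize}
With this accounting the total $c+(|p|-c)=|p|$ is correct; your two miscounts happen to cancel. You still need cleanliness of the intersection. The paper gets this from non-degeneracy of the traceless character varieties of $L_{p/q}$, through Lemmas~\ref{lemma:morsebottclean} and~\ref{lemma:detnonzeroinequality}.
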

	
\noindent We remark that there are pairs $(\gamma,\epsilon)$ and $(\gamma',\epsilon')$ satisfying the hypotheses of the proposition for which the quantities in \eqref{eq:moreexactdeterminationofintnumbers} are any given non-negative integer. The proof of Theorem \ref{thm:preciseintersectionresult} uses computations of instanton homology for two-bridge links. More general results relevant to the moduli spaces $M_{g,n}$ are given in \S \ref{sec:intersectionsandsu2simple}.

\subsubsection*{Lagrangian spheres in the intersection of two quadrics in $\mathbb{CP}^5$}

Assume $n$ is odd. Recall that the odd character variety $M_g^{\text{odd}}$ is the moduli space of flat $SU(2)$ connections on $\Sigma_{g,n}$ with $-1$ holonomy around each puncture. Newstead \cite{newstead} and Narasimhan and Ramanan \cite{narasimhan-ramanan} proved that the genus $2$ moduli space may be identified as
\[
	M_2^{\text{odd}} \cong Q_1 \cap Q_2 \subset \mathbb{C}\mathbb{P}^5
\]
where $Q_1$ and $Q_2$ are generic quadrics in $ \mathbb{C}\mathbb{P}^5$. For our purposes, this identification shall be understood as one of $6$-dimensional monotone symplectic manifolds. The methods that yield the above results for $M_{0,5}=\text{Bl}_5 \mathbb{C}\mathbb{P}^2 $ also give information about Lagrangian spheres in $Q_1\cap Q_2$. 

Write $\Sigma_g$ for the underlying closed surface of $\Sigma_{g,n}$ where we forget the marked points. Let $\gamma$ and $\gamma'$ be simple closed curves on $\Sigma_{g}$ missing the marked points which are essential on $\Sigma_g$, and choose $\epsilon,\epsilon'\in \{-,+\}$. Declare $(\gamma,\epsilon)\sim (\gamma',\epsilon')$ if and only if $\gamma$ is isotopic to $\gamma'$ in $\Sigma_g$ by an isotopy that crosses the set of punctures $m$ times (each time crossing a single puncture), where the parity of $m$ agrees with $\epsilon\epsilon'$. Define the {\emph{sign-twisted curve complex}} ${\mathcal{C}}^{\textup{tw}}(\Sigma_{g,n})$ to be the simplicial complex whose vertices are the equivalence classes $[\gamma,\epsilon]$ defined above; a collection of distinct vertices span a simplex if and only if the curves in the classes may be represented by pairwise disjoint curves. 

The simplicial isomorphism type of the sign-twisted complex only depends on the parity of $n$, which is fixed to be odd. Denote by $\mathcal{N}(\Sigma_g)$ the subcomplex of ${\mathcal{C}}^{\textup{tw}}(\Sigma_{g,n})$ spanned by the vertices whose underlying curves are non-separating.

 \begin{theorem}\label{thm:lagrangianspheresinquadricintersection}
		There is a simplicial injection from the subcomplex of non-separating curves in the sign-twisted curve complex of $\Sigma_2$ to the Lagrangian sphere complex of $Q_1\cap Q_2$:
		\begin{equation}\label{eq:genus2injectionintro}
			\mathcal{N}(\Sigma_2) \hookrightarrow  \mathcal{L}(Q_1\cap Q_2)
	\end{equation}
\end{theorem}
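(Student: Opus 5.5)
We construct the map on vertices first, then show it is well defined, respects simplices, and is injective on vertices.

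\textbf{Construction.} Let $[\gamma,\epsilon]$ be a vertex with $\gamma$ non-separating on $\Sigma_2$, missing the $n$ marked points. Attach a $2$-handle to $\Sigma_2\times[0,1]$ along $\gamma\times\{1\}$ to obtain a compression body $H_\gamma$. Choose the $SO(3)$-bundle on $H_\gamma$ according to $\epsilon$; the marked points are carried along as arcs with $-1$ holonomy. The restriction map from flat connections on $H_\gamma$ to $M_2^{\text{odd}}$ has image $\Lambda_\gamma^\epsilon$.

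Since $\gamma$ is non-separating, $\pi_1$ of the capped-off surface is free of rank $2$ up to the puncture relation, so the relevant character variety is that of a torus with odd/traceless data. We expect $\Lambda_\gamma^\epsilon$ to be identified with $S^3$, realised as the fibre of holonomy along $\gamma$ equal to $\pm1$ (sign fixed by $\epsilon$). Standard arguments, as in the Lagrangian correspondences of Wehrheim--Woodward, show it is a Lagrangian embedding, because it is the image of a regular moduli space of a $3$-manifold with boundary.

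\textbf{Well-definedness.} Moving $\gamma$ across a puncture changes the $w_2$ pairing by one. This matches the sign flip in the equivalence $\sim$, so the Lagrangian depends only on $[\gamma,\epsilon]$. Isotopies of the data give Lagrangian isotopies, which are Hamiltonian because $M_2^{\text{odd}}$ is simply connected and $H^1(S^3)=0$.

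\textbf{Simplices.} If curves in distinct classes can be made disjoint, then either they are disjoint and non-isotopic, or they are the same curve with opposite twisted signs. In the first case, every flat connection in $\Lambda_\gamma^\epsilon\cap\Lambda_{\gamma'}^{\epsilon'}$ extends over the manifold with handles attached along both curves. The handles can be arranged so that the resulting $SO(3)$ data admits no flat connections, or we perturb the Lagrangians using the paper's approximation of Hamiltonian isotopies by holonomy perturbations. In the second case the holonomies along $\gamma$ are $+1$ and $-1$, so the two Lagrangians are disjoint outright.

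\textbf{Injectivity (main obstacle).} Suppose $\Lambda_\gamma^\epsilon$ and $\Lambda_{\gamma'}^{\epsilon'}$ are Hamiltonian isotopic with $[\gamma,\epsilon]\neq[\gamma',\epsilon']$. Gluing $H_\gamma$ to $-H_{\gamma'}$ gives a closed $3$-manifold $Y$ with an admissible $SO(3)$-bundle $w$. Using the holonomy-perturbation approximation theorem, Lagrangian Floer homology $HF(\Lambda_\gamma^\epsilon,\Lambda_{\gamma'}^{\epsilon'})$ should be comparable to the instanton homology $I^w(Y)$, at least in Euler characteristic or rank bounds. The Euler characteristic of instanton homology is governed by Casson-type invariants, so the first thing to try is computing $|H_1(Y)|$. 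If Hamiltonian isotopic, the Floer group would equal $H_*(S^3)$, of rank $2$, and so $Y$ would have to look like $S^1\times S^2$ with the matching bundle. Heegaard splitting results (Waldhausen-type uniqueness, or Haken's lemma for reducible splittings) then force $\gamma$ isotopic to $\gamma'$ in $\Sigma_2$ with the parity matching, a contradiction.

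The hardest step is exactly this: distinguishing $Y$ from $S^1\times S^2$ using instanton homology. We expect to need nonvanishing of $I^w(Y)$ for $Y$ with $b_1=0$, which should hold since $\chi=|H_1|$, or $I^w(Y)\neq0$ for $b_1>0$ via Kronheimer--Mrowka, which distinguishes the ranks.
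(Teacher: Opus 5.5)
Your construction, well-definedness and simplex-preservation steps are essentially sound. The simplex step can be made precise without any perturbation. Two disjoint, non-isotopic non-separating curves on $\Sigma_2$ have connected complement, so they can be taken to be $a_1,a_2$ in a standard presentation. The relation $[A_1,B_1][A_2,B_2]=-1$ then rules out $A_1,A_2=\pm 1$, so $\Lambda_\gamma^\epsilon\cap\Lambda_{\gamma'}^{\epsilon'}=\emptyset$ outright.

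The genuine gap is injectivity on vertices. You propose to compare $HF(\Lambda_\gamma^\epsilon,\Lambda_{\gamma'}^{\epsilon'})$ with $I(Y,w)$ and then recognize $Y$ from a rank. No such comparison is available: it is the Atiyah--Floer isomorphism, which the paper deliberately avoids. The only tools are $\mathfrak{n}(\Lambda_1,\Lambda_2)\geq\frac12\,\mathrm{rank}\,I(Y,w)$ and the vanishing criterion. With these, a Hamiltonian isotopy gives at most $\mathfrak{n}\le 2$, i.e. $\mathrm{rank}\,I(Y,w)\le 4$.

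The recognition step is also false as stated. Even when $[\gamma,\epsilon]=[\gamma',\epsilon']$, the glued manifold is $(S^1\times S^2)\# T^3$ with $w$ a circle fibre of $T^3$, not $S^1\times S^2$; here $\frac12\mathrm{rank}\,I(Y,w)=\mathrm{rank}\,I^\#(S^1\times S^2)=2$. No theorem turns a rank bound on $I(Y,w)$ into isotopy of the Heegaard curves. The nonvanishing results you cite only separate zero from nonzero.

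The missing idea is that injectivity needs only vanishing versus nonvanishing, routed through displaceability. First prove that for distinct vertices, $\Lambda_\gamma^\epsilon$ and $\Lambda_{\gamma'}^{\epsilon'}$ are Hamiltonian displaceable iff $\gamma,\gamma'$ can be isotoped apart. The nontrivial direction is a chain:
\begin{itemize}
\item $\mathfrak{n}=0$ gives $I(Y,w)=0$ by the inequality.
\item The detection theorem then gives a $2$-sphere in $Y$ with odd pairing with $w$.
\item Minimizing its intersection with the glued torus and applying Casson--Gordon yields a class in $\mathscr{N}_1\cap\sigma(\mathscr{N}_2)$ or in some $\underline{\mathscr{D}}_i\cap\underline{\mathscr{A}}_j$.
\item The computation for $C_\gamma$ finishes it: $\mathscr{N}(C_\gamma,w_\epsilon)=\{[\gamma,\epsilon]\}$, $\underline{\mathscr{D}}(C_\gamma)=\{\gamma\}$, and every curve of $\underline{\mathscr{A}}(C_\gamma)$ is displaceable from, and not isotopic to, $\gamma$.
\end{itemize}

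Injectivity is then formal. Suppose $\Lambda_\gamma^\epsilon$ is Hamiltonian isotopic to $\Lambda_{\gamma'}^{\epsilon'}$. Then it is displaceable from $\Lambda_{\gamma'}^{-\epsilon'}$, which is disjoint from $\Lambda_{\gamma'}^{\epsilon'}$, so $\gamma,\gamma'$ can be made disjoint. If moreover $[\gamma,\epsilon]\neq[\gamma',\epsilon']$, the characterization makes $\Lambda_\gamma^\epsilon$ displaceable from $\Lambda_{\gamma'}^{\epsilon'}$, hence from itself. That is impossible: the same disk/annulus computation shows the self-gluing contains no sphere pairing oddly with $w$. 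Hence $I\neq 0$ and $\mathfrak{n}(\Lambda_\gamma^\epsilon,\Lambda_\gamma^\epsilon)>0$.
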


\noindent As in the case of Theorem \ref{thm:lagrangianspheres}, there is an action of $\pi_0 \textup{Symp}(Q_1\cap Q_2)$ on $\mathcal{L}(Q_1\cap Q_2)$ and one of an extension $\widehat{\Gamma}_2$ of the mapping class group of $\Sigma_2$ on $\mathcal{N}(\Sigma_2)$, and the injection of Theorem \ref{thm:lagrangianspheresinquadricintersection} is equivariant with respect to an injective homomorphism from $\widehat{\Gamma}_2$ to $\pi_0 \textup{Symp}(M_{2}^\text{odd})$.

 A natural question is whether \eqref{eq:genus2injectionintro} is an isomorphism, similar to Theorem \ref{thm:lagrangianspheres}. This would follow if one could show that the analogue of the homomorphism $\rho_{g,n}$ for $M_2^{\text{odd}}$ is an isomorphism (it is known to be injective \cite{smith,dsodd}), and that the symplectomorphism group of $Q_1\cap Q_2$ acts transitively on Lagrangian spheres, up to symplectic isotopy.
 
A standard argument shows that the natural map from $\mathcal{N}(\Sigma_2)$ to the ordinary curve complex of $\Sigma_2$ is a quasi-isomorphism, and thus similar to Corollary \ref{cor:diameterdelpezzo}, the main result of \cite{masur-minsky} implies:

\begin{corollary}\label{cor:diameterquadrics}
	$\mathcal{L}(Q_1\cap Q_2)$ has infinite diameter.
\end{corollary}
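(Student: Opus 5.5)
The argument has three steps: show that the curve complex map is quasi-isometric onto its image, push the Lagrangian sphere complex down to curves by a distance non-increasing map, and invoke Masur--Minsky.

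\emph{Step 1: the curve side.} I will use the map $\pi:\mathcal{N}(\Sigma_2)\to\mathcal{C}(\Sigma_2)$ which forgets the sign and the marked points. It is simplicial: disjoint representatives stay disjoint after the marked points are forgotten. It is also $1$-Lipschitz on $1$-skeleta. The non-separating curve complex of $\Sigma_2$ is quasi-isometric to $\mathcal{C}(\Sigma_2)$, since every separating curve is disjoint from a non-separating one. Each fiber of $\pi$ is a single isotopy class together with a sign choice, so it has bounded diameter: two lifts of the same curve can be joined through a disjoint non-separating curve. Combining these, $\mathcal{N}(\Sigma_2)$ has infinite diameter, because $\mathcal{C}(\Sigma_2)$ does by \cite{masur-minsky}.

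\emph{Step 2: the Lagrangian side.} Having infinite diameter in the subcomplex is not enough; I need unbounded distance in $\mathcal{L}(Q_1\cap Q_2)$ itself. Theorem \ref{thm:lagrangianspheresinquadricintersection} only gives a simplicial injection $\iota$, which does not control distances, because paths in $\mathcal{L}$ may pass through spheres outside the image. The plan is to build a coarse retraction $r:\mathcal{L}(Q_1\cap Q_2)\to \mathcal{C}(\Sigma_2)$ that is coarsely Lipschitz and satisfies $r\circ\iota\approx\pi$.
\begin{itemize}
\item[(a)] For any Lagrangian sphere $L$, use the instanton Floer theoretic nonvanishing results used to prove Theorem \ref{thm:lagrangianspheresinquadricintersection} to show that $L$ must intersect, after any Hamiltonian isotopy, $\iota(v)$ for some vertex $v$.
\item[(b)] Show that the set of vertices $v$ with $HF(L,\iota(v))\neq 0$ has bounded diameter, and define $r(L)$ to be this set.
\item[(c)] Prove the Lipschitz property. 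If $L$ and $L'$ are disjoint, then $r(L)$ and $r(L')$ should be close: a curve far from both projections should pair nontrivially with both.
\end{itemize}

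\emph{Main obstacle and fallback.} I expect Step 2 to be the hard part, since the information needed about arbitrary Lagrangian spheres is what is missing here. If that approach is unavailable, I will fall back on a simpler criterion, along the lines of Masur--Minsky. Vertices $\iota(v)$ and $\iota(w)$ at curve-distance at least $3$ have curves that fill $\Sigma_2$. In that case I will argue, via the Floer-theoretic nonvanishing for filling pairs (computing instanton homology of the associated 3-manifold obtained by Heegaard splitting), that $\iota(w)$ intersects every sphere disjoint from $\iota(v)$.

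I would then iterate this filling criterion to bound $\mathcal{L}$-distance below by a function of curve distance tending to infinity.
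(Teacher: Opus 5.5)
Your Step 1 is essentially all the paper does. It asserts that the natural map $\mathcal{N}(\Sigma_2)\to\mathcal{C}(\Sigma_2)$ is a quasi-isometry, then combines Masur--Minsky with the simplicial injection of Theorem \ref{thm:lagrangianspheresinquadricintersection}; nothing in it corresponds to your Step 2. Your concern about that last inference is well founded. A simplicial map is $1$-Lipschitz, and the proof of Theorem \ref{thm:lagrangianspheresinquadricintersection} shows only that the image is a full subcomplex. That gives $d_{\mathcal{L}}(\iota(v),\iota(w))\ge 2$ for distinct non-adjacent $v,w$ and nothing more, because a path in $\mathcal{L}(Q_1\cap Q_2)$ may pass through Lagrangian spheres not Hamiltonian isotopic to any $\Lambda_\gamma^\epsilon$. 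The deduction would be immediate if \eqref{eq:genus2injectionintro} were surjective, but the paper explicitly leaves that open and does not otherwise address this point.

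However, your Step 2 does not close this gap, so the proposal is not a proof.
\begin{itemize}
\item Items (a), (c) and the fallback are all assertions about arbitrary Lagrangian spheres $L\subset Q_1\cap Q_2$. The only non-displaceability mechanism available, Theorem \ref{thm:lagrangianineq} (specialized in Corollary \ref{cor:lagdisploddchar}), requires both Lagrangians to be of the form $\mathfrak{X}_\pi(M,T,w)$. A general $L$ comes with no $3$-manifold and hence no instanton homology. No computation of $HF(L,\Lambda_\gamma^\epsilon)$ is available either; the paper deliberately avoids Lagrangian Floer homology.
\item Item (b) is wrong as stated, even for $L$ in the image. For $L=\Lambda_\gamma^\epsilon$, the paper shows that the vertices $v\neq[\gamma,\epsilon]$ with $\iota(v)$ non-displaceable from $L$ are exactly those whose curves cannot be isotoped off $\gamma$. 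That set has infinite diameter.
\item A coarse projection would instead have to use the displaceable set, which for $L=\Lambda_\gamma^\epsilon$ is the star of $[\gamma,\epsilon]$. You would then need, for every $L$, that this set is nonempty and of bounded diameter, which is again unavailable.
\item Even granting the filling criterion in your fallback, it only yields $d_{\mathcal{L}}(\iota(v),\iota(w))\ge 3$ when $d(v,w)\ge 3$. It cannot be iterated along a path in $\mathcal{L}$, because the intermediate vertices need not lie in the image of $\iota$, so the criterion says nothing about them.
\end{itemize}
What is missing is a coarsely Lipschitz retraction $\mathcal{L}(Q_1\cap Q_2)\to\mathcal{N}(\Sigma_2)$, or surjectivity of \eqref{eq:genus2injectionintro}. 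Neither your proposal nor the paper supplies one.
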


An analogue of Theorem \ref{thm:preciseintersectionresult} is also proved in this setting, yielding explicit computations for minimally intersecting Lagrangian spheres in $M_2^{\text{odd}}$. See Theorem \ref{thm:int-odd-g=2}.

\subsubsection*{Hamiltonian diffeomorphisms of $M_{g,n}$ and holonomy perturbations}

Central to the proofs of the above theorems are several results that relate the symplectic topology of $M_{g,n}$ to instanton Floer theory for links in $3$-manifolds. The first result concerns the relationship between Hamiltonian diffeomorphisms of the moduli space $M_{g,n}$ and the class of perturbations that are typically used in instanton Floer theory. These are called {\emph{holonomy perturbations}}, and were used by Floer in his original construction of instanton homology for integer homology $3$-spheres \cite{ floer-zhs3} and also by Taubes in his work on the Casson invariant \cite{taubes-casson}.

We consider holonomy perturbations in the thickened punctured surface $[-1,1]\times \Sigma_{g,n}$. For our purposes, such a holonomy perturbation is determined by a smooth function $\eta:\R^k\to \R$ and a collection of $k$ embedded loops $\Gamma=\{\gamma_1,\ldots,\gamma_k\}$ in $(-1,1)\times \Sigma_{g,n}$ that are pairwise disjoint and come equipped with framings. To this data is associated a moduli space $\mathfrak{X}_{\eta,\Gamma}([-1,1]\times \Sigma_{g,n})$ of $SU(2)$ connections on $[-1,1]\times \Sigma_{g,n}$ which have traceless holonomy around the punctures and which satisfy a perturbation of the flatness equation; elements of this moduli space have the property that they are flat away from a regular neighborhood of the loops $\Gamma$. In particular, restriction of connections to the boundary $\{\pm 1\}\times \Sigma_{g,n}$ determines maps $r_{\pm}$:

\begin{equation*}
\begin{tikzcd}[column sep=small, row sep=large]
& \mathfrak{X}_{\eta,\Gamma}([-1,1]\times \Sigma_{g,n}) \arrow[dl, "r_-"'] \arrow[dr, "r_+"] & \\
M_{g,n} & & M_{g,n}
\end{tikzcd}\label{eq:bundlecorrespondenceintro}
\end{equation*}

\begin{theorem}\label{thm:hamiltoniandiffeo}
	Assume $n$ is odd. Given any Hamiltonian diffeomorphism $\phi$ of $M_{g,n}$ there exists holonomy perturbation data $\eta,\Gamma$ such that $r_{\pm}$ are diffeomorphisms and $r_+\circ r_-^{-1}$ is a Hamiltonian diffeomorphism which is arbitrarily close to $\phi$ in the $C^\infty$-topology.
\end{theorem}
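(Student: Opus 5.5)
We plan to reduce Theorem \ref{thm:hamiltoniandiffeo} to a statement about generating functions. A Hamiltonian diffeomorphism $\phi$ of $M_{g,n}$ is the time-one map of a time-dependent Hamiltonian $H_t$. We can factor it as a composition $\phi=\phi_N\circ\cdots\circ\phi_1$ of time-one maps of small autonomous Hamiltonians $H^{(j)}$, each $C^\infty$-close to a time slice of $H_t$ rescaled by $1/N$. Composition of perturbations corresponds to stacking them in disjoint slabs $[t_{j-1},t_j]\times\Sigma_{g,n}$ inside $[-1,1]\times \Sigma_{g,n}$. Away from the loops connections are flat, so restriction maps compose. It therefore suffices to realize, up to $C^\infty$-small error, the time-one flow of a single Hamiltonian $H$ by one slab of holonomy perturbation.

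\textbf{Local model.} Take loops $\gamma_1,\dots,\gamma_k$ lying in a single slice $\{0\}\times\Sigma_{g,n}$, pushed slightly apart in the $[-1,1]$ direction so that they are disjoint, with surface framings. We use the standard computation in the style of Taubes and Floer. For a perturbation $\eta$ supported near the loops, the perturbed flat equation says the connection is flat except for a jump across each $\gamma_i$. The boundary values are then related as follows: $r_+\circ r_-^{-1}$ is the composition of the time-one flows of the functions $f_i=\epsilon\,\eta_i\circ \mathrm{tr}\,\mathrm{hol}_{\gamma_i}$, which are Goldman functions on $M_{g,n}$. More generally, when $\eta$ depends on several loops in the same slice that do not commute, the correspondence is the Hamiltonian flow of $\eta(\mathrm{tr}\,\mathrm{hol}_{\gamma_1},\dots,\mathrm{tr}\,\mathrm{hol}_{\gamma_k})$ to first order in the size of $\eta$. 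The sizes are chosen small, so by the implicit function theorem $r_\pm$ are diffeomorphisms for small $\eta$ in $C^1$. The resulting map $r_+\circ r_-^{-1}$ is a graph correspondence and is Hamiltonian isotopic to the identity through the family $s\eta$, $s\in[0,1]$.

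\textbf{Density step.} We must show that functions of the form $\eta(\mathrm{tr}\,\mathrm{hol}_{\gamma_1},\dots,\mathrm{tr}\,\mathrm{hol}_{\gamma_k})$ are $C^\infty$-dense in $C^\infty(M_{g,n})$. It suffices that finitely many trace functions of loops give a smooth embedding $M_{g,n}\hookrightarrow \R^k$. Smooth functions on $\R^k$ then restrict to all smooth functions on the image, by extension from a closed submanifold. Here the trace functions of simple and non-simple loops on $\pi_1(\Sigma_{g,n})$ separate points of the character variety; this is classical invariant theory for $SU(2)$ representations. The condition on $n$ odd guarantees that every point of $M_{g,n}$ is irreducible, so we can choose loops whose traces give local coordinates at each point, and compactness yields finitely many. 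Non-simple loops are fine because they may be realized as embedded loops in the 3-dimensional thickening, framed arbitrarily; the framing only affects $\eta$ via reparametrization.

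\textbf{Main obstacle.} The hardest step is controlling the error between the exact correspondence $r_+\circ r_-^{-1}$ for loops at different heights (whose holonomy functions do not Poisson commute) and the time-one flow of the Hamiltonian $\epsilon\eta$. The error is $O(\epsilon^2)$ in $C^\infty$, uniformly. Taking $N$ slabs with $\epsilon\sim 1/N$ makes the total error $O(1/N)$, and the composite map is still exactly Hamiltonian by the isotopy argument above. We will verify the $C^\infty$ estimates by writing the perturbed equation on each slab as an ODE in the $[-1,1]$ direction, following the adiabatic analysis of Dostoglou--Salamon. This analysis exhibits $r_+\circ r_-^{-1}$ as the time-one map of a time-dependent Hamiltonian that is $\epsilon\eta\circ\mathrm{tr}\,\mathrm{hol}$ plus higher-order corrections, with smooth dependence on the parameters.
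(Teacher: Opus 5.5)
Your outline is workable, but it takes a genuinely different route from the paper.

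\textbf{The paper's route.} The paper insists on \emph{exact} realization. For a multicurve $\Gamma$ in the slice $\{0\}\times\Sigma_{g,n}$ and \emph{any} $\eta$, Proposition \ref{prop:Ham=Hol} shows by a $\pi_1$ computation that $r_\pm$ are diffeomorphisms and $r_+\circ r_-^{-1}=\phi^1_{f_{\eta,\Gamma}}$ on the nose. This uses the explicit flow of Proposition \ref{prop:hamcomp}, which is available because trace functions of disjoint curves Poisson-commute. Since only multicurves are allowed, the remaining work is approximation theory:
\begin{itemize}
\item the trace embedding plus the $\text{sl}_2$ skein relation give density of the span of multicurve trace monomials (Proposition \ref{prop:firsttraceapprox});
\item Banyaga plus a Trotter product formula give $C^0$-density of compositions of their flows (Proposition \ref{prop:cinfinityapprox});
\item complexification to the $SL(2,\C)$ character variety and Cauchy estimates upgrade this to $C^\infty$ (Lemma \ref{lemma:improveregularity}, Proposition \ref{prop:tracemondense}).
\end{itemize}

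\textbf{Your route.} You allow arbitrary framed knots in the thickening, so every smooth Hamiltonian is \emph{exactly} $\eta$ composed with a fixed trace embedding. You then need only the first-order identity $\partial_\varepsilon|_{\varepsilon=0}(r_+\circ r_-^{-1})=X_f$, followed by an Euler-type scheme over $N$ slabs. That identity does hold for any embedded loops, by the Stokes computation of Proposition \ref{unperturbed-3-man-Lag}. Your claim that separate $\eta_i$ give exactly a composition of Goldman flows holds only for simple curves in separate slices, but you never actually use it. This bypasses Banyaga, Trotter, the skein reduction and analyticity entirely.

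\textbf{What you must supply instead.}
\begin{itemize}
\item $r_\pm$ are diffeomorphisms only for small $\varepsilon$. This needs an implicit-function argument near the diagonal (the unperturbed slab is cut out transversely), plus compactness to exclude solutions far from flat ones.
\item Each slab map must be shown to be symplectic. Your sketch omits this. It follows because the perturbation contributes the symmetric Hessian of $f_{\eta,\Gamma}$ to the Stokes computation, so the graph is Lagrangian. Hamiltonicity then uses $H^1(M_{g,n};\R)=0$.
\item The $C^\infty$ estimates. Dostoglou--Salamon adiabatic analysis is the wrong tool: a knot spanning a range of heights makes the perturbation nonlocal in the $[-1,1]$ variable, so there is no ODE in that direction.
\end{itemize}
For the last point, use instead that the slab map is a smooth family $\Psi(\varepsilon,t,\cdot)$ on compact $M_{g,n}$ with $\Psi(0,t,\cdot)=\mathrm{id}$ and $\partial_\varepsilon\Psi(0,t,\cdot)=X_{H_t}$. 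Taylor's theorem then gives $O(\varepsilon^2)$ in every $C^k$. Accumulation over $N$ slabs is a telescoping, discrete-Gronwall estimate in $C^k$, with a bootstrap for uniform $C^k$ bounds on partial compositions.

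\textbf{Comparison.} The paper's route gives exactness for perturbations of any size, with concrete data: multicurves in slices, and $\eta$ scalar multiples of monomials. Yours gives a shorter approximation theory, paid for with a small-perturbation analysis.
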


In short, any Hamiltonian diffeomorphism of $M_{g,n}$ can be $C^\infty$-approximated by a holonomy perturbation. The case in which $\Gamma$ consists of a single embedded loop on $\{0\}\times \Sigma_{g,n}$ was studied by Herald and Kirk \cite{herald-kirk}, and they identified $r_+\circ r_-^{-1}$ in this case with Goldman's Hamiltonian twist flow on $M_{g,n}$ studied in \cite{goldman-symplectic}. The proof of Theorem \ref{thm:hamiltoniandiffeo} involves understanding the more general case where $\Gamma$ is a collection of disjoint loops embedded in $\{0\}\times \Sigma_{g,n}$. 

A precursor to Theorem \ref{thm:hamiltoniandiffeo} is the work of Zentner \cite{zentner}, who considered the case $M_{1,0}$, the $SU(2)$ character variety of a $2$-torus. This is a pillowcase, a sphere with $4$ branch points of order $2$, but one can equally well work with $\Z/2$-equivariant Hamiltonian diffeomorphisms on its double branched covering $T^2$. Observing that the Hamiltonian diffeomorphisms of $M_{1,0}$ induced by $\Z/2$-equivariant shear maps on $T^2$ can be realized by holonomy perturbations, Zentner reduced his problem to one of $C^0$-approximating Hamiltonian diffeomorphisms of $T^2$ by compositions of shear maps, see \cite[Theorem 3.3]{zentner}; the regularity was later improved \cite{sivek-zentner}.

The work of Berger and Turaev \cite{bt} shows, more generally, that any Hamiltonian diffeomorphism of a torus $T^{2n}=\R^{2n}/\Z^{2n}$ with its standard symplectic form can be smoothly approximated by compositions of shears; in fact, only horizontal and vertical shears are needed. Our proof of Theorem \ref{thm:hamiltoniandiffeo} adapts the work of \cite{bt} in several key steps.

Theorem \ref{thm:hamiltoniandiffeo} also holds for moduli spaces of flat $SU(2)$ connections on $\Sigma_{g,n}$ with more general conjugacy class conditions for the holonomies around punctures; see Theorem \ref{thm:hamiltoniandiffeo-general}.

\subsubsection*{Lagrangians in $M_{g,n}$ and instanton homology}

The next result establishes a relationship between the intersections of certain pairs of Lagrangians in $M_{g,n}$ and Kronheimer and Mrowka's singular instanton homology for links. Consider a triple $(Y,L,w)$ where $Y$ is a closed, oriented, connected $3$-manifold, $L$ is a link in $Y$, and $w$ is an embedded compact $1$-manifold in $Y$ such that $w\cap L = \partial w$. Call $(Y,L,w)$ an {\emph{admissible link}} if there is an embedded oriented surface $\Sigma\subset Y$ transverse to $L\cup w$ and satisfying either (i) $|\Sigma\cap L|$ is odd or (ii) $\Sigma\cap L=\emptyset$ and $|\Sigma\cap w|$ is odd. For such triples Kronheimer and Mrowka define
\[
    I(Y,L,w),
\]
 the singular instanton Floer homology, an affinely $\Z/4$-graded abelian group \cite{km-unknot}. The surface $\Sigma$ induces an involution on $I(Y,L,w)$, and one can show that this involution has degree $2\pmod{4}$ in case (i) and degree $0\pmod{4}$ in case (ii). Thus, in case (i), the rank of $I(Y,L,w)$ is even.

Consider now a triple $(M,T,w)$ consisting of a compact, oriented, connected $3$-manifold with boundary, a {\it tangle} $T\subset M$, and a compact $1$-manifold $w\subset \text{int}M$ satisfying $w\cap T = \partial w$. In this paper, we use the convention that a tangle $T\subset M$ is a properly embedded $1$-manifold in $M$. Assume that $\partial (M,T)$ is the union of the surfaces with marked points $\Sigma_{g,n}$ and $\Sigma_{0,3}$. Consider the moduli space $\mathfrak{X}(M,T,w)$ of flat $SU(2)$ connections on $M\smallsetminus (T\cup w)$ with traceless holonomy around meridians of $T$ and $-1$ holonomy around meridians of $w$. In general this may be a rather singular space, but there always exists a holonomy perturbation data $\pi=(\eta,\Gamma)$ such that the associated perturbed-flat moduli space $\mathfrak{X}_\pi(M,T,w)$ is a smooth manifold, and such that the map 
\begin{equation*}\label{eq:3mfldlagrangianintro}
	\mathfrak{X}_\pi(M,T,w) \to M_{g,n}
\end{equation*}
induced by restriction of connections to the boundary is a Lagrangian immersion; here we use that the moduli space for $\Sigma_{g,n} \sqcup \Sigma_{0,3}$ can be identified with $M_{g,n}$ because $M_{0,3}$ is a point. The existence of such a perturbation $\pi$ follows by adapting the work of Herald \cite{herald}, where no punctures are present; see also \cite{dfl,herald-kirk}. In \S \ref{sec:inequality}, we give a new, less-analytic proof that such perturbations exist. Given two such triples $(M_i,T_i,w_i)$ we glue them to obtain
\begin{equation}\label{eq:tripledecomposedintro}
	(Y,L,w) = (M_1,T_1,w_1)\cup_{\Sigma_{g,n} \sqcup \Sigma_{0,3}} (M_2,T_2,w_2)
\end{equation}
where we reverse the orientation of $M_2$ so that $Y$ inherits an orientation, and $w=w_1\sqcup w_2$.

Denote the fiber product of two Lagrangian immersions $r_i:\Lambda_i\to X$ in a symplectic manifold by $\Lambda_1\times_X \Lambda_2$. Write $\text{Ham}(X)$ for the group of Hamiltonian diffeomorphisms of $X$. For $\phi\in  \text{Ham}(X)$, the immersed Lagrangian $\phi\circ r_i : \Lambda_i\to X$ is denoted by $\phi(\Lambda_i)$. Then define
\begin{equation}\label{eq:minimallagrangianintersectiondefn}
	\mathfrak{n}(\Lambda_1,\Lambda_2) = \min \{ \# \left( \Lambda_1 \times_X \phi(\Lambda_2) \right) \; \mid \;  \phi\in \text{Ham}(X), \; \Lambda_1 \pitchfork \phi(\Lambda_2)\ \}
\end{equation}
If the $L_i$ are embedded, $\mathfrak{n}(\Lambda_1,\Lambda_2) $ is the minimal number of intersection points of $\Lambda_1$ and $\phi(\Lambda_2)$ ranging over all Hamiltonian diffeomorphisms $\phi$ for which such intersections are transverse.

\begin{theorem}\label{thm:lagrangianineq}
	Suppose $\Lambda_i = \mathfrak{X}_{\pi_i}(M_i,T_i,w_i)$ are Lagrangian immersions in $M_{g,n}$ as above, where $\partial (M_i,T_i)=\Sigma_{g,n}\sqcup \Sigma_{0,3}$ with $n$ odd, and the glued up triple $(Y,L,w)$ is an admissible link. Then
	\begin{equation}\label{eq:lagrankineqintro}
		\mathfrak{n}(\Lambda_1,\Lambda_2) \geq \frac{1}{2}\, \textup{rank}_\Z \; I(Y,L,w).
	\end{equation}
	The same inequality holds using the rank of instanton homology computed with respect to any integral domain coefficient ring. Furthermore, if $ I(Y,L,w)=0$ and the perbtation data $\pi_i$ defining $\Lambda_i$ is sufficiently small for $i\in \{1,2\}$, then $\mathfrak{n}(\Lambda_1,\Lambda_2)=0$.
\end{theorem}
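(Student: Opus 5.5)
The plan is to realize the fiber product $\Lambda_1\times_{M_{g,n}}\phi(\Lambda_2)$ as the set of generators of a chain complex computing $I(Y,L,w)$, up to the factor of $2$ coming from the involution. Fix $\phi\in\text{Ham}(M_{g,n})$ with $\Lambda_1\pitchfork\phi(\Lambda_2)$. By Theorem \ref{thm:hamiltoniandiffeo}, $\phi$ is $C^\infty$-approximated by $r_+\circ r_-^{-1}$ for some holonomy perturbation $(\eta,\Gamma)$ on a collar $[-1,1]\times\Sigma_{g,n}$. Transversality is an open condition, and the fiber product is a finite set of transverse points. So for a close enough approximation $\phi'$ the count $\#(\Lambda_1\times\phi'(\Lambda_2))$ equals $\#(\Lambda_1\times\phi(\Lambda_2))$: a small perturbation of a transverse intersection of compact immersions preserves the number of points.

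Next, insert the collar between $(M_1,T_1,w_1)$ and $(M_2,T_2,w_2)$, and on $Y$ consider the total holonomy perturbation $\pi=\pi_1\cup(\eta,\Gamma)\cup\pi_2$, supported in the three pieces. Perturbations of $M_i$ can be placed in collars of $M_i$, pushed away from the gluing region, so the three supports are disjoint. A standard gluing (Mayer–Vietoris type) argument for perturbed-flat connections, using that restriction to $\Sigma_{0,3}$ is a point and that the unperturbed region near the gluing surfaces consists of flat connections, identifies the perturbed critical set $\mathfrak{X}_\pi(Y,L,w)$ with $\Lambda_1\times_{M_{g,n}}\phi'(\Lambda_2)$. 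Transversality of the intersection corresponds to nondegeneracy of the critical points: the Hessian kernel is $T\Lambda_1\cap T\phi'(\Lambda_2)$, computed by the usual cohomology long exact sequence.

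Admissibility implies there are no reducibles. In case (i) the odd intersection with $L$ forces traceless holonomy around an odd number of points, so no abelian flat connections exist. Then $I(Y,L,w)$ is the homology of a complex generated by $\mathfrak{X}_\pi$, so $\#\mathfrak{X}_\pi\geq\text{rank}\,I$ over any integral domain. To gain the factor $1/2$, use the involution of $I$ induced by $\Sigma$. It acts on the critical set by tensoring with the flat $\Z/2$ (i.e. $\{\pm1\}$) bundle dual to $\Sigma$, and it acts freely on $\mathfrak{X}_\pi$: a fixed point would have holonomy reducible to a subgroup with nontrivial centralizer element, contradicting irreducibility. Alternatively, one can take the perturbation $\Sigma$-equivariant. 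I would instead argue that the quotient complex by this free $\Z/2$-action computes a version of $I$ whose rank is half. Cleaner still, note that the moduli space $M_{g,n}$ itself has already divided out the $H^1(\cdot;\Z/2)$-action appropriately, so that each fiber-product point corresponds to two critical points of the Chern–Simons functional on $Y$ (the two lifts differing by the class dual to $\Sigma$). Either way, $2\#(\Lambda_1\times\phi(\Lambda_2))\geq\text{rank}\,I$. Minimizing over $\phi$ gives \eqref{eq:lagrankineqintro}.

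For the vanishing statement, if $I=0$ and the $\pi_i$ are small, the unperturbed flat moduli space on $Y$ is empty. Otherwise the Floer complex with small perturbation would have nonzero Euler characteristic or nonzero homology; more precisely, the Euler characteristic argument fails, so I would instead use a compactness argument: emptiness of flat connections is inferred from $I=0$ together with nondegeneracy for a generic small perturbation. Then $\Lambda_1\cap\Lambda_2=\emptyset$ already and $\mathfrak{n}=0$ with $\phi=\text{id}$. I expect this last step, and the precise bookkeeping of the factor $2$ (the lift count between $Y$ and the punctured-surface moduli space), to be the main obstacle. The first two steps follow from Theorem \ref{thm:hamiltoniandiffeo} and standard gluing.
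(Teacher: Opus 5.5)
The inequality part of your proposal follows the paper's argument. You approximate $\phi$ by a collar holonomy perturbation via Theorem \ref{thm:hamiltoniandiffeo}, absorb it into the perturbation data, and use \eqref{identification-Lag-crit}: each fiber-product point lifts to the two critical points given by the two gluing signs along $\Sigma_{g,n}$ and $\Sigma_{0,3}$ (modulo the overall sign), and these differ by $\sigma_0$. Your ``cleaner'' version is the right one. You only need $\#\mathfrak{X}_\pi(Y,L,w)\le 2\,\#(\Lambda_1\times_{M_{g,n}}\phi'(\Lambda_2))$, so drop the quotient-complex idea.

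You also assert that the Hessian kernel at a perturbed critical point is $T\Lambda_1\cap T\phi'(\Lambda_2)$ by ``the usual'' long exact sequence. For perturbed-flat connections the linearization is not the standard twisted de Rham complex, and the paper explicitly declines to prove this claim. Instead, Lemma \ref{non-dege-pert} enlarges the perturbation data inside $M_1$ and $M_2$ by a further small generic perturbation. This keeps the fiber-product description, preserves the transverse count by stability, and makes the critical points nondegenerate. You should either do the same or supply that analysis.

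The genuine gap is in the vanishing statement. You need $I(Y,L,w)=0\Rightarrow \mathfrak{X}(Y,L,w)=\emptyset$, and no compactness or Euler characteristic argument gives this. A nonempty, even nondegenerate, critical set can have zero homology, since generators can cancel under the differential. The Euler characteristic only sees a signed count, which can vanish while the critical set is nonempty.

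The missing idea is the detection theorem, Theorem \ref{thm:detectionintro}. It says $I(Y,L,w)=0$ if and only if $Y$ contains an embedded $2$-sphere that either meets $L$ transversely in one point, or is disjoint from $L$ and has odd pairing with $w$. Restricting a flat connection to such a sphere would give a point of $M_{0,1}$ or of $M_0^{\text{odd}}$, and both are empty. Hence $\mathfrak{X}(Y,L,w)=\emptyset$, so the fiber product of the unperturbed $\mathfrak{X}(M_i,T_i,w_i)$ is empty. For sufficiently small $\pi_i$ the images of $\Lambda_i$ stay close to these, so $\Lambda_1\times_{M_{g,n}}\Lambda_2=\emptyset$ and $\mathfrak{n}(\Lambda_1,\Lambda_2)=0$ with $\phi=\mathrm{id}$.

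This input is deep, not a soft step. The paper proves it using the decomposition of Lemma \ref{lemma:tautconnectedsumdecomp}, non-vanishing of sutured instanton homology for taut sutured manifolds via excision, a K\"unneth formula for $I^\#$, and Fukaya's connected sum spectral sequence.
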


\noindent Fixing $i\in \{1,2\}$, the Lagrangians $\mathfrak{X}_{\pi_i}(M_i,T_i,w_i)$ for different choices of perturbations $\pi_i$ (for which the perturbed character varieties are smooth) are not necessarily Hamiltonian isotopic, or even diffeomorphic. In general, they are expected to be related by Lagrangian cobordism.

The inequality of Theorem \ref{thm:lagrangianineq} is a consequence of Theorem \ref{thm:hamiltoniandiffeo} and the definitions of the terms. Indeed, Theorem \ref{thm:hamiltoniandiffeo} implies that $\mathfrak{n}(\Lambda_1,\Lambda_2)$ can be realized by a Hamiltonian diffeomorphism $\phi$ of $M_{g,n}$ which is induced by a holonomy perturbation $\pi_\Sigma$ in the thickened surface. By construction, the instanton homology $I(Y,L,w)$ may be computed from a chain complex which has generators two copies of $\Lambda_1 \times_{M_{g,n}} \phi(\Lambda_2)$; these are the perturbed-flat connections on $(Y,L,w)$ associated to the given holonomy perturbation data $\pi_1,\pi_2$ and $\pi_\Sigma$. The two copies of $\Lambda_1 \times_{M_{g,n}} \phi(\Lambda_2)$ are interchanged by an involution that negates the holonomy of a loop that passes once through $\Sigma_{g,n}$ and once through $\Sigma_{3,0}$. Alternatively, write $I(Y,L,w)_\Sigma$ for the quotient of $I(Y,L,w)$ by the involution induced by the surface $\Sigma_{g,n}$. Since this involution has degree $2$ (mod $4$), as abelian groups we have
\[
	I(Y,L,w) = I(Y,L,w)_\Sigma \oplus I(Y,L,w)_\Sigma
\]
and the right side of \eqref{eq:lagrankineqintro} may be written as $\text{rank}\, I(Y,L,w)_\Sigma$.

The utility of Theorem \ref{thm:lagrangianineq} stems from two points. The first is that many Lagrangians in $M_{g,n}$ arise from the construction $\mathfrak{X}_\pi(M,T,w)$. For example, up to symplectic isotopy all Lagrangian spheres in $M_{0,5}$ can be realized by this construction, an important ingredient for Theorem \ref{thm:lagrangianspheres}. The second point is an explicit understanding of when $I(Y,L,w)$, defined over $\Z$, vanishes:

\begin{theorem}\label{thm:detectionintro}
    Let $(Y,L,w)$ be an admissible link. Then $I(Y,L,w)$ is zero if and only if there exists an embedded $2$-sphere $S^2\subset Y$ such that one of the following is satisfied: 
    \begin{itemize}
    	\item $L$ intersects $S^2$ transversely in one point
    	\item  $L$ is disjoint from $S^2$ and the homological pairing $w\cdot L$ is odd
    \end{itemize}
\end{theorem}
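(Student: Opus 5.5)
Both directions are needed. The ``if'' direction is the easier one and I would dispose of it first by exhibiting a vanishing argument directly from the geometry of the sphere. In the first case, where $L$ meets an embedded $S^2$ transversely in a single point, the sphere is an admissible surface of type (i), so the involution attached to it acts on $I(Y,L,w)$. I would argue instead via the flat connection count on a neighborhood. A flat $SU(2)$ connection on $S^2$ with one traceless puncture would require a traceless element to be conjugate to $1$, which is impossible. Hence the unperturbed representation variety of $(Y,L,w)$ is empty, and for small holonomy perturbations the perturbed-flat set stays empty by compactness, so the chain complex is zero.

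In the second case, $S^2$ is disjoint from $L$ and $w\cdot S^2$ is odd; I read the condition ``$w\cdot L$ odd'' as the pairing of $w$ with the sphere, i.e.\ $w$ meets $S^2$ an odd number of times. The restriction to $S^2$ is then a flat $SO(3)$ connection with $w_2\neq 0$ on $S^2$, equivalently an $SU(2)$ connection on $S^2$ minus a point with holonomy $-1$ around it. Since $\pi_1(S^2\smallsetminus pt)$ is trivial, this forces $1=-1$, so again there are no flat connections. Small perturbations preserve emptiness, and therefore $I=0$.

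The ``only if'' direction is the main obstacle. Here I would invoke the non-vanishing theorems of Kronheimer--Mrowka. Assume no such sphere exists. In the first case, a sphere meeting $L$ once would make $Y\smallsetminus L$ split off a component, so its absence gives irreducibility of $(Y,L)$ relative to such spheres. I would then decompose $Y$ along reducing spheres disjoint from $L$. By a connected sum formula, $I$ of the sum is essentially a tensor product of the summands' homologies, and each summand's homology is nonzero precisely when that summand carries no obstructing sphere; I would keep track of $w$ across the spheres so that the pairing parity condition is exactly what the decomposition leaves over.

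For a prime summand that is $L$-irreducible, I would take a taut sutured manifold hierarchy from the admissible surface $\Sigma$ (made Thurston-norm minimizing among surfaces with the same boundary behaviour). Kronheimer--Mrowka's sutured instanton theory, together with the excision theorem, shows that the generalized eigenspace of $\mu(\Sigma)$ at the top eigenvalue is nonzero. This is the analogue of ``instanton homology detects the Thurston norm'', adapted to singular bundles with $w$.

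The delicate points are twofold. First, summands with $S^1\times S^2$ pieces, or ones where $L$ is split, require a direct computation showing that $w$ pairing evenly gives a nonzero group. Second, the tensor formula over $\Z$ needs care with torsion; I would work over a field of each characteristic to conclude vanishing over $\Z$.
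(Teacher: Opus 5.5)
Your ``if'' direction is correct and is the paper's argument. The restriction of a flat connection to the sphere would be a point of $M_{0,1}$ or of $M_0^{\text{odd}}$, and both are empty. If the sphere in the first case also meets $w$, the relation becomes $\pm C=1$ with $C$ traceless, which is still impossible. So the unperturbed critical set is empty and $C(Y,L,w)=0$.

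The ``only if'' direction has a genuine gap at the connected-sum step.

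\textbf{The summands need not be admissible.} After you cut along spheres disjoint from $L$, the summands $(Y_i,L_i,w_i)$ are in general not admissible. Examples are a nontrivial knot in an $S^3$ summand, or any summand that misses the admissible surface. For such a summand $I(Y_i,L_i,w_i)$ is undefined, and there is no admissible $\Sigma$ on which to run your top-eigenspace argument.

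\textbf{Unreduced $I$ is not multiplicative.} Even when every summand is admissible, Fukaya's connected sum theorem only gives a spectral sequence starting from the tensor product with an extra $H_*(SO(3))$ factor. A nonzero $E_2$ page does not force a nonzero abutment.

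\textbf{The missing idea.} Pass to framed instanton homology $I^\#(Y,L,w)=I(Y,L\sqcup H,w\sqcup w_H)$. It is defined for every triple and satisfies an honest K\"unneth isomorphism $I^\#(Y,L,w;\F)\cong\bigotimes_i I^\#(Y_i,L_i,w_i;\F)$ over $\F=\Z/2$. The absence of a separating sphere disjoint from $L$ with odd $w$-pairing is exactly what lets you make $w$ miss the summing spheres. You then return to $I$ using Fukaya's spectral sequence $I(Y,L,w;\F)\otimes H_*(SO(3);\F)\Rightarrow I^\#(Y,L,w;\F)$, in the direction where it is useful: $I^\#\neq 0$ forces $I(Y,L,w;\F)\neq 0$, hence $I(Y,L,w)\neq 0$.

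Accordingly, for an irreducible, $\partial$-irreducible summand you need $I^\#(Y_i,L_i,w_i;\F)\neq 0$ with no admissibility hypothesis. A Thurston-norm/eigenvalue argument for an admissible surface does not supply this. The paper proceeds as follows:
\begin{enumerate}
\item Pair up the components of $L_i$ meeting $\partial w_i$ an odd number of times, and glue their boundary tori in pairs. This gives a balanced sutured manifold, or a closed admissible one.
\item Show this manifold is taut, using irreducibility and $\partial$-irreducibility.
\item Apply the Kronheimer--Mrowka nonvanishing of $SHI$, extended to allow $w$.
\item Add earrings to the unpaired components and use excision to identify $SHI$ with $I(Y_i,L_i^\pi,w_i^\pi)$.
\item Strip the earrings, using the factor-of-two relation between $I^\#$ and the one-earring group together with the mapping-cone inequality for removing each further earring.
\end{enumerate}

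You also need to isolate the irreducible but $\partial$-reducible pieces $(L(p,q),U_{p/q})$, whose sutured complement is taut exactly when $p\neq 0$. The case $p=0$, namely $(S^1\times S^2,S^1\times\{pt\})$, is precisely what a sphere meeting $L$ once produces. Such a sphere is non-separating; it does not ``split off a component'' of $Y\smallsetminus L$ as your outline suggests.
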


\noindent This theorem is a generalization of \cite[Prop. 2.1]{dsodd}, which treats the case when $L$ is empty. The proof of Theorem \ref{thm:detectionintro} relies on Kronheimer and Mrowka's sutured instanton homology and their non-vanishing result for instanton homology of taut sutured manifolds \cite{km-sutures}. The statement and argument of Theorem \ref{thm:detectionintro} also extend the non-vanishing results due to Kronheimer and Mrowka from \cite[\S 7]{km-tait}. The last statement in Theorem \ref{thm:lagrangianineq} is a consequence of Theorem \ref{thm:detectionintro}. 

For a triple $(Y,L,w)$ which is decomposed as in \eqref{eq:tripledecomposedintro}, it is convenient to recast the detection result of Theorem \ref{thm:detectionintro} in terms of the topology of the given decomposition, and more specifically via the data of curves on the surface $\Sigma_{g,n}$ which bound certain disks in $(M_i,T_i)$. This is achieved using results on Heegaard splittings of $3$-manifolds with tangles, following \cite{hayashi-shimokawa}. 

The proof of Theorem \ref{thm:mappingclassgroupaction} also requires an understanding of when the distance of Heegaard splittings induced by powers of a pseudo-Anosov diffeomorphism of a surface is unbounded. This is established for surfaces without punctures in \cite{hempel,abrams-schleimer}, which was used in the authors' previous work \cite{dsodd}, taking cue from \cite{clarkson}. The case for surfaces with punctures which is necessary for Theorem \ref{thm:mappingclassgroupaction} is adapted from the work \cite{ichihara-saito}.

\subsubsection*{Relation to the Atiyah--Floer conjecture}

Inspired by the original Atiyah--Floer conjecture \cite{atiyah-newinvs} and generalizing the version established by the first author in collaboration with Fukaya and Lipyanskiy \cite{dfl} for the case where the link $L$ is empty, it is natural to conjecture that there is an isomorphism of abelian groups
\begin{equation}\label{eq:atiyahfloerintro}
	 I(Y,L,w)_\Sigma \cong \textup{HF}(\Lambda_1,\Lambda_2)
\end{equation}
for an admissible link decomposed as in \eqref{eq:tripledecomposedintro}, where the Lagrangians $\Lambda_i=\mathfrak{X}_{\pi_i}(M_i,T_i,w_i)$ are assumed to be smoothly embedded in $M_{g,n}$. The right side of \eqref{eq:atiyahfloerintro} denotes Lagrangian Floer homology of $\Lambda_1$ and $\Lambda_2$ defined over $\Z$ (here we use that $M_{g,n}$ is monotone).

Theorem \ref{thm:lagrangianineq} would be a direct corollary of \eqref{eq:atiyahfloerintro}. Furthermore, the authors expect that \eqref{eq:atiyahfloerintro} can be proved, with a good deal of work, by adapting \cite{dfl} (including the analysis of the mixed equation studied in \cite{dfl-mixed}) to the setting of gauge theory with prescribed singularities along the link $L$. However, in this paper, we are able to prove Theorem \ref{thm:lagrangianineq}, which leads to all of our applications, without having to establish the Atiyah--Floer isomorphism \eqref{eq:atiyahfloerintro}, by solely relying on the relationship between Hamiltonian diffeomorphisms of $M_{g,n}$ and holonomy perturbations (Theorem \ref{thm:hamiltoniandiffeo}). Moreover, Theorems \ref{thm:hamiltoniandiffeo} and \ref{thm:lagrangianineq} are established in a level of generality that includes the odd character varieties studied in \cite{dsodd}, and thus the work here provides an alternative proof to the results of that paper which does not rely on the Atiyah--Floer results in \cite{dfl}.

As a final remark, in principle it should be possible to prove Theorems \ref{thm:mappingclassgroupaction} and \ref{thm:lagrangianspheres} using Lagrangian Floer homology, a tool specifically designed for such problems, and not appeal to instanton homology. However, Lagrangian Floer homology, like most Floer homologies, is generally difficult to compute. An advantage of our approach is that it allows us to harness the progress that has been made on the side of instanton homology, as seen especially in the work of Kronheimer and Mrowka \cite{km-sutures}, and which is here represented by Theorem \ref{thm:detectionintro}.

\subsubsection*{Outline}

After some preliminaries on approximating Hamiltonian diffeomorphisms in \S \ref{sec:hamiltonianapprox}, in \S \ref{mod-space-Ham-diffeo} we introduce the moduli spaces of flat $SU(2)$ connections on punctured surfaces and study their Hamiltonian diffeomorphisms and symplectomorphisms. Holonomy perturbations, Lagrangians from $3$-manifolds with tangles, and the proof of Theorem \ref{thm:hamiltoniandiffeo} are treated in \S \ref{sec:holonomyandlagrangians}. In \S \ref{sec:inequality}, singular instanton homology is reviewed and Theorem \ref{thm:lagrangianineq} is proved. The detection result of Theorem \ref{thm:detectionintro} is proved in \S \ref{sec:detection}, and this theorem is then studied in terms of Heegaard splittings and mapping classes in \S \ref{sec:mappingclassgroupthm}, where Theorem \ref{thm:mappingclassgroupaction} is also proved. Applications to $\text{Bl}_5 \mathbb{C}\mathbb{P}^2$, including Theorem \ref{thm:lagrangianspheres} and Corollary \ref{cor:lagrangianspheres} are proved in \S \ref{sec:lagspherecomp}. In \S \ref{sec:intersectionsandsu2simple}, the relationship between intersection numbers of Lagrangians in $M_{g,n}$ and $SU(2)$-simple knots is studied, and Theorem \ref{thm:preciseintersectionresult} is proved. Finally, in \S \ref{sec:genus}, we consider Lagrangians in the odd character variety $M_{g}^{\text{odd}}$ and prove Theorem \ref{thm:lagrangianspheresinquadricintersection}.

\subsubsection*{Acknowledgments}

A MathOverflow question posed by Harry Reed in 2017 asked whether anything was known about Lagrangian sphere complexes for symplectic manifolds of dimension bigger than $2$ \cite{mathoverflow}. In response to that question, Jonny Evans gives some reasoning for why the complex for $\text{Bl}_5 \mathbb{CP}^2$ should be related to the curve complex of $\Sigma_{0,5}$ following ideas of Seidel from \cite{seidel-4d}. The authors thank Jonny Evans for a very informative email correspondence. 

The authors thank Tom Mrowka, who posed the problem of obtaining the results from \cite{dsodd} without using the Atiyah--Floer Conjecture results of \cite{dfl}, by more directly studying the relationship between Hamiltonian isotopies of $M_{g}^{\text{odd}}$ and holonomy perturbations. The authors thank the Simons Center for Geometry and Physics for hosting the workshop \emph{Gauge Theory and Floer Homology in Low Dimensional Topology} in the spring of 2025, where this work was initiated.

\subsubsection*{AI disclosure}

No AI/LLMs were used in the development of the results nor in the writing of this paper. The authors only benefited from AI insofar as its effects on the current mathematical climate, which has prompted them to finish the paper sooner than otherwise. 

%!TEX root = main.tex

\section{Approximating Hamiltonian diffeomorphisms}\label{sec:hamiltonianapprox}

In this section we provide some general results on the approximation of Hamiltonian diffeomorphisms. Let $(M,\omega)$ be a closed symplectic manifold. Given a smooth function $H:[0,1]\times M \to \R$, the associated Hamiltonian time-dependent vector field $X_{H_t}$ is defined by $\iota(X_{H_t}) \omega = dH_t$. The flow of $X_{t}$ is written $\phi_H^t$. A diffeomorphism of $M$ is {\emph{Hamiltonian}} if it is equal to $\phi_H^1$ for some $H$. Write $\text{Ham}(M,\omega)$ for the group of Hamiltonian diffeomorphisms, which is a subgroup of $\text{Symp}(M,\omega)$. Given a smooth function $f:M\to \R$, by viewing $f$ as a function $[0,1]\times M\to \R$ constant in $[0,1]$, we write $X_f$ for the associated vector field and $\phi_f^t$ for the associated flow as above; note $\phi_f^t$ is defined for all $t\in \R$ under our assumption that $M$ is compact.

\begin{prop}\label{prop:cinfinityapprox}
    Suppose $(M,\omega)$ is a smooth closed symplectic manifold. Let $S\subset C^\infty(M;\R)$ be such that the $\R$-span of $S$ is $C^1$-dense in $C^\infty(M;\R)$. Then
    \begin{equation}\label{eq:hamiltoniangenset}
       \mathcal{G}(S) := \left\{ \phi_{f_1}^{t_1}\circ \cdots \circ \phi_{f_m}^{t_m} \; \mid \; t_1,\ldots,t_m\in \R, \; f_1,\ldots, f_m\in S \right\}
    \end{equation}
    is dense in {\emph{$\text{Ham}(M,\omega)$}} with respect to the $C^0$-topology.
\end{prop}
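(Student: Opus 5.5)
Write $\phi=\phi^1_H$ with $H:[0,1]\times M\to\R$ smooth. The plan has three steps: approximate $\phi$ by a product of autonomous flows, replace each autonomous flow by one coming from the span of $S$, and then break each such flow into flows of single elements of $S$.

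\emph{Step 1: reduce to autonomous flows.} Subdivide $[0,1]$ into $N$ intervals of length $1/N$. Then
\[
\phi^1_H=\psi_N\circ\cdots\circ\psi_1,
\]
where $\psi_j$ is the time-$[ (j-1)/N, j/N]$ flow. Let $g_j=H_{(j-1)/N}$, a time-independent function. Standard ODE estimates (Gronwall, with $M$ compact so all derivatives of $H$ are bounded) give
\[
d_{C^0}\bigl(\psi_j,\phi^{1/N}_{g_j}\bigr)=O(1/N^2).
\]
Each factor is Lipschitz with constant bounded uniformly in $j$ and $N$, so the composition error is $O(1/N)$ in $C^0$. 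It therefore suffices to approximate, in $C^0$, a single autonomous flow $\phi^t_g$ with $g$ smooth and $|t|\le 1$.

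\emph{Step 2: replace $g$ by an element of the span.} Fix an auxiliary Riemannian metric. Choose $f=\sum_i c_i f_i$ with $f_i\in S$ and $\|f-g\|_{C^1}$ small. Since $\omega$ is nondegenerate and $M$ is compact, $\|X_f-X_g\|_{C^0}\le C\|f-g\|_{C^1}$. Flows of $C^0$-close vector fields are $C^0$-close over bounded times, by Gronwall with the Lipschitz constant of $X_g$. So $\phi^t_f$ is $C^0$-close to $\phi^t_g$.

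A subtlety is that $X_f$ may have a large Lipschitz constant. That is harmless here, because Gronwall only uses the Lipschitz constant of the fixed field $X_g$. Indeed, if $x'=X_f(x)$ and $y'=X_g(y)$, then
\[
|x-y|'\le |X_f(x)-X_g(x)|+|X_g(x)-X_g(y)|,
\]
and only the second term involves a Lipschitz bound.

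\emph{Step 3: split the flow of a sum.} Now $f=\sum_{i=1}^m c_i f_i$ is a fixed finite combination. Use the Lie--Trotter product formula:
\[
\bigl(\phi^{t c_1/k}_{f_1}\circ\cdots\circ\phi^{t c_m/k}_{f_m}\bigr)^k\to\phi^t_f \quad (k\to\infty)
\]
in $C^0$, indeed in $C^\infty$. This holds because all $f_i$ are smooth on a compact manifold, so the one-step error is $O(1/k^2)$ uniformly, and Lipschitz constants of the one-step maps are $1+O(1/k)$. The left-hand side lies in $\mathcal G(S)$, since $\phi^{s}_{c f_i}=\phi^{cs}_{f_i}$.

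Composing the approximations from Steps 1--3 and using that $\mathcal G(S)$ is closed under composition gives density.

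The main obstacle is the error bookkeeping when the steps are combined, because $C^0$-closeness is not stable under composition without Lipschitz control. I would handle this by approximating in order. First fix $N$ so the Step 1 error is below $\varepsilon/3$. Then approximate each of the finitely many autonomous pieces, working from the last factor backwards. Each later factor is a fixed smooth map with a known Lipschitz constant, so the tolerance needed for each earlier piece can be chosen in advance. This keeps the Lipschitz constants that enter the estimates fixed, those of the $\phi^{1/N}_{g_j}$ and the already-chosen smooth approximants, and the total error stays below $\varepsilon$.
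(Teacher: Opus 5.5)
Your proposal is correct and is essentially the paper's argument: reduce to autonomous flows (the paper cites Banyaga but remarks that your elementary time-subdivision suffices), replace each Hamiltonian by a $C^1$-close element of the span of $S$, and split the flow of a finite sum with a Trotter-type telescoping estimate under Gronwall control; the paper treats two summands and inducts, whereas you handle all $m$ summands at once and also spell out the error bookkeeping for combining the steps, which the paper leaves implicit. One wording fix in Step 1: a uniform bound on each factor's Lipschitz constant is not enough over $N$ factors, and what you actually use is that the composites $\psi_N\circ\cdots\circ\psi_{j+1}$, being flows of $H$ over $[j/N,1]$, are Lipschitz with constant at most $e^{L}$ uniformly (equivalently, each factor is $1+O(1/N)$-Lipschitz).
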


The proof we give follows \cite{bt} for some key steps. 

\begin{proof}
    Any $\phi^1_H\in \text{Ham}(M,\omega)$ is a composition $\phi^1_{f_1}\circ \cdots \circ \phi^1_{f_m}$ where $f_i\in C^\infty(M;\R)$. This follows from work of Banyaga \cite{banyaga}. (For our purposes, one could get by with much less, as approximating $\phi^t_H$ by such compositions is elementary.) Next, since $\phi_f^{at} = \phi_{af}^t$ for $a\in \R$, there is no loss of generality in assuming, from the start, that $S$ is closed under scaling by $\R$. 
    
    If $f\in C^{\infty}(M;\R)$, then by assumption there is some $g$ in the span of $S$ arbitrarily close to $f$ in the $C^1$-topology. In particular, the Hamiltonian vector fields $X_f$ and $X_g$ are $C^0$-close, and consequently so too are the associated flows $\phi_f^t$ and $\phi_g^t$ for any fixed time $t$. 

    Thus to complete the proof it remains to show the following: if $f$ is a finite sum of elements from $S$, then the Hamiltonian flow of $f$ can be $C^0$-approximated by a composition of flows each coming from an element of $S$. It suffices to approximate the time $1$ flow $\phi_f^1$ where $f=f_1+f_2$ and $f_1,f_2\in S$, the more general case following by induction.

    For $f$ fixed as above, let $g:M\to M$ be a diffeomorphism satisfying
    \[
       d ( \phi^{1/N}_f, \,  g )_{C^0} :=  \sup_{x\in M}d\left(\phi^{1/N}_f(x), \, g(x)\right) \leq h(N)/N
    \]
    where $h$ is some function. Here the distance $d$ on $M$ is defined using a chosen Riemannian metric on $M$. Then taking the $N$-fold composition $g^N$ gives an approximation to $\phi_f^1$. Specifically, 
    \begin{align*}
         d\left(\phi^{1}_f, \, g^N \right)_{C^0} & \; \leq \;   \sum_{i=0}^{N-1} d\left((\phi^{1/N}_f)^{i+1} \circ g^{N-i-1} , \; (\phi^{1/N}_f)^{i} \circ g^{N-i} \right)_{C^0}\\
         & \; = \;  \sum_{i=0}^{N-1} d\left((\phi^{i/N}_f) \circ \phi_f^{1/N} , \; (\phi^{i/N}_f) \circ g  \right)_{C^0}\\
        & \; \leq \;  \sum_{i=0}^{N-1}  e^{ i L /N}   d ( \phi_f^{1/N}, \; g )_{C^0} \\
        & \; \leq \;  \sum_{i=0}^{N-1}  \frac{e^{ i L /N}}{N}  h(N) \; \leq \;  e^{L}h(N) 
    \end{align*}
    From the second to the third line, the inequality $d(\phi_f^t(x),\phi^t_f(y))\leq e^{tL}d(x,y)$ was applied with $t=i/N$, where $L$ is a constant depending only on $f$; this is an extension of Gronwall's inequality to the Riemannian setting \cite[Thm. 1]{gronwall}.

    Let us apply the above to the problem of approximating $\phi_{f}^1$ where $f=f_1+f_2$. First note
    \[
        d\left(\phi_{f}^{1/N} , \, \phi^{1/N}_{f_1}\circ\phi^{1/N}_{f_2} \right)_{C^0} \leq C/N^2  
    \]
    where $C$ is independent of $N$. Indeed, in local coordinates both $\phi_{f}^{t}$ and $\phi^{t}_{f_1}\circ\phi^{t}_{f_2}$ are equal to
    \[
        \text{id}+t(X_{f_1}+X_{f_2}) + O(t^2).
    \]
    Now letting $g=\phi^{1/N}_{f_1}\circ\phi^{1/N}_{f_2}$ and $h(N)=C/N$ in the previous paragraph we obtain
    \begin{equation}\label{eq:inequalitycompflows}
         d\left( \phi^{1}_f, \, (\phi^{1/N}_{f_1}\circ\phi^{1/N}_{f_2})^N \right)_{C^0} \leq C e^L/N.
    \end{equation}
    Thus by taking $N$ large enough we obtain a composition of flows arising from $f_1$ and $f_2$ which $C^0$-approximates $\phi_f^1$ to any desired accuracy. 
\end{proof}

We next consider hypotheses which improve the regularity in the conclusion of Proposition \ref{prop:cinfinityapprox}. First we recall some terminology. A symplectic manifold $(M,\omega)$ is {\emph{real analytic}} if $M$ is a real analytic manifold and $\omega$ is a real analytic $2$-form. The notion of complex analytic symplectic manifold is defined similarly; in this case, $\omega$ is a holomorphic $2$-form, i.e. of type $(2,0)$. A {\emph{real structure}} on a complex analytic symplectic manifold $(M_\C,\omega_\C)$ is an anti-holomorphic symplectic involution $\tau$. Let $M_\R$ be the fixed point set of $\tau$, called the {\emph{real locus}} of $\tau$. Then $(M_\R,\omega_\R)$ is a real analytic symplectic manifold, where $\omega_\R=\omega_\C|_{M_\R}$.

Suppose $(M,\omega)$ is a real analytic submanifold of the real locus of $(M_\C, \omega_\C,\tau)$. Denote by $C^\text{an}(M;\R)$ the set of real analytic functions $f:M\to \R$, topologized in the following way: a sequence $\{f_i\}_{i=0}^{\infty}$ converges to $f$ if and only if there exists an open neighborhood $V\subset M_\C$ of $M$ such that $f$ and $f_i$ for $i$ large enough each extend to complex analytic functions $V\to \C$, and $\sup_{x\in V} | f(x)-f_i(x) | \to 0$ as $i\to \infty$. Note $C^\text{an}$-convergence implies $C^\infty$-convergence.

\begin{lemma}\label{lemma:improveregularity}
In Proposition \ref{prop:cinfinityapprox}, assume $(M,\omega)$ is a real analytic symplectic submanifold of the real locus of a complex analytic symplectic manifold $(M_\C,\omega_\C)$ with real structure, $S\subset C^{\textup{an}}(M;\R)$, and $\textup{span}_\R(S)$ is dense in $C^{\textup{an}}(M;\R)$. Then $\mathcal{G}(S)$ is $C^\infty$-dense in $\textup{Ham}(M,\omega)$.
\end{lemma}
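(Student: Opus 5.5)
We follow the three steps of the proof of Proposition \ref{prop:cinfinityapprox} and upgrade each estimate from $C^0$ to $C^k$ for every $k$. The extra input is analyticity: flows of analytic Hamiltonians extend holomorphically to a complex neighborhood, and Cauchy estimates then turn $C^0$ bounds on that neighborhood into $C^k$ bounds on $M$.

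\textbf{Step 1: reduction to analytic Hamiltonians.} By Banyaga's theorem, any $\phi\in\textup{Ham}(M,\omega)$ is a finite composition $\phi^1_{f_1}\circ\cdots\circ\phi^1_{f_m}$ with $f_i\in C^\infty(M;\R)$. On the compact real analytic manifold $M$, each $f_i$ can be $C^\infty$-approximated by real analytic functions $g_i$, for instance by Grauert's embedding and Whitney approximation. Smooth dependence of flows on the vector field then makes $\phi^1_{g_i}$ close to $\phi^1_{f_i}$ in $C^\infty$. The hypothesis that $\textup{span}_\R(S)$ is dense in $C^{\textup{an}}$ lets us replace each $g_i$ by an element of the span, since $C^{\textup{an}}$-convergence implies $C^\infty$-convergence. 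As before, we may assume $S$ is closed under scaling. It therefore suffices to $C^\infty$-approximate $\phi^1_f$ for $f=f_1+f_2$ with $f_1,f_2\in S$, and then argue by induction.

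\textbf{Step 2: complexification of the flows.} Extend $f$, $f_1$, $f_2$ holomorphically to a neighborhood $V\subset M_\C$ of $M$, chosen small enough that it is compact-closure contained in a larger neighborhood. The holomorphic Hamiltonian vector fields defined by $\omega_\C$ restrict to $X_f$, $X_{f_1}$, $X_{f_2}$ on the real locus, because $\tau$ is anti-holomorphic and symplectic and $f$ is real on $M_\R$. Their complex-time flows are holomorphic maps defined on a smaller neighborhood $V'\Subset V$ for complex times $|t|\le\delta$.

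In local holomorphic charts we have the estimate
\[
\phi_f^{1/N}-\phi_{f_1}^{1/N}\circ\phi_{f_2}^{1/N}=O(N^{-2})
\]
uniformly on $V'$. This follows by the same Taylor argument as before, now applied to holomorphic maps. The Gronwall-type telescoping argument also holds on $V'$: the Lipschitz constant $L$ is taken for the holomorphic vector field in a Hermitian metric.

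\textbf{Step 3: controlling the iterates and upgrading to $C^\infty$.} This is the main obstacle. We must ensure that the iterates $g^i=(\phi^{1/N}_{f_1}\circ\phi^{1/N}_{f_2})^i$, for $i\le N$, remain inside a fixed complex neighborhood where the estimates apply.

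We argue inductively along the telescoping sum. The exact flow $\phi_f^{i/N}$ maps a slightly smaller neighborhood $V''$ into $V'$ for all $i\le N$. Using $d(\phi_f^{i/N},g^i)\le Ce^L i/N^2\le Ce^L/N$, the iterate $g^i$ stays within distance $O(1/N)$ of $\phi_f^{i/N}$. For $N$ large this keeps $g^i(V'')$ inside $V'$, so the induction can continue. We therefore obtain
\[
\sup_{V''}\big|\phi^1_f-g^N\big|\le C'/N
\]
for holomorphic maps on $V''$.

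Finally, Cauchy estimates on the fixed neighborhood $V''\supset M$ bound all derivatives on $M$ by the sup norm on $V''$. Hence $g^N\to\phi^1_f$ in $C^k$ for every $k$, that is, in $C^\infty$. Since composition is continuous in the $C^\infty$ topology, the induction from Step 1 goes through, and this proves that $\mathcal{G}(S)$ is $C^\infty$-dense in $\textup{Ham}(M,\omega)$.
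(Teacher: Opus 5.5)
Your proposal is correct and follows essentially the same route as the paper: reduce via Banyaga's theorem and analytic density to $f=f_1+f_2$ with $f_1,f_2\in S$, run the product-formula and Gronwall estimate for the holomorphically extended flows on a fixed complex neighborhood of $M$, and then upgrade the resulting $C^0$ convergence to $C^\infty$ convergence on $M$ by Cauchy estimates. The only difference is that you spell out the inductive argument that keeps the iterates $(\phi^{1/N}_{f_1}\circ\phi^{1/N}_{f_2})^i$ inside that neighborhood, whereas the paper delegates this step to Lemma 3.1 of Berger--Turaev.
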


\noindent Note that in this statement, $M$ is compact but $M_\C$ may not be compact.

\begin{proof}

The proof is similar to that of Proposition \ref{prop:cinfinityapprox}. As in the first paragraph there, we may restrict our attention to time-independent Hamiltonians $f:M\to \R$, and assume that $S$ is closed under scaling by $\R$. If $f\in C^\infty(M;\R)$, then since $C^\text{an}(M;\R)$ is $C^\infty$-dense in $C^\infty(M;\R)$, and by the assumption on $S$, there is some $g$ in the span of $S$ arbitrarily close to $f$ in the $C^\infty$-topology. Then $X_f$ and $X_g$ are $C^\infty$-close and so too are the associated flows.

Thus, just as in the proof of Proposition \ref{prop:cinfinityapprox}, it remains to show that $\phi_{f}^1$, for $f=f_1+f_2$ with $f_1,f_2\in S$, can be $C^\infty$-approximated by a composition of flows induced by elements of $S$.

Let $V_0\subset M_\C$ be an open neighborhood of $M$ on which $f$ has a holomorphic extension, also called $f$. The vector field $X_f$ on $M$ then extends to a holomorphic vector field on $V_0$ by the same name, defined by $\iota(X_f)\omega_\C = df$. There exists a neighborhood $V_1\subset V_0$ of $M$ such that the flow $\phi^t_f$ of $X_f$ is a holomorphic map defined on $V_1$ and satisfies $\phi^t_f(V_1)\subset V_0$ for all $t\in [0,1]$. Let $V_2\subset M_\C$ be a neighborhood of $M$ with compact closure in $V_1$. Then inequality \eqref{eq:inequalitycompflows} holds on $V_2$:
\[
\sup_{x\in V_2} d\left(\phi^{1}_f(x), \, (\phi^{1/N}_{f_1}\circ\phi^{1/N}_{f_2})^N(x) \right) \leq C e^L/N
\]
The argument is similar to the one in the proof of Proposition \ref{prop:cinfinityapprox}, keeping track of the neighborhoods on which the flows are defined; for details see \cite{bt}, specifically Lemma 3.1 of that reference. The $C^0$-convergence of the sequence of holomorphic maps $(\phi^{1/N}_{f_1}\circ\phi^{1/N}_{f_2})^N$ to $\phi^1_f$ on $V_2$ implies $C^\infty$-convergence on $V_2$ and hence $C^\infty$-convergence of their restrictions to $M$.
\end{proof}

%!TEX root = main.tex

\section{Moduli spaces, Hamiltonian diffeomorphisms, and mapping class groups}\label{mod-space-Ham-diffeo}

Following the introduction, we write $\Sigma_{g,n}$ for the pair of a Riemann surface $\Sigma=\Sigma_g$ of genus $g$ and a collection $\mathcal{P}=\{p_1,\ldots,p_n\}$ of $n$ distinct points on $\Sigma_g$. For $t\in [0,1]$, let $\Gamma(t)\subset SU(2)$ denote the conjugacy class of elements containing $\text{diag}(e^{i\pi t},e^{-i\pi t})$. Note $\Gamma(0)$ and $\Gamma(1)$ are points while $\Gamma(t)$ is a 2-sphere for each $t\in (0,1)$. For a given $\mathbf{t}=(t_1,\ldots,t_n)\in [0,1]^n$, consider the map
\[
    \mathfrak{m}:SU(2)^{2g}\times \prod_{j=1}^n \Gamma(t_j) \to SU(2),
\]
\[
    \mathfrak{m}(A_1,B_1,\ldots,  A_g,B_g,C_1,\ldots,C_n) = [A_1,B_1]\cdots [A_g,B_g]C_1\cdots C_n.
\]
The group $SO(3)=SU(2)/\pm 1$ acts on the domain by simultaneous conjugation. Let
\begin{equation}\label{rep-theoretic-desc}
    M_{g,n}(\mathbf{t}) = \mathfrak{m}^{-1}(1)/SO(3).
\end{equation}
We refer to $\mathbf{t}$ as the holonomy parameter of $M_{g,n}(\mathbf{t})$. We say $\mathbf{t}$ satisfies the {\emph{non-integral assumption}} if for all subsets $J\subset \{1,\ldots,n\}$, the following quantity is not an integer:
\[
    \frac{1}{2}  \sum_{j\in J} t_j - \frac{1}{2} \sum_{j\not\in J} t_j 
\]
The map $\mathfrak{m}$ has $1$ as a regular value if and only if $\mathbf{t}$ satisfies the non-integral assumption. In this case, the conjugation action is free, and $M_{g,n}(\mathbf{t})$ is a closed smooth manifold of dimension $6g+2n_{\mathbf{t}}-6$ where $n_{\mathbf{t}}$ is the number of $j$ such that $t_j\in (0,1)$. Throughout this paper, we assume that the holonomy parameter $\mathbf{t}$ satisfies the non-integral assumption.

The moduli space $M_{g,n}(\mathbf{t})$ can be regarded as a character variety of $SU(2)$ representations assigned to $\Sigma_{g,n}$ and $\mathbf{t}$. Indeed, $M_{g,n}(\mathbf{t})$ is the space of conjugacy classes of homomorphisms $ \pi_1(\Sigma\smallsetminus \mathcal{P})\to SU(2)$ where the conjugacy class of a small loop enclosing $p_j\in \mathcal P$ is mapped to $\Gamma(t_j)$. For a standard presentation of the fundamental group
\begin{equation}\label{eq:pi1presentationforpuncturedsurface}
    \pi_1(\Sigma\smallsetminus \mathcal{P}) = \langle a_1,b_1,\ldots,a_g,b_g,c_1,\ldots,c_n \;\mid \;[a_1,b_1]\cdots [a_g,b_g]c_1\cdots c_n = 1 \rangle,
\end{equation}
and given an element of $\mathfrak{m}^{-1}(1)$, there is an $SU(2)$-representation of $\pi_1(\Sigma\smallsetminus \mathcal{P})$ that maps $a_i$, $b_i$, $c_j$ respectively to $A_i$, $B_i$, $C_j$. In this paper we use the interpretation of $M_{g,n}(\mathbf{t})$ in terms of a moduli space of flat connections, as we discuss in \S \ref{subsec:symplecticstructure}. A symplectic form $\omega$ for $M_{g,n}(\mathbf{t})$ was constructed in \cite{ghjw}, generalizing the $n=0$ case from \cite{goldman-symplectic}. In \S \ref{subsec:symplecticstructure}, we review the symplectic structure on $M_{g,n}(\mathbf{t})$ from the more geometric point of view of moduli of flat connections. Hamiltonian diffeomorphisms and symplectomorphisms of $M_{g,n}(\mathbf{t})$ are discussed in \S \ref{subsec:hamiltoniandiffmgn} and \S \ref{sec:symplectofrommcg}.

\subsection{Symplectic structure on $M_{g,n}(\mathbf{t})$}\label{subsec:symplecticstructure}

The most relevant interpretation of $M_{g,n}(\mathbf{t})$ for our purposes is as a space of flat $SU(2)$ connections on $\Sigma$ with prescribed singularities. To set up this interpretation of $M_{g,n}(\mathbf{t})$ more carefully, let $\Sigma^\circ$ denote $\Sigma \smallsetminus \mathcal{P}$ and identify a small neighborhood of $\mathcal{P}$ with the union of disks $\mathcal{P}\times D^2$. Then $\Sigma^\circ$ can be compactified to a surface $\overline \Sigma$ of genus $g$ with $n$ boundary components in the obvious way. Define $\mathcal{A}_{g,n}(\mathbf{t})$ as the space of connections on the trivial $SU(2)$-bundle over $\Sigma^\circ$ whose restriction to a neighborhood of the origin in $\{p_j\}\times D^2$ takes the form
\begin{equation}\label{connection-local-model}
\left[  
\begin{array}{cc}
	\frac{it_j}{2}&0\\
0&-\frac{it_j}{2}
\end{array}
\right]d\theta
\end{equation}
where $\theta$ denotes the angular coordinate on $D^2$.

Let $\mathcal G_{g,n}(\mathbf{t})$ be the space of gauge transformations of the trivial $SU(2)$-bundle over $\Sigma^\circ$ whose restriction to a neighborhood of the origin in $\{p_j\}\times D^2$ stabilizes the connection in \eqref{connection-local-model}. That is to say, such a restriction is a constant map into $SU(2)$ if $t_j=0$, is of the form $\text{diag}(e^{ 2i\pi r_j},e^{-2i\pi r_j})$ for some constant $r_j$ if $t_j\in (0,1)$, and is of the form $\text{diag}(e^{ i\theta/2},e^{-i\theta/2}) \cdot u_j\cdot  \text{diag}(e^{ -i\theta/2},e^{i\theta/2})$ for some $u_j\in SU(2)$ if $t_j=1$. Then $\mathcal G_{g,n}(\mathbf{t})$ acts on $\mathcal{A}_{g,n}(\mathbf{t})$ by taking pullback and $M_{g,n}(\mathbf{t})$ is the quotient of the space of flat connections in $\mathcal{A}_{g,n}(\mathbf{t})$ by the action of $\mathcal G_{g,n}(\mathbf{t})$. 

The linearization of the action of the gauge group $\mathcal G_{g,n}(\mathbf{t})$ and the flat equation at a flat connection $A\in \mathcal{A}_{g,n}(\mathbf{t})$ are controlled by the middle row of the diagram in Figure \ref{diagram-complex-surface}. Here $\Omega^i_c(\Sigma^\circ,\frak{g})$ is the space of compactly supported $i$-forms on $\Sigma^\circ$ with values in the Lie algebra $\frak{g}=\frak{su}(2)$ of $SU(2)$, and $\widetilde \Omega^0(\Sigma^\circ,\frak{g})$ is the space of $0$-forms on $\Sigma^\circ$ such that in a small neighborhood of $p_j$ for each $j$, it is a constant map into $\frak{g}$ if $t_j=0$, has the form $\text{diag}(ir_j,-ir_j)$ for some $r_j$ if $t_j\in (0,1)$, and is $\text{diag}(e^{ i\theta/2},e^{-i\theta/2}) \cdot \zeta_j\cdot  \text{diag}(e^{ -i\theta/2},e^{i\theta/2})$ for some $\zeta_j\in \frak{g}$ if $t_j=1$. We write $ H^i_A$ for the cohomology groups of the cochain complex that appears in the middle row of Figure \ref{diagram-complex-surface}. These cohomology groups are trivial in degrees $0$ and $2$ by the non-integral assumption on $\mathbf{t}$, and $H^1_A$ may be identified with the tangent space of the class of $A$ in $M_{g,n}(\mathbf{t})$.

        \begin{figure}[h]
            \centering
                \begin{tikzcd}[column sep=large, row sep=large]
                & 0 \arrow[d] & 0 \arrow[d] & 0 \arrow[d] &\\ 
                0\arrow[r] &\Omega_c^0(\Sigma^\circ,\frak{g})\arrow[d]\arrow[r, "d_{A}"] & 
                  \Omega^1_c(\Sigma^\circ,\frak{g}) \arrow[d] \arrow[r, "d_{A}"] &   \Omega^2_c(\Sigma^\circ,\frak{g})\arrow[r] \arrow[d, equal]&0\\
                0\arrow[r] &\widetilde \Omega^0(\Sigma^\circ,\frak{g})\arrow[r, "d_{A}"] \arrow[d]& 
                  \Omega^1_c(\Sigma^\circ,\frak{g}) \arrow[r, "d_{A}"] \arrow[d]&   \Omega^2_c(\Sigma^\circ,\frak{g})\arrow[r]\arrow[d]&0\\
               0\arrow[r] &\displaystyle \bigoplus_{1\le i\le n} \mathfrak{h}_j  \arrow[r]  \arrow[d] &0 \arrow[r] \arrow[d] & 0 \arrow[d]\arrow[r] &0\\
                & 0  & 0  & 0 &
            \end{tikzcd}
            \caption{}
            \label{diagram-complex-surface}
        \end{figure}

The other rows of the diagram in Figure \ref{diagram-complex-surface} also define cochain complexes. Here $\frak h_j$ denotes the subspace of $\mathfrak{g}$ consisting of elements that commute with $\text{diag}(e^{i\pi t_j},e^{-i\pi t_j})$. 
The degree $i$ cohomology group of the complex in the first row, denoted by $H^i_{c}(\Sigma^\circ;{\rm ad}_A)$, is isomorphic to $H^i(\overline \Sigma,\partial \overline \Sigma;{\rm ad}_A)$, where ${\rm ad}_A$ denotes the twisted coefficients determined by the flat connection $A$ associated to the adjoint action of $SU(2)$ on its Lie algebra. In particular, it is trivial if $i=0$ or $i=2$, and has dimension $6g+3n-6$ if $i=1$. The vertical maps in this diagram define an exact sequence of cochain complexes, and the associated long exact sequence of cohomology groups determines the exact sequence
\[
        \begin{tikzcd}
         0 \arrow[r] & \displaystyle\bigoplus_{1\leq j\leq n} \frak h_j  \arrow[r] &  H^1_{c}(\Sigma^\circ;{\rm ad}_A)   \arrow[r] &  H^1_{A}  \arrow[r] & 0
    \end{tikzcd}
\]
This shows that $H^1_{A}$, the tangent space of $M_{g,t}(\mathbf{t})$ at $[A]$, has dimension $6g+2n_{\mathbf{t}}-6$.
        
From this viewpoint of flat connections, following Atiyah--Bott \cite[\S 9]{atiyah-bott} (see also \cite[\S 2.2]{jeffrey-weitsman}), the symplectic form $\omega$ on $M_{g,n}(\mathbf{t})$ at the class of the connection $A$ is induced by
\begin{equation}\label{eq:sympformonmoduli}
    \omega(\alpha,\alpha') := -\frac{1}{4\pi^2}\int_{\Sigma}\text{tr}(\alpha \wedge \alpha'),
\end{equation}
where $\alpha, \alpha'\in \Omega^1_c(\Sigma^\circ,\mathfrak g)$. That $\omega$ depends only on the classes of $\alpha$ and $\alpha'$ in $H_A^1$ follows from Stokes' theorem. The nondegeneracy of $\omega$ is explained as follows. A variation of \eqref{diagram-complex-surface} is given by Figure \ref{diagram-complex-surface-bdry}, where the middle row is the de Rham complex of $\overline \Sigma$ with twisted coefficients ${\rm ad}_A$, and the third row is the quotient of the complex in the middle row by the one in the first row. In particular, the $i^{\rm th}$ cohomology groups of these complexes are respectively isomorphic to $H^i(\overline \Sigma;{\rm ad}_A)$ and $H^i(\partial \overline \Sigma;{\rm ad}_{A^\partial})$ where $A^\partial$ denotes the restriction of $A$ to the boundary. 
        \begin{figure}[h]
            \centering
                \begin{tikzcd}[column sep=large, row sep=large]
                & 0 \arrow[d] & 0 \arrow[d] & 0 \arrow[d] &\\ 
                0\arrow[r] &\Omega_c^0(\Sigma^\circ,\frak{g})\arrow[d]\arrow[r, "d_{A}"] & 
                  \Omega^1_c(\Sigma^\circ,\frak{g}) \arrow[d] \arrow[r, "d_{A}"] &   \Omega^2_c(\Sigma^\circ,\frak{g})\arrow[r] \arrow[d]&0\\
                0\arrow[r] &\Omega^0(\overline \Sigma,\frak{g})\arrow[r, "d_{A}"] \arrow[d]& 
                  \Omega^1(\overline \Sigma,\frak{g}) \arrow[r, "d_{A}"] \arrow[d]&   \Omega^2(\overline \Sigma,\frak{g})\arrow[r]\arrow[d]&0\\
               0\arrow[r] & \Omega^0_\partial(\overline \Sigma,\frak{g}) \arrow[r, "d_{A}"]  \arrow[d] &\Omega^1_\partial(\overline \Sigma,\frak{g}) \arrow[r, "d_{A}"] \arrow[d] & \Omega^2_\partial(\overline \Sigma,\frak{g}) \arrow[d]\arrow[r] &0\\
                & 0  & 0  & 0 &
            \end{tikzcd}
            \caption{}
            \label{diagram-complex-surface-bdry}
        \end{figure}

The inclusion of the middle row of Figure \ref{diagram-complex-surface} into the middle of Figure \ref{diagram-complex-surface-bdry} determines a map between these two diagrams, and the induced map between the corresponding long exact sequences of cohomology groups gives the following, with exact rows:
\[
        \begin{tikzcd}
       \displaystyle \bigoplus_{1\le j \le n} \mathfrak{h}_j \arrow[r]  \arrow[d, equal] &H^1(\overline \Sigma,\partial \overline \Sigma;\mathrm{ad}_A) \arrow[r, "f"] \arrow[d, equal] & H^1_{A}   \arrow[r] \arrow[d, "h"]& 0 \arrow[d] \\
        \displaystyle\bigoplus_{1\le j\le n} \mathfrak{h}_j \arrow[r]  & H^1(\overline \Sigma,\partial \overline \Sigma;\mathrm{ad}_A) \arrow[r]  & H^1(\overline \Sigma;\mathrm{ad}_A) \arrow[r, "g"] & H^1(\partial \overline \Sigma;\mathrm{ad}_{A^\partial})
    \end{tikzcd}
\]        
Denoting by $\langle\cdot,\cdot\rangle$ the standard pairing between $H^1(\overline \Sigma,\partial \overline \Sigma;\mathrm{ad}_A)$ and $ H^1(\overline \Sigma;\mathrm{ad}_A)$, for any $\alpha\in H^1_A$ and $\beta\in H^1(\overline \Sigma,\partial \overline \Sigma;\mathrm{ad}_A)$, we have
\begin{equation}\label{rel-pairing}
  \langle h(\alpha), \beta\rangle=\omega(\alpha,f(\beta)).
\end{equation}
An examination of the above diagram shows that $H^1_A$ is isomorphic to $\ker(g)$. Finally, the non-degeneracy of $\omega$ follows from the non-degeneracy of $\langle\cdot,\cdot\rangle$ and \eqref{rel-pairing}.

\subsubsection{The monotone case} 

The main results of this paper apply to the case that $M_{g,n}(\mathbf{t})$ is a monotone symplectic manifold. Recall that a symplectic manifold $(M,\omega)$ is {\emph{monotone}} if there is a positive constant $\kappa$ such that 
\[
  [\omega]=\kappa c_1(TM)
\]
where $[\omega]$ denotes the cohomology class of the symplectic form, and the first Chern class of $TM$ is computed with respect to an almost complex structure on $TM$ that is compatible with $\omega$. The symplectic structure on $M_{g,n}(\mathbf{t})$ is monotone in the case that each $t_i$ belongs to the set $\{0,1/2,1\}$, see \cite{MW:mon-moduli-flat}. Our normalization is chosen so that $\kappa=1$. In particular, this moduli space is monotone in the case that $n$ is odd and all components of $\mathbf{t}$ are $1/2$. This is the moduli space denoted by
\[
     M_{g,n} := M_{g,n}(\tfrac{1}{2},\ldots,\tfrac{1}{2})  
\]
 and we continue to use this notation for this special case throughout the paper. Another monotone moduli space is the case where $n$ is odd and all components of $\mathbf{t}$ are $1$:
\[
     M^\textup{odd}_g := M_{g,n}(1,\ldots,1).
 \]
This is the {\emph{odd character variety}}, and was the central focus of the authors' previous work \cite{dsodd}. It is easy to see that removing any number of even punctures from $\Sigma_{g,n}$ does not change this moduli space, and for this reason $n$ is often omitted in the notation. 

The above moduli spaces $M_{g,n}$ and $M_{g}^{\text{odd}}$ are essentially the only moduli spaces of the form $M_{g,n}(\mathbf{t})$ that give a smooth monotone symplectic manifold. To see this, let $\mathbf{t}=(t_1,\dots,t_n)$ satisfy the non-integral assumption and $t_i\in \{0,1/2,1\}$ for any $i$. We can remove punctures with $t_i=0$ and, as above, remove any set of an even number of punctures with $t_i=1$ without changing the diffeomorphism type of $M_{g,n}(\mathbf{t})$, because the conjugacy classes $\Gamma(0)$ and $\Gamma(1)$ consist of single central elements. This still leaves another possibility where $n$ is even, $\mathbf{t}$ has $n-1$ entires equal $1/2$, and the remaining entry equal $1$. We can see this moduli space is diffeomorphic to $M_{g,n-1}$ using a diffeomorphism that maps $[A_1,B_1,\ldots,  A_g,B_g,C_1,\ldots,C_{n-1}]$ in $M_{g,n-1}$, in its representation theoretic description, to $[A_1,B_1,\ldots,  A_g,B_g,C_1,\ldots,C_{n-2},-C_{n-1}, -1]$. It is not difficult to see that these diffeomorphisms are symplectomorphisms. 

We end this discussion by pointing out that monotonicity of $M_{g,n}$ is not used directly in the proof of Theorem \ref{thm:mappingclassgroupaction}, although the proof uses versions of instanton Floer homology that satisfy a counterpart of monotonicity.

\subsection{Hamiltonian diffeomorphisms of $M_{g,n}(\mathbf{t})$} \label{subsec:hamiltoniandiffmgn}

The goal of this section is to introduce some natural maps on  $M_{g,n}(\mathbf{t})$ and study Hamiltonian flows associated to them. Let $\gamma$ be an immersed loop in $\Sigma$, disjoint from the set of points $\mathcal{P}$. Define
\[
    \text{tr}_{\gamma}:M_{g,n}(\mathbf{t})\to [-2,2]
\]
whose value on $[A]\in M_{g,n}(\mathbf{t})$ is the trace of the holonomy of $A$ around $\gamma$. Note that $\text{tr}_{\gamma}$ only depends on the free homotopy class of $\gamma$ in $\Sigma\smallsetminus \mathcal{P}$, and does not depend on the direction: $\text{tr}_\gamma = \text{tr}_{\gamma^{-1}}$. For $\Gamma=\{\gamma_1,\ldots,\gamma_k\}$ a collection of immersed loops, define
\[
    T_\Gamma:M_{g,n}(\mathbf{t})\to \R, \qquad 
    T_\Gamma := \text{tr}_{\gamma_1}\text{tr}_{\gamma_2}\cdots \text{tr}_{\gamma_k}. 
\]
By convention, $T_{\emptyset}=1$.
We refer to $T_{\Gamma}$ as a {\emph{trace monomial}} on the moduli space. Call $\Gamma$ a {\emph{multicurve}} if the $\gamma_i$ are homotopic in $\Sigma\smallsetminus \mathcal{P}$ to essential embedded loops $\gamma'_i$ such that no $\gamma_i'$ is homotopic to a circle enclosing a point in $\mathcal{P}$, and each pair of distinct $\gamma'_i$, $\gamma_j'$ is disjoint. We sometimes will conflate $\Gamma$ with the union of $\gamma_1,\ldots, \gamma_k$.

\begin{prop}\label{prop:firsttraceapprox}
    Let $S$ be the set of trace monomials $T_\Gamma$ where $\Gamma$ runs over all multicurves of $\Sigma_{g,n}$. Then the $\R$-span of $S$ is $C^{\infty}$-dense in $C^\infty(M_{g,n}(\mathbf{t});\R)$.
\end{prop}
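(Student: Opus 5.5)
The plan is to combine the Stone--Weierstrass theorem, in a version for $C^\infty$-density, with two facts: trace functions of all immersed loops separate points, and the skein relation reduces products of traces to multicurve monomials.

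\emph{Step 1 (algebra generated by traces).} Let $\mathcal{T}$ be the $\R$-algebra of functions on $M_{g,n}(\mathbf{t})$ generated by $\text{tr}_\gamma$, where $\gamma$ runs over all loops in $\Sigma\smallsetminus\mathcal{P}$ (all free homotopy classes). We first show that $\mathcal{T}=\text{span}_\R(S)$. For $A,B\in SU(2)$ we have the identity $\text{tr}(A)\text{tr}(B)=\text{tr}(AB)+\text{tr}(AB^{-1})$. Geometrically, this resolves a transverse crossing of two curves into the two smoothings. Hence any $\text{tr}_\gamma$ with $\gamma$ immersed, and any product of such traces, is a $\Z$-linear combination of trace monomials $T_\Gamma$ with $\Gamma$ a union of disjoint simple closed curves. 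We argue this by induction on the number of self-crossings, after putting the curves in minimal position. Then we clean up such $\Gamma$. An inessential component contributes $\text{tr}(1)=2$. A peripheral component around $p_j$ contributes the constant $2\cos(\pi t_j)$. Parallel copies of one curve give powers of $\text{tr}_\gamma$. These powers are still of the form $T_\Gamma$ with repeated $\gamma_i$, which the definition of multicurve allows, since only distinct $\gamma_i'$ must be disjoint. The empty product gives $1$. So $\text{span}_\R(S)$ is a unital subalgebra containing all $\text{tr}_\gamma$.

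\emph{Step 2 (separation of points and tangent vectors).} Trace functions of all words in $\pi_1$ separate conjugacy classes of $SU(2)$ representations. This is the classical fact that characters determine semisimple representations, and every $SU(2)$ representation is semisimple. So $\mathcal{T}$ separates points of $M_{g,n}(\mathbf{t})$. For smooth-topology density we also need $\mathcal{T}$ to separate tangent vectors, i.e.\ the differentials $d\,\text{tr}_\gamma$ should span $T^*_{[A]}M$ at each point. Here is the argument I would give. Embed $M$ via finitely many trace functions $(\text{tr}_{w_1},\dots,\text{tr}_{w_N})$. By Procesi's theorem, the invariant polynomials on $SU(2)^{2g}\times\prod\Gamma(t_j)$ under conjugation are generated by traces of words. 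Because the conjugation action on $\mathfrak{m}^{-1}(1)$ is free, with the $SO(3)$-quotient a smooth manifold, the invariant smooth functions $C^\infty(\mathfrak{m}^{-1}(1))^{SO(3)}=C^\infty(M)$ have differentials spanning the cotangent spaces. For a compact group acting freely on an affine real algebraic set, Schwarz's theorem says invariant smooth functions are smooth functions of the invariant polynomials. So finitely many trace functions give a smooth embedding $M\hookrightarrow\R^N$.

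\emph{Step 3 (density).} With the embedding $F=(\text{tr}_{w_i})$ from Step 2, any $f\in C^\infty(M)$ extends to a smooth compactly supported function $\tilde f$ on $\R^N$. Polynomials are $C^\infty$-dense on compact sets, by Weierstrass applied to all derivatives, or by convolution with the heat kernel. So $p\circ F$, with $p$ polynomial, approximates $f$ in $C^k$ for every $k$. Each $p\circ F$ lies in $\mathcal{T}=\text{span}_\R(S)$ by Step 1, which finishes the proof.

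The main obstacle is Step 2, proving that trace functions give an immersion rather than merely an injection. Rather than quoting Schwarz's theorem, I would try a direct argument first. The tangent space is $H^1_A$, and the differential of $\text{tr}_\gamma$ at $[A]$ pairs a class with the element of $H_1$ with twisted coefficients determined by $\gamma$ and its holonomy. I would show that these elements span the dual of $H^1_A$, using irreducibility of $A$ and the nondegenerate pairing from the previous subsection (equation \eqref{rel-pairing}).
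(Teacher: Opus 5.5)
Your proposal is correct and is essentially the paper's argument: embed $M_{g,n}(\mathbf{t})$ into $\R^N$ by finitely many trace functions of words, approximate by polynomials on a compact ball, and use $\text{tr}(A)\text{tr}(B)=\text{tr}(AB)+\text{tr}(AB^{-1})$ with induction on double points to reduce trace monomials of immersed loops to multicurve monomials, with null-homotopic and peripheral components contributing constants. The only difference is that you justify the embedding through Procesi's theorem plus Schwarz's theorem on smooth invariants, whereas the paper cites Donaldson's Lemmas 5.13--5.14; your route works, but Schwarz's theorem concerns linear representations, not free actions, so you must first extend a smooth invariant function from the compact submanifold $\mathfrak{m}^{-1}(1)$ to the ambient vector space $(\R^4)^{2g+n}$ and average over $SO(3)$ before applying it.
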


\begin{proof}
Consider $SU(2)^{2g+n}$ with the $SO(3)$-action induced by simultaneous conjugation, and the subset $X\subset SU(2)^{2g+n}$ of points with trivial $SO(3)$-stabilizer. There is a smooth embedding 
\[
    F:M_{g,n}(\mathbf{t})\to X/SO(3)
\]
obtained by viewing the quotient $X/SO(3)$ as the irreducible $SU(2)$-character variety for a subsurface of $\Sigma$ which retracts onto the 1-skeleton formed by the generating loops $a_i,b_i,c_j$ for $\pi_1(\Sigma\smallsetminus \mathcal{P})$. Furthermore, there is a smooth embedding
\[
    G:X/SO(3) \to \R^N
\]
defined as follows. Set $x_i=a_i$ and $x_{i+g}=b_i$ for $1\leq i \leq g$, and $ x_{j+2g}=c_j$ for $1\leq j \leq n$. Then $N=2g+n + { 2g+n \choose 2} + { 2g+n \choose 3}$ and the components of $G$ are given by
\begin{equation}\label{eq:tracefunctionsforembedding}
    \text{tr}_{x_i}, \qquad  \text{tr}_{x_ix_j}, \qquad  \text{tr}_{x_ix_jx_k}
\end{equation}
where $i,j,k$ run from $1$ to $2g+n$ and $i<j<k$. In fact, at least $n$ of these functions may be omitted, as in our setup the functions $\text{tr}_{c_j}$ are constant. The proof that $\chi$ is an embedding is discussed in \cite[Lemmas 5.13 \& 5.14]{donaldson-book}.\footnote{In \cite[p.140]{donaldson-book}, there is a minor error. The trace functions for $x_i$ and $x_ix_j^{-1}$ (the latter of which may be used instead of $x_i x_j$) are mentioned as being nearly sufficient for the purposes of obtaining the desired embedding $G$. It is then remarked that one needs to include trace functions on some other words, such as commutators of the elements $x_i$. However, commutators do not suffice, and one should instead use the words $x_ix_jx_k$.}
See \cite{procesi} for a more complete treatment in the algebraic context. As the polynomial functions are $C^{\infty}$-dense in $C^\infty(B;\R)$ for a closed ball $B\subset \R^N$ containing the image of $G$, we obtain the following, via $F$ and $G$: linear combinations of trace monomials
\[
    \sum_{i=1}^m r_i T_{\Gamma_i} \qquad (r_i\in\R)
\]
form a $C^\infty$-dense subset of $C^{\infty}(M_{g,n}(\mathbf{t});\R)$. To complete the proof it suffices to show that any trace monomial $T_{\Gamma}$, where $\Gamma=\{\gamma_1,\ldots,\gamma_k\}$ is a set of immersed curves, may be written as a linear combination of trace monomials $T_{\Gamma_i}$ where the $\Gamma_i$ are multicurves. This uses the well-known $\text{sl}_2$ skein relation for trace functions, as is explained below.

For $\Gamma=\{\gamma_1,\ldots,\gamma_k\}$ as above, denote by $d(\Gamma)$ the minimal number of double points among all generic immersions $\sqcup^k S^1\to \Sigma\smallsetminus \mathcal{P}$ where the $i^\text{th}$ component is homotopic to the inclusion of $\gamma_i$. Suppose $d(\Gamma)>0$. Upon reordering, either $\gamma_1$ is an immersion with a double point, or $\gamma_1$ and $\gamma_2$ have positive minimal intersection number. Suppose we are in the first case. 

\begin{figure}[t]
  \centering
\begin{tikzpicture}
  \node[anchor=south west, inner sep=0] (image) at (0,0) {\includegraphics[scale=0.8]{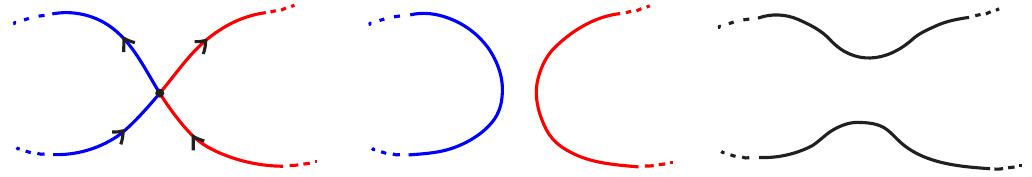}};
  \begin{scope}[x={(image.south east)}, y={(image.north west)}]
    \node at (0.07,0.8) {\large $a$};
    \node at (0.07,0.25) {\large $a$};
    \node at (0.24,0.78) {\large $b$};
    \node at (0.24,0.21) {\large $b$};
    \node at (0.45,0.5) {\large $\gamma_1'$};
    \node at (0.55,0.5) {\large $\gamma_1''$};
    \node at (0.133,0.45) {\large $p$};    
    \node at (0.15,0.00) {\large $\gamma_1$};
    \node at (0.5,0.00) {\large $\gamma_1^o$};
    \node at (0.835,0.00) {\large $\gamma_1^u$};
  \end{scope}
\end{tikzpicture}
\caption{ }
\label{fig:skein}
\end{figure}

View $\gamma_1$ as an element in $\pi_1(\Sigma\smallsetminus \mathcal{P},p)$ where the basepoint $p\in \Sigma\smallsetminus \mathcal{P}$ is one of the double points of $\gamma_1$. Then we can write $\gamma_1 = ab$ where $a,b\in \pi_1(\Sigma\smallsetminus \mathcal{P},p)$ and $a,b$ are represented by loops which, away from $p$, are immersed and transverse to one another, with minimal intersection. For a given representation $[\rho]\in M_{g,n}(\mathbf{t})$ write $A=\rho(a)$ and $B=\rho(b)$. The $SU(2)$ trace identity
\[
    \text{tr}(A)\text{tr}(B) = \text{tr}(AB) + \text{tr}(AB^{-1})
\]
shows that $\text{tr}_{\gamma_1'}(\rho)\text{tr}_{\gamma_1''}(\rho) = \text{tr}_{\gamma_1}(\rho)+\text{tr}_{\gamma_1^u}(\rho)$, where $\gamma_1^o=\gamma_1'\cup\gamma_1''$ are the two immersed loops which result from the oriented resolution of $\gamma_1$ at $p$ (with respect to some orientation), and $\gamma_1^u$ is the immersed loop formed from the unoriented resolution, see Figure \ref{fig:skein}. In particular, we have
\[
    T_{\Gamma} = T_{\Gamma^o} - T_{\Gamma^u} 
\]
where $\Gamma^o=\{\gamma_1',\gamma''_1,\gamma_2,\ldots,\gamma_k\}$ and $\Gamma^u=\{\gamma_1^u,\gamma_2,\ldots,\gamma_k\}$. Furthermore, $d(\Gamma^o)$ and $d(\Gamma^u)$ are strictly less than $d(\Gamma)$. A similar argument holds in the case for a pair of curves $\gamma_1$ and $\gamma_2$ with positive minimal intersection number. Thus by induction every $T_{\Gamma}$ is a linear combination of $T_{\Gamma_i}$ where $d(\Gamma_i)=0$. Note that if $d(\Gamma_i)=0$, either $\Gamma_i$ is a multicurve, or it is a multicurve together with some loops that are either nullhomotopic in $\Sigma\smallsetminus \mathcal{P}$ or homotopic to a loop around a point in $\mathcal{P}$. However, the trace function for a nullhomotopic loop is of course constant, as is that for the latter type of loop, by the definition of $M_{g,n}(\mathbf{t})$. Thus these loops may be discarded.
\end{proof}

\vspace{0.1cm}

Propositions \ref{prop:cinfinityapprox} and \ref{prop:firsttraceapprox} imply that $\mathcal{G}(S)$ is $C^0$-dense in $\text{Ham}(M_{g,n}(\mathbf{t}),\omega)$, where $S$ is the set of trace monomials of multicurves on $\Sigma_{g,n}$ and the set $\mathcal{G}(S)$ is defined in \eqref{eq:hamiltoniangenset}. Next, the regularity in this conclusion is improved through the use of Lemma \ref{lemma:improveregularity}.

\begin{prop}\label{prop:tracemondense}
    Let $S$ be the set of trace monomials of multicurves on $\Sigma_{g,n}$. Then
    \begin{equation}\label{eq:hamiltoniangensetformgn}
       \mathcal{G}(S) = \left\{ \phi_{f_1}^{t_1}\circ \cdots \circ \phi_{f_m}^{t_m} \; \mid \; t_1,\ldots,t_m\in \R, \; f_1,\ldots, f_m\in S \right\}
    \end{equation}
    is dense in $\textup{Ham}(M_{g,n}(\mathbf{t}), \omega)$ with respect to the $C^\infty$-topology.
\end{prop}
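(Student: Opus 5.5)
The plan is to apply Lemma \ref{lemma:improveregularity} with $S$ the set of trace monomials of multicurves. This requires three inputs: a complexification $(M_\C,\omega_\C,\tau)$ whose real locus contains $M_{g,n}(\mathbf{t})$ as a real analytic symplectic submanifold; the fact that each $T_\Gamma$ lies in $C^{\text{an}}$; and the density of $\text{span}_\R(S)$ in $C^{\text{an}}(M_{g,n}(\mathbf{t});\R)$.

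\textbf{Complexification.} Take $M_\C$ to be the smooth locus of the $SL(2,\C)$ relative character variety near $M_{g,n}(\mathbf{t})$. Concretely, consider the tuples $(A_i,B_i,C_j)$ in $SL(2,\C)^{2g}\times\prod_j \Gamma_\C(t_j)$ satisfying $\mathfrak{m}=1$, where $\Gamma_\C(t_j)$ is the $SL(2,\C)$-conjugacy class of $\text{diag}(e^{i\pi t_j},e^{-i\pi t_j})$, and divide by $PSL(2,\C)$. Near the compact set of $SU(2)$-representations the action is free and proper, since these representations are irreducible with regular value $1$ under the non-integral assumption. So some neighborhood is a complex manifold, carrying Goldman's holomorphic symplectic form $\omega_\C$, defined by the complex-bilinear analogue of \eqref{eq:sympformonmoduli}. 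The real structure is $\tau(\rho)=(\rho^*)^{-1}$, whose fixed locus near the unitary representations contains $M_{g,n}(\mathbf{t})$. One checks that $\omega_\C$ restricts to $\omega$, a matter of matching normalizations. This neighborhood may be noncompact, which the lemma allows. Each trace function $\text{tr}_\gamma$ extends holomorphically as the trace of the $SL(2,\C)$ holonomy, so $S\subset C^{\text{an}}$.

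\textbf{Density.} Fix $f\in C^{\text{an}}(M_{g,n}(\mathbf{t});\R)$, holomorphic on a neighborhood $V$. The maps $F$ and $G$ from the proof of Proposition \ref{prop:firsttraceapprox} complexify to a holomorphic embedding of a neighborhood $V'\subset V$ of $M_{g,n}(\mathbf{t})$ into $\C^N$; the coordinates \eqref{eq:tracefunctionsforembedding} are again trace functions, and we shrink $V'$ until the map is an embedding. Its image $Z$ is a closed complex submanifold of some Stein neighborhood $W$ of the totally real compact set $G(F(M_{g,n}(\mathbf{t})))\subset\R^N$. Cartan's Theorem B then extends $f\circ G^{-1}$ holomorphically to $W$. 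Shrinking $W$ to a polynomially convex neighborhood of the real compact set, for instance a small tubular neighborhood, which is polynomially convex since compact subsets of $\R^N$ are, we may apply Oka--Weil. This approximates the extension uniformly on a smaller neighborhood by complex polynomials. Replacing each polynomial $P$ by $(P+\overline{P(\bar z)})/2$ keeps real coefficients and still approximates, because $f$ is real on the real locus and its extension can be chosen $\tau$-symmetric. Pulling back gives real linear combinations of trace monomials that converge to $f$ in the $C^{\text{an}}$ sense on a fixed complex neighborhood.

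\textbf{Reduction to multicurves and conclusion.} The skein reduction of Proposition \ref{prop:firsttraceapprox} rewrites each such trace monomial as a finite real combination of $T_\Gamma$ with $\Gamma$ a multicurve. This reduction is an identity of holomorphic functions on the $SL(2,\C)$ character variety, since $\text{tr}(A)\text{tr}(B)=\text{tr}(AB)+\text{tr}(AB^{-1})$ holds in $SL(2,\C)$. Hence $\text{span}_\R(S)$ is dense in $C^{\text{an}}$, and Lemma \ref{lemma:improveregularity} yields the $C^\infty$-density of $\mathcal{G}(S)$.

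The main obstacle I expect is the complex-analytic approximation step: extending $f$ beyond the image while keeping uniform estimates on a fixed complex neighborhood of $M$. The most concrete route is to choose $W$ as a thin tube around a real ball, which is Stein and polynomially convex, and apply Theorem B and Oka--Weil there. If that is awkward, a fallback is to note that near a totally real compact set any holomorphic function is a uniform limit of polynomials, via Taylor or heat-kernel convolution in $\R^N$ extended to a complex strip.
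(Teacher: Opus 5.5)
Your proposal is correct and is essentially the paper's proof. Both apply Lemma~\ref{lemma:improveregularity} with the $SL(2,\C)$ relative character variety (real structure $\rho\mapsto(\rho^*)^{-1}$) as the complexification, use the trace coordinates as a holomorphic embedding into $\C^N$, and reduce to multicurves via the skein relation; your Cartan~B/Oka--Weil argument fills in the $C^{\text{an}}$-density that the paper simply calls clear (justify polynomial convexity of the thin tube by its convexity as a product over a real ball, not by polynomial convexity of compact subsets of $\R^N$), although the proof of Lemma~\ref{lemma:improveregularity} only actually uses $C^\infty$-density of $\text{span}_\R(S)$, already given by Proposition~\ref{prop:firsttraceapprox}, together with $S\subset C^{\text{an}}$.
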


\begin{proof}
    Let $M_{g,n}(\mathbf{t})_\C$ be the moduli space of irreducible flat $SL(2,\C)$ connections on $\Sigma\smallsetminus \mathcal{P}$ such that the holonomy around a small loop encircling $p_j\in \mathcal{P}$ has trace $2\cos(\pi t_j)$, for each $j\in \{1,\ldots, n\}$. Then $M_{g,n}(\mathbf{t})_\C$ is a complex analytic manifold. It has a complex analytic symplectic form $\omega_\C$ defined just as in \eqref{eq:sympformonmoduli}, with $\alpha,\beta$ now in $\Omega^1(\Sigma;\mathfrak{s}\mathfrak{l}(2,\C))$. It inherits a real structure $\tau$ compatible with $\omega_\C$, induced by the involution $A\mapsto A^*$ on $SL(2,\C)$. By \cite[Lemma III.1.1]{morgan-shalen} (see also \cite{acosta}), the real locus consists of those connections which are gauge equivalent to either an $SU(2)$ or $SL(2,\R)$ flat connection. In particular, under the non-integral assumption on $\mathbf{t}$, $(M_{g,n}(\mathbf{t}),\omega)$ is symplectically embedded inside the real locus of $(M_{g,n}(\mathbf{t})_\C,\omega_\C,\tau)$.

    Consider the following diagram, where vertical arrows are inclusions:
    \begin{equation}
    \begin{tikzcd}
        M_{g,n}(\mathbf{t}) \arrow[d] \arrow[r, "\iota_\R"] & \R^N \arrow[d] \\
        M_{g,n}(\mathbf{t})_\C \arrow[r, "\iota_\C"] & \C^N
    \end{tikzcd}
    \end{equation}
    The top arrow $\iota_\R$ is the embedding $G\circ F$ from the proof of Proposition \ref{prop:firsttraceapprox}, whose components are defined by the trace functions \eqref{eq:tracefunctionsforembedding}, here interpreted as traces of holonomies along the corresponding loops. These trace functions have natural complex-valued extensions to $M_{g,n}(\mathbf{t})_\C$ and together they define the bottom arrow $\iota_\C$ which is a complex analytic embedding. 
 
    By the proof of Proposition \ref{prop:firsttraceapprox}, a trace monomial $T_\Gamma$ for a multicurve $\Gamma$ on $\Sigma_{g,n}$ is equal to $\eta\circ \iota_\R$ for some polynomial $\eta:\R^N\to \R$. Viewing $\eta$ as a complex polynomial, $\eta\circ \iota_\C$ gives a complex analytic extension of $T_\Gamma$. It is now clear that $(M_{g,n}(\mathbf{t}),\omega)$ and the set $S$ of trace monomials on multicurves satisfy the hypotheses of Lemma \ref{lemma:improveregularity}, yielding the result.
\end{proof}

We next compute $\phi_f^t$ for any given $f\in S$. In fact, our computation will be more general. Let $\Gamma=\{\gamma_1,\ldots,\gamma_k\}$ be a multicurve on $\Sigma_{g,n}$ and $\eta:\R^k\to \R$ a smooth function. Consider
\[
    f_{\eta,\Gamma}:M_{g,n}(\mathbf{t})\to \R,
\]
\begin{equation}\label{eq:tracefunctionetasurface}
    f_{\eta,\Gamma}(\rho)  := \frac{1}{4\pi^2}  \eta( \text{tr}_{\gamma_1}(\rho), \ldots, \text{tr}_{\gamma_k}(\rho)).
\end{equation}
Note that choosing $\eta(x_1,\ldots,x_k)=4\pi^2 x_1 \cdots x_k$ yields the trace monomial $f_{\eta,\Gamma}=T_\Gamma$.

To state a formula for $\phi_f^t$, we first introduce some notation. Let $\gamma$ be an embedded circle in a manifold $X$, and $E$ be an $SU(2)$-bundle over $X$. As above, define $\text{tr}_{\gamma}$ to be the gauge-invariant function on the space of connections on $E$ obtained by taking the trace of the holonomy of a connection around $\gamma$, based at any point. The derivative of $\text{tr}_{\gamma}$ at $A$ is given by
\begin{equation}\label{eq:derivativeoftracehol}
     (d\text{tr}_{\gamma})_A(\alpha) = -\int_{\gamma} \text{tr}(T_{A,\gamma} \alpha ),
\end{equation}
where $\alpha\in \Omega^1(X,\text{ad}E)$. In this formula, $T_{A,\gamma}$ is the section of $\text{ad}E|_{\gamma}$ defined as follows. First, let $\Pi:SU(2)\to \mathfrak{s}\mathfrak{u}(2)$ be the conjugation-invariant function
\begin{equation}\label{eq:piprojmap}
   \Pi(g) := g - \tfrac{1}{2}\text{tr}(g)\text{id}
\end{equation} 
Then, for a point $p$ on $\gamma$, the holonomy of the connection $A$ along $\gamma$ determines an automorphism $\text{hol}_\gamma(A)$ of $E_p$, and $\Pi(\text{hol}_\gamma(A))$ is an element of $(\text{ad}E)_p \subset \text{End}(E)_p$ which gives the value of $T_{A,\gamma}$ at $p$. See \cite[Lemma 3]{braam-donaldson} for more details.

Fix a multicurve $\Gamma=\{\gamma_1,\ldots,\gamma_{k}\}$ on $\Sigma_{g,n}$.  Orient each $\gamma_i$, and choose an orientation-preserving identification of a tubular neighborhood of $\gamma_i$ in $\Sigma^\circ$ with $S^1\times (-1,1)$. For $s\in (-1,1)$, let $\gamma_i^s$ be the loop parallel to $\gamma_i$ determined by the points in $S^1\times (-1,1)$ with second coordinate equal to $s$. 
   Let $\beta_i$ be a 1-form supported on this neighborhood, given by the pullback of a compactly supported $1$-form on $(-1,1)$ with integral $1$. 
   In particular, $\beta_i$ restricted to $\gamma_i$ vanishes. We also arrange that the $\beta_i$ have pairwise disjoint supports, so that $\beta_i$ restricted to $\gamma_j$ vanishes for all $i,j$.
  For any connection $A$ on $\Sigma^\circ$, the sections $T_{A,\gamma_i^s}$ for different values of $s$ determine a section of $\text{ad}E$ over the tubular neighborhood of $\gamma_i$, which we denote by $T_{A,i}$.
   In particular, $T_{A,i}\beta_i$ is a well-defined element of $\Omega^1_c(\frak g)$. 
   If $A$ is flat, then $T_{A,i}$ is $d_A$-parallel and thus $T_{A,i}\beta_i$ is in the kernel of $d_A$.

\begin{prop}\label{prop:hamcomp}
    Let $\Gamma$ be a multicurve on $\Sigma_{g,n}$ and $\eta:\R^k\to \R$ a smooth function, and write $f$ for the function $f_{\eta,\Gamma}:M_{g,n}(\mathbf{t})\to \R$. Then the Hamiltonian vector field for $f$ may be represented by
    \[
        X_{f} = - \sum_{i=1}^k \partial_i \eta \cdot T_{A,i}\beta_i
    \]
     where the terms on the right are defined as above, and $\partial_i\eta=(\partial_i \eta)(\textup{tr}_{\gamma_1}(A),\ldots,\textup{tr}_{\gamma_k}(A))$.
     Furthermore, for any $[A]\in M_{g,n}(\mathbf{t})$, we have $\phi_{f}^t([A])=[A_t]$ with
     \begin{equation}\label{formula-phitf}
       A_t=A- t\sum_{i=1}^k \partial_i \eta \cdot T_{A,i}\beta_i.
     \end{equation}
\end{prop}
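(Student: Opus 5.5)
The plan has two parts: first identify the Hamiltonian vector field by pairing against $\omega$, then integrate the flow explicitly by showing the connection path stays flat and the relevant traces are constant along it.

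\textbf{Step 1: the Hamiltonian vector field.} Let $A$ be flat and $\alpha\in\Omega^1_c(\Sigma^\circ,\mathfrak g)$ with $d_A\alpha=0$. By the chain rule and \eqref{eq:derivativeoftracehol},
\[
df(\alpha)=\frac{1}{4\pi^2}\sum_i \partial_i\eta\,(d\mathrm{tr}_{\gamma_i})_A(\alpha)=-\frac{1}{4\pi^2}\sum_i\partial_i\eta\int_{\gamma_i}\mathrm{tr}(T_{A,\gamma_i}\alpha).
\]
Set $\xi=-\sum_i\partial_i\eta\, T_{A,i}\beta_i$. This $\xi$ is $d_A$-closed, as noted before the statement, and compactly supported, so it defines a class in $H^1_A$. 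We need $\omega(\xi,\alpha)=df(\alpha)$, with the sign convention $\iota(X_f)\omega=df$. By \eqref{eq:sympformonmoduli},
\[
\omega(\xi,\alpha)=\frac{1}{4\pi^2}\sum_i\partial_i\eta\int_{\Sigma}\mathrm{tr}(T_{A,i}\,\beta_i\wedge\alpha).
\]
Each summand is computed on the annulus $S^1\times(-1,1)$, where $\beta_i=\beta(s)\,ds$. By Fubini,
\[
\int\mathrm{tr}(T_{A,i}\,\beta_i\wedge\alpha)=\pm\int_{-1}^1\beta(s)\Big(\int_{\gamma_i^s}\mathrm{tr}(T_{A,\gamma_i^s}\alpha)\Big)ds.
\]
The inner integral equals $-(d\mathrm{tr}_{\gamma_i^s})_A(\alpha)$. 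Since $\alpha$ is $d_A$-closed and $\gamma_i^s$ is isotopic to $\gamma_i$, this derivative does not depend on $s$: the variation of $\mathrm{tr}_{\gamma}$ along a closed $\alpha$ depends only on the homotopy class of $\gamma$, which can be checked by Stokes on the sub-annulus between $\gamma_i$ and $\gamma_i^s$, using that $T_{A,i}$ is parallel. Because $\int\beta=1$, the double integral collapses to $\mp(d\mathrm{tr}_{\gamma_i})_A(\alpha)$. With the orientation of $S^1\times(-1,1)$ fixed (the ordering $ds\wedge d\theta$ versus $d\theta\wedge ds$ determines the sign), this matches $df(\alpha)$. The main care needed here is bookkeeping the orientation sign against the $-\tfrac{1}{4\pi^2}$ normalization.

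\textbf{Step 2: integrating the flow.} Define $A_t$ by \eqref{formula-phitf}, with coefficients $c_i=\partial_i\eta$ frozen at $[A]$. The aim is to show that $A_t$ is a flat connection, that $\mathrm{tr}_{\gamma_j}(A_t)=\mathrm{tr}_{\gamma_j}(A)$ for all $t$, and that $T_{A_t,i}=T_{A,i}$. Granting these, $\tfrac{d}{dt}A_t=-\sum_i c_i T_{A,i}\beta_i$ equals the representative of $X_f$ at $A_t$ from Step 1 (the coefficients stay constant because the traces do), so $t\mapsto[A_t]$ is the integral curve and $\phi^t_f([A])=[A_t]$.

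For flatness, work on each annulus separately, where the supports are disjoint. Write $T=T_{A,i}$. Then
\[
F_{A_t}=F_A+d_A(-tc_iT\beta_i)+t^2c_i^2\,T\beta_i\wedge T\beta_i.
\]
The first term vanishes since $A$ is flat. The middle term vanishes because $d_AT=0$ and $d\beta_i=0$ ($\beta_i$ is the pullback of a 1-form on an interval). The last term vanishes because $\beta_i\wedge\beta_i=0$.

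For the traces and the sections $T_{A_t,i}$: since $\beta_i|_{\gamma_j}=0$, the holonomy around each $\gamma_j$, and indeed around each $\gamma_i^s$ because $\beta_i$ restricts to zero on every parallel loop, is unchanged. So $\mathrm{tr}_{\gamma_j}(A_t)=\mathrm{tr}_{\gamma_j}(A)$ and $T_{A_t,\gamma_i^s}=T_{A,\gamma_i^s}$. One subtle point is that the basepoint-dependent section $T_{A_t,i}$ is defined using transport along $\gamma_i^s$ only, so it is indeed unchanged. One should also confirm that $T_{A,i}$ remains $d_{A_t}$-parallel; this holds because $[T,T\beta_i]=0$, which is the expected main obstacle and is resolved by this commutation.

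Finally, gauge independence of the formula follows from the equivariance of $T_{A,i}$ under gauge transformations.
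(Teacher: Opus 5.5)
Your proposal is correct and follows the paper's proof. The paper identifies $X_f$ by writing $\mathrm{tr}_{\gamma_i}=\int_{(-1,1)}\mathrm{tr}_{\gamma_i^s}\,\beta_i$ on the flat locus; your Fubini-plus-Stokes argument is the same content, and with the product orientation $d\theta\wedge ds$ the sign comes out as you need. It then checks, as you do, that $A_t$ is flat (because $\beta_i\wedge\beta_j=0$ and $T_{A,i}\beta_i$ is $d_A$-closed) and that its restriction to every $\gamma_i^s$ is unchanged, so $T_{A_t,i}=T_{A,i}$ and $A_t$ solves the flow equation.
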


\begin{proof}
    First note that $\text{tr}_{\gamma_i}$ can be rewritten as 
    \[
	\text{tr}_{\gamma_i}=\int_{(-1,1)}\text{tr}_{\gamma_i^s}\cdot \beta_i
    \]
    because holonomies of a flat connection on the loops $\gamma_i^s$ is independent of $s$.
    Let $[A]\in M_{g,n}(\mathbf{t})$ and $\alpha\in \Omega^1_c(\frak g)$ represent a tangent vector to $[A]$. 
    Using this relation and \eqref{eq:derivativeoftracehol}, we compute the derivative of $f(A)=\frac{1}{4\pi^2}\eta(\text{tr}_{\gamma_1}(A),\ldots, \text{tr}_{\gamma_k}(A))$ along $\alpha$ as
    \begin{equation}\label{eq:computehamiltonian}
        (df)_A(\alpha) =\frac{1}{4\pi^2} \int_{\Sigma}\text{tr}\big(\sum_{i=1}^k \partial_i \eta \cdot T_{A,i} \beta_{i} \wedge \alpha\big).
    \end{equation}
    By definition of the symplectic form $\omega$ in \eqref{eq:sympformonmoduli}, we obtain from the computation in \eqref{eq:computehamiltonian} that the Hamiltonian vector field for $f$ may be represented by 
    \[
        X_{f} = - \sum_{i=1}^k \partial_i \eta \cdot T_{A,i}\beta_i
    \]
    
    To identify the flow generated by the vector field $X_f$ on $M_{g,n}(\mathbf{t})$, first note that $A_t$ in \eqref{formula-phitf} is a flat connection because
    $\beta_i\wedge \beta_j$ is trivial for all $i,j$ and $T_{A,i}\beta_i$ is $d_A$ closed. Furthermore, the restrictions of $A_t$ and $A$ to each $\gamma_i^s$ agree with each other. Thus, $T_{A_t,i}$ is independent of $t$.
    This implies that $A_t$ represents a trajectory of the Hamiltonian flow of $f$ because it satisfies 
     \begin{equation*}\label{eq:floweq}
        \frac{d}{dt} A_t = - \sum_{i=1}^k \partial_i \eta \cdot T_{A_t, i} \beta_{\gamma_i}. \qedhere
    \end{equation*}   
 \end{proof}

Recall that any flat $SU(2)$ connection is determined by knowing the traces of its holonomies along any loop. Although not needed for the sequel, we describe $\phi_{f}^t([A])$ in these terms. Let $\sigma$ be an immersed loop in $\Sigma^\circ$ which is transverse to $\Gamma$, such that the intersections of $\sigma$ with $\Gamma$ occur at embedded points of $\sigma$. Orient $\sigma$ and choose an embedded point $q_0\in \sigma$ disjoint from $\Gamma$. Starting from $q_0$, traverse $\sigma$ in the direction of its orientation and let $q_1,\ldots,q_\ell$ be the intersection points of $\sigma$ with the curves in $\Gamma$ in the order in which they appear. Let $\gamma_{i_j}$ be the curve in $\Gamma$ containing $q_j$.

\begin{prop}\label{prop:hamcomphol}
    \[
        \textup{tr}_\sigma\left(\phi_{f}^t([A])\right) = \textup{tr}\left[\textup{exp}\left(t\partial_{i_\ell} \eta \cdot \Pi(\textup{hol}_{\gamma_{i_\ell}}(A) ) \right)\cdots \textup{exp}\left( t\partial_{i_1} \eta \cdot \Pi(\textup{hol}_{\gamma_{i_1}}(A) ) \right)  \textup{hol}_{\sigma}(A)\right].
    \]
\end{prop}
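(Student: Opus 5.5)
The plan is to compute the holonomy of $A_t$ along $\sigma$ directly from the explicit formula \eqref{formula-phitf} in Proposition \ref{prop:hamcomp}. By that proposition, $\phi_f^t([A])=[A_t]$ with $A_t = A - t\sum_i \partial_i\eta\cdot T_{A,i}\beta_i$. The perturbation term is supported in the disjoint tubular neighborhoods $S^1\times(-1,1)$ of the $\gamma_i$. So $A_t$ agrees with $A$ on $\sigma$ away from small arcs of $\sigma$ crossing these neighborhoods near the points $q_1,\ldots,q_\ell$. After an isotopy of $\sigma$ (which does not change $\textup{tr}_\sigma$, since $A_t$ is flat), we may assume that near each $q_j$ the loop $\sigma$ is a transverse fiber $\{\theta_j\}\times(-1,1)$ of the tubular neighborhood of $\gamma_{i_j}$.

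The main step is the holonomy along one such crossing arc. Trivialize $E$ along the arc $c_j=\{\theta_j\}\times(-1,1)$ by $A$-parallel transport, so $A$ becomes the trivial connection along $c_j$. The section $T_{A,i_j}$ is $d_A$-parallel, so in this frame it is a constant element of $\mathfrak{su}(2)$, namely $\Pi(\textup{hol}_{\gamma_{i_j}}(A))$ with the holonomy based at $q_j$. Then $A_t|_{c_j} = -t\,\partial_{i_j}\eta\,\Pi(\textup{hol}_{\gamma_{i_j}}(A))\,\beta_{i_j}|_{c_j}$. This is a constant Lie algebra element times a scalar 1-form, so the parallel transport ODE integrates explicitly. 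Since $\int_{c_j}\beta_{i_j}=\pm1$, the transport along $c_j$ is $\exp\!\big(\pm t\,\partial_{i_j}\eta\,\Pi(\textup{hol}_{\gamma_{i_j}}(A))\big)$ relative to $A$'s transport. Here $\partial_{i_j}\eta$ is a constant along the flow, because the traces $\textup{tr}_{\gamma_i}(A_t)$ are $t$-independent, as noted in the proof of Proposition \ref{prop:hamcomp}.

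Next we compose. We write $\textup{hol}_\sigma(A_t)$, based at $q_0$, as the ordered product of $A$-transports along the arcs of $\sigma$ between crossings, interleaved with the correction factors at each $q_j$. Each correction factor $\exp(\cdots\Pi(\textup{hol}_{\gamma_{i_j}}(A)))$, with holonomy based at $q_j$, is moved back to $q_0$ by conjugating with $A$-transport along $\sigma$ from $q_0$ to $q_j$. The result is the element $\Pi(\textup{hol}_{\gamma_{i_j}}(A))$ interpreted with basepoint transported along $\sigma$. This is the meaning of the expression in the statement. We obtain $\textup{hol}_\sigma(A_t)=E_\ell\cdots E_1\,\textup{hol}_\sigma(A)$, with $E_j$ ordered as crossings appear (later crossings further to the left under the composition convention). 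Taking traces gives the formula.

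The main obstacle is bookkeeping rather than substance. We must fix the sign convention: the orientation of $\gamma_i$, the sign of the intersection of $\sigma$ with $\gamma_i$ relative to the integral of $\beta_i$, and the left/right composition convention for holonomy. Together these must give exactly $\exp(+t\partial\eta\,\Pi(\cdot))$ in the stated order. The sign of $\int_{c_j}\beta_{i_j}$ flips with the crossing direction, but so does the orientation of $\gamma_{i_j}$ seen from $\sigma$. Since $\Pi(g^{-1})=-\Pi(g)$ in $SU(2)$, these two flips cancel. So I would orient $\gamma_{i_j}$ at each crossing so that $\sigma$ crosses positively, and check the overall sign once in a local model.
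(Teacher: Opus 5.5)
Your proposal is correct and takes essentially the paper's route. The paper only remarks that the identity is a direct computation from the formula for $A_t$ in Proposition \ref{prop:hamcomp}. Your argument is exactly that computation: in an $A$-parallel frame across each crossing arc the correction term is a constant $\mathfrak{su}(2)$ element times a scalar $1$-form, so its transport exponentiates, and you then conjugate back to $q_0$ along $\sigma$. Your handling of the sign through $\Pi(g^{-1})=-\Pi(g)$ and the orientation convention at each $q_j$ is the right bookkeeping.
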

 
 \noindent The holonomies appearing are defined as follows. Choose a framing of the bundle at $q_0$ to define the holonomy $\text{hol}_\sigma(A)\in SU(2)$. Using the flat connection $A$, parallel transport the framing at $q_0$ to $q_j$ in the oriented direction of $\sigma$. Define $\text{hol}_{\gamma_{i_j}}(A)\in SU(2)$ using the framing at $q_j$ and the orientation of $\gamma_{i_j}$ for which $\smash{\gamma_{i_j}'(q_j)\wedge \sigma'(q_j)}$ agrees with the orientation of $\Sigma$. The proof of Proposition \ref{prop:hamcomphol} is a direct computation using the expression for $A_t$ from Proposition \ref{prop:hamcomp}.  

\subsection{Symplectomorphisms on $M_{g,n}(\mathbf{t})$ from the mapping class group}\label{sec:symplectofrommcg}

The mapping class group of $\Sigma_{g,n}$ provides another source of symplectomorphisms of $M_{g,n}(\mathbf{t})$. Let 
\[
	\psi:\Sigma_{g,n}\to \Sigma_{g,n}
\]
be a diffeomorphism. In particular, $\psi$ restricts to a permutation of $\mathcal{P}=\{p_1,\ldots,p_n\}$. We assume that if $\psi(p_i)=p_j$, then $t_i=t_j$. By applying an isotopy, we may assume that $\psi$ maps the disk neighborhood of $p_i$ to the disk neighborhood of $p_j$ by the identity map. In particular, if $A\in \mathcal{A}_{g,n}(\mathbf{t})$ is flat, then the pullback of $A$ by $\psi^{-1}$ is a flat connection in $\mathcal{A}_{g,n}(\mathbf{t})$. (To define this pullback, we lift $\psi$ into an isomorphism of the trivial $SU(2)$ bundle in the obvious way.) This induces a diffeomorphism of $M_{g,n}(\mathbf{t})$ which only depends on the class of $\psi$ in ${\rm Mod}(\Sigma_{g,n})$. The differential of this map is also given by taking pullback with respect to $\psi^{-1}$. From this we can see that this map is a symplectormorphism of $M_{g,n}(\mathbf{t})$. Thus we obtain a homomorphism from a finite index subgroup of ${\rm Mod}(\Sigma_{g,n})$ to the symplectomorphism group of $M_{g,n}(\mathbf{t})$.

 In the special case that $M_{g,n}(\mathbf{t})=M_{g,n}$, since all $t_i=1/2$, in the above construction there is no constraint on how the elements of ${\rm Mod}(\Sigma_{g,n})$ permute the marked points. By remembering only the symplectic isotopy class of the resulting map on $M_{g,n}$ we obtain a homomorphism
\begin{equation}\label{hom-ModSigma}
  {\rm Mod}(\Sigma_{g,n}) \to \pi_0 \text{Symp}(M_{g,n}).
\end{equation}

There is some flexibility in the above construction that allows us to produce additional symplectomorphisms of $M_{g,n}$. In the previous construction, we may use non-trivial lifts of $\psi$ to the level of $SU(2)$-bundles. First, we remark that if we change this lift by composing with a map $u:\Sigma^\circ\to SU(2)$ that is constant and diagonal in the disc neighborhoods of the punctures, we do not obtain a new symplectomorphism of $M_{g,n}$ because these maps are exactly the elements of the gauge group $\mathcal G_{g,n}$, which are modded out in the definition of $M_{g,n}$. 

However, given a map $v:\Sigma^\circ\to SO(3)$ which is constant and in $SO(2)\subset SO(3)$ in the punctured neighborhoods, we can still pull back a connection with respect to $v$ because $SO(3)$ is the adjoint group of $SU(2)$. Write ${\mathcal G}'_{g,n}$ for the space of all such maps. Changing $v\in  {\mathcal G}'_{g,n}$ to $ \bar u\cdot v$, where $\bar u$ is the composition of $u\in {\mathcal G}_{g,n}$ with the projection $SU(2)\to SO(3)$, does not change the induced map on $M_{g,n}$. In particular, we obtain a homomorphism from ${\mathcal G}'_{g,n}/\mathcal G_{g,n}$ into $\pi_0 \text{Symp}(M_{g,n})$. Any element $v\in {\mathcal G}'_{g,n}$ induces a homomorphism $v_*:H_1(\Sigma^\circ;\Z/2)\to H_1(SO(3);\Z/2)$, and this element of $H^1(\Sigma^\circ;\Z/2)$ is the obstruction to lift $v$ to a map $\Sigma^\circ\to SU(2)$. Thus, we have ${\mathcal G}'_{g,n}/\mathcal G_{g,n}\cong H^1(\Sigma^\circ;\Z/2)$ and the above construction gives a homomorphism
\begin{equation}\label{hom-H1}
 H^1(\Sigma^\circ;\Z/2) \to \pi_0 \text{Symp}(M_{g,n}).
\end{equation}
This homomorphism and \eqref{hom-ModSigma} can be combined to give 
\begin{equation}\label{hom-H2}
  \rho_{g,n}: \Gamma_{g,n} \to \pi_0 \text{Symp}(M_{g,n}).
\end{equation}
where $\Gamma_{g,n}$ is the semi-direct product ${\rm Mod}(\Sigma_{g,n})  \ltimes H^1(\Sigma^\circ;\mathbb Z/2)$ corresponding to the standard action of ${\rm Mod}(\Sigma_{g,n}) $ on $H^1(\Sigma^\circ;\mathbb Z/2)$.

An alternative definition of \eqref{hom-H2} can be given using the representation-theoretic description of $M_{g,n}$. The action of $[\psi]\in \text{Mod}(\Sigma_{g,n})$ on $M_{g,n}$ defining the homomorphism \eqref{hom-ModSigma} is obtained by viewing an element of  $M_{g,n}$ as a respresentation $\pi_1(\Sigma^\circ)\to SU(2)$ and precomposing with the automorphism of $\pi_1(\Sigma^\circ)$ induced by $\psi$. For \eqref{hom-H1}, let $(A_1,B_1,\ldots,  A_g,B_g,C_1,\ldots,C_n)$ represent an element of $M_{g,n}$ as in definition \eqref{rep-theoretic-desc}. Then $\zeta\in H^1(\Sigma^\circ;\mathbb Z/2)$ sends this element to
\[
  ((-1)^{\zeta(a_1)}A_1,(-1)^{\zeta(b_1)}B_1,\ldots, (-1)^{\zeta(a_g)} A_g,(-1)^{\zeta(b_g)}B_g,(-1)^{\zeta(c_1)}C_1,\ldots,(-1)^{\zeta(c_n)}C_n)
\]
using the notation of \eqref{eq:pi1presentationforpuncturedsurface}. Here we are using that the conjugacy class $\Gamma(1/2)$ is invariant with respect to multiplication by $-1$. For general $\bf t$, we can define similar symplectomorphisms of $M_{g,n}(\mathbf{t})$ using elements $\zeta\in H^1(\Sigma^\circ;\mathbb Z/2)$ that satisfy the constraint $\zeta(c_i)=0$ whenever $t_i\neq 1/2$.

Finally, we recall some facts about the cohomology ring of $M_{g,n}$ which can be found in \cite[Theorem 3.1]{xie-zhang}. The ring $H^\ast(M_{g,n};\Q)$ is a quotient of the polynomial ring
\[
	\Q[\alpha,\delta_1,\ldots,\delta_n,\psi_1,\ldots,\psi_{2g}]
\]
by an ideal of relations, where the degrees of $\alpha$ and $\delta_i$ are $2$, and that of $\psi_j$ is $3$. The class $\alpha$ is a multiple of the symplectic form of $M_{g,n}$, each $\delta_i$ corresponds to a marked point $p_i\in \mathcal{P}$, and each $\psi_j$ corresponds to the loop $a_j$ for $1\leq j\leq g $ and to the loop $b_{j-g}$ for $g+1 \leq j\leq 2g$. If $3g+n>4$, then the lowest degree nonzero element in the ideal of relations has degree at least $4$, and 
\begin{gather}
	H^2(M_{g,n};\Q) = \Q\alpha \oplus \Q \langle\mathcal{P}\rangle \nonumber \\[2mm]
	H^3(M_{g,n};\Q) =H_1(\Sigma;\Q) \label{eq: h1h3idmod}
\end{gather}
where $\Q \langle\mathcal{P}\rangle$ is the vector space with basis the set of marked points $\mathcal{P}$. Suppose $\phi\in \text{Mod}(\Sigma_{g,n})$. With these identifications, the map on $H^\ast(M_{g,n};\Q)$ induced by the symplectomorphism associated to $\phi$ fixes $\alpha$ (since it is a symplectomorphism), acts on $\Q\langle \mathcal{P}\rangle $ using the permutation $\phi|_{\mathcal{P}}$, and acts on $H^3(M_{g,n};\Q) $ in the same way as map $\phi_\ast$ on $H_\ast(\Sigma;\Q)$. 

The action $\zeta^\ast$ on $H^\ast(M_{g,n};\Q)$ induced by the symplectomorphism of $M_{g,n}$ coming from an element $\zeta\in H^1(\Sigma^\circ;\Z/2)$ satisfies the following:
\begin{gather*}
	\zeta^\ast(\alpha)=\alpha, \qquad \zeta^\ast(\delta_i) = (-1)^{\zeta(c_i)} \delta_i,  \qquad \zeta^\ast(\psi_j) =\psi_j
\end{gather*} 
These actions $\zeta^\ast$ on the cohomology ring are called {\emph{flip symmetries}} in \cite{xie-zhang}. That $\zeta^\ast$ fixes the $\psi_j$ classes goes back to an observation of Thaddeus \cite[Lemma 12.1]{thaddeus-intro}.

 Let $\text{Symp}_h(M_{g,n})$ denote the subgroup of symplectomorphisms in $\text{Symp}(M_{g,n})$ which induce the identity map on the cohomology ring $H^\ast(M_{g,n};\Z)$. Define the {\emph{pure}} mapping class group,
 \[
 	\textup{PMod}(\Sigma_{g,n}),
 \] 
 to be the subgroup of $\textup{Mod}(\Sigma_{g,n})$ consisting of the mapping classes that act identically on the set of marked points $\mathcal{P}$.
 Then the above discussion, combined with the fact that the cohomology $H^\ast(M_{g,n};\Z)$ is torsion-free, yields the following result.

  \begin{prop}\label{prop:puremappingclassgroup}
 	If $3g+n>4$, then $\rho_{g,n}^{-1}\left( \pi_0(\textup{Symp}_h(M_{g,n})) \right)=  \textup{PMod}(\Sigma_{g,n}) \ltimes H^1(\Sigma;\Z/2) $.
 \end{prop}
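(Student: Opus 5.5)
The plan is to compute the action of an arbitrary element $(\phi,\zeta)\in\Gamma_{g,n}=\textup{Mod}(\Sigma_{g,n})\ltimes H^1(\Sigma^\circ;\Z/2)$ on the degree $2$ part of $H^\ast(M_{g,n};\Q)$, using the identification \eqref{eq: h1h3idmod} valid for $3g+n>4$, and to read off exactly when this action is trivial. Since $H^\ast(M_{g,n};\Z)$ is torsion-free, $H^\ast(M_{g,n};\Z)$ embeds in $H^\ast(M_{g,n};\Q)$. Hence a symplectomorphism acts as the identity on integral cohomology if and only if it does so rationally, and we may work with the rational presentation throughout.

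The first step combines the two descriptions recalled above. The symplectomorphism associated to $\phi$ fixes $\alpha$ and permutes the basis $\delta_1,\ldots,\delta_n$ of $\Q\langle\mathcal{P}\rangle$ via $\phi|_{\mathcal{P}}$. The flip symmetry $\zeta^\ast$ fixes $\alpha$ and sends $\delta_i\mapsto(-1)^{\zeta(c_i)}\delta_i$. Composing, $\rho_{g,n}(\phi,\zeta)^\ast$ acts on $H^2=\Q\alpha\oplus\Q\langle\mathcal{P}\rangle$ as a signed permutation matrix: it fixes $\alpha$ and sends $\delta_i$ to $\pm\delta_{\phi(i)}$, with sign $(-1)^{\zeta(c_{i})}$ or $(-1)^{\zeta(c_{\phi(i)})}$ depending on the composition convention. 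Because $\delta_1,\ldots,\delta_n$ are linearly independent (they form a basis of $\Q\langle\mathcal{P}\rangle$ by \eqref{eq: h1h3idmod}), this signed permutation is the identity if and only if two conditions hold:
\begin{itemize}
\item $\phi|_{\mathcal{P}}=\textup{id}$, i.e.\ $\phi\in\textup{PMod}(\Sigma_{g,n})$;
\item $\zeta(c_i)=0$ for every $i$.
\end{itemize}

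The second step identifies the classes $\zeta\in H^1(\Sigma^\circ;\Z/2)$ with $\zeta(c_i)=0$ for all $i$. By the Mayer--Vietoris (or long exact) sequence for $\Sigma=\Sigma^\circ\cup\mathcal{P}\times D^2$, the restriction map $H^1(\Sigma;\Z/2)\to H^1(\Sigma^\circ;\Z/2)$ is injective. Its image is precisely the set of classes vanishing on all the loops $c_i$: in the presentation \eqref{eq:pi1presentationforpuncturedsurface}, these are the homomorphisms to $\Z/2$ that factor through $\pi_1(\Sigma)$. Since $\textup{PMod}(\Sigma_{g,n})$ preserves this subgroup, the set of pairs satisfying both conditions is the subgroup $\textup{PMod}(\Sigma_{g,n})\ltimes H^1(\Sigma;\Z/2)$.

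Conversely, for elements of this subgroup the action on the generators $\alpha,\delta_i$ is trivial. By the discussion preceding the proposition, the action on the degree $3$ generators $\psi_j$ is through $\phi_\ast$ on $H_1(\Sigma;\Q)$, and $\zeta^\ast$ fixes the $\psi_j$ by Thaddeus' observation. The step I expect to need the most care is reconciling this with the definition of $\textup{Symp}_h$. The identification of the preimage as stated is exactly the condition detected by the ring generators in degree $2$, which is how the preceding discussion (``the above discussion, combined with \ldots torsion-free'') is set up. In writing the proof I would make this explicit: the conditions forced by $H^2$ are necessary, and they are exactly those cutting out $\textup{PMod}(\Sigma_{g,n})\ltimes H^1(\Sigma;\Z/2)$. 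I would also check that the relations ideal, having no nonzero elements below degree $4$, does not introduce any identification among $\alpha,\delta_i$ that could weaken the necessity argument.
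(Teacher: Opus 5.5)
Your necessity argument (the inclusion $\subseteq$) is sound and follows the paper's own reasoning, the Xie--Zhang description plus torsion-freeness. Rational coefficients suffice. The action on $H^2=\Q\alpha\oplus\Q\langle\mathcal{P}\rangle$ is the signed permutation you describe. Its triviality forces $\phi|_{\mathcal{P}}=\mathrm{id}$ and $\zeta(c_i)=0$ for all $i$, so $(\phi,\zeta)\in\textup{PMod}(\Sigma_{g,n})\ltimes H^1(\Sigma;\Z/2)$.

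The gap is in the reverse inclusion, exactly at the point you flagged and then set aside. Membership in $\textup{Symp}_h(M_{g,n})$ means acting trivially on the whole cohomology ring. A ring automorphism is trivial only if it fixes \emph{all} generators, including the degree $3$ classes $\psi_j$. These transform by $\phi_\ast$ on $H_1(\Sigma;\Q)\cong H^3(M_{g,n};\Q)$, and for $g\geq 1$ this is often nontrivial on pure mapping classes. Take a Dehn twist $t_a$ about a nonseparating curve $a$ disjoint from $\mathcal{P}$. It lies in $\textup{PMod}(\Sigma_{g,n})$, but acts on $H_1(\Sigma;\Z)$ by the transvection $x\mapsto x\pm(a\cdot x)\,a$, which is nontrivial. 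Hence $\rho_{g,n}(t_a)$ acts nontrivially on $H^3(M_{g,n};\Q)$ and is not in $\textup{Symp}_h(M_{g,n})$. So no ``reconciliation'' is possible. For $g\geq 1$, e.g.\ $(g,n)=(1,3)$ or $(2,1)$, the inclusion $\supseteq$ is false with $\textup{Symp}_h$ as defined, and an argument that only inspects degree $2$ cannot prove it. The paper's one-line justification has the same defect.

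What your argument does prove, essentially verbatim, is the following corrected statement. The preimage $\rho_{g,n}^{-1}(\pi_0\textup{Symp}_h(M_{g,n}))$ consists of the pairs $(\phi,\zeta)$ with $\phi|_{\mathcal{P}}=\mathrm{id}$, $\phi_\ast=\mathrm{id}$ on $H_1(\Sigma;\Z)$, and $\zeta\in H^1(\Sigma;\Z/2)$. Here you use Thaddeus' observation that $\zeta^\ast$ fixes the $\psi_j$. Equivalently, the displayed formula holds if $\textup{Symp}_h$ is replaced by the symplectomorphisms acting trivially on $H^2$. For $g=0$ there are no $\psi_j$, all these versions coincide, and your proof is complete. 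That is the only case the paper uses afterwards (Proposition \ref{rho-0-5-iso}, where $\textup{Symp}_h$ is described as acting trivially on $H_2(M_{0,5};\Z)$). You should state and prove one of these versions rather than the $g\geq 1$ claim as written.
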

 
 \noindent Here we have identified $H^1(\Sigma;\Z/2)$ with its image under $H^1(\Sigma;\Z/2)\to H^1(\Sigma^\circ;\Z/2)$. Note that for $g=0$, we obtain $\rho_{0,n}^{-1}\left( \pi_0(\textup{Symp}_h(M_{0,n})) \right)=  \textup{PMod}(\Sigma_{0,n})$.

%!TEX root = main.tex

\section{Lagrangians from 3-manifolds and holonomy perturbations}\label{sec:holonomyandlagrangians} 
We next discuss the 3-dimensional counterpart of the construction of the previous section. To start, let $M$ be a compact connected oriented $3$-manifold with boundary $\partial M=\Sigma$ and $T= T_1 \cup \cdots \cup T_m \subset M$ be a tangle with boundary $\partial T = \mathcal{P} \subset \Sigma$. We also fix $\mathbf{t}=(t_1,\ldots,t_m)\in [0,1]^{m}$, which we refer to as a holonomy parameter for the tangle $T$. Analogous to $M_{g,n}(\mathbf{t})$, define the character variety $\mathfrak{X}(M,T;\mathbf{t})$ to be the moduli space of all conjugacy classes of representations 
\[
  \rho: \pi_1(M\smallsetminus T) \to SU(2)
\]
that map the conjugacy class of a meridian of $T_i$ to $\Gamma(t_i)\subset SU(2)$. Since each conjugacy class $\Gamma(t)$ is invariant with respect to inversion, the latter constraint does not depend on the choice of orientations for the meridians of $T$.

The character variety $\mathfrak{X}(M,T;\mathbf{t})$ is not necessarily smooth, and we may need to perturb it to obtain a smooth manifold. In fact, the perturbed character variety is expected to give an immersed Lagrangian in the character variety associated to $(\Sigma,\mathcal P)$. In \S \ref{subsec:unperturbed}, we study the unperturbed character variety $\mathfrak{X}(M,T;\mathbf{t})$ and give a sufficient condition to ensure that it is cut out transversely and determines a Lagrangian. Then, in \S \ref{hol-pert}, we introduce holonomy perturbations for a tuple $(M,T;\mathbf{t})$ and show that they may be used to approximate Hamiltonian isotopies of the symplectic manifold $M_{g,n}(\mathbf{t})$ in a suitable sense, proving Theorem \ref{thm:hamiltoniandiffeo}. Finally, in \S \ref{subsec:perturbed-3man-Lag} we combine the earlier results of this section to discuss how for a more general $(M,T;\mathbf{t})$, we may use holonomy perturbations to construct immersed Lagrangians for symplectic manifolds of the form $M_{g,n}(\mathbf{t})$.

\subsection{Unperturbed 3-manifold Lagrangians}\label{subsec:unperturbed}

As in the 2-dimensional case, we need a more geometrical description of $\mathfrak{X}(M,T;\mathbf{t})$, in terms of flat connections. For simplicity, we assume for now that the holonomy parameter $t_j$ of any closed component $T_j$ of $T$ belongs to the open interval $(0,1)$. Identify a tubular neighborhood of $T$ with $T\times D^2$, and let $\theta_j$ and $\phi_j$ be coordinate functions on $T_j\times D^2$ respectively given by the angular coordinate on $D^2$ and the standard coordinate on $T_j$ after its identification with $\R/\Z$ or $[0,1]$ depending on whether it is closed or not. Define $\mathcal A_{M,T}(\mathbf{t})$ as the space of connections on $M^\circ:=M\smallsetminus T$ whose restriction to a neighborhood of $T_j=T_j\times \{0\}$ in $T_j\times D^2$ is of the form
\begin{equation*}\label{connection-local-model-1}
\left[  
\begin{array}{cc}
	\frac{it_j}{2}&0\\
0&-\frac{it_j}{2}
\end{array}
\right]d\theta_j
\end{equation*}
if $T_j$ is an arc component of $T$, and is of the form
\begin{equation*}\label{connection-local-model-2}
\left[  
\begin{array}{cc}
	\frac{is_j}{2}&0\\
0&-\frac{is_j}{2}
\end{array}
\right]d\phi_j+\left[  
\begin{array}{cc}
	\frac{it_j}{2}&0\\
0&-\frac{it_j}{2}
\end{array}
\right]d\theta_j
\end{equation*}
if $T_j$ is closed. In the latter case, $s_j$ is an arbitrary real number.

The space of connections $\mathcal A_{M,T}(\mathbf{t})$ is an affine space modeled over $\widetilde {\Omega}^1(M^\circ,\mathfrak g)$, which is the space of all $1$-forms on $M^\circ$ that have the form 
\begin{equation*}\label{affine-space}
\left[  
\begin{array}{cc}
	\frac{ir_j}{2}&0\\
0&-\frac{ir_j}{2}
\end{array}
\right]d\phi_j
\end{equation*}
for some $r_j\in \mathbb R$ in a neighborhood of any closed component $T_j$ of $T$. Let $\Omega_c^i(M^\circ,\mathfrak g)$ denote the space of all compactly supported $i$-forms on $M^\circ$ with values in the Lie algebra $\mathfrak g$. Then $\widetilde {\Omega}^1(M^\circ,\mathfrak g)$ and $\Omega_c^i(M^\circ,\mathfrak g)$ fit into an exact sequence which is given as the middle column in Figure \ref{diagram-complex}. In this diagram, $\mathfrak c$ denotes the subset of $\{1,\cdots,n\}$ determined by the closed components of $T$.

We may pull back any connection in $\mathcal A_{M,T}(\mathbf{t})$ by a gauge transformation $u:M^\circ\to SU(2)$ of the trivial $SU(2)$ bundle over $M^\circ$, and we define $\mathcal G_{M,T}(\mathbf{t})$ as the group of all such gauge transformations $u$ that map $\mathcal A_{M,T}(\mathbf{t})$ into $\mathcal A_{M,T}(\mathbf{t})$ by taking pullback. This imposes the following constraints on $u$. For an arc component $T_j$ of $T$, $u$ in a neighborhood of $T_j$ is a constant map into $SU(2)$ if $t_j=0$, is of the form $\text{diag}(e^{ 2i\pi r_j},e^{-2i\pi r_j})$ for some $r_j$ if $t_j\in (0,1)$, and is of the form $\text{diag}(e^{-i\theta_j/2},e^{i\theta/2}) \cdot u_0\cdot  \text{diag}(e^{ i\theta/2},e^{-i\theta/2})$ for some $u_0\in SU(2)$ if $t_j=1$. For a closed component $T_j \subset T$, by our assumption we have $t_j\in (0,1)$ and $u$ is of the form
\[
	\text{diag}(e^{ 2i\pi (r_j+m_j\phi_j)},e^{-2i\pi (r_j+m_j\phi_j)})
\]
 in a neighborhood of $T_j$, where $m_j \in \Z$ and $r_j\in \R$. The Lie algebra of the gauge group $\mathcal G_{M,T}(M^\circ;\mathbf{t})$ is the space $\widetilde \Omega^0(M^\circ,\frak{g})$ of $0$-forms on $M^\circ$ with restriction to a neighborhood of $T_j$ for each $j$ a constant map into $\frak{g}$ if $t_j=0$, of the form $\text{diag}(ir_j,-ir_j)$ for some $r_j$ if $t_j\in (0,1)$, and $\text{diag}(e^{ i\theta_j/2},e^{-i\theta_j/2}) \cdot \zeta_j\cdot  \text{diag}(e^{ -i\theta_j/2},e^{i\theta_j/2})$ for some $\zeta_j\in \frak{g}$ if $t_j=1$.

 The space $\mathfrak{X}(M,T;\mathbf{t})$ may be identified with the space of $\bA\in \mathcal A_{M,T}(\mathbf{t})$ such that 
\begin{equation}\label{flat}
  F_{\bA}=0,
\end{equation}
modulo the action of $\mathcal G_{M,T}(\mathbf{t})$. To obtain a smooth space, we need to guarantee that all elements of this moduli space are irreducible and cut out transversely. 

Next, we introduce conditions on $(M,T;\mathbf{t})$ that ensure that every element of $\mathfrak{X}(M,T;\mathbf{t})$ is irreducible. We require that the boundary surface $\Sigma = \partial M$ has two connected components $\Sigma_\pm$ with genera $g_\pm$. Let $\mathcal P_\pm:=\mathcal P\cap \Sigma_\pm$ be given as
\[
  \mathcal P_+=\{x_1,\dots,x_{n_+}\},\hspace{1cm} \mathcal P_-=\{x_1',\dots,x_{n_-}'\},
\]
such that $x_i$ belongs to the connected component $T_{j_i}$ and $x_i'$ belongs to the connected component $T_{k_i}$. In short we say that the boundary of $(M,T;\mathbf{t})$, denoted by $\partial(M,T;\mathbf{t})$, is equal to $(\Sigma_+,\mathcal P_+,\mathbf{t}_{+})\sqcup (\Sigma_-,\mathcal P_-,\mathbf{t}_{-})$, where 
\[
  \mathbf{t}_{+}=(t_{j_1}, \dots, t_{j_{n_+}}),\hspace{1cm}\mathbf{t}_{-}=(t_{k_1}, \dots, t_{k_{n_-}}).
\]
We say $(M,T;\mathbf{t})$ satisfies the non-integral assumption if $\mathbf t_{\pm}$ satisfy the non-integral assumption in the previous sense. In general, restriction to the boundary determines the correspondence 
\begin{equation}
\begin{tikzcd}[column sep=small, row sep=large]
	&\mathfrak{X}(M,T;\mathbf{t}) \arrow[dl, "r_-"'] \arrow[dr, "r_+"] & \\
	M_{g_{-},n_{-}}(\mathbf{t}_{-}) & & M_{g_{+},n_{+}}(\mathbf{t}_{+})
\end{tikzcd}\label{eq:bundlecorrespondence-general}
\end{equation}
In particular, under the non-integral assumption, any element of $\mathfrak{X}(M,T;\mathbf{t})$ is irreducible because its restriction to the boundary is irreducible.

Unlike the 2-dimensional case, $\mathfrak{X}(M,T;\mathbf{t})$ is not necessarily cut out transversely even after we guarantee irreducibility of the elements of this moduli space. To achieve transversality we either need further assumptions on $(M,T;\mathbf{t})$ or to perturb \eqref{flat}. First we discuss a sufficient condition that ensures that the elements of  $\mathfrak{X}(M,T;\mathbf{t})$ are cut out transversely. 
\begin{prop}\label{unperturbed-3-man-Lag}
	Suppose the inclusion map $\Sigma_+\hookrightarrow M$ induces a surjective map from $\pi_1(\Sigma_+\smallsetminus \mathcal P_+)$ to $\pi_1(M\smallsetminus T)$. Then $\mathfrak{X}(M,T;\mathbf{t})$ is a smooth manifold. 	
	Furthermore, the restriction map 
	\begin{equation}\label{r-Lag}
	 r=(r_+,r_-):\mathfrak{X}(M,T;\mathbf{t})\to M_{g_{+},n_{+}}(\mathbf{t}_{+})\times M_{g_{-},n_{-}}(\mathbf{t}_{-})
	\end{equation}
	defines a Lagrangian embedding. 
\end{prop}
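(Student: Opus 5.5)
The plan is to reduce everything to the representation-theoretic picture and then use the deformation complex for the tangent space. Surjectivity of $\pi_1(\Sigma_+\smallsetminus\mathcal P_+)\to\pi_1(M\smallsetminus T)$ shows that $r_+$ is injective on conjugacy classes of representations. Indeed, a representation of $\pi_1(M\smallsetminus T)$ is determined by its pullback to $\pi_1(\Sigma_+\smallsetminus\mathcal P_+)$. Hence $r=(r_+,r_-)$ is injective. Every element is irreducible because of the non-integral assumption on $\mathbf t_+$, as already noted. So $\mathfrak{X}(M,T;\mathbf{t})$ is identified with a subset of $M_+\times M_-$, writing $M_\pm=M_{g_\pm,n_\pm}(\mathbf t_\pm)$. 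Concretely, it is the set of $[\rho_+]\in M_+$ for which $\rho_+$ kills the kernel $K$ of $\pi_1(\Sigma_+^\circ)\to\pi_1(M^\circ)$ and satisfies the meridian conditions on closed components of $T$.

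To get smoothness and the Lagrangian property, I would work with the gauge-theoretic deformation complex of $\mathfrak{X}(M,T;\mathbf{t})$ at a flat $\bA$. This is the analogue of the middle row of Figure \ref{diagram-complex-surface}, using $\widetilde\Omega^0(M^\circ,\frak g)$, $\widetilde\Omega^1(M^\circ,\frak g)$ and $\Omega^2_c(M^\circ,\frak g)$. The plan has three steps.

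\emph{Step one.} Show $H^0_\bA=0$; this is irreducibility.

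\emph{Step two.} Show that the image of $H^1_\bA\to H^1_{A_+}\oplus H^1_{A_-}$ is Lagrangian. This is the standard Atiyah--Bott/Stokes argument. For closed $\alpha,\alpha'$ on $M^\circ$, the integral $\int_{\partial M}\text{tr}(\alpha\wedge\alpha')$ equals $\int_M d\,\text{tr}(\alpha\wedge\alpha')=0$. There are no contributions from the tangle neighborhoods, by the local form of elements of $\widetilde\Omega^1$: on tubes around arcs the forms are compactly supported, and on tubes around closed components the forms are multiples of $d\phi_j$, so their wedge vanishes. Since $\partial M=\Sigma_+\sqcup\overline{\Sigma}_-$ with the reversed orientation, this shows the image is isotropic for $\omega_+\oplus(-\omega_-)$. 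Half-dimensionality then follows from a Poincaré--Lefschetz duality count. One uses the long exact sequence of the pair $(\overline{M^\circ},\partial)$ with twisted coefficients, together with the duality pairing, as in the surface case; the standard output is that the image of $H^1(M)\to H^1(\partial M)$ is half-dimensional. The tangle's peripheral tori and annuli need to be handled with the parabolic boundary conditions, as was done with $\frak h_j$ before.

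\emph{Step three.} Show the moduli space is smooth, i.e. $H^2_\bA$ (the obstruction space) vanishes, or at least the Kuranishi map vanishes. Here I would use the surjectivity hypothesis again. The map $H^1_\bA\to H^1_{A_+}$ should be injective: a deformation of a representation of $\pi_1(M^\circ)$ is determined by its restriction to $\pi_1(\Sigma_+^\circ)$, because a cocycle on a group is determined on generators, and this holds in group cohomology. Thus $\dim H^1_\bA\le\dim M_+$. Combining with the Euler characteristic and duality count from step two should force $H^2_\bA$ to have the expected dimension, namely zero after accounting for boundary terms, giving a smooth manifold of dimension $\tfrac12(\dim M_++\dim M_-)$. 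An alternative route to smoothness: realize $\mathfrak{X}$ as a fiber of the restriction data, i.e. cut out $\mathfrak{X}$ inside $M_+$ by relations from $K$. The injectivity of the differential then says $r_+$ (hence $r$) is an immersion, and injectivity of $r$ plus compactness makes it an embedding.

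The main obstacle I expect is step three, proving that the moduli space is actually cut out transversely rather than just having injective tangent map. The approach is to compare $\dim H^1_\bA$ from below (via the Zariski tangent space equalling the actual local dimension) with $\tfrac12\dim(M_+\times M_-)$ from above. Here the upper bound comes from isotropy. For the lower bound, I would try the standard fact that $\dim H^1-\dim H^2$ equals half the boundary first cohomology, so that the local dimension is at least $\dim H^1-\dim H^2$. This should show $H^2_\bA=0$ once the half-dimensionality is combined with injectivity into a Lagrangian-sized image.
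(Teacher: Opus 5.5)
Your argument is correct, but it reaches the key point, $H^2_{\bA}=0$, by a different route from the paper.

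\textbf{The paper's route.} It proves vanishing directly. The short exact sequence of deformation complexes gives a surjection $H^2_c(M^\circ;\mathrm{ad}_{\bA})\cong H^1(\overline M,\overline\Sigma;\mathrm{ad}_{\bA})\twoheadrightarrow H^2_{\bA}$. The long exact sequence of the pair $(\overline M,\overline\Sigma)$ then kills the source, since $H^0(\overline\Sigma;\mathrm{ad})=0$ by irreducibility and $H^1(\overline M;\mathrm{ad})\to H^1(\overline\Sigma;\mathrm{ad})$ is injective by the $\pi_1$-surjectivity. The dimension $N=\tfrac12\dim(M_+\times M_-)$ is read off afterwards from the Euler characteristic, and isotropy is checked separately by Stokes.

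\textbf{Your route.} You run a squeeze instead. The index computation gives $\dim H^1_{\bA}-\dim H^2_{\bA}=N$. Injectivity of $H^1_{\bA}\to H^1_{A_+}$, together with isotropy of the image in the $2N$-dimensional symplectic space $H^1_{A_+}\oplus H^1_{A_-}$, gives $\dim H^1_{\bA}\le N$. This forces $H^2_{\bA}=0$ and makes the image Lagrangian in one stroke.

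\textbf{Trade-offs.} Your route packages unobstructedness and the Lagrangian condition into a single count. It bypasses the Poincar\'e--Lefschetz identification of $H^2_c$ and the pair sequence, and it makes the separate half-dimensionality count in your step two unnecessary. In exchange, it needs the Stokes computation at the level of Zariski tangent spaces before smoothness is known. It also needs the full parabolic index formula with the $\mathfrak{h}_j$ corrections, which the paper proves as well.

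\textbf{Points to tidy.}
\begin{itemize}
\item The bound $\dim H^1_{\bA}\le\dim M_+$ that you write first is too weak by itself. It is isotropy in the product that yields $N$.
\item Drop the remarks about local dimension. Invoking ``the Zariski tangent space equalling the actual local dimension'' is circular. Also, a real Kuranishi model need not have local dimension at least $\dim H^1-\dim H^2$; think of $x^2+y^2=0$. None of this is needed, since $\dim H^2_{\bA}\ge 0$ already gives $\dim H^1_{\bA}\ge N$ from the index formula.
\item Passing from $H^2_{\bA}=0$ to smoothness requires the usual device, because $d_{\bA}:\widetilde\Omega^1\to\Omega^2_c$ is never onto. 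The paper handles this with Herald's projected curvature $\Pi_{\bA_0}\circ F_{\bA}$ and the Bianchi identity.
\end{itemize}
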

\begin{proof}
	Fix a connection $\bA_0$ satisfying $F_{\bA_0}=0$. The linearization of $F_{\bA}$ as a map from $ \mathcal A_{M,T}(\mathbf{t})$ to $\Omega^2_c(M^\circ,\frak{g})$ at $\bA_0$ is given by 
	\[
	    d_{\bA_0}:\widetilde  \Omega^1(M^\circ,\frak{g})\to  \Omega^2_c(M^\circ,\frak{g}).
	\]
	However, this map is not surjective even under the given assumption on fundamental groups. Following \cite{herald}, we replace this map as follows. Let $\Pi_{\bA_0}$ be the projection map $\Omega^2_c(M^\circ,\frak{g})$ to the kernel of 
	${\rm ker}(d_{\bA_0}):\Omega^2_c(M^\circ,\frak{g}) \to \Omega^3_c(M^\circ,\frak{g})$. Then 
	\begin{equation}\label{flat-proj}
	  	\Pi_{\bA_0}\circ F_{\bA}:\mathcal A_{M,T}(\mathbf{t}) \to \Omega^2_c(M^\circ,\frak{g})\cap \ker (d_{\bA_0})
	\end{equation}
	and $F_{\bA}$ have the same zeroes in a small enough neighborhood of $\bA_0$.
	To see this, note that $d_{\bA}F_{\bA}=0$ by the Bianchi identity. Thus if $\Pi_{\bA_0}\circ F_{\bA}=0$, then $F_\bA$ belongs to $\ker(d_{\bA})$ and is orthogonal to 
	$\ker(d_{\bA_0})$, which is impossible if $F_{\bA}\neq 0$ and $\bA$ is close enough to $\bA_0$. 	To analyze the behavior of $\mathfrak{X}(M,T;\mathbf{t})$ in a neighborhood of $[\bA_0]$, we may thus consider the linearization of 
	\eqref{flat-proj} instead of $F_{\bA}$. This linearization is given by the map 
	\[
		d_{\bA_0}:\widetilde  \Omega^1(M^\circ,\frak{g}) \to \Omega^2_c(M^\circ,\frak{g})\cap {\rm ker}(d_{{\bA_0}}),
	\]
	which appears in the middle row of the diagram in 
	Figure \ref{diagram-complex}. If this operator is surjective for any $[\bA_0]$, then $\mathfrak{X}(M,T;\mathbf{t})$ 
	is a smooth manifold. 
	 
        \begin{figure}[t]
            \centering
                \begin{tikzcd}[column sep=large, row sep=large]
                & 0 \arrow[d] & 0 \arrow[d] & 0 \arrow[d] &\\ 
                0\arrow[r] &\Omega_c^0(M^\circ,\frak{g})\arrow[d]\arrow[r, "d_{{\bA_0}}"] & 
                  \Omega^1_c(M^\circ,\frak{g}) \arrow[d] \arrow[r, "d_{{\bA_0}}"] &   \Omega^2_c(M^\circ,\frak{g})\cap {\rm ker}(d_{{\bA_0}})\arrow[r] \arrow[d, equal]&0\\
                0\arrow[r] &\widetilde \Omega^0(M^\circ,\frak{g})\arrow[r, "d_{{\bA_0}}"] \arrow[d]& 
                  \widetilde  \Omega^1(M^\circ,\frak{g}) \arrow[r, "d_{{\bA_0}}"] \arrow[d]&   \Omega^2_c(M^\circ,\frak{g})\cap {\rm ker}(d_{{\bA_0}})\arrow[r]\arrow[d]&0\\
               0\arrow[r] & \displaystyle \bigoplus_{1\le j\le n} \mathfrak{h}_j \arrow[r, "0"]  \arrow[d] &\displaystyle \bigoplus_{j\in \mathfrak c} \mathfrak{h}_j \arrow[r] \arrow[d] & 0 \arrow[d]\arrow[r] &0\\
                & 0  & 0  & 0 &
            \end{tikzcd}
            \caption{}
            \label{diagram-complex}
        \end{figure}

	The other non-trivial map in the middle of Figure \ref{diagram-complex} is the linearization of the action of the gauge group. In particular, this row 
	gives a deformation complex for $\mathfrak{X}(M,T;\mathbf{t})$ in a neighborhood of $[\bA_0]$.
	Let $H^i_{\bA_0}$ denote the cohomology groups of this complex. Since $\bA_0$ is irreducible, $H^0_{\bA_0}$ is trivial, and we wish to show that $H^2_{\bA_0}$ is also trivial. 
	Each row of the diagram in Figure \ref{diagram-complex} defines a cochain complex. The cohomology groups of the complex in the first row can be identified with
	the cohomology with compact support group $H^i_c(M^\circ;{\rm ad}_{\bA_0})$. 
	If $\overline M$ denotes the complement of a tubular neighborhood $N(T)$ of $T$ in $M$ and $S(T)$ denotes the boundary of 
	$N(T)$ minus the disk neighborhoods of $\mathcal P$ in $\Sigma$, then $H^i_c(M^\circ;{\rm ad}_{\bA_0})$ can be identified with $H^i(\overline M,S(T);{\rm ad}_{\bA_0})$. In particular, the boundary of $\overline M$ is the union of $S(T)$ and 
	$\overline \Sigma$, and $H^i_c(M^\circ;{\rm ad}_{\bA_0})$ is isomorphic to $H^{3-i}(\overline M,\overline \Sigma;{\rm ad}_{\bA_0})$ by Poincar\'{e}-Lefschetz duality. 
	
	The vertical arrows of the diagram in Figure \ref{diagram-complex} define an exact sequence of cochain complexes. The associated exact sequence of cohomology groups implies that there is a surjective map
	\begin{equation}\label{surj-tangent}
	  H^2_c(M^\circ;{\rm ad}_{\bA_0})\cong H^{1}(\overline M,\overline \Sigma;{\rm ad}_{\bA_0}) \twoheadrightarrow H^2_{\bA_0}
	\end{equation}	
	and induces the following relation at the level of Euler characteristics:
	\begin{equation}\label{dim-Lag}
	  \dim H^1_{\bA_0}-\dim H^2_{\bA_0}=-\sum_{j\notin \mathfrak c}\dim \mathfrak h_j-\sum_{i=0}^2 (-1)^{i} \dim H^{i}(\overline M,S(T);{\rm ad}_{\bA_0}).
	\end{equation}
	The exact sequence for the pair $(\overline M,\overline \Sigma)$ with twisted coefficients is given by
	\[
	  \begin{tikzcd}
                \dots\arrow[r] &H^i(\overline M,\overline \Sigma;{\rm ad}_{\bA_0})\arrow[r] & 
                H^i(\overline M;{\rm ad}_{\bA_0})\arrow[r] &H^i(\overline \Sigma;{\rm ad}_{A_0}) \arrow[r] & \dots
            \end{tikzcd}
	\]
	where $A_0$ is the restriction of $\bA_0$ to the boundary. Since $\bA_0$ and $A_0$ are irreducible, all the cohomology groups in this sequence with $i=0$ vanish. Note that $\overline \Sigma$ has the connected components $\overline \Sigma_+$, 
	$\overline \Sigma_-$, and our assumption implies that the inclusion of $\overline \Sigma_+$ in $\overline M$ induces a surjective map of fundamental groups. In particular, the map from $H^1(\overline M;{\rm ad}_{\bA_0})$ to $H^1(\overline \Sigma;{\rm ad}_{A_0})$ is injective.
	Consequently, $H^1(\overline M,\overline \Sigma;{\rm ad}_{\bA_0})$ is trivial, and the surjection in \eqref{surj-tangent} implies that $H^2_{\bA_0}$ is trivial. Thus, the moduli space $\mathfrak{X}(M,T;\mathbf{t})$ is a smooth manifold of dimension
	\begin{equation*}\label{dim-Lag-2}
	  \dim H^1_{\bA_0}=-\sum_{j\notin \mathfrak c}\dim \mathfrak h_j-\sum_{i=0}^3 (-1)^{i} \dim H^{i}(\overline M,S(T);{\rm ad}_{\bA_0}).
	\end{equation*}
	This identity follows from \eqref{dim-Lag}, and the observation $H^{3}(\overline M,S(T);{\rm ad}_{\bA_0})\cong H^{0}(\overline M,\overline \Sigma;{\rm ad}_{\bA_0})$ is trivial. 
	The second sum on the right hand side is the Euler characteristic for the cohomology groups of the pair $(\overline M,S(T))$ with coefficients in the 
	flat rank three bundle ${\rm ad}_{\bA_0}$. In particular, this Euler characteristic is equal to $3\chi(\overline M,S(T))$, and thus
	\begin{equation}\label{dim-Lag-2.5}
	  \dim H^1_{\bA_0}=-\sum_{j\notin \mathfrak c}\dim \mathfrak h_j-3\chi(\overline M,S(T)).
	\end{equation}
	where $\chi(\overline M,S(T))$ denotes the Euler characteristic of the pair $(\overline M,S(T))$,
	In the following, we use a similar notation for Euler characteristics of pairs. 
	
	The exact sequence of cohomology groups for the triple 
	$(\overline M,\partial \overline M,S(T))$ and the excision isomorphism $H^i(\partial \overline M,S(T))\cong H^i(\overline \Sigma,\partial \overline \Sigma)$ give rise to the exact sequence
	\[
	  \begin{tikzcd}
                \dots\arrow[r] & 
                H^i(\overline M,\partial \overline M)\arrow[r] &H^i(\overline M,S(T)) \arrow[r] & H^{i}(\overline \Sigma,\partial \overline \Sigma)\arrow[r] &\dots
            \end{tikzcd}
	\]
	This exact sequence implies that 
	\[
	  \chi(\overline M,S(T))=\chi(\overline \Sigma,\partial \overline \Sigma)+\chi(\overline M,\partial \overline M)
	\]
	A similar relation at the level of Euler characteristics for the exact sequence associated to the pair $(\overline M,\partial \overline M)$ together with Poincar\'e duality isomorphism $H^{i}(\overline M,\partial \overline M)\cong H^{3-i}(\overline M)$
	implies that 
	\begin{align*}
	  \chi(\overline M,\partial \overline M)&=-\frac{1}{2} \chi(\partial \overline M)=-\frac{1}{2} \chi(\overline \Sigma,\partial \overline \Sigma)\label{euler-char-eq}
	\end{align*}
	The second identity follows easily from the fact that $S(T)$ is a union of cylinders. Thus, we obtain
	\[
	  \chi(\overline M,S(T))=\frac{1}{2} \chi(\overline \Sigma,\partial \overline \Sigma)
	\]	
	Combining this identity and \eqref{dim-Lag-2.5} implies that the dimension of $\mathfrak{X}(M,T;\mathbf{t})$ is exactly half of the dimension of $M_{g_{+},n_{+}}(\mathbf{t}_{+})\times M_{g_{-},n_{-}}(\mathbf{t}_{-})$.
	
	The assumption on fundamental groups implies that the map $r_+:\mathfrak{X}(M,T;\mathbf{t})\to M_{g_{+},n_{+}}(\mathbf{t}_{+})$ is an injection. Furthermore, the map $r_+$ is an immersion. To see this, let $\bA_0$ be as above and
	 let $A_0^+$ denote its restriction to $\Sigma_+$. Then the derivative of $r_+$ determines a homomorphism 
	 \begin{equation}\label{r+*}
	 	 (r_+)_*:H^1_{\bA_0}\to H^1_{A_0^+},
	\end{equation} 
	which is induced by the restriction of elements of $\widetilde  \Omega^1(M^\circ,\frak{g})$ to 
	 the boundary component $\Sigma_+$ of $M$. Let $\zeta\in \widetilde  \Omega^1(M^\circ,\frak{g})$ represent an element of  $\smash{H^1_{\bA_0}}$ such that its restriction to $\Sigma_+$ represents the trivial element in $\smash{H^1_{A_0^+}}$.
	This implies that $\smash{d_{\bA_0}\zeta=0}$ and there is $\lambda\in \widetilde \Omega^0(\Sigma_+,\frak{g})$ such that $\smash{d_{A_0^+}\lambda}$ is equal to the restriction of $\zeta$ to $\Sigma_+$. 
	Now using the assumption on the fundamental groups and invoking a similar argument as in the proof of \cite[Lemma 4.23]{knot-complement-problem-nullhomotopic},  
	we may extend $\lambda$ into an element of 
	$\lambda\in \widetilde \Omega^0(M^\circ,\frak{g})$ that maps to $\zeta$ via $d_{\bA_0}$. In particular, $\zeta$ represents the trivial element in $H^1_{\bA_0}$. This shows that \eqref{r+*} is an injection. Thus, the map $r$ in 
	\eqref{r-Lag} defines an embedding. 
	
	To complete the proof, we need to show that $\mathfrak{X}(M,T;\mathbf{t})$ is an isotropic submanifold. Let 
	$\hat \alpha, \hat \alpha'\in \widetilde  \Omega^1(M^\circ,\frak{g})$ represent elements of $H^1_{\bA_0}$,
	and $\alpha=r_*(\hat \alpha)$, $\alpha'=r_*(\hat \alpha')$ denote their restriction to the boundary of $M$. Then by the Stokes theorem we have
	\begin{align*}
		\omega(\alpha,\alpha') 
		&= -\frac{1}{4\pi^2}\int_{\Sigma}\text{tr}(\alpha \wedge \alpha')\\[2mm]
		&= -\frac{1}{4\pi^2}\int_{M} d\,\text{tr}(\hat \alpha \wedge \hat \alpha')\\[2mm]
		&= -\frac{1}{4\pi^2}\int_{M} \text{tr}(d_{\bA_0} \hat \alpha \wedge \hat \alpha'-\hat \alpha \wedge d_{\bA_0} \hat \alpha')=0.
	\end{align*}
	This shows that $r$ maps the tangent space $H^1_{\bA_0}$ of  $\mathfrak{X}(M,T;\mathbf{t})$ into an isotropic subspace of the tangent space of $M_{g_{+},n_{+}}(\mathbf{t}_{+})\times M_{g_{-},n_{-}}(\mathbf{t}_{-})$.
\end{proof}

The above discussion can be generalized to the case that some of the closed components of $T$ have holonomy parameters $0$ and $1$. First we remove any component $T_i$ of $T$ with $t_i=0$ as it does not affect the character variety $\mathfrak{X}(M,T;\mathbf{t})$. If $w$ is the union of the closed components $T_i$ of $T$ with $t_i=1$, then we may form a $PU(2)$-bundle $P$ over $M$ with $w_2(P)$ represented by the Poincar\'e dual of $w$.  Then we repeat the above discussion after replacing $T$ with $T\smallsetminus w$, the trivial $SU(2)$-bundle with the bundle $P$ over the complement of a tubular neighborhood of $T\smallsetminus w$, and the gauge group $\mathcal G_{M,T}(\mathbf{t})$ with the determinant one gauge group. This gauge group consists of sections of the fiber bundle associated to $P$ corresponding to the adjoint action of $PU(2)$ on $SU(2)$ that satisfies similar constraints as above in tubular neighborhoods of the components of $T\smallsetminus w$. The gauge equivalence classes of flat connections can be still identified with $\mathfrak{X}(M,T;\mathbf{t})$. The analogue of Proposition \ref{unperturbed-3-man-Lag} then holds with the assumption that the inclusion of $\Sigma_+$ in $M$ induces a surjective map of $\pi_1(\Sigma_+\smallsetminus \mathcal P_+)$ into $\pi_1(M\smallsetminus (T\cup w) )$. For the rest of this section, we use a similar modification for the closed components of $T$ with holonomy parameters $0$ and $1$.

\subsection{Holonomy perturbations}\label{hol-pert}

For a general $(M,T;\mathbf{t})$, with no assumptions on the fundamental groups, the claim in Proposition \ref{unperturbed-3-man-Lag} does not necessarily hold, and we need to perturb the defining equation of $\mathfrak{X}(M,T;\mathbf{t})$ in order to guarantee smoothness. We next discuss the holonomy perturbations that may be used to perturb $\mathfrak{X}(M,T;\mathbf{t})$. Our treatment follows \cite{donaldson-book} adapted to case of non-closed 3-manifolds; some other references are \cite{ floer-zhs3, taubes-casson, braam-donaldson, herald, yaft}.

For a connection $\bA\in \mathcal A_{M,T}(\mathbf{t})$ and an embedded loop $\gamma$ in the interior of $M^\circ$, recall that $\text{tr}_{\gamma}(\bA)$ denotes the trace of the holonomy of $\bA$ along $\gamma$. Fix one such embedded loop $\gamma$, and extend this to an embedding of $S^1\times D^2$ into $\text{int}(M^\circ)$, and for $z\in D^2$ write $\gamma^z$ for the restriction of the embedding to $S^1\times \{z\}$. In particular, $\gamma=\gamma^0$. Let $\nu$ be a smooth positive $2$-form on $D^2$ with integral $1$. Define
\begin{equation}\label{holonomy-map-cylinder}
    \sigma_\gamma(\bA) = \int_{D^2} \text{tr}_{\gamma^z}(\bA) \nu(z)
\end{equation}
A priori, $\sigma$ depends on the extension of the embedding of $\gamma$ and the $2$-form $\nu$, but this is suppressed from the notation. Note that $\sigma_\gamma$ is invariant with respect to the action of $\mathcal G_{M,T}(\mathbf{t})$ and may be viewed as a real-valued function on the configuration space $\mathcal A_{M,T}(\mathbf{t})/\mathcal G_{M,T}(\mathbf{t})$.

More generally, given $\Gamma = \{\gamma_1,\ldots,\gamma_k\}$, where each $\gamma_i$ is an embedded loop in the interior of $M^\circ$, and a smooth function $\eta:\R^k\to \R$, we may form the gauge-invariant function
\[
    f_{\eta,\Gamma}(\bA)  = \eta( \sigma_{\gamma_1}(\bA), \ldots, \sigma_{\gamma_k}(\bA)).
\]
Such functions are called {\emph{admissible}} in \cite[\S 5.5]{donaldson-book}. A similar computation as in \eqref{eq:computehamiltonian} shows that for a tangent vector $\alpha\in \widetilde  \Omega^1(M^\circ,\frak{g})$ to $\mathcal A_{M,T}(\mathbf{t})$, the derivative of $f_{\eta,\Gamma}$ in the direction of $\alpha$ is
\begin{equation}\label{eq:derivativeoffetaGamma}
     (df_{\eta,\Gamma})_{\bA}(\alpha) = \int_{M} \text{tr}(\zeta_{\eta,\Gamma}(\bA) \alpha ),
\end{equation}
\begin{equation*}\label{zeta-etaGamma-A}
	\zeta_{\eta,\Gamma}(\bA)=-\sum_{i=1}^k \partial_i\eta(\text{tr}_{\gamma_1}({\bA}),\ldots,\text{tr}_{\gamma_k}({\bA})) T_{{\bA},i} \nu_i,
\end{equation*}
where $\nu_i$ is the $2$-form supported in the solid torus neighborhood of $\gamma_i$ obtained from the form $\nu$ on $D^2$ pulled back to $S^1\times D^2$, and $T_{{\bA},i}$ is the section of the adjoint bundle over the solid torus neighborhood of $\gamma_i$ whose restriction over each $\gamma_i^z$ is the section $T_{{\bA},\gamma_i^z}$ as described earlier in \eqref{eq:derivativeoftracehol}. Explicitly, the value of $T_{{\bA},\gamma_i^z}$ at $p=(t,z)\in S^1\times D^2$ is $\Pi(\textup{hol}_{\gamma_i^z}({\bA}))$ where $\Pi$ is defined in \eqref{eq:piprojmap} and the holonomy is viewed as an element of $SU(2)$. Since $\zeta_{\eta,\Gamma}(\bA)$ is a 2-form with values in the adjoint bundle and thus, it can be used to perturb \eqref{flat}, leading to the following equation:
\begin{equation}\label{eq:perturbedflat}
    F_{\bA} = \sum_{i=1}^k \partial_i\eta(\text{tr}_{\gamma_1}({\bA}),\ldots,\text{tr}_{\gamma_k}({\bA})) T_{{\bA},i} \nu_i.
\end{equation}
The set of gauge-equivalence classes of solutions to \eqref{eq:perturbedflat} is denoted by
\[
    \mathfrak{X}_{\eta,\Gamma}(M,T;\mathbf{t}).
\]
We refer to $(\Gamma,\eta)$ as the perturbation data of $\mathfrak{X}_{\eta,\Gamma}(M,T;\mathbf{t})$. The restriction of an element of $\mathfrak{X}_{\eta,\Gamma}(M,T;\mathbf{t})$ to the boundary is still flat because $\zeta_{\eta,\Gamma}(\bA)$ is trivial on the boundary of $M$. For example, assuming the boundary of $(M,T;\mathbf{t})$ is equal to $(\Sigma_+,\mathcal P_+,\mathbf{t}_{+})\sqcup (\Sigma_-,\mathcal P_-,\mathbf{t}_{-})$ with notation as in the previous subsection, restriction induces a map 
	\begin{equation*} 
	 r=(r_+,r_-):\mathfrak{X}_{\eta,\Gamma}(M,T;\mathbf{t})\to M_{g_{+},n_{+}}(\mathbf{t}_{+})\times M_{g_{-},n_{-}}(\mathbf{t}_{-})
	\end{equation*}
which generalizes the map \eqref{r-Lag} considered in the unperturbed case.

In the case that the loops $\gamma_i\in \Gamma$ are pairwise disjoint (with the thickened embeddings chosen disjointly), there is also a representation-theoretic description of solutions to \eqref{eq:perturbedflat}. In the following statement, the longitude of $\gamma_i$ is induced by the framing given by its extended embedding from $S^1\times D^2$, and $\Gamma$ is conflated with the union of the images of the $\gamma_i$ in $M$.  

\begin{prop}\label{prop:perturbedcriticalset}
    Suppose $\Gamma$ consists of pairwise disjoint embedded loops. Then the set of solutions to \eqref{eq:perturbedflat} modulo gauge is in bijection with the set of conjugacy classes of homomorphisms 
    \[
        \rho:\pi_1(M\smallsetminus (\Gamma\cup T)) \to SU(2)
    \]
    which satisfy the following, where $\mu_j$, $\lambda_j$ are the meridian and longitude for $\gamma_j$:
\begin{equation}\label{eq:perturbedcriticalsetconditions}
        \rho(\mu_j) = \textup{exp}\left( \partial_{j} \eta \cdot  \pi(\rho(\lambda_j) ) \right) \qquad (1\leq j \leq k).
    \end{equation}
 Furthermore, $\rho$ is required to map the conjugacy class of a meridian of $T_i$ to the conjugacy class $\Gamma(t_i)\subset SU(2)$ for all $1\leq i\leq m$.
\end{prop}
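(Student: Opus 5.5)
The plan is to analyze a solution $\bA$ of \eqref{eq:perturbedflat} separately on the complement of the solid tori $N_j\cong S^1\times D^2$ around the $\gamma_j$ and inside each $N_j$. Then we reassemble the pieces into a representation of $\pi_1(M\smallsetminus(\Gamma\cup T))$. Outside $\bigcup_j N_j$ the right side of \eqref{eq:perturbedflat} vanishes, so $\bA$ is flat there and has the prescribed singular behaviour near $T$. Its holonomy therefore gives a representation of $\pi_1$ of the complement of $T\cup\bigcup_j N_j$, which is $\pi_1(M\smallsetminus(\Gamma\cup T))$, with meridians of $T_i$ sent into $\Gamma(t_i)$. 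This is the same standard identification used for $\mathfrak{X}(M,T;\mathbf{t})$ above.

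The main step is the local analysis on a single $N_j$, in coordinates $(s,z)\in S^1\times D^2$ and with $c_j:=\partial_j\eta(\mathrm{tr}_{\gamma_1}(\bA),\ldots,\mathrm{tr}_{\gamma_k}(\bA))$. We put $\bA$ in a temporal-type gauge in which the $z$-direction is handled via radial gauge from a point. Because $\nu_j$ is pulled back from $D^2$, the curvature has no $ds\wedge dz$ components. Hence parallel transport along the $S^1$ factor commutes with transport in the disk directions, so the holonomies $h(z)=\mathrm{hol}_{\gamma_j^z}(\bA)$ are all conjugate by transport. As a consequence, $\mathrm{tr}_{\gamma_j^z}$ is independent of $z$; this justifies writing $\mathrm{tr}_{\gamma_j}$ inside $\partial_j\eta$, and $c_j$ is a constant.

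The section $T_{\bA,j}=\Pi(h)$ is then covariantly constant, by the same reasoning as in Proposition~\ref{prop:hamcomp}. The curvature on each disk $\{s\}\times D^2$ is therefore $c_j\Pi(h)\nu$ with $\Pi(h)$ parallel, so it is an abelian curvature in the direction of $\Pi(h)$. Integrating over the disk, the holonomy around its boundary, which is the meridian $\mu_j$, based at a point of $\gamma_j$ and framed by parallel transport, is $\exp(c_j\,\Pi(h))$ because $\int\nu=1$. The sign and orientation conventions must be matched against the convention fixed for $\mu_j$, and we would fix them to agree with Proposition~\ref{prop:hamcomphol}. The boundary longitude $\lambda_j$ is $\gamma_j^z$ for $z\in\partial D^2$, which has the same holonomy $h$ up to this transport. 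This gives \eqref{eq:perturbedcriticalsetconditions}, with the $\pi$ there read as the map $\Pi$.

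For the converse, we start from $\rho$ satisfying \eqref{eq:perturbedcriticalsetconditions}. The elements $\rho(\mu_j)$ and $\rho(\lambda_j)$ commute, and they determine a flat connection on $M\smallsetminus(T\cup\bigcup N_j)$ up to gauge. We extend it over each $N_j$ by an explicit model: a connection of the form $\Pi(h)\,a(r,\theta)$ plus a flat $S^1$-part with holonomy $h$, where $da=c_j\nu$. This model solves \eqref{eq:perturbedflat} and matches on $\partial N_j$, since abelian connections on a torus with the same holonomy are gauge equivalent. The subtle point is that $c_j$ itself depends on traces of $\bA$, but these traces equal $\mathrm{tr}\,\rho(\lambda_j)$, so the condition is self-consistent. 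The two constructions are mutually inverse on gauge classes because gauge transformations correspond to conjugation together with the allowed gauge changes near $T$.

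I expect the main obstacle to be the local step inside $N_j$: showing that the curvature is reducible to the $\Pi(h)$-direction, and fixing orientation conventions so that the exponent has the stated sign.
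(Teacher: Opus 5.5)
Your local analysis on $N_j$ is exactly the Floer/Braam--Donaldson argument the paper defers to. The ingredients are: no $ds\wedge dz$ components of $F_{\bA}$; hence $z$-independence of the longitudinal holonomy up to transport; a parallel section $\Pi(h)$; reduction to the circle fixing it; and Stokes for the meridian. Note that parallelism of $\Pi(h)$ here comes from the vanishing of the mixed curvature components, not from flatness as in Proposition~\ref{prop:hamcomp}. Your forward direction and the existence half of the converse are fine. The sign in the exponent depends only on the orientation convention for $\mu_j$, which the paper does not pin down either.

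What is not justified is injectivity. Your remark that gauge transformations correspond to conjugation only concerns the flat part on $M\smallsetminus(T\cup\bigcup_j N_j)$. To conclude that the two constructions are mutually inverse, you also need the following: a solution on $N_j$ is determined, up to a gauge transformation equal to the identity near $\partial N_j$, by its restriction to a collar of $\partial N_j$. Without this, two non-gauge-equivalent solutions could have the same exterior holonomy.

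This follows from what you already proved. Let $\bA,\bA'$ be solutions whose exterior representations are conjugate. Extend the exterior gauge transformation over each $N_j$, which is possible since $\pi_1 SU(2)=\pi_2 SU(2)=0$. We may then assume $\bA=\bA'$ on a collar of $\partial N_j$. Fix $x_0$ on a boundary longitude $\lambda_j$ in the collar and set $u(x)=P^{\bA}_{x_0\to x}\circ (P^{\bA'}_{x_0\to x})^{-1}$.

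This $u$ is path-independent. For a loop $\ell$ at $x_0$, write $\ell-m\lambda_j=\partial\Sigma$ with $\Sigma$ a $2$-chain in $N_j$. Both connections preserve the circle stabilizing the common value $\Pi(h)_{x_0}$, which is common because $\lambda_j$ lies in the collar. Both have curvature $c_j\Pi(h)\nu$ with the same constant $c_j$, since the traces agree. The abelian Stokes formula then gives
\[
\mathrm{hol}(\ell)=\mathrm{hol}(\lambda_j)^m\exp\Bigl(\pm c_j\,\Pi(h)_{x_0}\int_\Sigma\nu\Bigr)
\]
with the same right-hand side for $\bA$ and $\bA'$. The right-hand side is well defined because $\nu$ is closed and $H_2(N_j)=0$. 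When $\Pi(h)=0$, both connections are flat on $N_j$ and this reduces to $\pi_1(\partial N_j)\to\pi_1(N_j)$ being onto.

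Hence $u$ is a well-defined gauge transformation carrying $\bA'$ to $\bA$ and equal to the identity on the collar. This gives injectivity and completes the bijection.
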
  

\noindent This proposition in the case of closed 3-manifolds and empty tangles is essentially due to Floer, see \cite[\S 3]{floer-dehn} and \cite[Lemma 4]{braam-donaldson}. The same proof carries through without change to give a proof of Proposition \ref{prop:perturbedcriticalset}. An immediate consequence of this proposition is that, at least in the case that the loops in $\Gamma$ are disjoint, $\mathfrak{X}_{\eta,\Gamma}(M,T;\mathbf{t})$ is independent of the choice of $\nu$ and the extensions of the embeddings of the loops in $\Gamma$.

For a representation $\rho$ as in Proposition \ref{prop:perturbedcriticalset}, $\rho(\mu_j)$ and $\rho(\lambda_j)$ commute, and by an $SU(2)$ conjugation can be simultaneously diagonalized. We may assume this from the start:
\[
    \rho(\mu_j) = \left[\begin{array}{cc} e^{i \tau_j} &  0 \\ 0 & e^{-i\tau_j} \end{array}\right], \qquad \rho(\lambda_j) = \left[\begin{array}{cc} e^{i \sigma_j} &  0 \\ 0 & e^{-i\sigma_j} \end{array}\right].
\]
Then $\Pi(\rho(\lambda_j))$ is the diagonal matrix with entries $i\sin(\sigma_j)$, $-i\sin(\sigma_j)$. Using $\text{tr}_{\gamma_j}(\rho)=2\cos(\sigma_j)$ and $\partial_j\eta = \partial_j \eta(\text{tr}_{\gamma_1}(\rho),\ldots,\text{tr}_{\gamma_k}(\rho))$, equality \eqref{eq:perturbedcriticalsetconditions} yields
\[
    \tau_j  \equiv -\frac{1}{2}\frac{\partial}{\partial \sigma_j} \left( \eta(2\cos(\sigma_1),\ldots,2\cos(\sigma_k)\right) \pmod{2\pi}
\]

We end this subsection by a remark on the behavior of perturbed character varieties with respect to composing tangles. Let $(M,T;\mathbf{t})$ and $(M',T';\mathbf{t}')$ satisfy the non-integral assumption and
\begin{align*}
  \partial(M,T;\mathbf{t}) & =(\Sigma_-,\mathcal P_-,\mathbf{t}_{-})\sqcup (\Sigma,\mathcal P,\mathbf{s}), \\[2mm]
  \partial(M',T';\mathbf{t}') &=(-\Sigma,-\mathcal P,\mathbf{s})\sqcup (\Sigma_+,\mathcal P_+,\mathbf{t}_+).
\end{align*}
Then we have character varieties associated to $(M,T;\mathbf{t})$ and $(M',T';\mathbf{t}')$. More generally, let $(\eta, \Gamma)$ and $(\eta', \Gamma')$ be perturbation data for these tuples. Then we may have correspondences
\begin{equation*}
\begin{tikzcd}[column sep=small, row sep=large]
	&\mathfrak{X}_{\eta,\Gamma}(M,T;\mathbf{t}) \arrow[dl, "r_-"'] \arrow[dr, "r_+"] &&\mathfrak{X}_{\eta',\Gamma'}(M',T';\mathbf{t}') \arrow[dl, "r_-'"'] \arrow[dr, "r_+'"]&  \\
	M_{g_{-},n_{-}}(\mathbf{t}_{-}) & & M_{g,n}(\mathbf{s})& & M_{g_{+},n_{+}}(\mathbf{t}_{+})
\end{tikzcd}\label{eq:bundlecorrespondence-general-two}
\end{equation*}
Here $M_{g_{\pm},n_{\pm}}(\mathbf{t}_{\pm})$ and $M_{g,n}(\mathbf{s})$ are respectively character varieties associated to $(\Sigma_\pm,\mathcal P_\pm,\mathbf{t}_{\pm})$ and $(\Sigma,\mathcal P,\mathbf{s})$. To be more precise, the target of $r_-'$ is the character variety associated to $(-\Sigma,-\mathcal P,\mathbf{s})$, which as a symplectic manifold may be identified as $M_{g,n}(\mathbf{s})$ with the negative of its symplectic structure. Glue $(M,T)$ and $(M',T')$ along their common boundary component $(\Sigma,\mathcal P)$ to obtain 
\[
  (M_\#,T_\#)=(M,T) \cup_{(\Sigma,\mathcal P)}(M',T'),
\] 
and note that the holonomy parameters $\mathbf{t}$, $\mathbf{t}'$ induce a holonomy parameter $\mathbf{t}_\#$ for $(M_\#,T_\#)$ which satisfies the non-integral assumption. We obtain perturbation data $(\eta_\#, \Gamma_\#)$ for $(M_\#,T_\#;\mathbf{t}_\#)$ from $(\eta, \Gamma)$ and $(\eta', \Gamma')$. Then we have the following relation between the correspondences:
\begin{equation}\label{corr-relation}
	\mathfrak{X}_{\eta_\#,\Gamma_\#}(M_\#,T_\#;\mathbf{t}_\#)=\mathfrak{X}_{\eta,\Gamma}(M,T;\mathbf{t})\times_{M_{g,n}(\mathbf{s})}\mathfrak{X}_{\eta',\Gamma'}(M',T';\mathbf{t}'),
\end{equation}
where the right hand side denotes the fiber product over $M_{g,n}(\mathbf{s})$. The identification in \eqref{corr-relation} is compatible with the correspondence maps into $M_{g_{-},n_{-}}(\mathbf{t}_{-})$ and $M_{g_{+},n_{+}}(\mathbf{t}_{+})$. Furthermore, \eqref{corr-relation} does not require that the 3-manifold character varieties are cut out transversely.

\subsection{Approximating Hamiltonian diffeomorphisms by holonomy perturbations}
As a particular case of interest, let $M=[-1,1]\times \Sigma$ where $\Sigma$ has genus $g$, and $T=[-1,1]\times \mathcal{P}$ where $\mathcal{P}=\{p_1,\ldots,p_n\}$ are distinct points on $\Sigma$; we abbreviate $(M,T)=[-1,1]\times \Sigma_{g,n}$. Choose $\mathbf{t}\in [0,1]^n$ satisfying the non-integral assumption to prescribe the singular holonomy conditions around meridians of the arcs $[-1,1]\times \{p_j\}$ for $1\leq j \leq n$. There is a natural identification
\[
    \mathfrak{X}([-1,1]\times \Sigma_{g,n};\mathbf{t}) = M_{g,n}(\mathbf{t})
\]
by restriction of flat connections to any slice $\{s\}\times \Sigma_{g,n}$. 

Next suppose $\Gamma=\{\gamma_1,\ldots,\gamma_k\}$ is a multicurve on $\{0\}\times\Sigma_{g,n}$ and $\eta:\R^k\to \R$ is some smooth function. Following the general scheme, we form the perturbed-flat moduli space by using solid torus neighborhoods of the $\gamma_i$ which are pairwise disjoint and disjoint from the arcs $[-1,1]\times \{p_j\}$ and the boundary $\{\pm 1\}\times \Sigma$. We obtain a correspondence
\begin{equation}
\begin{tikzcd}[column sep=small, row sep=large]
& \mathfrak{X}_{\eta,\Gamma}([-1,1]\times \Sigma_{g,n};\mathbf{t}) \arrow[dl, "r_-"'] \arrow[dr, "r_+"] & \\
M_{g,n}(\mathbf{t}) & & M_{g,n}(\mathbf{t})
\end{tikzcd}\label{eq:bundlecorrespondence}
\end{equation}
where $r_{\pm}$ sends a perturbed-flat connection to its restriction on $\{\pm 1\}\times \Sigma$. The next proposition identifies the data of the correspondence \eqref{eq:bundlecorrespondence} with the Hamiltonian flow of the function $f_{\eta,\Gamma}$ on $M_{g,n}(\mathbf{t})$ defined in \eqref{eq:tracefunctionetasurface}. The result is a generalization of one due to Herald and Kirk \cite[Thm. 7.3]{herald-kirk}, who considered the case of a single curve $\Gamma=\{\gamma_1\}$. Already in this case, our approach, which uses the computation of $\phi_f^t$ from Proposition \ref{prop:hamcomp}, is more direct. 

\begin{prop}\label{prop:Ham=Hol}
    Let $\Gamma=\{\gamma_1,\ldots,\gamma_k\}$ be a multicurve on $\Sigma_{g,n}$ and $\eta:\R^k\to \R$ be a smooth function. 
    Consider the correspondence \eqref{eq:bundlecorrespondence} induced by a holonomy perturbation in $[-1,1]\times \Sigma_{g,n}$ which is determined by $\Gamma$ and $\eta$.
    Then the maps $r_+$ and $r_-$ are diffeomorphisms and
    \[
        r_{+}\circ (r_-)^{-1} = \phi^1_{f}
    \]
    where $\phi^1_{f}$ is the time $1$ Hamiltonian diffeomorphism of $M_{g,n}(\mathbf{t})$ induced by $f=f_{\eta,\Gamma}$.
\end{prop}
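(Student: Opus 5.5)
The plan is to construct an explicit gauge-theoretic model of the perturbed-flat connections on $[-1,1]\times\Sigma_{g,n}$ and then compare it with the formula for $\phi^t_f$ from Proposition \ref{prop:hamcomp}.

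\emph{Setup.} Choose the solid torus neighborhoods of $\gamma_i$ to be products $N_i = (-\epsilon,\epsilon)\times U_i$, where $U_i\cong S^1\times(-1,1)$ is the annular neighborhood of $\gamma_i$ in $\Sigma^\circ$ used there. Take $\nu = \delta(s)\,ds\wedge\beta_i$ with $\delta$ a bump function on $(-\epsilon,\epsilon)$ of integral one. By the remark after Proposition \ref{prop:perturbedcriticalset}, the moduli space does not depend on these choices, so this choice costs nothing.

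\emph{Step 1: an explicit solution for each $A$.} Given a flat $A\in\mathcal{A}_{g,n}(\mathbf t)$, set $h(s)=\int_{-1}^s\delta$ and define a connection on $[-1,1]\times\Sigma^\circ$ in temporal gauge by
\[
\bA = A - h(s)\sum_{i=1}^k \partial_i\eta\cdot T_{A,i}\beta_i .
\]
Here $\partial_i\eta$ is evaluated at $(\mathrm{tr}_{\gamma_1}(A),\ldots,\mathrm{tr}_{\gamma_k}(A))$. For each fixed $s$, the slice $\bA|_{\{s\}\times\Sigma}=A_{h(s)}$ is flat, by the argument in the proof of Proposition \ref{prop:hamcomp}. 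Moreover the holonomies of all slices around each $\gamma_i^z$ agree with those of $A$, so $T_{\bA,i}=T_{A,i}$ and $\mathrm{tr}_{\gamma_i}(\bA)=\mathrm{tr}_{\gamma_i}(A)$. Hence
\[
F_{\bA}=ds\wedge\partial_s\bA=-\delta(s)\,ds\wedge\sum_i\partial_i\eta\, T_{A,i}\beta_i ,
\]
which matches the right side of \eqref{eq:perturbedflat} up to the sign and orientation conventions for $\nu_i$; I would fix those conventions at this point. Its restrictions are $r_-=[A]$ and $r_+=[A_1]=\phi^1_f([A])$. This shows that $r_+\circ r_-^{-1}$ agrees with $\phi_f^1$ on the image of this construction.

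\emph{Step 2: every solution is of this form, and $r_\pm$ are bijective.} Given any solution $\bA$, put it in temporal gauge. The perturbation is supported in $|s|<\epsilon$, so $\bA$ is flat on $[-1,-\epsilon]\times\Sigma$ and on $[\epsilon,1]\times\Sigma$. Equation \eqref{eq:perturbedflat} forces each slice $B_s$ to be flat, with
\[
\partial_s B_s=-\delta(s)\sum_i\partial_i\eta(\mathrm{tr}_{\gamma_i}(\bA))\,T_{\bA,i}\beta_i .
\]
On each $\gamma_i^z$ this derivative vanishes, so the holonomies along the curves $\gamma_i^z$, and therefore $T_{\bA,i}$ and the traces, are constant in $s$. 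The ODE then integrates to the formula of Step 1 with $A=B_{-1}$. This proves that $r_-$ is a bijection. The same argument run from $s=1$, using that $\phi_f^1$ is invertible, gives the same for $r_+$.

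\emph{Step 3: smoothness.} One must show that $\mathfrak{X}_{\eta,\Gamma}$ is a manifold and that $r_\pm$ are diffeomorphisms, not just bijections. I expect this to be the main obstacle. I would argue that the linearized perturbed deformation complex at $\bA$ has $H^2=0$, by the same projection trick as in Proposition \ref{unperturbed-3-man-Lag}. The slice-wise ODE description then exhibits a neighborhood of $[\bA]$ as the graph of the smooth family $A\mapsto\bA$, parametrized smoothly by $[A]\in M_{g,n}(\mathbf t)$. The inverse of $r_-$ is therefore the smooth map $[A]\mapsto[\bA]$, and $r_+\circ r_-^{-1}=\phi_f^1$, which is smooth with smooth inverse.

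Alternatively, Step 3 can be cross-checked using Proposition \ref{prop:perturbedcriticalset} together with Proposition \ref{prop:hamcomphol}. The meridian condition $\rho(\mu_j)=\exp(\partial_j\eta\cdot\Pi(\rho(\lambda_j)))$ is precisely the twist factor appearing in the holonomy formula for $\phi^1_f$, with the longitude being the pushoff of $\gamma_j$, whose holonomy equals that of $\gamma_j$.
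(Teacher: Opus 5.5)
Your proposal is correct, and Step 1 is essentially the paper's own argument. The paper also fixes a product thickening of the $\gamma_i$, writes down $\bA = A - g(t)\sum_i \partial_i\eta\, T_{A,i}\beta_i$ with $g$ a step function, and reads off $r_+([\bA])=\phi^1_f([A])$ from Proposition \ref{prop:hamcomp}.

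The sign you deferred is not cosmetic. With your $\nu=\delta(s)\,ds\wedge\beta_i$, the displayed $F_{\bA}$ is exactly the negative of the right side of \eqref{eq:perturbedflat}. Solving with that $\nu$ would give $A+h(s)\sum_i\cdots$, and hence $\phi_f^{-1}$. The paper takes $\nu=g'(t)\,\beta\wedge dt$, and with that ordering your $\bA$ solves the equation on the nose.

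Where you genuinely differ is Step 2, the bijectivity of $r_\pm$.

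\emph{The paper's route} is topological. By Proposition \ref{prop:perturbedcriticalset}, solutions correspond to representations of $\pi_1$ of the complement of $\{0\}\times\Gamma$. That complement is homotopy equivalent to $\{-1\}\times\Sigma^\circ$ with a torus glued along each $\gamma_j$. So an extension of $\rho$ is determined by its values on the meridians $\mu_j$, and \eqref{eq:perturbedcriticalsetconditions} fixes those values. The paper then simply asserts ``bijection, hence a diffeomorphism.''

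\emph{Your route} proves uniqueness directly from the equation in temporal gauge. The slices are flat, $B_s|_{\gamma_i^r}$ is constant in $s$, so the right-hand side is determined by $B_{-1}$ and the ODE integrates. The singular model causes no trouble here: near the arcs the connection has no $ds$-component, so the temporal gauge transformation is trivial there.

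What each buys:
\begin{itemize}
\item The paper's argument is quick and independent of the choice of $\nu$ and thickening. However, it leans on the cited Floer/Braam--Donaldson correspondence, and it leaves the diffeomorphism claim unjustified.
\item Your argument is self-contained gauge theory. It exhibits $r_-^{-1}$ as the explicit smooth map $[A]\mapsto[\bA]$, which actually supports the word ``diffeomorphism.'' Its cost is that it is tied to the product choice of perturbation data. For general $\nu$ and thickenings you still need the choice-independence remark, which the paper derives from Proposition \ref{prop:perturbedcriticalset} anyway.
\end{itemize}

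A small caution on Step 3. Vanishing of $H^2$ does not follow verbatim from the projection trick of Proposition \ref{unperturbed-3-man-Lag}, because the linearized operator now includes the derivative of the perturbation term. You do not need it, however. Since $r_-$ composed with $[A]\mapsto[\bA]$ is the identity, that map is an injective immersion of the compact manifold $M_{g,n}(\mathbf{t})$. Its image is therefore an embedded submanifold on which $r_\pm$ are diffeomorphisms.
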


\begin{proof}
The maps $r_\pm$ are induced by pre-composing a representation $\rho$ with the inclusion-induced homomorphism $\pi_1(\{\pm 1\}\times (\Sigma\smallsetminus \mathcal{P}))\to \pi_1([-1,1]\times (\Sigma\smallsetminus \mathcal{P})\smallsetminus \{0\}\times \Gamma)$. The space $[-1,1]\times (\Sigma\smallsetminus \mathcal{P})\smallsetminus \{0\}\times \Gamma$ is homotopy equivalent to the result of taking $\{-1\}\times (\Sigma\smallsetminus \mathcal{P})$ and attaching for each $1\leq j \leq k$ a $2$-torus $T_j= S^1\times S^1$ by gluing $\text{pt}\times S^1$ in $T_j$ to $\gamma_j$. The longitude $\lambda_j$ of $\gamma_j$ corresponds to $\text{pt}\times S^1$ in $T_j$ while $S^1\times \text{pt}$ in $T_j$ corresponds to the meridian $\mu_j$.

Let $[\rho]\in M_{g,n}(\mathbf{t})$. Proposition \ref{prop:perturbedcriticalset} implies that the elements in the fiber 
\[
  r_-^{-1}([\rho]) \subset\mathfrak{X}_{\eta,\Gamma}([-1,1]\times \Sigma_{g,n};\mathbf{t})
\]  
correspond to certain extensions of $\rho:\pi_1(\{\pm 1\}\times (\Sigma\smallsetminus \mathcal{P}))\to SU(2)$ to homomorphisms 
\[
    \pi_1([-1,1]\times (\Sigma\smallsetminus \mathcal{P})\smallsetminus \{0\}\times \Gamma)\to SU(2)
\]
Such an extension is determined by its values on the loops $S^1\times \textup{pt}\subset T_j$ (appropriately tied to the basepoint) which correspond to the meridians $\mu_j$. Condition \eqref{eq:perturbedcriticalsetconditions} which defines $\mathfrak{X}_{\eta,\Gamma}([-1,1]\times \Sigma_{g,n};\mathbf{t})$ uniquely specifies these values in terms of the values of $\rho$ on the curves $\gamma_j$. Thus $r_-$ is a bijection, hence a diffeomorphism. The case for $r_+$ is similar.

It remains to show that $r_+\circ (r_-)^{-1} = \phi_f^1$. Proposition \ref{prop:perturbedcriticalset} implies that $r_+\circ (r_-)^{-1}$ is independent of the choice of the extensions of $\gamma_i$ and the $2$-form $\nu$. We make the following choices that are convenient for our purposes. First we extend the embedding $\gamma_i:S^1\to \Sigma$ to an embedding of $S^1\times (-1,1)$ as in the previous section and then thicken this embedding into an embedding of $S^1\times (-1,1)\times (-\frac{1}{2},\frac{1}{2})$ into $\Sigma\times [-1,1]$ using the inclusion of $(-\frac{1}{2},\frac{1}{2})$ in $[-1,1]$. Fix an increasing function $g:\mathbb R \to \mathbb R$ that $g(t)=0$ for $t\leq -\frac{1}{3}$, $g(t)=1$ for $t\geq \frac{1}{3}$, and define the 2-form $\nu=g'(t)\beta\wedge dt$ where $\beta$ is the 2-form on $(-1,1)$ introduced in the previous section pulled back to $D^2\cong (-1,1)\times (-\frac{1}{2},\frac{1}{2})$ and $t$ denotes the standard coordinate on the factor $(-\frac{1}{2},\frac{1}{2})$.

Let $A$ be a flat connection with $[A]\in M_{g,n}(\mathbf{t})$. Then the connection on $[-1,1]\times \Sigma_{g,n}$ given by
\[
  \bA=A-g(t)\sum_{i=1}^k \partial_i \eta \cdot T_{A,i}\beta_i
\]
satisfies \eqref{eq:perturbedflat} for the above choices of $\nu$ and the extensions of $\gamma_i$. In particular, $[\bA]$ represents an element of $\mathfrak{X}_{\eta,\Gamma}([-1,1]\times \Sigma_{g,n};\mathbf{t})$ such that $r_-([\bA])=[A]$ and $r_+([\bA])=\phi^1_{f}([A])$. The latter identity follows from Proposition \ref{prop:hamcomp}.
\end{proof}

The following is a generalization of Theorem \ref{thm:hamiltoniandiffeo} to more general holonomy parameters.
\begin{theorem}\label{thm:hamiltoniandiffeo-general}
	Let $\mathbf{t}$ be a holonomy parameter for $\Sigma_{g,n}$ satisfying the non-integral assumption, and let $\phi:M_{g,n}(\mathbf{t})\to M_{g,n}(\mathbf{t})$ be a Hamiltonian diffeomorphism. Then there is holonomy perturbation data $\eta,\Gamma$ such that 
	$r_{\pm}$ are diffeomorphisms and $r_+\circ r_-^{-1}$ is a Hamiltonian diffeomorphism which arbitrarily close to $\phi$ in the $C^\infty$-topology.
\end{theorem}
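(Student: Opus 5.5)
By Proposition \ref{prop:tracemondense}, I would first approximate $\phi$ in the $C^\infty$-topology by a composition
\[
\psi=\phi_{f_1}^{t_1}\circ\cdots\circ\phi_{f_m}^{t_m},
\]
where each $f_i=T_{\Gamma_i}$ is a trace monomial of a multicurve $\Gamma_i=\{\gamma_{i,1},\ldots,\gamma_{i,k_i}\}$ on $\Sigma_{g,n}$. Since $\phi_{f}^{t}=\phi^1_{tf}$, each factor is a time-one flow $\phi^1_{f_{\eta_i,\Gamma_i}}$ with $\eta_i(x_1,\ldots,x_{k_i})=4\pi^2 t_i x_1\cdots x_{k_i}$. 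By Proposition \ref{prop:Ham=Hol}, each factor is therefore realized exactly by the correspondence associated to the holonomy perturbation $(\eta_i,\Gamma_i)$ on a copy of $[-1,1]\times\Sigma_{g,n}$. For that correspondence, both restriction maps $r_\pm^{(i)}$ are diffeomorphisms and $r_+^{(i)}\circ (r_-^{(i)})^{-1}=\phi^1_{f_{\eta_i,\Gamma_i}}$.

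Next I would stack these cylinders. Reparametrize $[-1,1]$ as a union of $m$ consecutive subintervals $I_1,\ldots,I_m$, ordered so that the factor applied first (namely $\phi_{f_m}^{t_m}$) sits nearest $\{-1\}\times\Sigma$. Then place the loops $\Gamma_i$ on the slice $\{s_i\}\times\Sigma$, with $s_i$ an interior point of the subinterval assigned to factor $i$. The total perturbation data is $\Gamma=\bigsqcup_i \{s_i\}\times\Gamma_i$, which consists of $k=\sum k_i$ pairwise disjoint embedded framed loops in $(-1,1)\times\Sigma_{g,n}$, using framings from the surface. The function $\eta:\R^k\to\R$ is the sum $\eta(x)=\sum_i \eta_i(x^{(i)})$ over the corresponding blocks of variables. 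I would choose the solid torus neighborhoods to be pairwise disjoint and each contained in its own slab $I_i\times\Sigma$. Because $\eta$ is a sum, $\partial_j\eta$ for a loop in block $i$ depends only on the traces of the loops in $\Gamma_i$. Hence equation \eqref{eq:perturbedflat} decouples slab by slab, and within slab $i$ it coincides with the perturbed equation for $(\eta_i,\Gamma_i)$.

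The gluing formula \eqref{corr-relation}, applied inductively with $\Sigma=\Sigma_{g,n}$ at each interface, then identifies $\mathfrak{X}_{\eta,\Gamma}([-1,1]\times\Sigma_{g,n};\mathbf{t})$ with the iterated fiber product
\[
\mathfrak{X}_{\eta_m,\Gamma_m}\times_{M_{g,n}(\mathbf{t})}\cdots\times_{M_{g,n}(\mathbf{t})}\mathfrak{X}_{\eta_1,\Gamma_1}.
\]
Each factor is the graph of a diffeomorphism, so this fiber product is a smooth manifold diffeomorphic to $M_{g,n}(\mathbf{t})$ via either end. Both $r_\pm$ are diffeomorphisms, and
\[
r_+\circ r_-^{-1}=\phi^1_{f_{\eta_1,\Gamma_1}}\circ\cdots\circ\phi^1_{f_{\eta_m,\Gamma_m}}=\psi,
\]
which is Hamiltonian and $C^\infty$-close to $\phi$.

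The main obstacle is a bookkeeping point in the gluing step. Proposition \ref{prop:Ham=Hol} is stated for loops on the middle slice $\{0\}\times\Sigma$, whereas \eqref{corr-relation} is formulated for cutting along an interior surface. I would handle this by observing that the representation-theoretic description in Proposition \ref{prop:perturbedcriticalset} depends only on the isotopy class of the framed loops and on $\eta$. Consequently each slab $I_i\times\Sigma$, after an affine reparametrization, is exactly the setting of Proposition \ref{prop:Ham=Hol}. I would also check that the fiber product is transverse, which holds automatically since one factor at each stage is a graph. With these observations the fiber-product description holds, and the proof reduces to Propositions \ref{prop:tracemondense} and \ref{prop:Ham=Hol}.
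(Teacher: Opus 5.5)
Your proposal is correct and follows essentially the same route as the paper: approximate $\phi$ by a composition of time-one flows of trace functions of multicurves via Proposition~\ref{prop:tracemondense}, realize each factor exactly by a holonomy perturbation on a cylinder via Proposition~\ref{prop:Ham=Hol}, and stack the cylinders using the gluing relation \eqref{corr-relation}. Your extra bookkeeping spells out details that the paper's short proof leaves implicit: absorbing the times $t_i$ into $\eta_i$, ordering the slabs, and observing that the perturbed equation decouples slab by slab because $\eta$ is a sum.
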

\begin{proof}
	Let $\phi:M_{g,n}(\mathbf{t})\to M_{g,n}(\mathbf{t})$ be a Hamiltonian diffeomorphism of $M_{g,n}(\mathbf{t})$. By Proposition \ref{prop:tracemondense}, there are multicurves $\Gamma_i$ and functions $\eta_i$ with $1\leq i\leq k$
	such that if $\phi_i$ is the time $1$ Hamiltonian flow of $f_{\eta_i,\Gamma_j}:M_{g,n}(\mathbf{t})\to \mathbb R$, then $\phi$ is arbitrarily close to the composition $\phi_1\circ \dots \circ \phi_k$. (Note that 
	Proposition \ref{prop:tracemondense} asserts that a special choice of functions $\eta_i$ given as scalar multiples of multiplication maps would suffice. But we will not need this stronger version of the proposition.)
	Proposition \ref{prop:Ham=Hol} implies that $\eta_i$, $\Gamma_i$ induce perturbation data for $([-1,1]\times \Sigma_{g,n};\mathbf{t})$, giving
	\begin{equation}
	\begin{tikzcd}[column sep=small, row sep=large]
		& \mathfrak{X}_{\eta_i,\Gamma_i}([-1,1]\times \Sigma_{g,n};\mathbf{t}) \arrow[dl, "r^i_-"'] \arrow[dr, "r^i_+"] & \\
		M_{g,n}(\mathbf{t}) & & M_{g,n}(\mathbf{t})
	\end{tikzcd}\label{eq:bundlecorrespondence}
	\end{equation}
	such that $r^i_+\circ (r^i_-)^{-1}=\phi_i$. Now by gluing the perturbation data $(\eta_i,\Gamma_i)$ for $1\leq i\leq k$ as in the discussion at the end of previous subsection, we obtain perturbation data $(\eta_\#,\Gamma_\#)$
	for the result of gluing $k$ copies of $[-1,1]\times \Sigma_{g,n}$, which may be identified with $[-1,1]\times \Sigma_{g,n}$ again after reparametrization. For the associated correspondence 
	\begin{equation}
	\begin{tikzcd}[column sep=small, row sep=large]
		& \mathfrak{X}_{\eta_\#,\Gamma_\#}([-1,1]\times \Sigma_{g,n};\mathbf{t}) \arrow[dl, "r^\#_-"'] \arrow[dr, "r^\#_+"] & \\
		M_{g,n}(\mathbf{t}) & & M_{g,n}(\mathbf{t})
	\end{tikzcd}\label{eq:bundlecorrespondence-compose}
	\end{equation}	
	we have $r^\#_+\circ (r^\#_-)^{-1}=\phi_1\circ \dots \circ \phi_k$. This completes the proof of the claim.
\end{proof}

\subsection{Perturbed 3-manifold Lagrangians}\label{subsec:perturbed-3man-Lag}

We next return to the case of general $(M,T;\mathbf{t})$ satisfying the non-integral assumption. Proposition~\ref{unperturbed-3-man-Lag} explains how we can associate a Lagrangian to $(M,T;\mathbf{t})$ under a restrictive assumption on fundamental groups, and our goal in this part is to use holonomy perturbations to obtain 3-manifold Lagrangians for more general $(M,T;\mathbf{t})$. 

First we review some terminology, mostly following \cite{hayashi-shimokawa}. Let $F$ be a closed orientable surface. A {\emph{compression body}} $C$ is an orientable connected $3$-manifold obtained from a ball $B$ or $F\times [0,1]$ by attaching a finite number of $1$-handles to $\partial B$ or $F\times \{1\}$. Denote by $\partial_- C= F\times \{0\}$ and $\partial_+ C=\partial C\smallsetminus \partial_- C$. If $C$ is obtained from a ball, $\partial_- C$ is empty, and $C$ is a handlebody. A tangle $T=t_1\cup\cdots \cup t_n\subset C$ is {\emph{trivial}} if each $t_i$ is an arc and satisfies one of the following:
\begin{itemize}
\item $t_i$ is {\emph{$\partial_+$-parallel}}: there is an embedded disk $D\subset C$ with $t_i\subset \partial D$ and $D\cap \partial C = \overline{\partial D\smallsetminus t_i} \subset \partial_+ C$, and such that $D\cap t_j=\emptyset $ for $j\neq i$;
\item $t_i$ is {\emph{vertical}}: there is a homeomorphism from $C$ to $F\times [0,1]$ with $1$-handles attached along $F\times \{1\}$, where $\partial_-C$ is sent to $F\times \{0\}$, and $t_i$ is sent to $\{p\}\times [0,1]$ for some $p\in F$.
\end{itemize}
We will sometimes use the notation $\partial_\pm (C,T) = (\partial_\pm C, \partial_\pm C\cap T)$. For an example of a compression body with trivial tangle, see the left picture in Figure \ref{fig:standardpieces}. Note that the right picture in Figure \ref{fig:standardpieces} depicts a compression body, but the tangle is not trivial.

Consider a pair $(M,T)$ where $M$ is a compact connected orientable $3$-manifold and $T$ is a tangle. A {\emph{Heegaard splitting}} of $(M,T)$ is the data of an embedded surface $\Sigma\subset M$ that is transverse to $T$, with $\mathcal{P}:=\Sigma\cap T$ a finite set of points, and such that $(\Sigma,\mathcal{P})$ divides $(M,T)$ into the union
\begin{equation}\label{eq:heegaardsplitting}
    (M,T) = (C_1,T_1) \cup_{(\Sigma,\mathcal{P})} (C_2,T_2)
\end{equation}
where $(C_i,T_i)$ is a compression body with trivial tangle and $\partial_+ C_1 = \partial_+ C_2=\Sigma$. A holonomy parameter for the tangle $T$ is called {\emph{strongly non-integral}} with respect to the Heegaard splitting \eqref{eq:heegaardsplitting} if the induced holonomy parameters for $(C_1,T_1)$ and $(C_2,T_2)$ satisfy the non-integral assumption. By \cite[Lemma 2.1]{hayashi-shimokawa}, every $3$-manifold with tangle $(M,T)$ admits a Heegaard splitting.

\begin{theorem}\label{3-man-Lag}
	Let $\mathbf{t}$ be a holonomy parameter for a pair $(M,T)$ that is strongly non-integral with respect to some Heegaard splitting of $(M,T)$.
	Then there is a choice of perturbation data $\eta$, $\Gamma$ for $(M,T;\mathbf{t})$ such that $\mathfrak{X}_{\eta,\Gamma}(M,T;\mathbf{t})$ is a smooth 
	manifold and the restriction map to the boundary $r:\mathfrak{X}_{\eta,\Gamma}(M,T;\mathbf{t}) \to M_{g,n}(\mathbf{t})$ defines an immersed Lagrangian. 
\end{theorem}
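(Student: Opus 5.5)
The plan is to reduce to Proposition \ref{unperturbed-3-man-Lag} on each half of the Heegaard splitting, and then glue the two halves. The gluing will be done through a thickened copy of the splitting surface, carrying a holonomy perturbation that realizes a generic Hamiltonian diffeomorphism via Theorem \ref{thm:hamiltoniandiffeo-general}.

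\textbf{Step 1: each compression body gives an embedded Lagrangian.} Fix a Heegaard splitting $(M,T)=(C_1,T_1)\cup_{(\Sigma',\mathcal{P}')}(C_2,T_2)$ for which $\mathbf{t}$ is strongly non-integral. Each $\partial_- C_i$ is a union of components of $\partial M$. Dually, a compression body $C_i$ is obtained from $\partial_+C_i\times[0,1]$ by attaching $2$-handles, and $3$-handles for the handlebody case. A trivial tangle consists of arcs that are either vertical or $\partial_+$-parallel. A vertical arc contributes a product, and a $\partial_+$-parallel arc has complement that is again obtained from the $\partial_+$ side by adding a $2$-handle. Hence the inclusion induces a surjection
\[
\pi_1(\partial_+C_i\smallsetminus\mathcal{P}')\to\pi_1(C_i\smallsetminus T_i).
\]
Proposition \ref{unperturbed-3-man-Lag} then shows that $L_i:=\mathfrak{X}(C_i,T_i;\mathbf{t})$ is a smooth manifold embedded as a Lagrangian correspondence between $M(\Sigma')$ and $M(\partial_-C_i)$. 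Here $M(\cdot)$ denotes the relevant character varieties, smooth by the non-integral assumption. Two minor extensions of that proposition are needed: $\partial_-C_i$ may be empty (a handlebody, where $L_i$ is simply a Lagrangian in $M(\Sigma')$) or disconnected. In both cases the same cohomological argument goes through verbatim. Orientation conventions give $M(\Sigma')^-$ for $C_2$.

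\textbf{Step 2: choose a Hamiltonian making the fiber product transverse.} Insert a collar $[-1,1]\times(\Sigma',\mathcal{P}')$ between $C_1$ and $C_2$. I will show that for a generic $\phi\in\mathrm{Ham}(M(\Sigma'))$ the fiber product $L_1\times_{M(\Sigma')}(\phi\times\mathrm{id})(L_2)$ is cut out transversely. This is a standard parametric transversality argument. The evaluation map from $\mathrm{Ham}\times L_1\times L_2$ to $M(\Sigma')\times M(\Sigma')$ is a submersion, because Hamiltonian vector fields span every tangent space of $M(\Sigma')$. Sard--Smale then gives a residual set of suitable $\phi$, and transversality is an open condition.

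By Theorem \ref{thm:hamiltoniandiffeo-general}, there are holonomy data $(\eta,\Gamma)$ in the collar with $r_+\circ r_-^{-1}$ arbitrarily $C^\infty$-close to such a $\phi$. By openness, $r_+\circ r_-^{-1}$ is then itself transverse in this sense. Using \eqref{corr-relation}, the perturbed moduli space satisfies
\[
\mathfrak{X}_{\eta,\Gamma}(M,T;\mathbf{t})=L_1\times_{M(\Sigma')}\mathfrak{X}_{\eta,\Gamma}([-1,1]\times\Sigma')\times_{M(\Sigma')}L_2,
\]
and since $r_\pm$ on the collar are diffeomorphisms, this is $L_1\times_{M(\Sigma')}(r_+\circ r_-^{-1})(L_2)$. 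The latter is therefore a smooth manifold.

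\textbf{Step 3: the composed correspondence is an immersed Lagrangian.} It is a standard fact that the composition of two embedded Lagrangian correspondences with transverse fiber product is an immersed Lagrangian in $M(\partial_-C_1)^-\times M(\partial_-C_2)$, which is the character variety of $\partial M$. The dimension count is half, and isotropy follows from isotropy of each factor.

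\textbf{Main obstacle.} I expect the hardest step to be checking that the smooth structure obtained from the fiber product agrees with the one coming from the perturbed deformation complex of \eqref{eq:perturbedflat}. In other words, one must show that $H^2$ of the perturbed complex vanishes exactly when the fiber product is transverse. My first approach is a Mayer--Vietoris argument for the deformation complexes over $C_1\cup([-1,1]\times\Sigma')\cup C_2$. The collar complex is acyclic relative to either end, since $r_\pm$ are diffeomorphisms, and the $C_i$ complexes have vanishing $H^2$ by Step 1. This should identify $H^2$ of the perturbed complex with the cokernel of $TL_1\oplus T\phi(L_2)\to TM(\Sigma')$, which vanishes precisely under the transversality arranged in Step 2.
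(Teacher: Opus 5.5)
Your proposal is correct and is essentially the paper's proof. The paper also splits along the Heegaard surface with a collar, applies Proposition \ref{unperturbed-3-man-Lag} to each compression body via the $\pi_1$-surjectivity, realizes a generic Hamiltonian diffeomorphism in the collar by Theorem \ref{thm:hamiltoniandiffeo-general}, and uses \eqref{corr-relation} to identify $\mathfrak{X}_{\eta,\Gamma}(M,T;\mathbf{t})$ with a transverse fiber product, i.e.\ a transverse composition of Lagrangian correspondences. One small correction to Step 2: only the first component of your evaluation map is a submersion, not the map to $M(\Sigma')\times M(\Sigma')$, but that already gives transversality to the diagonal. Your ``main obstacle'' is not needed: the paper takes the smooth structure directly from the fiber-product identification, and in the proof of Lemma \ref{non-dege-pert} it explicitly avoids the analogous deformation-theoretic comparison, so your Mayer--Vietoris check is an optional refinement.
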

\begin{proof}
	Fixing a Heegaard splitting of $(M,T)$ as in \eqref{eq:heegaardsplitting}, we obtain a decomposition of $ (M,T) $ as 
	\[
	  (C_1,T_1) \cup_{(\Sigma,\mathcal{P})} [-1,1]\times (\Sigma,\mathcal{P}) \cup_{(\Sigma,\mathcal{P})} (C_2,T_2)
	\]
	Let $\mathbf{t}_1$, $\mathbf{t}_2$, $\mathbf{s}$ be the induced holonomy parameters on $(C_1,T_1)$, $(C_2,T_2)$, $[-1,1]\times (\Sigma,\mathcal{P})$. 
	Let $g$ be the genus of $\Sigma$, $n$ be the size of $\mathcal P$, and with a slight abuse of notation, we write $\mathbf{s}$ for the induced holonomy parameter on $(\Sigma,\mathcal{P})$. 
	Let also $g_i$, $n_i$ and $\mathbf{s}_i$ respectively denote the genus of $\partial_-C_i$, the size of $\partial_-T_i$ and the induced holonomy parameter on $\partial_-(C_i,T_i)$. 
	Fix a Hamiltonian isotopy $\phi_0$ of $M_{g,n}(\mathbf{s})$, and let $\eta$, $\Gamma$ be as in Theorem \ref{thm:hamiltoniandiffeo-general} such that for the induced correspondence 
	\begin{equation}
		\begin{tikzcd}[column sep=small, row sep=large]
		& \mathfrak{X}_{\eta,\Gamma}([-1,1]\times \Sigma_{g,n};\mathbf{s}) \arrow[dl, "r_-"'] \arrow[dr, "r_+"] & \\
		M_{g,n}(\mathbf{s}) & & M_{g,n}(\mathbf{s})
		\end{tikzcd}\label{eq:bundlecorrespondence}
	\end{equation}
	we have $\mathfrak{X}_{\eta,\Gamma}([-1,1]\times \Sigma_{g,n};\mathbf{s})=M_{g,n}(\mathbf{s})$ and $\phi:=r_{+}\circ (r_-)^{-1}$ is a Hamiltonian isotopy close to $\phi_0$.
	Then $(\Gamma,\eta)$ determines a choice of holonomy perturbation data for $(M,T)$, which is still denoted by $(\Gamma,\eta)$, and using \eqref{corr-relation}, we have
	\begin{align}
	  \mathfrak{X}_{\eta,\Gamma}(M,T;\mathbf{t})&= \mathfrak{X}(C_1,T_1;\mathbf{t}_1) \times_{M_{g,n}(\mathbf{s})} M_{g,n}(\mathbf{s}) \times_{M_{g,n}(\mathbf{s})}  \mathfrak{X}(C_2,T_2;\mathbf{t}_2)\nonumber\\[2mm]
	  &= \mathfrak{X}(C_1,T_1;\mathbf{t}_1) \times_{M_{g,n}(\mathbf{s})}  \mathfrak{X}(C_2,T_2;\mathbf{t}_2)\label{Lag-fiber-prod}
	\end{align}
	Denoting the restriction maps associated to $\mathfrak{X}(C_i,T_i;\mathbf{t}_i)$ by
	\begin{equation}\label{rest-map-compressionbody}
	  r_i=(r_-^i,r_+^i):\mathfrak{X}(C_i,T_i;\mathbf{t}_i)\to M_{g_{i},n_{i}}(\mathbf{s}_{i})\times M_{g,n}(\mathbf{s}),
	\end{equation}
	the fiber product in \eqref{Lag-fiber-prod} is taken with respect to the maps 
	\begin{equation}\label{fib-prod-maps}
	  \phi\circ r_+^1:\mathfrak{X}(C_1,T_1;\mathbf{t}_1)\to M_{g,n}(\mathbf{s}),\hspace{1cm} r_+^2:\mathfrak{X}(C_2,T_2;\mathbf{t}_2)\to M_{g,n}(\mathbf{s}).
	\end{equation}  
	From the assumptions, the inclusion of $\Sigma\smallsetminus \mathcal P$ into $C_i\smallsetminus T_i$ induces a surjection of fundamental groups. In particular, Proposition \ref{unperturbed-3-man-Lag} implies that 
	the map $r_i$ in \eqref{rest-map-compressionbody} determines a Lagrangian. 
	Since $\phi$ is $C^\infty$ close to $\phi_0$, which may be taken to be an arbitrary Hamiltonian isotopy, we may arrange for $\eta$, $\Gamma$ such that the maps in \eqref{fib-prod-maps} are transversal. 
	In particular, the fiber product in \eqref{Lag-fiber-prod} is a smooth manifold, and the map from $\mathfrak{X}_{\eta,\Gamma}(M,T;\mathbf{t})$ to $M_{{g_1},n_{1}}(\mathbf{s}_{1})\times M_{g_{2},n_{2}}(\mathbf{s}_{2})$ 
	induced by $(r_-^1,r_-^2)$ defines an immersed Lagrangian. 
\end{proof}

Theorem \ref{3-man-Lag} holds more generally without the assumption that the holonomy parameter is strongly non-integral with respect to some Heegaard splitting. Furthermore, if $\mathfrak{X}_{\eta,\Gamma}(M,T;\mathbf{t})$ and $\mathfrak{X}_{\eta',\Gamma'}(M,T;\mathbf{t})$ are immersed Lagrangians associated to two choices of holonomy parameters for $(M,T;\mathbf{t})$, then these Lagrangians are related by an immersed Lagrangian cobordism. Since we do not need these more general claims, we will not prove them here and refer the reader to \cite{herald,dfl} for similar results in the case that $T$ is empty.  

\subsubsection{The case of webs}

The above proposition may be generalized and improved in several directions. First we may allow $T$ to be an embedded graph in $M$ that has trivalent vertices in the interior of $M$ and outside a small neighborhood of trivalent vertices is a properly embedded $1$-manifold. This is an extension of the notion of {\it webs} in \cite{km-tait} to the case of 3-manifolds with boundary, and we use the same name web here. A holonomy parameter $\mathbf{t}$ for a web $T$ is an assignment of a real number $t_i\in [0,1]$ to each embedded edge $T_i$ of $T$ (a connected component of $T$ without a trivalent vertex is also regarded as an edge). In the same way as tangles, $\mathfrak{X}(M,T;\mathbf{t})$ is the space of all conjugacy classes of $SU(2)$-representations of the complement of $T$ such that a meridian of an edge $T_i$ of $T$ is mapped to the conjugacy class $\Gamma(t_i)$. We say $\mathbf{t}$ satisfies the non-integral assumption if $\partial(M,T;\mathbf{t})=(\Sigma_+,\mathcal P_+;\mathbf{t}_{+})\sqcup (\Sigma_-,\mathcal P_-;\mathbf{t}_{-})$ and $\mathbf{t}_{\pm}$ satisfy the non-integral assumption in the previous sense. This assumption implies that $\mathfrak{X}(M,T;\mathbf{t})$ consists of irreducible elements.

\begin{figure}[t]
  \centering
\begin{tikzpicture}
  \node[anchor=south west, inner sep=0] (image) at (0,0) {\includegraphics[scale=1.25]{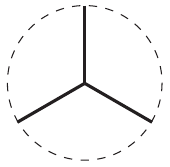}};
  \begin{scope}[x={(image.south east)}, y={(image.north west)}]
    \node at (0.01,0.13) {\large $t_j$};
    \node at (1.0,0.13) {\large $t_k$};
    \node at (0.5,1.1) {\large $t_i$};
  \end{scope}
\end{tikzpicture}
\caption{ }
\label{fig:trivalent}
\end{figure}

Removing small neighborhoods of trivalent vertices turns $(M,T)$ into a pair $(M',T')$ of a 3-manifold and a tangle and a holonomy parameter $\mathbf{t}$ for $(M,T)$ determines a holonomy parameter $\mathbf{t}'$ for $(M',T')$. The pair $(M',T')$ has a boundary component corresponding to each trivalent vertex of $T$ that is $\Sigma_{0,3}$ with the three marked points labeled by three components $t_i$, $t_j$ and $t_k$ of $\mathbf{t}$. See Figure \ref{fig:trivalent}. The character variety $M_{0,3}(t_i,t_j,t_k)$ has exactly one element if the inequalities 
\begin{equation}\label{inequlaties-S2-3markedpts}
	t_i\leq t_j+t_k ,\hspace{1cm}t_j\leq t_i+t_k,\hspace{1cm}t_k\leq t_i+t_j,\hspace{1cm} t_i+t_j+t_k\leq 2
\end{equation}
are satisfied, and otherwise $M_{0,3}(t_i,t_j,t_k)$ is empty. Furthermore, if $M_{0,3}(t_i,t_j,t_k)$ is non-empty, the unique element of this moduli space is irreducible if and only if all the inequalities in \eqref{inequlaties-S2-3markedpts} are strict. The character variety $\mathfrak{X}(M',T';\mathbf{t}')$ can be identified with that of $\mathfrak{X}(M,T;\mathbf{t})$. We may also follow similar proofs as above to obtain analogues of Proposition \ref{unperturbed-3-man-Lag} and Theorem \ref{3-man-Lag}.

\subsubsection{The monotone case} \label{subsec:monotonecaselag}

For the rest of the paper, we focus on $(M,T;\mathbf{t})$ with boundary $(\Sigma_+,\mathcal P_+,\mathbf{t}_{+})\sqcup (\Sigma_-,\mathcal P_-,\mathbf{t}_{-})$ where $\Sigma_{\pm}$ are connected, all entries of $\mathbf{t}$ are $1/2$ or $1$, and the boundary holonomy parameters $\mathbf{t}_{\pm}$ consist entirely of entries equal to $1/2$. Because of this, we reset our notation from now on, and use $T$ to denote the components of the tangle with label $1/2$ and $w$ to denote the components of the tangle with label $1$. Fix a Heegaard splitting of $(M,T)$ as in \eqref{eq:heegaardsplitting}, we now write
\[
    (M,T\cup w) = (C_1,T_1\cup w_1) \cup_{(\Sigma,\mathcal{P})} (C_2,T_2\cup w_2)
\] 
where as before the $1$-manifold $w$ is disjoint from $\Sigma$, and $\mathcal P_\pm\subset \Sigma_\pm$ has an odd number of points, each labeled with $1/2$. In particular, $\mathbf{t}$ is strongly non-integral with respect to any Heegaard splitting of $(M,T)$, and we may apply Theorem \ref{3-man-Lag} to obtain $\eta$, $\Gamma$ such that 
\[
	\mathfrak{X}_{\eta,\Gamma}(M,T, w)=\mathfrak{X}_{\eta,\Gamma}(M,T\cup w;\mathbf{t})
\]
is an immersed Lagrangian of $M_{g_-,n_-}\times M_{g_+,n_+}$. As we explained above, we may assume that $T\cup w$ has trivalent vertices. For $\mathfrak{X}_{\eta,\Gamma}(M,T, w)$ to be non-empty, we need to guarantee that \eqref{inequlaties-S2-3markedpts} is satisfied at any trivalent vertex. This implies that all the edges incident to a trivalent vertex of $T$ are all labeled by $1/2$ or exactly one of them is labeled by $1$. In particular, $w$ is an embedded $1$-manifold in $M$ with boundary satisfying $\partial w=w\cap T$.

As a special example of this construction, let $M=[-1,1]\times \Sigma_g$. Fix a set of $n$ marked points with $n$ odd, and a $1$-dimensional submanifold $Z$ of $\Sigma_g$ with $\partial Z=Z\cap \mathcal P$. Let $T=[-1,1]\times \mathcal P$ and $w=\{0\}\times Z$. Label the edges in $T$ with $1/2$ and the edges in $\{0\}\times Z$ with $1$. If 
\begin{equation}
\begin{tikzcd}[column sep=small, row sep=large]
& \mathfrak{X}(M,T,w) \arrow[dl, "r_-"'] \arrow[dr, "r_+"] & \\
M_{g,n} & & M_{g,n}
\end{tikzcd}\label{eq:bundlecorrespondence-Z}
\end{equation}
is the associated correspondence, then it is easy to see that $r_-$ and $r_+$ are diffeomorphisms and $r_+\circ (r_-)^{-1}$ is the symplectomorphism from the previous section that is associated to the element $\zeta\in H^1(\Sigma^\circ;\mathbb Z/2)$ given by the Poincar\'e dual of $Z$.

%!TEX root = main.tex

\section{Singular instanton homology and 3-manifold Lagrangians}\label{sec:inequality}

In this section we review the necessary aspects of singular instanton Floer homology, and prove Theorem \ref{thm:lagrangianineq} from the introduction; see Theorem \ref{thm:lagrangianinequalityresultgeneral}.

Let $(Y,L,w)$ be an admissible link as defined in the introduction. As mentioned there, singular instanton Floer homology of $(Y,L,w)$ is a $\Z/4$-graded abelian group $I(Y,L,w)$. This instanton homology group can be regarded as the Morse homology of a Chern--Simons functional $CS$ on a certain configuration space $\mathcal B(Y,L,w)$ of isomorphism classes of $PU(2)$-connections $\bA$ on $Y$ that are singular along $L$ and the topological type of the $PU(2)$-bundle carrying $\bA$ is determined by $w$. In particular, $I(Y,L,w)$ is the homology of a chain complex $(C(Y,L,w),d)$:
\[
	I(Y,L,w) = H_\ast( C(Y,L,w), d).
\]
The definition of $C(Y,L,w)$ uses similar (perturbed) character varieties as in the previous section associated to $(Y,L,w)$. The differential $d$ is defined in terms of solutions to a certain version of the anti-self-duality equation. As such, the analytical aspects of the construction of $I(Y,L,w)$ are more subtle than the construction of perturbed character varieties. There are several approaches to address this, all of which yield isomorphic instanton homologies $I(Y,L,w)$. Our exposition here follows \cite{km-unknot} and we refer the reader to this reference for more details. 

To set up the theory as in \cite{km-unknot}, first fix an orbifold metric on $Y$ which is smooth away from $L$ and has orbifold angle $\pi$ along $L$. We write $\check Y$ for the orbifold 3-manifold determined by the pair $(Y,L)$, which has $L$ as the locus of singular points with isotropy groups $\Z/2$. As explained in \cite{km-unknot}, $w$ defines an orbifold $SO(3)$-bundle $\check P$ on $\check Y$, and we may form $\mathcal A(Y,L,w)$ as the space of orbifold connections on $\check P$. The holonomy of any connection $\bA$ in $\mathcal A(Y,L,w)$ along meridians of $L$ are asymptotic to elements of order $2$ in $SO(3)$. The space of connections $\mathcal A(Y,L,w)$ is an affine space modeled on the Sobolev space $L^2_k(Y;\Lambda^1\otimes {\rm ad}(\check P))$ where $k\geq 3$ is a fixed integer and ${\rm ad}(\check P)$ is the orbifold vector bundle on $\check Y$ associated to $\check P$, corresponding to the adjoint action of $SO(3)$ on $\mathfrak{su}(2)$. The space $\mathcal A(Y,L,w)$ is denoted by $\mathcal C_k(Y,L,\check P)$ in \cite{km-unknot}.

There is a {\it determinant-$1$} gauge group $\mathcal G(Y,L,w)$ acting on $\mathcal A(Y,L,w)$, and the configuration space $\mathcal B(Y,L,w)$ is the quotient $\mathcal A(Y,L,w)/\mathcal G(Y,L,w)$. The choice of the gauge group allows us to associate to any closed loop $\gamma$ in $Y\smallsetminus (L\cup w)$ and any $[\bA]\in \mathcal A(Y,L,w)$ a well-defined conjugacy class in $SU(2)$, which is obtained as the conjugacy class of the holonomy of $\bA$ along $\gamma$ after lifting to $SU(2)$. If $\gamma$ is a meridian of $L$ (resp. $w$), then the holonomy of $[A]$ along $\gamma$ is asymptotic to a traceless element of $SU(2)$ (resp. $-1\in SU(2)$) as the size of the meridian goes to zero. The Lie algebra of the gauge group $\mathcal G(Y,L,w)$ may be identified with $L^2_{k+1}(Y;{\rm ad}(\check P))$, and the linearization of the action of the gauge group at $\bA\in \mathcal A(Y,L,w)$ is given by
\begin{equation}\label{d-A}
	d_{\bA}:L^2_{k+1}(Y;{\rm ad}(\check P)) \to L^2_{k}(Y;\Lambda^1\otimes {\rm ad}(\check P))
\end{equation}

The Chern--Simons functional is a functional $\mathcal A(Y,L,w)\to \mathbb R$, characterized up to a constant by the property that its formal gradient at any connection $\bA\in \mathcal A(Y,L,w)$ is equal to $\star F_\bA$ where the metric on $\mathcal A(Y,L,w)$ is induced by the $L^2$ metric on sections of $\Lambda^1\otimes {\rm ad}(\check P)$. This induces an $\mathbb R/\mathbb Z$-valued map on $\mathcal B(Y,L,w)$, which is denoted by $CS$. The set of critical points  $\mathfrak{X}(Y,L,w)$ of $CS$ are given by the gauge equivalence classes of flat connections in $\mathcal A(Y,L,w)$, and by taking holonomies, this set can be identified with the set of conjugacy classes of homomorphisms 
\begin{equation}\label{rho-char-variety}
  \rho:\pi_1(Y\smallsetminus (L\cup w)) \to SU(2) 
\end{equation}
which map any meridian of $L$ to a traceless element of $SU(2)$ and any meridian of $w$ to $-1$. 

There is an action of $H^1(Y\smallsetminus L;\Z/2)$ on $\mathfrak{X}(Y,L,w)$. We first define this action representation-theoretically. Any $\sigma\in H^1(Y\smallsetminus L;\Z/2)$ induces a homomorphism $\hat \sigma: \pi_1(Y\smallsetminus L) \to \{\pm1\}$, and for any $\rho$ as in \eqref{rho-char-variety} representing an element of $\mathfrak{X}(Y,L,w)$, we define 
\[
  \sigma\cdot \rho:\pi_1(Y \smallsetminus (L\cup w)) \to SU(2)
\]
where its value at any $\gamma\in \pi_1(Y\smallsetminus (L\cup w))$ is equal to $\hat \sigma(\gamma) \rho(\gamma)$. Alternatively, let $l_\sigma$ be the real line bundle associated to $\sigma$ and $B_\sigma$ the connection on $l_\sigma$ that has holonomies in $\{\pm1\}$. For any flat connection $\bA\in \mathcal A(Y,L,w)$ representing an element of $\mathfrak{X}(Y,L,w)$, the connection $\bA\otimes B_\sigma$ is another flat connection in $ \mathcal A(Y,L,w)$. This construction respects the action of $\mathcal G(Y,L,w)$ and induces the above action of $\sigma$ on $\mathfrak{X}(Y,L,w)$. This description shows that this action extends to $\mathcal A(Y,L,w)$, and preserves $CS$ up to a constant.

The critical points of $CS$ are not necessarily non-degenerate, and for this reason we may need to perturb $CS$. As in \S \ref{hol-pert}, fix $\Gamma = \{\gamma_1,\ldots,\gamma_k\}$, where each $\gamma_i$ is an embedded loop in $Y\smallsetminus (L\cup w)$, and a choice of smooth function $\eta:\R^k\to \R$. Thicken each loop $\gamma_i$ to an embedding of $S^1\times D^2$ into $Y\smallsetminus (L\cup w)$. For any $\bA\in \mathcal A(Y,L,w)$, define 
\[
    f_{\eta,\Gamma}(\bA)  = \eta( \sigma_{\gamma_1}(\bA), \ldots, \sigma_{\gamma_k}(\bA)).
\]
where $\sigma_{\gamma_i}(\bA)$ is defined as in \eqref{holonomy-map-cylinder}. Note that the restriction of $\bA$ to the solid torus determined by $\gamma_i$ is an ordinary connection. A connection $\bA\in \mathcal B(Y,L,w)$ represents a critical point of $CS+f_{\eta,\Gamma}$ exactly when it satisfies the equation
  \begin{equation}\label{eq:perturbedflat-closed}
    F_{\bA} = \sum_{i=1}^k \partial_i\eta(\text{tr}_{\gamma_1}({\bA}),\ldots,\text{tr}_{\gamma_k}({\bA})) T_{{\bA},i} \nu_i,
\end{equation}
which is the same as \eqref{eq:perturbedflat}, except now we are working in a closed $3$-manifold. Write $\mathfrak{X}_{\eta,\Gamma}(Y,L,w)$ for the gauge equivalence classes of such connections. 
For any admissible link $(Y,L,w)$, there is a choice of perturbation data $\eta$, $\Gamma$ such that the critical points of $CS+f_{\eta,\Gamma}$ are non-degenerate \cite{km-unknot,yaft,donaldson-book}. In Lemma \ref{non-dege-pert} below, we establish this non-degeneracy using perturbations adapted to a Heegaard splitting of $(Y,L,w)$.

For $i\in \{1,2\}$, let $(M_i,T_i,w_i)$ be a triple of a compact, oriented, connected $3$-manifold $M_i$ with boundary, a tangle $T_i\subset M_i$, and a compact $1$-manifold $w_i\subset \text{int}M_i$ satisfying $w_i\cap T_i = \partial w_i$. We also assume that $\partial (M_i,T_i)$ is the union of $\Sigma_{g,n}$ and $\Sigma_{g',n'}$, where $n$, $n'$ are odd integers. After reversing the orientation of $M_2$, we glue these triples along their common boundaries to obtain 
\begin{equation}\label{admissible-splitting-triple}
  (Y,L,w) = (M_1,T_1,w_1)\cup_{\Sigma_{g,n} \sqcup \Sigma_{g',n'}} (M_2,T_2,w_2)
\end{equation}
Then the embedding of $\Sigma_{g,n}$ into $(Y,L,w)$ as one of the boundary components of $(M_i,T_i,w_i)$ implies that $(Y,L,w)$ is admissible. Using Theorem \ref{3-man-Lag}, there is a choice of perturbation data $\eta_i$, $\Gamma_i$ for $(M_i,T_i,w_i)$ such that $\mathfrak{X}_{\eta_i,\Gamma_i}(M_i,T_i,w_i)$ defines an immersed Lagrangian \[r_i: \mathfrak{X}_{\eta_i,\Gamma_i}(M_i,T_i,w_i) \to M_{g,n}\times M_{g',n'}\] by restriction to the boundary. Note that combining $\Gamma_1$ and $\Gamma_2$ determines a family of loops $\Gamma$ in $Y$, and the functions $\eta_i:\mathbb R^{k_i}\to \mathbb R$ give $\eta:\mathbb R^{k_1+k_2}\to \mathbb R$ defined as \[\eta(x_1,\dots,x_{k_1+k_2})=\eta_1(x_1,\dots,x_{k_1})+\eta_2(x_{k_1+1},\dots,x_{k_1+k_2}).\]
In particular, we may use $\Gamma$ and $\eta$ to perturb the Chern--Simons functional of $(Y,L,w)$ and form the critical point set $\mathfrak{X}_{\eta,\Gamma}(Y,L,w)$.

Restricting any $[\bA]\in\mathfrak{X}_{\eta,\Gamma}(Y,L,w) $ to $M_i$ gives an element $[\bA_i] \in \mathfrak{X}_{\eta_i,\Gamma_i}(M_i,T_i)$ satisfying 
\begin{equation}\label{matching-bdry}
	r_1([\bA_1])=r_2([\bA_2]).
\end{equation} 
In the reverse direction, let $[\bA_i] \in \mathfrak{X}_{\eta_i,\Gamma_i}(M_i,T_i,w_i)$ be given such that \eqref{matching-bdry} holds. Then the connections $\bA_1$ and $\bA_2$ are flat in the neighborhood of the boundary and after applying appropriate gauge transformations, we may assume that each $\bA_i$ is the pullback of flat connections $A$ and $A'$ on tubular neighborhoods of the boundary components $\Sigma_{g,n}$ and $\Sigma_{g',n'}$. Since $A$ and $A'$ are both irreducible, we may glue $\bA_1$ and $\bA_2$ along each of the common boundaries of $(M_1,T_1)$ and $(M_2,T_2)$ by $\pm 1$. Changing both signs at the same time produces gauge equivalent connections on $(Y,L,w)$. But changing only one of the signs leads to a different gauge equivalence class that is related to the original one by the action of the element of $\sigma_0\in H^1(Y\smallsetminus L;\Z/2)$, given by the restriction of the Poincar\'e dual of $\Sigma_{g,n}$ to $Y\smallsetminus L$. Combining these observations, we see that 
\begin{equation}\label{identification-Lag-crit}
  \mathfrak{X}_{\eta,\Gamma}(Y,L,w)/\sigma_0 \cong \mathfrak{X}_{\eta_1,\Gamma_1}(M_1,T_1,w_1) \mathop{\times}\limits_{r_1,r_2}\mathfrak{X}_{\eta_2,\Gamma_2}(M_2,T_2,w_2).
\end{equation}
We remark that the involution induced by $\sigma_0$ acts freely on $\mathfrak{X}_{\eta,\Gamma}(Y,L,w)$. In fact, it is easy to see that if $[\bA]\in \mathcal B(Y,L,w)$ is fixed by the action of $\sigma_0$, then $[\bA]$ has a reduction to an $O(2)$-connection where the associated orientation bundle of the $O(2)$-bundle carrying $\bA$ is $l_{\sigma_0}$ (see \cite[Lemma 2.44]{dfl} for the proof of a similar fact). In particular, the restriction of $\bA$ to $\Sigma_{g,n}$ and $\Sigma_{g',n'}$ has a reduction to an $SO(2)$-connection. This implies that the restriction of $\bA$ to these embedded surfaces cannot be flat because all elements of $M_{g,n}$ and $M_{g',n'}$ are irreducible and they do not have $SO(2)$-reductions. Thus, any such $[\bA]$ is not an element of $\mathfrak{X}_{\eta,\Gamma}(Y,L,w)$.

\begin{lemma}\label{non-dege-pert}
	Using the above notation, the sets of loops $\Gamma_1$, $\Gamma_2$ can be enlarged and the functions $f_1$ and $f_2$ can be extended such that the set of
	critical points $\mathfrak{X}_{\eta,\Gamma}(Y,L,w)$ of the perturbed Chern--Simons functional $CS+f_{\eta,\Gamma}$ is non-degenerate.
\end{lemma}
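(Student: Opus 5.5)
Here the plan is to place the additional perturbation inside a collar $[-1,1]\times\Sigma_{g,n}$ of one gluing surface and use Theorem \ref{thm:hamiltoniandiffeo} to make it act on $M_{g,n}$ as a generic Hamiltonian diffeomorphism; non-degeneracy should then reduce to transversality of the two Lagrangians.

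We identify a collar $[-1,1]\times\Sigma_{g,n}$ of the boundary component $\Sigma_{g,n}$ inside $M_1$, disjoint from $\Gamma_1$, $T_1\smallsetminus([-1,1]\times\mathcal P)$ and $w_1$. Theorem \ref{thm:hamiltoniandiffeo} provides a multicurve $\Gamma_\Sigma$ in $\{0\}\times\Sigma_{g,n}$ and a function $\eta_\Sigma$ whose correspondence is the graph of a Hamiltonian diffeomorphism $\phi$. This $\phi$ can be made $C^\infty$-close to any prescribed Hamiltonian diffeomorphism $\phi_0$ of $M_{g,n}$. We then enlarge $\Gamma_1$ by $\Gamma_\Sigma$ and $\eta_1$ by $\eta_\Sigma$ (added as a separate summand), and apply the fiber product identity \eqref{corr-relation} together with \eqref{identification-Lag-crit}. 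This identifies $\mathfrak{X}_{\eta,\Gamma}(Y,L,w)/\sigma_0$ with the fiber product of $(\phi\times\mathrm{id})\circ r_1$ and $r_2$ over $M_{g,n}\times M_{g',n'}$. Since $\mathrm{Ham}(M_{g,n}\times M_{g',n'})$ acts transitively enough, a standard Sard–Smale argument shows that for generic $\phi_0$ the immersed Lagrangians $(\phi_0\times \mathrm{id})\circ r_1$ and $r_2$ are transverse. We restrict to $\phi_0\times\mathrm{id}$; if this is not enough, we also perturb in a collar of $\Sigma_{g',n'}$ by the same construction. Transversality is an open condition, so it persists for $\phi$ close enough to $\phi_0$.

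The main step is to show that transversality of the fiber product is equivalent to non-degeneracy of the corresponding critical points, that is, to vanishing of $H^1$ of the deformation complex of the perturbed-flat connection $\bA$ on $Y$. For this we use a Mayer–Vietoris argument for the decomposition \eqref{admissible-splitting-triple}, in the spirit of the proof of Proposition \ref{unperturbed-3-man-Lag}. The argument runs in three steps.
\begin{itemize}
\item The linearized perturbed-flatness complex on $Y$ restricts to the deformation complexes on the $M_i$. These have $H^0=0$ and $H^2=0$, by irreducibility on the boundary and by the Lagrangian (transversely cut out) property from Theorem \ref{3-man-Lag} and Proposition \ref{prop:Ham=Hol}.
\item The surfaces have vanishing $H^0$ and $H^2$.
\item The Mayer–Vietoris sequence then gives $0\to H^1_{\bA}(Y)\to T\Lambda_1\oplus T\Lambda_2\to T(M_{g,n}\times M_{g',n'})$, where the last map is the difference of the derivatives of the restriction maps.
\end{itemize}
Its kernel is the tangent space of the fiber product, which is zero precisely when the intersection is transverse: the Lagrangians have complementary (half) dimensions, so injectivity of the map is equivalent to transversality.

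The main obstacle is this last step. We must check that the deformation complex of the perturbed equation \eqref{eq:perturbedflat-closed} really localizes. This holds because the perturbation is supported in solid tori away from the gluing surfaces, where the connection is flat and irreducible. We must also check that the Kronheimer–Mrowka orbifold Sobolev setup on $\check Y$ computes the same cohomology as the smooth models with prescribed singularity used in \S\ref{sec:holonomyandlagrangians}. We would handle the latter by standard elliptic regularity and identification of the $L^2_k$ orbifold complex with the singular model, citing \cite{km-unknot}.

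Finally, $\sigma_0$ acts freely and preserves non-degeneracy, so non-degeneracy on the quotient lifts to all of $\mathfrak{X}_{\eta,\Gamma}(Y,L,w)$.
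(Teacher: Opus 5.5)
Your first half is exactly what the paper does: a collar perturbation realizes a generic Hamiltonian diffeomorphism via Theorem \ref{thm:hamiltoniandiffeo-general}, which makes the Lagrangians transverse and the critical set a finite fiber product.

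\textbf{The gap is in your main step.} The sequence $0\to H^1_{\bA}(Y)\to T\Lambda_1\oplus T\Lambda_2\to T(M_{g,n}\times M_{g',n'})$ presupposes two facts about the linearized \emph{perturbed} complex on each $M_i$:
\begin{itemize}
\item its $H^2$ vanishes;
\item its $H^1$ is the tangent space of $\Lambda_i=\mathfrak{X}_{\eta_i,\Gamma_i}(M_i,T_i,w_i)$.
\end{itemize}
Nothing you cite gives either. Theorem \ref{3-man-Lag} and Proposition \ref{prop:Ham=Hol} obtain the smooth structure of $\Lambda_i$, and the diffeomorphisms $r_\pm$, only from the representation-theoretic description of Proposition \ref{prop:perturbedcriticalset} and a finite-dimensional fiber product. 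They say nothing about the linearization of \eqref{eq:perturbedflat}. Near the perturbation loops that linearization contains the nonlocal derivative of $\partial_i\eta\, T_{\bA,i}\nu_i$; the connection is not flat there, and any finer decomposition along the boundary tori of the solid tori meets reducible (abelian) restrictions.

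Smoothness of $\Lambda_i$ does not force $H^2=0$, since a smooth zero set can have a Zariski tangent space larger than its dimension. Your remark that the perturbation is supported away from the gluing surfaces only shows that a Mayer--Vietoris sequence can be set up; it does not compute its terms. The Sobolev/orbifold comparison you flag is not the real obstruction. The paper explicitly declines to prove this implication, saying it would need further development of the analytic setup for $3$-manifold Lagrangians (compare \cite[Lemma 2.48]{dfl}).

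\textbf{What the paper does instead} is add an extra small perturbation.
\begin{itemize}
\item After the transversality step the critical set is finite, so it suffices to kill the finite-dimensional Hessian kernels $H_{\bA}$.
\item For nonzero $\zeta\in H_{\bA}$, the restrictions $\zeta_1,\zeta_2$ cannot both be of the form $d_{\bA_i}\kappa_i$. Near the gluing surfaces $\bA$ is flat and irreducible, so the $\kappa_i$ would agree on the collars and glue to a $\kappa$ with $d_{\bA}\kappa=\zeta$.
\item A non-exact $\zeta_i$ is detected by $d\,\mathrm{tr}_c$ for some loop $c\subset M_i\smallsetminus(T_i\cup w_i)$. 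This yields finitely many loops $c_1,\dots,c_N$ with $d(\mathrm{tr}_{c_1},\ldots,\mathrm{tr}_{c_N})$ injective on every $H_{\bA}$.
\item The genericity argument of \cite[Proposition 5.15]{donaldson-book} then makes all critical points non-degenerate.
\end{itemize}
The loops $c_j$ lie in the interiors of the $M_i$, so they enlarge $\Gamma_1,\Gamma_2$; this is why the lemma is worded that way. Smallness of the extra perturbation preserves transversality and the Lagrangian property. This argument uses only the elementary gluing of exact forms, not any cohomology computation for perturbed complexes on the pieces.

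To salvage your route you would need a linearized version of Propositions \ref{prop:perturbedcriticalset} and \ref{prop:Ham=Hol}, showing that the perturbed collar complex has $H^2=0$ and $H^1$ equal to the graph of $d\phi$. Short of that, adopt the extra-perturbation argument.
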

\begin{proof}
	Throughout this proof, let $X=M_{g,n}\times M_{g',n'}$ and $\Lambda_i=\mathfrak{X}_{\eta_i,\Gamma_i}(M_i,T_i,w_i)$. There is an arbitrarily small Hamiltonian isotopy 
	$\phi:X\to X$ such that the immersed Lagrangians 
	\[\phi \circ r_1:\Lambda_1\to X,\hspace{1cm}r_2:\Lambda_2\to X\]
	are transversal to each other and the intersection points of $\Lambda_1$ and $\Lambda_2$. 
	Theorem \ref{thm:hamiltoniandiffeo-general} asserts that $\phi$ can be approximated by a holonomy perturbation in 
	$[-1,1]\times \Sigma_{g,n}$. In particular, we may assume that after possibly enlarging $\Gamma_1$ using a family of curves in a 
	tubular neighborhood of the boundary of $M_1$ and extending $f_1$, the immersed Lagrangians $\Lambda_1$ and $\Lambda_2$ are transversal to each other. 
	Therefore, the fiber product of $\Lambda_1$, $\Lambda_2$ over $X$ consists of finitely many points, away from the 
	self-intersection points of $\Lambda_1$ and $\Lambda_2$. In light of the identification in \eqref{identification-Lag-crit}, this implies that any critical point of 
	$CS+f_{\eta,\Gamma}$, up the action of $\sigma_0$, is uniquely determined by its restrictions to $M_1$, $M_2$.
	
	We expect that the transversality of $\Lambda_1$, $\Lambda_2$ implies that the perturbed functional $CS+f_{\eta,\Gamma}$ is non-degenerate 
	(see \cite[Lemma 2.48]{dfl} for the proof of a similar claim). However, this would require further development of the analytical setup for 3-manifolds Lagrangians. 
	Instead, we prove something weaker that suffices for our purposes: we show that, after an additional small
	perturbation, $CS+f_{\eta,\Gamma}$ is non-degenerate. The proof follows a similar argument as in \cite{donaldson-book}. 
	
	Fix a critical point of $CS+f_{\eta,\Gamma}$ and let $\bA$ be a connection representing this critical point. Since $\bA$ is flat in a neighborhood of 
	$\Sigma_{g,n}$, $\Sigma_{g',n'}$, we may assume that $\bA$ is the pullback of flat connections $A$, $A'$ in tubular neighborhoods $\nu(\Sigma_{g,n})$, $\nu(\Sigma_{g',n'})$ of 
	these embedded surfaces. There is a finite dimensional subspace $H_{\bA}$ of the tangent space of $\mathcal B(Y,L,w)$ at $[\bA]$ 
	representing the kernel of the Hessian of $CS+f_{\eta,\Gamma}$ at $[\bA]$. The space $H_{\bA}$ can be regarded as a subspace of 
	the orthogonal complement of the image of \eqref{d-A}. In particular, any non-zero element of $H_\bA$ is given by $\zeta \in L^2_{k}(Y;\Lambda^1\otimes {\rm ad}(\check P))$
	that is not of the form $d_A\kappa$ for any $\kappa \in L^2_{k+1}(Y;{\rm ad}(\check P))$.

	Let $\bA_i$, $\zeta_i$ respectively denote the restriction of $\bA$, $\zeta$ to $M_i$. We claim that $\zeta_1$, $\zeta_2$ cannot both be of the form $d_{\bA_i}\kappa_i$
	for some $L^2_{k+1}$ sections $\kappa_i$ of ${\rm ad}(\check P)$ over $M_i$. Otherwise, the restrictions $\lambda_i$, $\lambda_i'$ of $\kappa_i$ 
	over the tubular neighborhoods $\nu(\Sigma_{g,n})$, $\nu(\Sigma_{g',n'})$ satisfy 
	\[
	  d_A\lambda_1=d_A\lambda_2\hspace{1cm}d_{A'}\lambda_1'=d_{A'}\lambda_2'.
	\]  
	Since the connections $A$, $A'$ are irreducible, this implies that $\lambda_1=\lambda_1'$ and $\lambda_2=\lambda_2'$. In particular, $\kappa_1$, $\kappa_2$ agree in 
	the tubular neighborhoods of the boundary and we may glue them to find $\kappa$ with $d_A\kappa=\zeta$, which is a contradiction. 
	Assuming $\zeta_1$ is not of the form $d_{\bA_1}\kappa_1$, there is an embedded loop $c$ in $M_1\smallsetminus (T_1\cup w_1)$ such that the derivative of the map 
	${\rm tr}_c$ along $\zeta$ is non-trivial. This claim is the infinitesimal version of the fact that any $SU(2)$-connection that has the trace of its holonomy along any loop trivial 
	is gauge equivalent to the trivial connection. One can prove this claim by combining the arguments in Proposition \ref{prop:firsttraceapprox} and \cite[Proposition 4.60]{dfl} 
	(see also \cite[Lemma 4.23]{knot-complement-problem-nullhomotopic}). Applying this for a basis of $H_{\bA}$ for each critical point of  $CS+f_{\eta,\Gamma}$ gives a collection 
	of loops $\{c_i\}_{1\leq i\leq N}$ such that the derivative of the map 
	\[
	  ({\rm tr}_{c_1},\ldots, {\rm tr}_{c_N}): \mathcal B(Y,L,w) \to \mathbb R^N
	\]
	restricted to $H_{\bA}$ for each critical point $[\bA]$ of $CS+f_{\eta,\Gamma}$ is injective. 
	 Now the proof can be completed using a similar argument as in \cite[Proposition 5.15]{donaldson-book}.
\end{proof}

Next, let $(Y,L,w)$ be an admissible link and let $\eta$, $\Gamma$ be a choice of the perturbation data such that the critical points $\mathfrak{X}_{\eta,\Gamma}(Y,L,w)$ of $CS+f_{\eta,\Gamma}$ are non-degenerate. Then the chain group $C(Y,L,w)$ used in the definition of $I(Y,L,w)$ can be chosen to be the free abelian group generated by the elements of $\mathfrak{X}_{\eta,\Gamma}(Y,L,w)$. The differential $d$ is defined in terms of the moduli spaces of perturbed orbifold instantons on the bundle $\mathbb R\times \check P$ over $\mathbb R\times \check Y$ where the perturbation is induced by the gradient of $f_{\eta,\Gamma}$. In general, one might need to perturb $CS$ further to guarantee that all such moduli spaces are cut out transversely. This can be arranged without changing the critical set \cite[Proposition 3.18]{yaft}. In fact, we may pick this additional perturbation of $CS+f_{\eta,\Gamma}$ to be arbitrarily small and trivial in a neighborhood of the critical points $ \mathfrak{X}_{\eta,\Gamma}(Y,L,w)$ of $CS+f_{\eta,\Gamma}$. In particular, this additional perturbation does not change $C(Y,L,w)$. 

We are now in a position to prove Theorem \ref{thm:lagrangianineq}. We give a slightly more general result.

\begin{theorem}\label{thm:lagrangianinequalityresultgeneral}
	Suppose $(Y,L,w)$ is an admissible link with a decomposition as in \eqref{admissible-splitting-triple}, and suppose holonomy perturbation data $\eta_i,\Gamma_i$ for $i\in \{1,2\}$ is chosen such that each $\Lambda_i=\mathfrak{X}_{\eta_i,\Gamma_i}(M_i,T_i,w_i)$ defines an immersed Lagrangian in $M_{g,n}\times M_{g',n'}$. Then
		\begin{equation}\label{eq:lagrankineqgeneralproof}
		\mathfrak{n}(\Lambda_1,\Lambda_2) \geq \frac{1}{2}\, \textup{rank}_\Z \; I(Y,L,w).
	\end{equation}
	The same inequality holds using the rank of instanton homology computed with respect to any integral domain coefficient ring. Furthermore, if $ I(Y,L,w)=0$ and the perbtation data $\pi_i$ defining $\Lambda_i$ is sufficiently small for $i\in \{1,2\}$, then $\mathfrak{n}(\Lambda_1,\Lambda_2)=0$.
\end{theorem}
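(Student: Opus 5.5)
We plan to realize the minimum in the definition of $\mathfrak{n}(\Lambda_1,\Lambda_2)$ by a Hamiltonian diffeomorphism that comes from a holonomy perturbation, and then read off a chain complex for $I(Y,L,w)$ whose generators are two copies of the fiber product. Let $X=M_{g,n}\times M_{g',n'}$, and fix $\phi\in\text{Ham}(X)$ with $\Lambda_1\pitchfork\phi(\Lambda_2)$ achieving $N:=\mathfrak{n}(\Lambda_1,\Lambda_2)$ points in $\Lambda_1\times_X\phi(\Lambda_2)$. Transversality of immersed Lagrangians is an open condition, and the count of fiber product points is locally constant under $C^1$-small perturbations of $\phi$, because each transverse intersection point persists uniquely and no new ones appear by compactness. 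Hence any $\phi'$ sufficiently $C^\infty$-close to $\phi$ still gives exactly $N$ points.

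\textbf{Step 1: replace $\phi$ by a holonomy perturbation.} Write $\phi$ as a product $\phi_+\times\phi_-$ acting factorwise, or, more simply, approximate it one factor at a time. The cleanest route is to first note that $\text{Ham}(X)$ is generated by products of Hamiltonians of the two factors up to $C^\infty$-approximation, since functions of the form $f(x)g(y)$ span a dense subspace. Then apply Theorem \ref{thm:hamiltoniandiffeo-general} to each factor, using collars $[-1,1]\times\Sigma_{g,n}$ and $[-1,1]\times\Sigma_{g',n'}$ glued onto $M_2$ near its boundary. Alternatively, one can apply Proposition \ref{prop:cinfinityapprox} directly on $X$ with trace monomials of multicurves on both surfaces. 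This produces perturbation data $\pi_\Sigma$ in the collars with $r_+\circ r_-^{-1}=\phi'$ close to $\phi$. By \eqref{corr-relation}, the perturbed Lagrangian of $M_2$ enlarged by the collar equals $\phi'(\Lambda_2)$.

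\textbf{Step 2: identify critical points.} Combine $\eta_1,\Gamma_1$, $\eta_2,\Gamma_2$ and $\pi_\Sigma$ into data $(\eta,\Gamma)$ on $Y$. By \eqref{identification-Lag-crit}, $\mathfrak{X}_{\eta,\Gamma}(Y,L,w)$ is a free $\sigma_0$-double cover of $\Lambda_1\times_X\phi'(\Lambda_2)$, so it has $2N$ elements. Transversality of the fiber product should give nondegeneracy. Since the paper only proves a weaker statement, we instead invoke Lemma \ref{non-dege-pert} and its proof: we add a further arbitrarily small holonomy perturbation supported on loops $c_i$ to make $CS+f$ nondegenerate. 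Because the critical points are isolated and determined by transverse data, a small enough such perturbation does not change their number. This stability of isolated points under small perturbations is the step I expect to need the most care. The intended argument is that the fiber product description \eqref{identification-Lag-crit} still holds after the extra perturbation, if the $c_i$ lie in $M_1$ or $M_2$ away from the collars. The Lagrangians then move by a $C^1$-small amount, and transverse intersections persist in number. After the further perturbations needed for transversality of instanton moduli spaces, which leave the critical set unchanged, the chain complex $C(Y,L,w)$ is free of rank $2N$. Therefore $\text{rank}\,I(Y,L,w)\le 2N$ over $\Z$, and equally over any integral domain, since the rank of the homology is at most the rank of the chain group.

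\textbf{Step 3: the vanishing case.} If $I(Y,L,w)=0$, Theorem \ref{thm:detectionintro} gives a $2$-sphere meeting $L$ once, or a sphere disjoint from $L$ with $w\cdot L$ odd. In either case $\mathfrak{X}(Y,L,w)=\emptyset$: a meridian of $L$ would be nullhomotopic but traceless, or a product of $-1$'s equal to $-1$ would be nullhomotopic. By compactness of the unperturbed configuration analysis, the perturbed critical set $\mathfrak{X}_{\eta,\Gamma}(Y,L,w)$ is also empty for sufficiently small $\pi_1,\pi_2$ (and trivial collar perturbation). Using \eqref{identification-Lag-crit} with $\phi=\text{id}$, we get $\Lambda_1\times_X\Lambda_2=\emptyset$, which is vacuously transverse, so $\mathfrak{n}=0$.
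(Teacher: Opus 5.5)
Your proposal is correct and follows essentially the same route as the paper: approximate $\phi$ by a collar holonomy perturbation via Theorem \ref{thm:hamiltoniandiffeo-general}, count the generators of $C(Y,L,w)$ as the free $\sigma_0$-double cover of the fiber product via \eqref{identification-Lag-crit}, and in the vanishing case use Theorem \ref{prop:detection} to get $\mathfrak{X}(Y,L,w)=\emptyset$ and then use that small perturbations keep the fiber product empty. Your extra justification, that the nondegeneracy perturbation of Lemma \ref{non-dege-pert} does not change the count, is a point the paper leaves implicit. One correction to Step 1: the ``factorwise'' route fails, because $\mathrm{Ham}(M_{g,n})\times\mathrm{Ham}(M_{g',n'})$ is not dense in $\mathrm{Ham}(X)$ (for instance, the flow of $f(x)g(y)$ is not a product). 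So use your alternative with trace monomials of multicurves on $\Sigma_{g,n}\sqcup\Sigma_{g',n'}$, but invoke the $C^\infty$ density of Lemma \ref{lemma:improveregularity} and Proposition \ref{prop:tracemondense} rather than the $C^0$ statement of Proposition \ref{prop:cinfinityapprox}, since you need $C^1$-closeness to preserve the transverse intersection count.
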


\begin{proof}[Proof of Theorem \ref{thm:lagrangianineq}]
Set $X=M_{g,n}\times M_{g',n'}$. Let $\phi:X\to X$ be a Hamiltonian isotopy such that $\Lambda_1$ is transversal to $\phi(\Lambda_2)$. As we explained in the proof of Theorem \ref{non-dege-pert}, we may use Theorem \ref{thm:hamiltoniandiffeo-general} to modify perturbation data $\eta_2$, $\Gamma_2$ to $\eta_2'$, $\Gamma_2'$ such that $\phi(\Lambda_2)$ is arbitrarily close to $\Lambda_2'=\mathfrak{X}_{\eta_2',\Gamma_2'}(M_2,T_2,w_2)$. In particular, the number of points in $\Lambda_1 \times_X \phi(\Lambda_2) $ and $\Lambda_1 \times_X \Lambda_2' $ are equal to each other. The perturbation data $\eta_i,\Gamma_i$ induce the perturbation data $\eta$, $\Gamma$ for the admissible link $(Y,L,w)$ in \eqref{admissible-splitting-triple}, and by \eqref{identification-Lag-crit}, the number of points in $ \mathfrak{X}_{\eta,\Gamma}(Y,L,w)$ is twice the size of $\Lambda_1 \times_X \Lambda_2' $ (or equivalently $\Lambda_1 \times_X \phi(\Lambda_2)$). Since  $I(Y,L,w)$ is the homology of the chain complex $(C(Y,L,w),d)$ where $C(Y,L,w)$ is generated by $\mathfrak{X}_{\eta,\Gamma}(Y,L,w)$, we conclude that the size of $\Lambda_1 \times_X \phi(\Lambda_2)$ is bounded below by half of the rank of $I(Y,L,w)$. This proves \eqref{eq:lagrankineqgeneralproof}.

To prove the last statement, let $I(Y,L,w)=0$. In the next section, we show that $I(Y,L,w)=0$ implies that $\mathfrak{X}(Y,L,w)$ is empty. That is, the singular instanton complex $C(Y,L,w)$ for the trivial perturbation is trivial. The identification in \eqref{identification-Lag-crit} shows that the fiber product of $\mathfrak{X}(M_1,T_1,w_1)$ and $\mathfrak{X}(M_2,T_2,w_2)$ over $M_{g,n}\times M_{g',n'}$ is trivial. But the spaces $\mathfrak{X}(M_1,T_1,w_1)$ and $\mathfrak{X}(M_2,T_2,w_2)$ are not necessarily cut out transversaly. However, by Theorem \ref{3-man-Lag}, there are arbitrarily small perturbation data $\eta_i$, $\Gamma_i$ such that $\Lambda_i=\mathfrak{X}_{\eta_i,\Gamma_i}(M_i,T_i,w_i)$ are immersed Lagrangians. Since the perturbation data are small, the image of $\Lambda_i$ and $\mathfrak{X}(M_i,T_i,w_i)$ in $M_{g,n}\times M_{g',n'}$ are close to each other and hence the fiber product of $\Lambda_1$ and $\Lambda_2$ is still empty. In particular, we have 
$\mathfrak{n}(\Lambda_1,\Lambda_2)=0$. 
\end{proof}

%!TEX root = main.tex

\section{A detection result in singular instanton homology}\label{sec:detection}

In this section, we characterize when singular instanton homology of an admissible link is non-trivial. The following is an extension of \cite[Proposition 2.1]{dsodd}. The statement and argument also build on non-vanishing results due to Kronheimer and Mrowka \cite[\S 7]{km-tait}.

\vspace{0.1cm}

\begin{theorem}\label{prop:detection}
    Let $(Y,L,w)$ be an admissible link. Then $I(Y,L,w)$ is zero if and only if one of the following three conditions is satisfied:
    \begin{enumerate}
        \item[{\rm{(i)}}] $Y=S^1\times S^2\# Y'$ where  $L\subset Y'$ and $w$ has odd pairing with $\{pt\}\times S^2$.
        \item[{\rm{(ii)}}] $Y=S^1\times S^2\# Y'$ where $L$ is the disjoint union of $S^1\times \{pt\}\subset S^1\times S^2$ and some $L'\subset Y'$.
        \item[{\rm{(iii)}}] $Y=Y_1\# Y_2$ where the separating sphere $S^2$ satisfies $w\cdot S^2$ is odd and $L\cap S^2 = \emptyset$.
    \end{enumerate}
\end{theorem}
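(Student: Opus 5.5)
The plan has two directions: show that each of the three configurations forces $I(Y,L,w)=0$, and conversely show that if none of them occurs, $I(Y,L,w)\neq 0$ by reducing to Kronheimer--Mrowka's non-vanishing theorem for taut sutured manifolds.

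\textbf{Vanishing.} In all three cases I will show that the unperturbed representation variety $\mathfrak{X}(Y,L,w)$ is empty; the complex can then be built with no perturbation, so $I(Y,L,w)=0$.

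\begin{itemize}
\item In case (iii), a representation sends the belt loop of the separating sphere $S^2$ to a product of meridians of $w$. The loop bounds a disk on one side, so its image must be $1$. The count of $w$-meridians is odd, so its image is $-1$. This is a contradiction.
\item Case (i) is the same argument applied to $\{pt\}\times S^2$.
\item In case (ii), the sphere $\{pt\}\times S^2$ meets $L$ once. The boundary of the complementary disk in $S^2\smallsetminus L$ is a meridian of $L$, hence traceless, yet it is nullhomotopic in the complement. Again a contradiction.
\end{itemize}

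Equivalently, one can appeal to the earlier argument that a sphere meeting $L$ once, or meeting $w$ oddly and avoiding $L$, obstructs flat connections. Note that admissibility holds in each case, witnessed by the sphere itself or by the surface $\Sigma$.

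\textbf{Non-vanishing.} Assume none of (i)--(iii) holds. I will follow \cite[Proposition 2.1]{dsodd} and \cite[\S 7]{km-tait}.

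\begin{enumerate}
\item First reduce to an irreducible situation. Decompose $Y$ by a prime decomposition adapted to $(L,w)$: use connected-sum spheres meeting $L$ in zero or two points, with $w$ meeting each such sphere evenly (odd intersections are excluded by (iii)). Use a Künneth/connected-sum formula for singular instanton homology to express $I(Y,L,w)$ as a tensor product of the invariants of the summands, possibly including a factor for the unknot or for $S^1\times S^2$ pieces. For $S^1\times S^2$ summands, the exclusions of (i) and (ii) ensure that $w$ pairs evenly with the sphere and that $L$ does not meet it once. After modifying $w$ by a homologically trivial change, the summand then contributes a nonzero factor, with torsion handled via integral domain coefficients. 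Some care with the Künneth formula over $\Z$ is needed; I would work over a field $\F$ and deduce nonvanishing over $\Z$ from the universal coefficient theorem.
\item For each irreducible piece, with $Y\smallsetminus N(L)$ irreducible and $L$ not split off by spheres meeting it once, choose a Thurston-norm-minimizing surface $\Sigma$ in the complement, exhibiting admissibility. Remove a neighborhood of $L$, with sutures given by pairs of meridians, and cut along $\Sigma$. Gabai's theory shows the resulting sutured manifold is taut.
\item Kronheimer--Mrowka identify $I(Y,L,w)$, restricted to the top eigenspace of $\mu(\Sigma)$, with sutured instanton homology $SHI$, following the excision arguments of \cite{km-sutures} and \cite[\S 7]{km-tait}. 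Their theorem gives $SHI\neq 0$ for taut sutured manifolds.
\end{enumerate}

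The main obstacle is step 3 in the presence of both $L$ and $w$. The relation between $I(Y,L,w)$ and sutured instanton homology of the complement requires handling the $w$-dependence (excision changes $w$ by classes dual to $\Sigma$) and the fact that $L$ may be disjoint from $\Sigma$ (case (ii) of admissibility). In that case one uses $w\cdot\Sigma$ odd, as in the closed case of \cite{dsodd}. If $L$ is empty or unknotted in a ball, I would reduce to \cite[Proposition 2.1]{dsodd} directly, adding an earring (a meridional arc of $w$) to change $w$ as needed, since such earrings do not affect nonvanishing.
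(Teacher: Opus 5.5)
Your vanishing direction is correct and is the paper's argument. The non-vanishing direction has a genuine gap at step~1, and it propagates through the rest.

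You want a K\"unneth formula expressing $I(Y,L,w)$ as a tensor product over the prime summands. But the unframed group is only defined for admissible triples, and the summands are typically not admissible. Examples are a split unknot, or more generally a piece $(L(p,q),U_{p/q})$ with $p\neq0$; an $S^1\times S^2$ summand with $L$ empty and $w$ even; or any summand not containing the admissibility surface. For the same reason, step~2 cannot pick in each piece a surface ``exhibiting admissibility.'' Even where both sides are defined, connected sum is governed by Fukaya's theorem, which gives only a spectral sequence with an extra $H_*(SO(3))$ factor. A spectral sequence transfers non-vanishing only from the abutment to the $E_2$ page. So it cannot produce $I(Y,L,w)\neq0$ from non-vanishing of the pieces.

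The missing idea is to pass to the framed group $I^\#(Y,L,w)=I(Y,L\sqcup H,w\sqcup w_H)$. It needs no admissibility, and it does satisfy $I^\#(Y,L,w;\F)\cong\bigotimes_i I^\#(Y_i,L_i,w_i;\F)$ once $w$ is made mod~$2$ homologous to $\bigsqcup_i w_i$, which is possible exactly because (iii) fails. One proves each factor nonzero. Then Fukaya's spectral sequence, used in the right direction from $I(Y,L,w;\F)\otimes H_*(SO(3);\F)$ to $I^\#(Y,L,w;\F)$, gives $I(Y,L,w;\F)\neq0$, hence $I(Y,L,w)\neq0$.

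Step~3 is a second gap, as you suspect. There is no available identification of a top $\mu(\Sigma)$-eigenspace of $I(Y,L,w)$ with $SHI$ when $L$ crosses $\Sigma$ and $w$ is present. Also, ``earrings do not affect non-vanishing'' is not free. An earring changes the group, and only the implication from the earringed group back to the original holds, via a rank inequality that must be proved.

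The paper never cuts along $\Sigma$. Take a piece with $(Y_i,L_i)$ irreducible and $\partial$-irreducible, and let $\Pi$ be the set of components of $L_i$ carrying an odd number of endpoints of $w$.
\begin{enumerate}
\item Glue the boundary tori of the components in $\Pi$ in pairs, and put two meridional sutures on each remaining torus. This sutured manifold is taut by an elementary argument, so its $SHI$ is nonzero by (an extension of) Kronheimer--Mrowka's theorem.
\item Excision identifies this $SHI$ with $I(Y_i,L_i^\pi,w^\pi)$, where $L_i^\pi$ has one earring on each unpaired component. The pairing is what handles $w$: every pair of excision tori meets $w$ oddly.
\item Remove the earrings one at a time, using $\dim_\F I^\#=2\dim_\F I^\natural$ and the mapping-cone bound $2\dim_\F I(L^{\sigma\smallsetminus\{s\}})\geq\dim_\F I(L^\sigma)$. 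This gives $I^\#(Y_i,L_i,w_i;\F)\neq0$.
\end{enumerate}

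Your case list also misses the $\partial$-reducible pieces $(L(p,q),U_{p/q})$ with $p\neq0$. They are not split off by a sphere meeting $L$ once, and they are not admissible. However, their sutured complement is a product, hence taut, so the same argument applies. Finally, a summand $(S^1\times S^2,\emptyset)$ with $w$ even simply contributes $I^\#\cong\F^2$.
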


\noindent Note (i) and (iii) may be combined into the condition that there exists a 2-sphere in $Y$, disjoint from $L$, which has odd pairing with $w$. In particular, Theorems \ref{prop:detection} and \ref{thm:detectionintro} are equivalent.

In preparation for the proof, we prove several lemmas. Recall that a compact $3$-manifold $M$ is {\emph{irreducible}} if every embedded $2$-sphere bounds a 3-ball, and {\emph{$\partial$-irreducible}} if every component of $\partial M$ is incompressible, i.e. there are no embedded disks $D$ in $M$ with $\partial D$ essential in $\partial M$. Given a link $L\subset Y$ we write $N(L)\subset Y$ for a closed regular neighborhood of $L$, and we say that $(Y,L)$ is irreducible or $\partial$-irreducible if this is the case for $Y\smallsetminus \text{int}N(L)$. For $p\in \Q\cup \{\infty\}$ let $(L(p,q),U_{p/q})$ be the result of doing $p/q$-surgery on a meridional loop around the unknot in $S^3$.

\vspace{0.1cm}

\begin{lemma}\label{lemma:tautconnectedsumdecomp}
    Given a 3-manifold with link $(Y,L)$, we may write $Y=Y_1\# \cdots \# Y_k$ where $L$ is the disjoint union of (possibly empty) links $L_i\subset Y_i$ such that for each $i$ one of the following holds: 
        \begin{itemize}
        \item $(Y_i,L_i)$ is irreducible and $\partial$-irreducible.
        \item $(Y_i,L_i)=(S^1\times S^2,\emptyset)$.
        \item $(Y_i,L_i)=(L(p,q),U_{p/q})$ for some $p/q\in \Q \cup \{\infty\}$.
    \end{itemize}
    The last case includes $(S^3,U)$, when $p/q=\infty$, and $(S^1\times S^2,S^1\times \{pt\})$, when $p/q=0$.
\end{lemma}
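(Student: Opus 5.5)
We argue by induction on a complexity measure: the pair $(\text{number of prime summands of } Y,\ \text{number of components of } L)$, or more concretely by induction on the number of essential spheres and compressing disks one must cut along. The plan is first to apply the Kneser--Milnor prime decomposition to the exterior $E=Y\smallsetminus \text{int}N(L)$, and then to handle compressible torus boundary components.

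\textbf{Step 1: essential spheres.} Suppose $E$ contains an essential $2$-sphere $S$, meaning one not bounding a ball in $E$. Since $S$ is disjoint from $L$, it is a sphere in $Y$ disjoint from $L$. There are two cases.
\begin{itemize}
\item If $S$ separates $Y$, then $S$ exhibits $(Y,L)$ as a connected sum $(Y_1,L_1)\#(Y_2,L_2)$ with each $L_i\subset Y_i$.
\item If $S$ is non-separating, take an arc from one side of $S$ to the other in the complement of $L$. A regular neighborhood of $S$ union this arc is a punctured $S^1\times S^2$, so it splits off an $(S^1\times S^2,\emptyset)$ summand.
\end{itemize}
In both cases the number of prime summands of the pieces is controlled by Kneser's theorem applied to $E$, because each piece has strictly smaller complexity. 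By induction, we may therefore reduce to the situation where $E$ is irreducible, or $E$ is $S^1\times S^2$ with $L=\emptyset$. One caveat: a summand $(Y_i,\emptyset)$ with $Y_i=S^3$ is simply discarded, and irreducibility of $E$ then means every sphere in $E$ bounds a ball in $E$.

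\textbf{Step 2: compressible boundary tori.} Now assume $E$ is irreducible and some boundary torus $\partial N(K)$, for a component $K\subset L$, is compressible in $E$. Let $D$ be a compressing disk. A regular neighborhood of $N(K)\cup D$ has boundary a $2$-sphere $S'$ in $E$, obtained by surgering the torus along $D$. By irreducibility, $S'$ bounds a ball $B$ in $E$; since $S'$ is non-separating from the torus side only trivially, $B$ lies on the side away from $N(K)$, or else $N(K)\cup D$ sits inside a ball.

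In the first case, $Y$ is the union of a solid torus $V$ and a neighborhood of $N(K)\cup D$ glued along a sphere. This decomposes $Y$ as $Y'\#Q$, where $Q$ is the union of the solid tori $N(K)$ and $N(D)\cup B$, so $Q$ is a lens space (including $S^3$ and $S^1\times S^2$). Here $K$ is a core of one Heegaard solid torus of $Q$, and all other components of $L$ lie in $Y'$. More precisely, the $2$-sphere $S'$ separates $E$, and the side containing $\partial N(K)$ together with $N(K)$ is a genus-one Heegaard piece, i.e. a punctured lens space containing $K$ as a core. Because a core of a genus one Heegaard splitting of $L(p,q)$ is exactly the curve $U_{p/q}$ obtained from surgery on a meridian of the unknot, this summand is $(L(p,q),U_{p/q})$. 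The remaining components of $L$ go to the other summand, which has fewer link components, so induction applies.

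\textbf{Obstacle.} The main point requiring care is Step 2: showing that the sphere obtained from compressing the torus really separates off a summand whose link is a single core curve. One must check that $S'$ does not also enclose other components of $L$, and that the side containing $K$ is a (punctured) solid torus union a $2$-handle. The approach here is to choose $D$ disjoint from the other boundary tori, which is automatic since $D$ lies in $E$. The side of $S'$ containing $K$ is then exactly $N(K)\cup N(D)$, which contains no other component of $L$. Its complement, closed up with a ball, is the other summand. The lens space identification is standard, since $N(K)\cup N(D)$ is a punctured solid torus with a $2$-handle attached. After finitely many steps (bounded by Kneser finiteness plus the number of components of $L$), every remaining summand has an irreducible and $\partial$-irreducible exterior, which gives the three stated cases.
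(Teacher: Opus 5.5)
Your proposal is correct and follows the same route as the paper: a Kneser--Milnor prime decomposition of the exterior, then, in an irreducible summand, surgery on a compressible torus $\partial N(K)$ along the compressing disk $D$. This produces a sphere $S'$ bounding the punctured lens space $N(K)\cup N(D)$, in which $K$ is a Heegaard core, i.e.\ $U_{p/q}$. The one point to clean up is this: $(N(K)\cup N(D))\cap E$ has $\partial N(K)$ in its boundary, so irreducibility of $E$ forces the \emph{other} side of $S'$ to be a ball missing $L$. Hence the summand is exactly $(L(p,q),U_{p/q})$ with $L_i=K$, which is how the paper concludes. Your induction on the remaining link components, and the second case ``$N(K)\cup D$ inside a ball,'' are therefore vacuous.
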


\vspace{0.0cm}

\begin{proof}
The prime decomposition of $Y\smallsetminus L$ allows us to write $Y=Y_1\# \cdots \# Y_k$ where $L$ is the disjoint union of $L_i\subset Y_i$ and each $Y_i\smallsetminus L_i$ is either irreducible or $Y_i=S^1\times S^2$ with $L_i=\emptyset$. Suppose $(Y_i,L_i)$ is not $\partial$-irreducible. Then there is a component $L_i'$ of $L_i$ and a disk $D\subset Y_i\smallsetminus \text{int}N(L_i')$ such that $\partial D$ is an essential simple closed curve on $\partial N(L_i')$. The union of a thickening $D\times I$ of $D$ together with $N(L_i')\cong S^1\times D^2$ is homeomorphic to $L(p,q)$ minus an open $3$-ball for some $p/q$. Accounting for $L_i'$ in this lens space gives $U_{p/q}$. By irreducibility of $(Y_i,L_i)$ we must have $(Y_i,L_i)=(L(p,q),U_{p/q})$.
\end{proof}

\vspace{0.1cm}

As Kronheimer and Mrowka's sutured instanton homology \cite{km-sutures} is used below, we recall some of the relevant terminology. A {\emph{balanced sutured manifold}} $(M,\gamma)$ consists of a compact oriented $3$-manifold $M$ with boundary, having no closed components, and an oriented 1-manifold $\gamma\subset \partial M$ whose components are called {\emph{sutures}}. The pair $(M,\gamma)$ is required to satisfy several properties explained in the following paragraph.

There is a disjoint pair of oriented surfaces with boundary $R_+$ and $R_-$ in $\partial M$ such that the complement of $R_+\cup R_-$ in $\partial M$ is the disjoint union of a collection of annular neighborhoods of the sutures. (Strictly speaking, the annular neighborhoods of the sutures are usually included in the data of $(M,\gamma)$.) The surface $R_+$ is distinguished as having the property that the orientation of the sutures parallel to $\partial R_+$ are induced by first using the boundary orientation of $R_+\subset \partial M$ induced by $M$ and again using the boundary orientation of $\partial R_+$ induced by $R_+$. Finally, it is required that $\chi(R_+)=\chi(R_-)$ and that each of $R_\pm$ has no closed components.

A balanced sutured manifold $(M,\gamma)$ is {\emph{taut}} if the $3$-manifold $M$ is irreducible, the surface $R_+\cup R_-$ is incompressible in $M$, and $R_\pm$ are each minimal in their homology classes with respect to the Thurston norm on $H_2(M,\partial M)$.

A {\emph{pairing}} for $(Y,L)$ is the data of a subset $\Pi$ of the set of link components of $L$ such that $|\Pi|$ is even, together with a partition of $\Pi$ into 2-element subsets. Plainly, a pairing matches up some of the components of $L$ into pairs. The {\emph{empty pairing}} is the unique pairing with $\Pi=\emptyset$.

To a link $(Y,L)$ with a pairing, associate a sutured manifold $(M,\gamma)$ as follows: $M$ is the result of taking $Y\smallsetminus \text{int}N(L)$ and gluing up the boundary tori of components from $\Pi$ in pairs according to the pairing; the particular way in which two boundary tori are glued together will not be important. Note $\partial M$ consists of the remaining boundary tori of link components not in $\Pi$. The sutures $\gamma$ consist of two oppositely oriented meridians on each torus boundary component on $\partial M$.

This construction produces a balanced sutured manifold $(M,\gamma)$ so long as $L$ has components not in $\Pi$. Otherwise, the result is a closed $3$-manifold. For convenience, we will abuse terminology and include closed 3-manifolds in the class of balanced sutured manifolds. In this setting, a closed $3$-manifold being taut will mean that it is irreducible.

\vspace{0.1cm}

\begin{lemma}\label{eq:irreducibltotaut}
    Suppose $(Y,L)$ is irreducible and $\partial$-irreducible. Then the sutured manifold $(M,\gamma)$ associated to any pairing of $(Y,L)$ is taut.
\end{lemma}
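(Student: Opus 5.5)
The plan is to verify the three conditions in the definition of tautness separately: irreducibility of $M$, incompressibility of $R_+\cup R_-$, and Thurston norm minimality of $R_\pm$. Throughout, write $X=Y\smallsetminus \text{int}N(L)$. By hypothesis $X$ is irreducible and every boundary torus of $X$ is incompressible in $X$. The manifold $M$ is obtained from $X$ by gluing the boundary tori of the components in $\Pi$ together in pairs. Let $\mathcal{T}\subset M$ denote the union of the resulting glued tori, so that cutting $M$ along $\mathcal{T}$ returns $X$. Note that $X$ is connected and some gluings may be self-gluings of $X$. This causes no difficulty, because all arguments below are local to $\mathcal{T}$.

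\emph{Step 1: incompressibility of $\mathcal{T}$ and of $\partial M$ in $M$.} Let $D$ be a compressing disk, either for a component of $\mathcal{T}$ or for a component of $\partial M$, and isotope it to be transverse to $\mathcal{T}$.
\begin{itemize}
\item Choose a circle of $D\cap\mathcal{T}$ that is innermost in $D$. It bounds a subdisk $D'\subset D$ whose interior lies in $X$.
\item Since the tori of $\partial X$ are incompressible in $X$, $\partial D'$ is inessential on its torus and bounds a disk $E$ there.
\item The sphere $D'\cup E$ bounds a ball in $X$ by irreducibility of $X$. Isotoping $D'$ across this ball (if $\partial D'$ is innermost on $E$; otherwise replace $D'$ by $E$ and push off) removes at least this intersection circle.
\end{itemize}
Inducting on $|D\cap\mathcal{T}|$, we reach one of two outcomes. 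Either $\partial D$ becomes inessential, which is a contradiction; or $D$ lies in $X$ with essential boundary on $\partial X$, which contradicts $\partial$-irreducibility of $X$. Hence $\mathcal{T}$ and $\partial M$ are incompressible in $M$.

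\emph{Step 2: irreducibility of $M$.} Given an embedded sphere $S\subset M$, run the same innermost-circle argument on $S\cap\mathcal{T}$, now using the incompressibility of $\mathcal{T}$ from Step 1 together with irreducibility of $X$. This reduces $S\cap\mathcal{T}$ to the empty set by isotopy. Then $S$ lies in $X$, where it bounds a ball by irreducibility of $X$, and that ball is also a ball in $M$. In the case $L\smallsetminus\Pi=\emptyset$, $M$ is closed and this is all that tautness requires.

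\emph{Step 3: the surfaces $R_\pm$.} On each remaining boundary torus, the two oppositely oriented meridional sutures cut the torus into two annuli, one belonging to $R_+$ and one to $R_-$. So $R_+$ and $R_-$ are disjoint unions of annuli with meridian cores, and $\chi(R_+)=\chi(R_-)=0$. Incompressibility of $R_+\cup R_-$ follows from Step 1. Indeed, a compressing disk for an annulus would have boundary parallel to the core, a meridian, which is essential on its boundary torus; this would compress $\partial M$, which Step 1 rules out. Finally, the Thurston norm is nonnegative, and the annuli contribute $0$ since they have no sphere or disk components. So $R_\pm$ realize norm $0$ and are therefore minimal in their classes in $H_2(M,\partial M)$.

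I expect the main obstacle to be Steps 1 and 2, the standard gluing lemma saying that gluing irreducible, boundary-irreducible pieces along incompressible tori preserves these properties. The care is needed in the innermost-disk isotopies: one must check that each replacement strictly decreases the number of intersection curves, and handle a disk $E$ on a torus containing further intersection curves by passing to an innermost one on $E$. The self-gluing case needs no separate argument, since only the local two-sidedness of $\mathcal{T}$ in $M$ is used.
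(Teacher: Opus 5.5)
Your proposal is correct and follows essentially the paper's route. An innermost disk of a sphere (or compressing disk) cut along the glued tori lies in $Y\smallsetminus \text{int}N(L)$ with boundary on a boundary torus; this either contradicts $\partial$-irreducibility or, if its boundary is inessential, can be removed using irreducibility. The annuli $R_\pm$ are automatically Thurston-norm minimizing. The paper organizes this differently: it takes a reducing sphere minimizing intersections via surgery, then repeats the argument for a compressing disk of $R_\pm$, whereas you first prove incompressibility of the glued tori and of $\partial M$. The one imprecise point is your ``replace $D'$ by $E$ and push off'' case when $E$ contains further intersection circles, since that surface would not be embedded. Instead, push the whole ball bounded by $D'\cup E$, together with everything inside it, across $E$ by an ambient isotopy; this removes every circle in $E$ at once. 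For disks you can alternatively do surgery along a circle innermost on $E$.
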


\begin{proof}
    Suppose $M$ is reducible, and consider a reducing sphere $S\subset \text{int}M$. Let $T\subset \text{int}M$ be the collection of $2$-tori which were glued up in pairs. Assume $S$ intersects $T$ transversely, and the number of components in $S\cap T$ is minimal among reducing spheres $S$. A standard argument using the minimality of $S\cap T$ implies that no component of $S\cap T$ is nullhomotopic in $T$. Suppose $S\cap T\neq \emptyset$. The result of removing a neighborhood of $S\cap T$ from $S$ gives a planar surface $S'$, a union of disks with holes, and $S$ is homeomorphic to the result of gluing up pairs of boundary circles in $S'$. As $S$ is a sphere, an Euler characteristic argument shows that $S'$ contains a disk. This disk gives a compressing disk for $Y\smallsetminus \text{int}N(L)$, contradicting the $\partial$-irreducibility of $(Y,L)$. Thus $S\cap T=\emptyset$, in which case $S$ is a reducing sphere for $Y\smallsetminus \text{int}N(L)$, contradicting the irreducibility of $(Y,L)$. It follows that $M$ is irreducible. 
    
    The surfaces $R_+$ and $R_-$ in $\partial M$ are each collections of annuli, and thus automatically are Thurston-norm minimizing. It remains to show that each annulus component of $R_\pm$ is incompressible in $M$. For this, suppose that there is a compressing disk $D$ for one of these annuli, and repeat the argument above, with $D$ playing the role of $S$. 
\end{proof}

\vspace{0.1cm}

For the next lemma, we utilize the notation of \cite{km-unknot} and write
\[
    I^\#(Y,L,w):=I(Y,L\sqcup H,w\sqcup w_H)
\]
where $H$ is a Hopf link in a small $3$-ball in $Y$ and $w_H$ is an arc connecting the components of $H$. To define $I^\#(Y,L,w)$, we do not require $(Y,L,w)$ to be admissible. Write $\F=\Z/2$.

\vspace{0.1cm}

\begin{lemma}\label{lemma:frameddetection}
    Given $(Y,L,w)$ where $(Y,L)$ is irreducible and $\partial$-irreducible, we have
    \[
        I^\#(Y,L,w;\F) \neq 0.
    \]
\end{lemma}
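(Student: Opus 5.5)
Our plan is to reduce the statement to Kronheimer and Mrowka's non-vanishing theorem for sutured instanton homology of taut balanced sutured manifolds \cite{km-sutures}, following the strategy of \cite[\S 7]{km-tait}. Since $I^\#$ is unchanged by adding an unknotted component carrying $w$ (up to finite rank factors) and $w$ only changes the bundle, the first step is to handle $w$. We would use an excision argument to relate $I^\#(Y,L,w;\F)$ to sutured instanton homology of the balanced sutured manifold $(M,\gamma)$ attached to $(Y,L)$ with a suitable pairing. Concretely, $I^\#(Y,L,w)$ is the instanton homology of $(Y,L\sqcup H,w\sqcup w_H)$. Kronheimer--Mrowka identify instanton homology of a link with meridional arcs for $w$ with the sutured instanton homology $SHI$ of the link complement with two meridional sutures per component, tensored with a fixed nonzero group. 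This happens after excising along tori around the components, which amounts to the pairing construction. The role of $w$ is to choose the bundle. Up to modifying $w$ by arcs in a neighborhood of $L$ (which changes the class only by meridians), $w$ restricted to $Y\smallsetminus N(L)$ defines a class in $H_1(M,\partial M;\F)$ that can be absorbed into the closure used to define $SHI$. Sutured instanton homology, in its twisted-coefficient versions, is nonzero for taut manifolds with any choice of twisting class. So the plan is to choose a pairing $\Pi$ of the components of $L$ (possibly empty) and set up this identification over $\F$, noting any needed local system.

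The second step is to invoke Lemma \ref{eq:irreducibltotaut}: since $(Y,L)$ is irreducible and $\partial$-irreducible, $(M,\gamma)$ is taut for every pairing. Kronheimer--Mrowka's theorem then gives $SHI(M,\gamma)\neq 0$. This holds even when $M$ is closed and irreducible, where the relevant statement is non-vanishing of $I(Y\# T^3\text{-type closure})_w$ for irreducible closed $Y$. The theorem holds over $\F$ as well, since the proof via sutured hierarchies and excision (Floer's excision and the surface decomposition inequalities) works over any field, and the top-grading eigenspace argument is characteristic-free. If needed, we would instead use the $\C$ version together with a universal coefficient argument in the reverse direction: a nonzero group over $\Z$ that is free forces nonzero rank over $\F$. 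More safely, since $\dim_\F \geq \dim_\Q$ by universal coefficients, non-vanishing over $\Q$ already gives non-vanishing over $\F$.

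The main obstacle is the first step: making precise the isomorphism, or at least an injection or a rank inequality, between $I^\#(Y,L,w;\F)$ and $SHI(M,\gamma)$ with the twist induced by $w$. In particular, one must ensure that an arbitrary $w$ (not just meridional arcs) can be matched with an admissible closure class. We would first try to mimic \cite[Prop.~7.x]{km-tait}. That argument shows $I^\#(Y,L)$ contains $SHI$ of the link complement with meridional sutures via excision along the tori $\partial N(L)$ glued to a genus-one closure. It tracks $w$ as a class in $H_2$ of the closure, using that KM's non-vanishing holds for every such class since the relevant top eigenspace of $\mu(R)$ is nonzero for all bundles. Pairing components is only needed if one prefers closures without extra tori, so the empty pairing should suffice.
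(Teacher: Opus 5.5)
Your outline (excision to a sutured manifold, tautness from Lemma~\ref{eq:irreducibltotaut}, the Kronheimer--Mrowka non-vanishing theorem, then universal coefficients to reach $\F$) matches the paper's. However, the step you yourself flag as the main obstacle is left open, and the identification you propose for it is incorrect.

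Excision identifies $SHI$ of the complement with two meridional sutures per component with $I(Y,L^\pi,w^\pi)$, the link carrying an \emph{earring} on every component. It does not identify it with $I^\#(Y,L,w)=I(Y,L\sqcup H,w\sqcup w_H)$. The two are not related by a fixed tensor factor: for a knot $I^\#$ has twice the rank of the sutured group, while for the Hopf link (whose complement $T^2\times I$ satisfies your hypotheses) the ranks agree.

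What is needed is a one-sided rank inequality. The paper obtains it from two facts over $\F$. The first is $\dim I(Y_0,L_0\sqcup H,w_0\sqcup w_H)=2\dim I(Y_0,L_0^{\{p\}},w_0^{\{p\}})$. The second is the mapping-cone inequality $2\dim I(Y_0,L_0^{\sigma\smallsetminus\{s\}},w_0^{\sigma\smallsetminus\{s\}})\geq \dim I(Y_0,L_0^{\sigma},w_0^{\sigma})$, which lets one delete earrings one at a time while preserving non-vanishing. Without these, non-vanishing of $SHI$ says nothing about $I^\#$. The universal coefficient step must be applied to $I(Y,L^\pi,w^\pi)$ \emph{before} these $\F$-inequalities. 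Your claim that the eigenspace argument is characteristic-free is therefore unnecessary, and doubtful.

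Your handling of $w$ has a second gap, and your closing remark that the empty pairing should suffice gets the role of the pairing backwards. The parity of $\#(\partial w\cap L')$ on each component $L'$ is determined by the class of $w$ in $H_1(Y,L;\F)$, so moving $w$ near $L$ cannot change it. Suppose it is odd for some $L'$, and attach an earring $E$ to $L'$ with its arc $w_H$. Then the total number of endpoints of $w^\pi$ on $L'\cup E$ is odd. Hence the tori around $L'$ and around $E$ cannot both meet $w^\pi$ an odd number of times, and the excision is unavailable.

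The paper handles this as follows:
\begin{itemize}
\item it lets $\Pi$ be the (even) set of components with odd $w$-parity;
\item it replaces $w$, within its mod $2$ class, by a $1$-manifold with one endpoint on each component of $\Pi$ and none elsewhere;
\item it glues the boundary tori of $\Pi$ in pairs, so that $w$ closes up to a $1$-manifold in the interior of $M$;
\item it attaches earrings only to components outside $\Pi$;
\item it excises along pairs of tori each meeting $w^\pi$ an odd number of times.
\end{itemize}
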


\begin{proof}
If $L$ is empty, the result follows from \cite[Proposition 2.1]{dsodd}, and so we assume $L\neq \emptyset$. Consider the set of components of $L$ that have an odd number of points from $w$:
\[
    \Pi := \{ L' \subset L \; : \;  L'\text{ is a component of } L \text{ and } \# L' \cap \partial w \text{ is odd} \}.
\]
Note that $|\Pi|$ is even. Choose a pairing of $(Y,L,w)$ with respect to $\Pi$. Note that $w$ is mod $2$ homologous to a $1$-manifold that misses components of $L$ not in $\Pi$ and has one boundary point on each component in $\Pi$. We assume from the start that $w$ has this property. Writing $(M,\gamma)$ for the balanced sutured manifold associated to $(Y,L)$ and the pairing chosen, we also have a $1$-manifold in $M$ which we still call $w\subset M$; it is obtained from $w\subset Y$ by arranging that when gluing the tori of $\Pi$ in pairs, we require that the points of $\partial w$ in each torus is appropriately matched. Consider the sutured instanton homology, defined as a vector space over $\C$, written
    \[
    SHI(M,\gamma,w).
    \]
Here, we slightly extend the sutured instanton homology from \cite{km-sutures} to include an embedded unoriented closed $1$-manifold $w\subset \text{int} M$; for details, see \cite{dis}. As explained there, \cite[Theorem 7.12]{km-sutures} generalizes: if $(M,\gamma)$ is taut, then $SHI(M,\gamma,w)\neq 0$. 

Recall that our conventions allow the possibility that $M$ is a closed $3$-manifold. In this case $SHI(M,\gamma,w)$ is defined to be $I(M,w;\C)$. Note that in our setup, when $M$ is closed, $w$ intersects each glued up torus in an odd number of points, and thus $(M,w)$ is admissible.

    Let $\pi$ be a collection of points on $L$ and denote by $L^\pi \subset Y$ the link obtained from $L$ by `adding an earring' around each point in $\pi$. Explicitly, at each point in $\pi$ we connect sum onto $L$ a copy of the Hopf link. This is done away from the $1$-manifold $w\subset Y$. Let $w^\pi\subset Y$ be the union of $w$ and one small arc connecting the two components of each Hopf link attached. Now assume that $\pi$ consists of exactly one point on each component of $L$ which is not in $\Pi$. Applying the excision argument of \cite[Proposition 5.7]{km-unknot} (see also \cite[Proposition 5.1]{xie}), there is an isomorphism 
    \[
        SHI(M,\gamma,w) \cong I(Y,L^\pi,w^\pi;\C).
    \]
    More precisely, Floer's excision is performed on $(Y,L^{\pi},w^\pi)$ in two ways: once for each component $L'\subset L$ not in $\Pi$, using a pair of tori, one surrounding $L'\subset L^\pi$ and the other torus encasing the earring; and once also for each pair of components $L',L''\subset L^\pi$ in $\Pi$ that are matched by the pairing, using a pair of tori surrounding each of $L'$ and $L''$. It is important that each excision involves a pair of tori that have odd intersection with $w^\pi$.

    Under the assumption that $(Y,L)$ is irreducible and $\partial$-irreducible, Lemma \ref{eq:irreducibltotaut} implies that $(M,\gamma)$ is taut. Then $I(Y,L^\pi,w^\pi;\C)\neq 0$, and by the universal coefficients theorem,
    \[
        I(Y,L^\pi,w^\pi;\F) \neq 0
    \]
    where $\F =\Z/2$. Next, for any admissible link $(Y_0,L_0,w_0)$ we have
    \begin{equation}\label{eq:dimdoublesharptonatural}
        \dim_\F I(Y_0,L_0\sqcup H,w_0\sqcup w_H;\F) = 2 \dim_\F I(Y_0,L^{\{p\}}_0,w^{\{p\}}_0;\F),
    \end{equation}
    where $p$ is any point on $L$, and, as before, $H$ is a Hopf link in a $3$-ball and $w_H$ is a small arc between the components of $H$. See \cite[Lemma 7.7]{km-tait}, \cite[Corollary 8.17]{ds1}. Note that the latter reference has \eqref{eq:dimdoublesharptonatural} stated for $(Y,K)$ where $K$ is a knot in an integer homology $3$-sphere, but the argument adapts to the case of admissible links. Thus we now have
    \begin{equation}\label{eq:detectionnonvanishstepproof}
    I(Y,L^{\pi\smallsetminus \{p\}}\sqcup H,w^{\pi\smallsetminus \{p\}}\sqcup w_H;\F) \neq 0
    \end{equation}
    where $p\in \pi$ is arbitrary. The next property to be used is the following: if $(Y_0,L_0,w_0)$ is any admissible link and $\sigma\neq\emptyset$ a collection of points on $L_0$, then for any $s\in \sigma$,
    \begin{equation}\label{eq:detectionpropertyproof}
        2\dim_\F I(Y_0,L^{\sigma\smallsetminus \{s\}}_0,w^{\sigma\smallsetminus \{s\}}_0;\F) \geq \dim_\F I(Y_0,L_0^{\sigma},w_0^{\sigma};\F).
    \end{equation}
    This follows from \cite[\S 8.2]{ds1} adapted to the case of admissible links, where it is shown that a chain complex for $I(Y_0,L_0^{\sigma},w_0^{\sigma})$ is given by the mapping cone of a chain map on a chain complex for the triple associated to $\sigma\smallsetminus \{s\}$. By applying property \eqref{eq:detectionpropertyproof} successively to $(Y,L^{\pi\smallsetminus \{p\}}\sqcup H,w^{\pi\smallsetminus \{p\}}\sqcup w_H)$ for each remaining point in $\pi\smallsetminus \{p\}$, we finally obtain from \eqref{eq:detectionnonvanishstepproof} that
    \begin{equation*}
         I^\#(Y,L,w) = I(Y,L\sqcup H,w\sqcup w_H;\F)\neq 0. \qedhere
    \end{equation*}
\end{proof}

\vspace{0.2cm}

\begin{proof}[Proof of Theorem \ref{prop:detection}]

        Let $(Y,L,w)$ be any admissible link. The vanishing of $I(Y,L,w)$ in cases (i)--(iii) is standard. If condition (i) or (iii) is satisfied, then there are no critical points of the associated (unperturbed) Chern--Simons functional. Indeed, any such critical point restricts to the relevant $2$-sphere described in (i) or (ii) to a point in $M^\text{odd}_{0}=\emptyset$. Similary, a critical point in case (ii) restricts to the relevant $2$-sphere to a point in $M_{0,1}=\emptyset$.
        
        Now suppose $(Y,L,w)$ does not satisfy (i)--(iii). Write $Y=Y_1\# \cdots \# Y_k$ where $L$ is the disjoint union of $L_i\subset Y_i$ where the $(Y_i,L_i)$ satisfy the conditions of Lemma \ref{lemma:tautconnectedsumdecomp}. By the assumption that (iii) does not hold, $w$ is mod 2 homologous to a $1$-manifold $w_1\cup \cdots \cup w_k$ where $w_i\subset Y_i$. By a straightforward generalization of the K\"{u}nneth formula from \cite[Corollary 5.9]{km-unknot}, we have
    \begin{equation}\label{eq:framedkunnethdecomp}
        I^\#(Y,L,w;\F) \cong \bigotimes_{i=1}^k I^\#(Y_i,L_i,w_i;\F).
    \end{equation}
    Lemma \ref{lemma:tautconnectedsumdecomp} lists the three possibilities for each $(Y_i,L_i)$.

    \begin{itemize}
        \item If $(Y_i,L_i)$ is irreducible and $\partial$-irreducible, then $I^\#(Y_i,L_i,w_i;\F)\neq 0$ by Lemma \ref{lemma:frameddetection}.
        \item If $(Y_i,L_i)=(S^1\times S^2,\emptyset)$, by the assumption that (i) does not hold, we have that $w_i$ is mod $2$ nullhomologous. Then $I^\#(Y_i,L_i,w_i;\F) = I^\#(S^1\times S^2;\F) \cong \F^2 \neq 0$.
        \item If $(Y_i,L_i)=(L(p, q), U_{p/q})$ for some $p/q\in \Q \cup \{\infty\}$, then, as (ii) does not hold, $p\neq 0$. Furthermore, $I^\#(L(p,q),U_{p/q},w;\F)\neq 0$ for any $w$. This may be seen by observing that the sutured manifold associated to $(L(p,q),U_{p/q})$ is taut whenever $p\neq 0$.
    \end{itemize}
    From these cases and \eqref{eq:framedkunnethdecomp} we obtain $I^\#(Y,L,w)\neq 0$. Finally, from Fukaya's connected sum theorem \cite{fukaya-connectedsum} over $\F$ adapted to the case of admissible links, there is a spectral sequence from $I(Y,L,w;\F)\otimes H_\ast(SO(3);\F)$ converging to $I^\#(Y,L,w;\F)$. Thus $I(Y,L,w;\F)\neq 0$.
\end{proof}

%!TEX root = main.tex

\section{Instanton homology and the symplectic mapping class group of $M_{g,n}$}\label{sec:mappingclassgroupthm}

In this section, we apply the earlier obtained results to study the symplectic topology of the moduli spaces $M_{g,n}$. In \S \ref{sec:heegaard}, we introduce a class of Lagrangians in $M_{g,n}$ induced by what we call {\emph{standard pairs}}, and in the setting where two such standard pairs are glued, we recast the detection result of Theorem \ref{prop:detection} in terms of Heegaard splitting data. In \S \ref{sec:detectionmappingclasses}, the detection result is studied from the viewpoint of mapping classes of $\Sigma_{g,n}$, and in \S \ref{subsec:pseudoanasov}, the case of pseudo-Anosov mapping classes is considered. Finally, in \S \ref{sec:mappingclassproof}, we prove Theorem \ref{thm:mappingclassgroupaction}.

\subsection{Detection of instanton homology via Heegaard splittings}\label{sec:heegaard}

Theorem \ref{prop:detection} relates non-vanishing of instanton homology to existence of spheres in 3-manifolds. We being this section by the study of such spheres (and disks) for a pair $(M,T)$ of a $3$-manifold and a tangle. A {\emph{splitting sphere}} of $(M,T)$ is an essential sphere in $\text{int}(M)\smallsetminus T$. A {\emph{$T$-compressing disk of $\partial M$}} is a properly embedded disk $D\subset M\smallsetminus T$ with $\partial D$ an essential curve on $\partial M$. 

With $(M,T)$ as above, let $F\subset M$ be a properly embedded surface, and $D$ an embedded disk in $M$ such that $D\cap F = \partial D$ and $D\cap T=\emptyset$. Take a regular neighborhood $N$ of $D$ in $M$, identified with $D\times [0,1]$, so that $N\cap F = \partial D \times [0,1]$ and $N\cap T=\emptyset$. The operation of replacing $\partial D\times [0,1]$ on $F$ with the two disks $D\times \{0,1\}$ is called a {\emph{$2$-surgery}} on $F$ along $D$. If $F'$ is one of the components of a $2$-surgery on $F$, it is said to be obtained by $2$-surgery on $F$.

\begingroup
\renewcommand\labelenumi{(\theenumi)}
\begin{lemma}\label{lemma:haken}
    Suppose $(M,T)$ is a $3$-manifold with tangle, with Heegaard decomposition
    \begin{equation}\label{eq:heegaardsplitting-2}
	(M,T) = (C_1,T_1) \cup_{(\Sigma,\mathcal{P})} (C_2,T_2)
    \end{equation}
    \begin{enumerate}
        \item If $(M,T)$ has a splitting sphere, then there exists a splitting sphere $S$ transverse to $\Sigma$ such that $S\cap \Sigma$ is an essential simple closed curve in $\Sigma\smallsetminus \mathcal{P}$. \label{lemma:haken1} 
        \item If $(M,T)$ has a $T$-compressing disk $D_0$ of $\partial M$, then it has a $T$-compressing disk $D$ with $\partial D = \partial D_0$ such that $D \cap \Sigma$ is an essential simple closed curve in $\Sigma\smallsetminus \mathcal{P}$.\label{lemma:haken1.5} 
        \item If $M$ has a sphere intersecting $T$ transversely in one point, then it has a sphere $S$ transverse to $T$ and $\Sigma$, with $S\cap T$ one point and $S\cap \Sigma$ an essential simple closed curve in $\Sigma\smallsetminus \mathcal{P}$.\label{lemma:haken2} 
    \end{enumerate}
In each case, the resulting sphere or disk is obtained from the original by $2$-surgeries and isotopy. 
\end{lemma}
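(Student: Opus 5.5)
This is the tangle version of Haken's lemma. I would prove it by the standard innermost-disk and outermost-arc argument, relative to the spine of the compression bodies, following \cite{hayashi-shimokawa} (their Lemma on essential spheres and disks for Heegaard splittings of pairs). All three statements use the same complexity-reduction scheme.

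For the spheres, fix a spine for each $(C_i,T_i)$. This is a graph $G_i\subset C_i$ together with $\partial_-C_i$ and the vertical arcs, and its complement is a product $\Sigma\times I$ minus the $\partial_+$-parallel arcs, which are trivial in that product. Equivalently, $(C_i,T_i)$ is cut by a system $\mathcal{D}_i$ of compressing disks, disjoint from $T_i$, into pieces of two kinds: $(\partial_-C_i\times I,\ \text{vertical arcs})$ and $3$-balls containing at most one trivial arc each.

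The complexity of $S$ is $c(S)=(|S\cap(\mathcal{D}_1\cup\mathcal{D}_2)|,\ |S\cap\Sigma|)$, ordered lexicographically. First remove circles of $S\cap\mathcal{D}_i$ by innermost-disk arguments. An innermost disk $E\subset\mathcal{D}_i$ with $\partial E\subset S$ and $E\cap T=\emptyset$ gives a $2$-surgery on $S$. One of the two resulting spheres is still a splitting sphere, since an essential sphere cannot be the sum of two inessential ones; in case (3), keep the piece containing the unique point of $S\cap T$, and a parity count shows exactly one piece meets $T$ once. The resulting sphere has smaller complexity.

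Next remove arcs of $S\cap\mathcal{D}_i$ by outermost-arc arguments, which correspond to isotopies of $S$ across $\Sigma$. Once $S$ misses $\mathcal{D}_1\cup\mathcal{D}_2$, it lies in the cut-open pieces. In a $3$-ball with one trivial arc, $S$ is inessential or bounds that arc, which contradicts the hypotheses. In a product piece $(F\times I,\ \text{vertical arcs})$, incompressibility of $F$ minus the points shows $S\cap\Sigma$ consists of curves parallel to level curves. Now take a minimal-complexity $S$ and consider $S\cap\Sigma$. An inessential circle bounding a disk in $\Sigma\smallsetminus\mathcal{P}$ allows a further $2$-surgery or isotopy. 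A circle bounding a once-punctured disk in case (3) is handled by the parity count. So every remaining circle is essential in $\Sigma\smallsetminus\mathcal{P}$. If more than one circle remains, look at an outermost disk component of $S\smallsetminus\Sigma$ inside some $C_i$. It is a compressing disk for $\Sigma\smallsetminus\mathcal{P}$ in $C_i\smallsetminus T_i$. I would combine two such disks on opposite sides, via Hayashi--Shimokawa's tangle version of the Haken/Casson--Gordon argument, to produce a sphere meeting $\Sigma$ in a single essential curve.

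Case (2) follows the same scheme. The boundary $\partial D_0\subset\partial_-C_i$ is kept fixed, and only interior circles of $D\cap\Sigma$ and of $D\cap\mathcal{D}_i$ are removed by $2$-surgery. Surgery on a disk along an innermost disk leaves a disk with the same boundary plus a sphere, which we discard. Afterwards the spanning annulus in the product part, together with a compressing disk on the other side, gives $D$ with $D\cap\Sigma$ a single essential curve.

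The hard step is the last one: reducing from several essential circles of $S\cap\Sigma$ to exactly one. This is the genuine content of Haken's lemma, and it is where trivial $\partial_+$-parallel arcs could obstruct the standard argument. Here I would invoke the tangle Haken lemma of \cite{hayashi-shimokawa} directly rather than reprove it, checking only that their moves are $2$-surgeries and isotopies, and that in case (3) the single puncture can be kept on the surviving component.
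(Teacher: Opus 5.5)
For (1) and (2) you end up deferring to Hayashi--Shimokawa's tangle Haken lemma, and that is all the paper does there: it cites parts (2) and (3) of their Theorem 1.3. Your spine/complexity sketch is therefore not needed, and it would not work on its own. A sphere missing $\mathcal{D}_1\cup\mathcal{D}_2$ can still cross $\Sigma$ many times, so it need not lie in one piece of one compression body. Also, an outermost-arc push across $\Sigma$ can create new intersections with the disk system on the other side, so your complexity need not drop.

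The genuine gap is item (3). The theorem you plan to invoke concerns splitting spheres, which lie in $M\smallsetminus T$, and $T$-compressing disks. It says nothing about spheres meeting $T$ in one point, so it cannot be applied ``directly'' there. Checking that the puncture stays on the surviving piece only controls the $2$-surgeries. It does not control the isotopies or the final reduction to a single curve of $S\cap\Sigma$, which in their setting are formulated for surfaces disjoint from $T$. Nothing in your proposal shows these steps survive a point of $S\cap T$.

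The missing idea is to reduce (3) to (2). Let $p=S_0\cap T$, and consider two cases.

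\emph{Case 1: $p$ lies on a $\partial_+$-parallel arc, say in $C_1$.} Delete a small ball $B\subset\text{int}(C_1)$ around $p$ meeting $T$ in an unknotted arc and $S_0$ in a disk. Then $C_1\smallsetminus B$ with the cut tangle is again a compression body with trivial tangle: the two halves of the arc become vertical arcs ending on the new sphere component of $\partial_-$. So $(M\smallsetminus B,T')$, with $T'=T\smallsetminus B$, still has a Heegaard splitting along $(\Sigma,\mathcal{P})$. Moreover, $S_0\smallsetminus B$ is a $T'$-compressing disk for the new boundary sphere, whose boundary separates its two tangle points. Apply (2) with the boundary fixed, then cap off with $S_0\cap B$.

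\emph{Case 2: $p$ lies on a vertical arc.} The drilling trick fails here, because one half of the cut arc would have both endpoints on $\partial_-$, so the tangle is no longer trivial. Instead, replace a small disk of $S_0$ around $p$ by an annulus tubing along the vertical arc down to $\partial_-C_1$. This gives a $T$-compressing disk whose boundary encircles one point of $T\cap\partial_-C_1$. Apply (2), then cap off with the small disk in $\partial_-C_1$ pushed into the interior.

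In both cases the resulting sphere is obtained from $S_0$ by $2$-surgeries and isotopy, as the statement requires.
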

\endgroup

\begin{proof}
    Items \eqref{lemma:haken1} and \eqref{lemma:haken1.5} follow directly from Theorem 1.3 (2) and (3) of \cite{hayashi-shimokawa}, respectively. 
    
    To deduce \eqref{lemma:haken2}, begin with a sphere $S_0$ in $M$ intersecting the tangle $T$ in one point $p$, transversely. First assume that $p$ lies on a $\partial_+$-parallel arc. After a possible isotopy of $S_0$, we may assume that $p$ is in the interior of $C_1$, and that there is a small open ball $B_1\subset \text{int}(C_1)$ with $B_1\cap T_1$ a trivial arc in $B_1$ and $S_0\cap B_1$ is a disk with $S_0\cap T_1 = \{p\}$ in $B_1$. Set $M'=M\smallsetminus B$, $T'=M'\cap T$, $C_1'=C_1\smallsetminus B$, and $T_1'=C_1'\cap T_1$. Note that $(C_1',T_1')$ is a compression body with trivial tangle. Then we have a Heegaard decomposition
\[
     (M',T') = (C_1', T_1' ) \cup_{(\Sigma,\mathcal{P})} (C_2,T_2).
 \]
    Moreover, $D_0 = S_0\cap M'$ is a disk in $M'\smallsetminus T'$ whose boundary is essential in the annulus $\partial \overline{B}\smallsetminus T'$. Now apply item (2) of the lemma to obtain a disk $D$ in $M'\smallsetminus T'$, with $\partial D = \partial D_0$, and which intersects $\Sigma\smallsetminus\mathcal{P}$ transversely in an essential closed curve. The desired sphere $S\subset M$ is obtained by gluing the closure of the disk $S_0\cap B_1$ to $D$ along their common boundary. 
    
    Now suppose $p$ lies on a vertical arc. In this case, the above construction does not apply, as $T_1'$ is not a trivial tangle in $C_1'$. Instead, let $D_1$ be a small disk neighborhood of $p$ in $S_0$. Let $A_1\subset C_1\smallsetminus T_1$ be an annulus with one boundary component given by $A_1 \cap S_0 = \partial D_1$ and the other equal to the boundary of a disk $D_2\subset \partial_- C_1$ which contains a single point of $\partial T$. The annulus $A_1$ may be obtained by running a tube surrounding the trivial arc going from $\partial D_1$ to $\partial_- C_1$. Cut out $D_1$ from $S_0$ and glue in $A_1$ to obtain a disk in $C_1\smallsetminus T_1$ with boundary $\partial D_2\subset \partial_- C_1$. To this disk we apply item (2) of the lemma to obtain a disk $D$ in $M\smallsetminus T$ with $\partial D=\partial D_2$ and which intersects $\Sigma\smallsetminus\mathcal{P}$ transversely in a single essential simple closed curve. Finally, gluing $D$ and $D_2$ and pushing it into the interior of $C_1$ gives the desired $2$-sphere $S$.
\end{proof}

\noindent We note that item \eqref{lemma:haken1} of Lemma \ref{lemma:haken} is a variation on a theorem of Haken \cite[\S 7]{haken}. The case where $T=\emptyset$ is due to Casson and Gordon \cite[Lemma 1.1]{cg-heegaard}. See Doll \cite{doll} for closely related results in the case where $M$ is closed. 

\begin{figure}[t]
\centering
\includegraphics[scale=0.7]{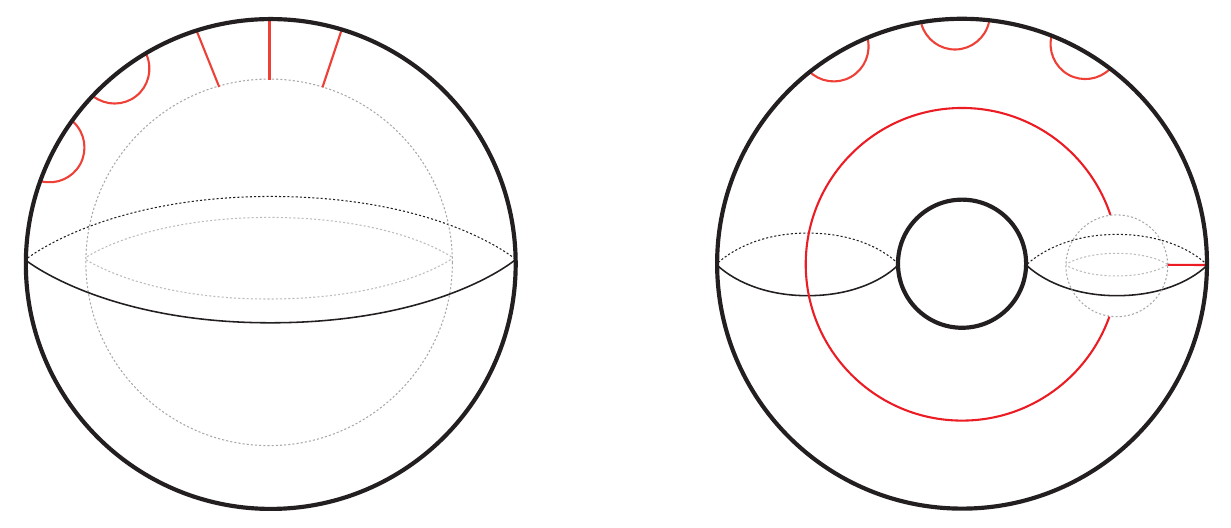}
\caption{\small A standard $(0,7)$ pair of type I (left) and a standard $(1,7)$ pair of type II (right). }
\label{fig:standardpieces}
\end{figure}

Consider a $3$-manifold with tangle $(C,T)$ where $C$ is obtained from $S^2\times [0,1]$ by attaching $g$ $1$-handles along $S^2\times \{1\}$ and $T$ is the union of vertical arcs $\{p_1,p_2,p_3\}\times [0,1]$ with $(n-3)/2$ many $\partial_+$-parallel arcs. We call $(C,T)$ a {\emph{standard $(g,n)$ pair of type I}}, or simply a {\emph{type I pair}}. Note
\begin{equation}\label{eq:boundaryofstandardpair}
	\partial_+ (C,T) \cong \Sigma_{g,n} \qquad \partial_- (C,T) \cong \Sigma_{0,3}
\end{equation}
and that $(C,T)$ is a compression body with trivial tangle. Next, suppose $(C,T)$ is such that $C$ is as above, but $T$ is the union of a vertical arc $\{p_1\}\times [0,1]$, a ``special'' arc $t_0$ which is the core of a $1$-handle and has endpoints on $S^2\times \{0\}$, and $(n-1)/2$ many $\partial_+$-parallel arcs. We call $(C,T)$ a {\emph{standard $(g,n)$ pair of type II}}, or a {\emph{type II pair}}. The boundaries of type II pairs also satisfy \eqref{eq:boundaryofstandardpair}. Note that, in contrast to the type I case, the tangle $T$ in a type II pair is not a trivial tangle due to the special arc $t_0$. Furthermore, observe that a standard $(g,n)$ pair is defined under the conditions
\begin{align*}
	\textup{type I}: & \quad g\geq 0 , \;\; n\geq 3, \;\; n \text{ odd}  \\[1mm]
		\textup{type II}:  & \quad  g\geq 1 , \;\; n\geq 1, \;\; n \text{ odd}  
\end{align*}
In what follows below, we will implictly assume that standard pairs come equipped with preferred orientation-preserving boundary identifications as in \eqref{eq:boundaryofstandardpair}.

Recall that the {\emph{curve complex}} $\mathcal{C}(\Sigma_{g,n})$ of $\Sigma_{g,n}=(\Sigma,\mathcal{P})$ is the simplicial complex where a $k+1$ simplex is a collection of $k+1$ distinct isotopy classes of essential and non-peripheral simple closed curves in $\Sigma\smallsetminus \mathcal{P}$ such that the isotopy classes have representative curves are pairwise disjoint. The distance $d$ between vertices $x,y\in \mathcal{C}(\Sigma_{g,n})$ is the minimum number of $1$-simplices in a simplicial path connecting $x,y$. Denote by $\mathcal{C}(\Sigma_{g,n})^{(i)}$ the $i$-skeleton of $C(\Sigma_{g,n})$, so that $\mathcal{C}(\Sigma_{g,n})^{(0)}$ is the set of vertices. We extend $d$ to pairs of subsets $\mathscr{S}_1, \mathscr{S}_2$ of $\mathcal{C}(\Sigma_{g,n})^{(0)}$ by defining
\[
	d(\mathscr{S}_1, \mathscr{S}_2 ) = \min\left\{ d(x_1,x_2)    \mid  x_i\in\mathscr{S}_i \right\}.
\]

Suppose $(C_1,T_1)$ and $(C_2,T_2)$ are each standard $(g,n)$ pairs (of either type I or II). Define
\begin{equation}\label{eq:standardpairsplitting}
	(Y,L) =  (C_1,T_1) \cup_{\Sigma_{g,n}\sqcup \Sigma_{0,3}} (C_2,T_2),
\end{equation}
 a closed connected oriented $3$-manifold with an embedded link, by gluing along $\Sigma_{g,n}$ and $\Sigma_{0,3}$ using the identifications $\partial_+ (C_i,T_i) \cong \Sigma_{g,n}$ and $\partial_- (C,T) \cong \Sigma_{0,3}$. The orientation of $(Y,L)$ is induced by that of $(C_1,T_1)$ and the reverse of $(C_2,T_2)$. Note that the underlying surface induced by the glued up $\Sigma_{0,3}$ shows that $(Y,L)$ is an admissible link. For $i\in \{1,2\}$ define
\begin{align*}
    \mathscr{D}_i &= \mathscr{D}(C_i,T_i) = \left\{ [x] \; \mid \; x=\partial D, \; D \text{ a disk in } C_i, \; D\cap T_i= \emptyset \right\} \subset \mathcal{C}(\Sigma_{g,n})^{(0)} \\[1mm]
    \mathscr{A}_i &= \mathscr{A}(C_i,T_i) = \left\{ [x] \; \mid \; x=\partial D, \; D \text{ a disk in } C_i, \; D\pitchfork T_i = \text{ point} \right\} \subset \mathcal{C}(\Sigma_{g,n})^{(0)} 
 \end{align*}

\begin{prop}\label{prop:instantonheegaarddetectionnobundle}
    Suppose $(Y,L)$ is obtained from gluing two standard pairs as in \eqref{eq:standardpairsplitting}. Then
    \[
        I(Y,L)=0 \quad \Leftrightarrow \quad \left(\mathscr{D}_1\cap \mathscr{A}_2\right) \cup \left(\mathscr{A}_1\cap \mathscr{D}_2\right) \neq \emptyset.
    \]
\end{prop}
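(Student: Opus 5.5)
The plan is to reduce the statement to Theorem \ref{thm:detectionintro} (equivalently Theorem \ref{prop:detection}) with $w=\emptyset$. In that case the second condition of the theorem cannot hold, since the pairing $w\cdot L$ is zero. So $I(Y,L)=0$ if and only if $Y$ contains an embedded $2$-sphere meeting $L$ transversely in exactly one point. The proposition then amounts to translating the existence of such a sphere into disk data on $\Sigma_{g,n}$, using Lemma \ref{lemma:haken}.

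\emph{The easy direction ($\Leftarrow$).} Suppose $[x]\in \mathscr{D}_1\cap\mathscr{A}_2$; the other case is symmetric. Choose a disk $D_1\subset C_1$ with $\partial D_1=x$ and $D_1\cap T_1=\emptyset$, and a disk $D_2\subset C_2$ with $\partial D_2$ isotopic to $x$ and $D_2$ meeting $T_2$ transversely in one point. After an isotopy in a collar of $\Sigma_{g,n}$ the two boundaries agree. Then $S=D_1\cup D_2$ is an embedded sphere in $Y$ meeting $L$ transversely in one point, so $I(Y,L)=0$ by Theorem \ref{thm:detectionintro}.

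\emph{The hard direction ($\Rightarrow$).} Suppose $I(Y,L)=0$, and let $S_0$ be a sphere meeting $L$ in one point.
\begin{enumerate}
\item First make $S_0$ disjoint from the glued copy of $\Sigma_{0,3}$. Put $S_0$ transverse to it. Every curve of $S_0\cap\Sigma_{0,3}$ bounds a disk in the underlying $2$-sphere containing at most one of the three marked points. Do a $2$-surgery along an innermost such disk.
\begin{itemize}
\item If the disk contains no marked point, the two resulting spheres meet $L$ in $a$ and $b$ points with $a+b=1$.
\item If it contains one marked point, the counts are $a+1$ and $b+1$ with $a+b=1$.
\end{itemize}
In either case one of the resulting spheres still meets $L$ exactly once and has fewer intersection curves. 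Inducting, we get such a sphere in the complement $M$ of a neighborhood of $\Sigma_{0,3}$.
\item Now $(M,L\cap M)=(C_1,T_1)\cup_{\Sigma_{g,n}}(C_2,T_2)$. When both pairs are of type I this is a Heegaard splitting with trivial tangles, and Lemma \ref{lemma:haken}\eqref{lemma:haken2} gives a sphere $S$ meeting $L$ once with $S\cap\Sigma_{g,n}$ a single essential curve $x$ in $\Sigma\smallsetminus\mathcal P$.
\item Then $S\cap C_i$ are two disks, and their intersection numbers with $T_i$ sum to $1$. So one disk misses its tangle and the other meets it once, which gives $[x]\in(\mathscr{D}_1\cap\mathscr{A}_2)\cup(\mathscr{A}_1\cap\mathscr{D}_2)$. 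I must also check that $x$ is non-peripheral, so that it is a vertex of $\mathcal{C}(\Sigma_{g,n})$. If $x$ bounded a once-punctured disk in $\Sigma$, then $S$ would cut off a ball meeting $L$ in an unknotted arc, contradicting that it meets $L$ once. If $x$ were nullhomotopic, then $S$ could not meet $L$ an odd number of times.
\end{enumerate}

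\emph{Main obstacle: type II pairs.} There the special arc $t_0$ is not a trivial tangle, so Lemma \ref{lemma:haken} does not apply directly. I would first try to exploit that $t_0$ is the core of a $1$-handle with both ends on $\partial_-C$. Cutting along the cocore disk of that handle, or equivalently thickening $\partial_-C$ to swallow the handle containing $t_0$, should give a new decomposition in which the tangle is trivial and the Heegaard surface is obtained from $\Sigma_{g,n}$ by a standard move. I would then run the argument of Lemma \ref{lemma:haken} there, in the same way as in its proof for the vertical-arc case, where a tube was run along the arc to $\partial_-C$. Finally I would show that the resulting essential curve can be isotoped back into $\Sigma_{g,n}$, with the disk types preserved.
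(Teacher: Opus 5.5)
Your reduction to Theorem \ref{thm:detectionintro} with $w=\emptyset$ matches the paper's argument. So do the surgery step that pushes the sphere off the glued $\Sigma_{0,3}$, the type I case via Lemma \ref{lemma:haken}\eqref{lemma:haken2}, and the easy direction.

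\textbf{The gap: type II pairs.} You leave this case open, and the route you sketch does not work as stated. The two operations you call equivalent are different:
\begin{enumerate}
\item Cutting $C_i$ along the cocore of the handle containing $t_0$ compresses $\Sigma_{g,n}$. The complementary piece is then $C_j$ with a $2$-handle attached along the image of the belt circle under the gluing map. For an arbitrary gluing this is not a compression body with trivial tangle, so you no longer have a Heegaard splitting.
\item Removing only a small regular neighborhood $N(t_0)$ of the special arc is what works, and it is what the paper does.
\end{enumerate}
Since $t_0$ meets $\partial C_i$ only in $\partial_- C_i$, removing $N(t_0)$ leaves $\Sigma_{g,n}$ untouched. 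The pair $(C_i\smallsetminus N(t_0),\,T_i\smallsetminus t_0)$ is a compression body whose $\partial_-$ is a torus, with trivial tangle. This gives a Heegaard splitting of $(M',T')$ along the same $(\Sigma,\mathcal{P})$, so no curve ever has to be ``isotoped back.''

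You then split into cases according to where the single point of $S\cap L$ lies:
\begin{enumerate}
\item If it is not on a special arc, then $S\subset M'$, and Lemma \ref{lemma:haken}\eqref{lemma:haken2} applies exactly as in your type I argument.
\item If it lies on $t_0\subset C_i$, then $S\cap M'$ is a $T'$-compressing disk of $\partial M'$. Its boundary is a meridian of $N(t_0)$, which is essential on the torus $\partial_-(C_i\smallsetminus N(t_0))$. Lemma \ref{lemma:haken}\eqref{lemma:haken1.5} replaces it by a disk with the same boundary meeting $\Sigma\smallsetminus\mathcal{P}$ in one essential curve $x$. Capping with the meridian disk of $N(t_0)$ gives a sphere meeting $L$ once, on $t_0$. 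Hence $[x]\in\mathscr{A}_i\cap\mathscr{D}_j$.
\end{enumerate}

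\textbf{A smaller point: non-peripherality.} Your justification that $x$ is non-peripheral is not correct as written. A sphere meeting $L$ in exactly one point is non-separating, so it cannot cut off a ball. Also, an inessential $x$ is already excluded by Lemma \ref{lemma:haken}.

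The right argument is as follows. Suppose $x$ bounds $\Delta\subset\Sigma$ containing one marked point. Let $E$ be the disk of $S\smallsetminus x$ that misses the tangle, say $E\subset C_j$. Then $E\cup\Delta$, with $\Delta$ pushed slightly into $C_j$, is a sphere in $C_j$ meeting $T_j$ exactly once. The meridian of $T_j$ at that point bounds the rest of this sphere, a disk in $C_j\smallsetminus T_j$, so it is nullhomotopic there. Hence no traceless representation of $C_j\smallsetminus T_j$ exists. This contradicts $\mathfrak{X}(C_j,T_j)\neq\emptyset$, which follows from \eqref{eq:lagrangianidentification}.
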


\begin{proof}
    By Theorem \ref{prop:detection}, if $I(Y,L)=0$ then there is a $2$-sphere $S'\subset Y$ intersecting $L$ transversely in one point $p$. We may assume also that $S'$ is transverse to $(S,P)$, the sphere with $3$ points identified by gluing the two copies of $\Sigma_{0,3}$, and that $p\not\in S$. Let $S'$ further have the property that among all such spheres the number of components in $S\cap S'$ is minimal.
    
    Suppose $S\cap S'$ has a component which bounds a disk $D\subset S\smallsetminus P$ containing no other components of $S\cap S'$. Perform $2$-surgery on $S'$ along $D$, avoiding $p$, to obtain two spheres, one of which intersects $L$ transversely in the single point $p$. The sphere also has fewer components of intersection with $S$ than does $S'$, a contradiction. Thus all components in $S\cap S'$ are essential in $S\smallsetminus P$.
    
    If $S\cap S'$ is nonempty, it has a component in $S\smallsetminus P$ which bounds a disk $D\subset S$ with $D\cap P$ a single point, and such that $D$ contains no other components of $S\cap S'$. Do $2$-surgery on $S'$ along $D$ (with the caveat that $D\cap L\neq \emptyset$) to obtain two spheres, one of which intersects $L$ transversely in one point (the other sphere intersecting it in two points), and intersecting $S$ in fewer components than did $S'$. This is a contradiction, and thus $S\cap S'$ is empty.

    Given that $S\cap S'$ is empty, the sphere $S'$ may be viewed as embedded inside $M$, where 
    \[
    (M,T) = (C_1,T_1) \cup_{\Sigma_{g,n}} (C_2,T_2)
 \]
 is the result of cutting $(Y,L)$ along $(S,P)$. This may not be a Heegaard splitting, as type II pairs do not have trivial tangles. To remedy the situation, if $(C_i,T_i)$ is type II, let $(C_i',T_i')$ be the result of removing a small regular neighborhood of the corresponding special arc; if it is type I, let  $(C_i',T_i')$ be $(C_i,T_i)$ unchanged. Then we have a Heegaard splitting
\begin{equation}\label{eq:heegaardsplittingafterremovalofspecialarcs}
     (M',T') = (C_1', T_1' ) \cup_{(\Sigma,\mathcal{P})} (C'_2,T'_2).
 \end{equation}
which naturally embeds into $(M,T)$.

 If the sphere $S'$ does not intersect a special arc in a pair of type II, then $S'=S' \cap M'$. Applying Lemma \ref{lemma:haken} \eqref{lemma:haken2} to the sphere $S'$ in $(M',T')$, we obtain a sphere $S''$ in $M'$ which intersects $T'$ transversely in a single point $p$ and also intersects $\Sigma\smallsetminus\mathcal{P}$ transversely in an essential simple closed curve $x$. We then view $S''$ as a sphere in $M$. Note that $x$ is non-peripheral, for if it were, then the two disk components of $S''\smallsetminus x$ would each intersect the tangle, a contradiction. Next, if $p\in C_1$ then the disk of $S''\smallsetminus x$ on the side of $C_1$ intersects $T_1$ in one point, yielding $[x]\in \mathscr{A}_1$, while the disk of $S''\smallsetminus x$ on the side of $C_2$ misses $T_2$, yielding $[x]\in \mathscr{D}_2$. Thus $\mathscr{A}_1\cap \mathscr{D}_2\neq \emptyset$. Similarly, if $p\in C_2$ then we conclude $\mathscr{D}_1\cap \mathscr{A}_2\neq \emptyset$. 

If $S'$ intersects a special arc in a pair of type II, say $(C_1,T_1)$, then $S' \cap M'$ is a $T'$-compressing disk of $\partial M'$, and Lemma \ref{lemma:haken} \eqref{lemma:haken1.5} applies to give a $T'$-compressing disk $D$ with the same boundary and which intersects $\Sigma\smallsetminus\mathcal{P}$ transversely in an essential simple closed curve $x$. Filling back in the regular neighborhood of the special arc, the disk $D$ completes to a sphere $S''$ which intersects the special arc transversely in a single point, otherwise misses $T$, and intersects $\Sigma\smallsetminus\mathcal{P}$ transversely in an essential simple closed curve $x$. The disk of $S''\smallsetminus x$ on the side of $C_1$ yields $[x]\in \mathscr{A}_1$, while the disk of $S''\smallsetminus x$ on the side of $C_2$ yields $[x]\in \mathscr{D}_2$. Thus $\mathscr{A}_1\cap \mathscr{D}_2\neq \emptyset$. Similarly, if $S'$ had intersected a special arc in $(C_2,T_2)$, then we obtain $\mathscr{D}_1\cap \mathscr{A}_2\neq \emptyset$.

    The converse is immediate: if $[x]\in \mathscr{A}_i\cap \mathscr{D}_j\neq \emptyset$ where $\{i,j\}=\{1,2\}$, then one can glue disks from the defining properties of $\mathscr{A}_i$ and $\mathscr{D}_j$ to obtain a sphere in $Y$ intersecting $L$ transversely in one point, and Theorem \ref{prop:detection} implies $I(Y,L)=0$.
\end{proof}

We expand the above result to include non-trivial bundle data. We consider triples
\begin{equation}\label{eq:heegaardsplittingwithbundlem}
    (Y,L,w) = (C_1,T_1,w_1) \cup_{\Sigma_{g,n}\sqcup \Sigma_{0,3}} (C_2,T_2,w_2)
\end{equation}
where $(C_i,T_i)$ are standard pairs and $w=w_1\sqcup w_2$ is the union of $1$-manifolds $w_i\subset \text{int}(C_i)$ with $w_i\cap T_i = \partial w_i$. For $\varepsilon\in \{0,1\}$, define subsets of $\mathscr{D}_i$ as follows:
\begin{align*}
    \mathscr{D}_i^\varepsilon &= \mathscr{D}^\varepsilon(C_i,T_i,w_i) = \left\{ [x]  \mid  x=\partial D,  D \text{ a disk in } C_i,  D\cap T_i= \emptyset,  [D]\cdot [w_i]\equiv \varepsilon \!\!\! \pmod{2} \right\} 
\end{align*}

\begin{prop}\label{prop:instantonheegaarddetection}
Suppose the admissible link $(Y,L,w)$ is obtained from gluing two standard pairs $(C_i,T_i)$ with $1$-manifolds $w_i\subset \textup{int}(C_i)$ as in \eqref{eq:heegaardsplittingwithbundlem}. Then
    \[
        I(Y,L,w)=0 \quad \Leftrightarrow \quad \left(\mathscr{D}_1\cap \mathscr{A}_2\right) \cup \left(\mathscr{A}_1\cap \mathscr{D}_2\right) \cup \left( \mathscr{D}^0_1\cap \mathscr{D}^1_2\right) \cup \left( \mathscr{D}^1_1\cap \mathscr{D}^0_2\right) \neq \emptyset.
    \]
\end{prop}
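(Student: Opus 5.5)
The plan is to follow the proof of Proposition \ref{prop:instantonheegaarddetection-free} just given, i.e. Proposition \ref{prop:instantonheegaarddetectionnobundle}, and use the full form of Theorem \ref{prop:detection}. By that theorem (equivalently Theorem \ref{thm:detectionintro}), $I(Y,L,w)=0$ if and only if $Y$ contains either (a) a $2$-sphere meeting $L$ transversely in one point, or (b) a $2$-sphere disjoint from $L$ with odd pairing with $w$.

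\textbf{Converse direction.} This is immediate. An element of $\mathscr{D}_1\cap\mathscr{A}_2$ or $\mathscr{A}_1\cap\mathscr{D}_2$ gives a sphere of type (a), exactly as before. An element $[x]\in\mathscr{D}^0_1\cap\mathscr{D}^1_2$ (or the symmetric case) gives two disks, one in each $C_i$, missing $T_i$ and with boundary $x$. Gluing them yields a sphere disjoint from $L$ whose pairing with $w=w_1\sqcup w_2$ is $0+1\equiv 1 \pmod 2$. This is a sphere of type (b).

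\textbf{Forward direction, case (a).} If a sphere of type (a) exists, the proof of Proposition \ref{prop:instantonheegaarddetectionnobundle} applies verbatim, since it never used $w$. It produces an element of $(\mathscr{D}_1\cap\mathscr{A}_2)\cup(\mathscr{A}_1\cap\mathscr{D}_2)$.

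\textbf{Forward direction, case (b).} Let $S'$ be a sphere disjoint from $L$ with $S'\cdot w$ odd, transverse to the glued-up $3$-punctured sphere $(S,P)$, and chosen to minimize the number of components of $S\cap S'$.

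First, innermost components bounding disks in $S\smallsetminus P$ are removed by $2$-surgery. Of the two resulting spheres, the pairings with $w$ sum to $S'\cdot w$, so one still has odd pairing. It also misses $L$, which contradicts minimality. Any remaining component of $S\cap S'$ is essential in $S\smallsetminus P$, hence bounds a disk in $S$ meeting $P$ in one point. Surgering along an innermost such disk produces two spheres, each meeting $L$ in one point. This is case (a), which was already handled. So we may assume $S\cap S'=\emptyset$, and $S'$ lies in $(M,T)=(C_1,T_1)\cup_{\Sigma_{g,n}}(C_2,T_2)$.

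Since $S'$ is disjoint from $L$, it misses the special arcs. We pass to the Heegaard splitting \eqref{eq:heegaardsplittingafterremovalofspecialarcs} of $(M',T')$, with $S'\subset M'$. The sphere $S'$ is essential in $M'\smallsetminus T'$: if it bounded a ball missing $T'$, its pairing with $w$ would be even, because $w$ is a $1$-manifold with $\partial w\subset L$. So $S'$ is a splitting sphere, and Lemma \ref{lemma:haken}\eqref{lemma:haken1} gives a splitting sphere $S''$ meeting $\Sigma\smallsetminus\mathcal P$ in one essential curve $x$.

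\textbf{Main obstacle: parity and non-peripherality of $x$.} The key point is that $S''$ is obtained from $S'$ by $2$-surgeries and isotopy. Each $2$-surgery splits the pairing with $w$ additively, so we track the parity through each step and keep a component with odd pairing. If that component stops being essential, it bounds a ball missing $T'$ and would have even pairing, as above; so we may keep an odd component at each step. To make this rigorous, I would re-run the Hayashi--Shimokawa argument while keeping an odd-pairing piece, or alternatively apply their theorem to the complement of a suitable neighborhood. I expect this bookkeeping to be the delicate step.

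Granting it, the curve $x$ bounds disks $D_i\subset C_i$ missing $T_i$, and their pairings with $w_1$ and $w_2$ sum to an odd number. Hence $[x]\in\mathscr{D}^0_1\cap\mathscr{D}^1_2$ or $[x]\in\mathscr{D}^1_1\cap\mathscr{D}^0_2$. It remains to check that $x$ is non-peripheral. If $x$ bounded a disk with one puncture, then one of the $D_i$ together with that disk would be a sphere meeting $L$ once, returning us to case (a). If instead $x$ is not essential in $\Sigma\smallsetminus\mathcal P$, then $S''$ would bound a ball, which contradicts the odd pairing.
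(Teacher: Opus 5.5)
Your proposal is correct and follows the paper's proof essentially step for step. You reduce via Theorem \ref{prop:detection} to the two sphere types, treat the one-point case by Proposition \ref{prop:instantonheegaarddetectionnobundle}, minimize intersections with the glued three-punctured sphere, and then apply Lemma \ref{lemma:haken}(1), keeping the odd-pairing sphere at every $2$-surgery, exactly as the paper does. Your explicit checks are small details the paper leaves implicit: that odd pairing with $w$ forces a sphere to be essential, so the odd sphere is always an admissible choice in the Hayashi--Shimokawa scheme, and that a peripheral curve $x$ would yield a sphere meeting $L$ once.
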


\begin{proof}
    By Theorem \ref{prop:detection}, if $I(Y,L,w)=0$ there is an embedded $2$-sphere $S'\subset Y$, transverse to $L$, such that either (i) $S'\cap L$ is a point or (ii) $S'\cap L=\emptyset$ and $[S']\cdot w$ is odd. In case (i), the proof of Proposition \ref{prop:instantonheegaarddetectionnobundle} implies $\left(\mathscr{D}_1\cap \mathscr{A}_2\right) \cup \left(\mathscr{A}_1\cap \mathscr{D}_2\right)\neq \emptyset$, so suppose case (ii) holds. 
    
    Suppose $S'$ intersects the glued 3-pointed sphere $(S,P)$ transversely, and that the number of components in $S'\cap S$ is minimal among all such spheres that are disjoint from $L$ and have odd pairing with $[w]$. The same argument as before shows that $S\cap S'$ has no components that bound a disk in $S\smallsetminus P$. If $S\cap S'$ is nonempty, it then has a component in $S\smallsetminus P$ which bounds a disk $D\subset S$ with $D\cap P$ one point, with $D$ disjoint from all other components of $S\cap S'$. Perform $2$-surgery on $S'$ along $D$. Each of the resulting $2$-spheres intersects $L$ in one point, reverting to case (i).

    Thus we continue on with case (ii), assuming $S\cap S'$ is empty. As before, let $(M',T')$ be obtained from $(Y,L)$ by cutting along $S$ and removing regular neighborhoods of special arcs; it has a Heegaard splitting as in \eqref{eq:heegaardsplittingafterremovalofspecialarcs}. Let $w'=w\cap M'$. Since $S'$ is disjoint from $S$ and $L$, it lies in the interior of $M'\smallsetminus T'$. By Lemma \ref{lemma:haken} \eqref{lemma:haken1} applied to $S'$ in the Heegaard splitting of $(M',T')$, there exists a sphere $S''$ in $M'\smallsetminus T'$ such that $S''$ intersects $\Sigma\smallsetminus\mathcal{P}$ in at most one curve. However, we require also that $[S'']\cdot [w]$ is odd. The proof of Lemma \ref{lemma:haken} from \cite[Theorem 1.3]{hayashi-shimokawa} shows that $S''$ is obtained from a sequence of isotopies and $2$-surgeries on disks, where at each surgery at least one of the resulting $2$-spheres does not bound a ball, and choosing one of these spheres at each stage eventually yields $S''$. Observe that at each of these surgeries, one of the resulting $2$-spheres has odd pairing with $[w]$, and in the scheme of the proof we make the minor additional requirement that $S''$ is obtained by always taking this resulting $2$-sphere. The proof carries through without any other changes, and $[S'']\cdot [w]$ is odd, as desired. 

    Thus $S''\cap \Sigma$ is an essential simple closed curve in $\Sigma\smallsetminus\mathcal{P}$. This curve divides $S''$ into a pair of disks, one in $C_1$ and the other in $C_2$, exactly one of which has odd pairing with $[w_i]$, implying that one of $\mathscr{D}^0_1\cap \mathscr{D}^1_2$ or $\mathscr{D}^1_1\cap \mathscr{D}^0_2$ is nonempty.

    The converse statement is immediate, as each of the four possible intersections of disk sets leads to a sphere in $Y$ of type (i) or (ii).
\end{proof}

Give a triple $(C,T,w)$ where $(C,T)$ is a standard $(g,n)$ pair of either type I or II and a compact $1$-manifold $w\subset \text{int}(C)$ satisfying $w\cap T = \partial w$, we consider
\[
	 \mathfrak{X}(C,T,w) \subset M_{g,n}.
\]
By Proposition \ref{unperturbed-3-man-Lag}, this is an embedded Lagrangian submanifold of $M_{g,n}$. Furthermore, we have
\begin{equation}\label{eq:lagrangianidentification}
	\mathfrak{X}(C,T,w) \cong \begin{cases} \left( S^3 \right)^{g} \times \left(S^2 \right)^{(n-3)/2} \quad & \textup{(type I)} \\[2mm]
		S^1 \times \left( S^3 \right)^{g-1} \times \left(S^2 \right)^{(n-1)/2} \quad  & \textup{(type II)}  \end{cases}
\end{equation}
To explain this, first consider the case in which $(C,T)$ is type I. Recall that elements of $M_{g,n}$ may be viewed as conjugacy classes of tuples $A_i,B_i,C_j$ (where $1\leq i\leq g$ and $1\leq j\leq n$) such that 
\[
	[A_1,B_1]\cdots [A_g,B_g] C_1\cdots C_n = 1.
\]
Recall that $C_j$ is the holonomy around a small loop encircling the point $p_{j}$. In our presentation of $(C,T)$, we may arrange that each $B_i$ corresponds to the holonomy around a loop which is a cocore of an attached $1$-handle, and that for $2\leq k \leq (n-1)/2$ the points $p_{2k}$ and $p_{2k+1}$ are connected by a $\partial_+$-parallel arc. Then $\mathfrak{X}(C,T,w)$ is the conjugacy classes of tuples satisfying
\begin{gather*}
	B_i = (-1)^{\varepsilon_i} \;\; (1\leq i\leq g),\\[2mm]
	 C_1C_2C_3 = (-1)^{\varepsilon_{123}},\\[2mm]
	 C_{2k} C_{2k+1} = (-1)^{\varepsilon'_{k}} \;\; (2\leq k \leq (n-1)/2)
\end{gather*}
where $\varepsilon_i$ is the intersection number of $w$ with a cocore disk with boundary $B_i$, $\varepsilon_{123}$ is the number of endpoints of $w$ on $\{p_1,p_2,p_3\}\times [0,1]$ and $\varepsilon'_{k}$ is the number of endpoints of $w$ on the $\partial_+$-parallel arc connecting $p_{2k}$ and $p_{2k+1}$. We may conjugate so that 
\[
  C_1=   \left[\begin{array}{cc} i & 0 \\ 0 & -i \end{array}\right], \quad C_2 =    \left[\begin{array}{cc} 0 & -1 \\ 1 & 0 \end{array}\right], \quad C_3 =  (-1)^{\varepsilon_{123}}\left[\begin{array}{cc} 0 & i \\ i & 0
  \end{array}\right].
\]
Then there are free variables $A_i\in SU(2)\cong S^3$ for $1\leq i \leq g$ and $C_{2k}\in \Gamma(1/2)\cong S^2$ for $2\leq k \leq (n-1)/2$, with no relations, yielding the first identification in \eqref{eq:lagrangianidentification}.

In the type II case, similar reasoning shows that $\mathfrak{X}(C,T,w)$ is defined by
\begin{gather*}
	B_i = (-1)^{\varepsilon_i} \;\; (2\leq i\leq g),\\[2mm]
	 [A_1,B_1]C_1 = (-1)^{\varepsilon_{0}}, \quad B_1\in \Gamma(1/2)\\[2mm] 
	 C_{2k} C_{2k+1} = (-1)^{\varepsilon'_{k}} \;\; (1\leq k \leq (n-1)/2)
\end{gather*}
where $\varepsilon_0$ is the number of endpoints of $w$ on the special arc $t_0$ and the vertical arc $\{p_1\}\times [0,1]$. Here we have chosen $B_1$ to be the holonomy along the cocore of the $1$-handle which has a special arc in its core. Up to conjugation there is a circle of solutions to the system $[A_1,B_1]C_1=\pm 1$, $B_1\in \Gamma(1/2)$; the remaining free variables $B_i\in SU(3)\cong S^3$ for $2\leq i\leq g$ and $C_{2k}\in \Gamma(1/2)\cong S^2$ for $1\leq k\leq (n-1)/2$, and we obtain the second identification in \eqref{eq:lagrangianidentification}.

\subsection{Detection of instanton homology via mapping classes of $\Sigma_{g,n}$} \label{sec:detectionmappingclasses}

Next fix one standard $(g,n)$ pair $(C,T)$ and let $\mathscr{D}=\mathscr{D}(C,T)$ be the corresponding disk set. Let $(C_1,T_1)$ and $(C_2,T_2)$ be copies of the standard pair $(C,T)$. Orient $C_2$ oppositely that of $C_1$. Let $w_i\subset \text{int}(C_i)$ be (possibly different) $1$-manifolds with $w_i\cap T_i = \partial w_i$. Given $\phi\in \text{Mod}(\Sigma_{g,n})$, form
\[
    (Y_\phi, L_{\phi}, w_1\sqcup w_2) := (C_1, T_1, w_1) \cup_{\Sigma_{g,n} \sqcup \Sigma_{0,3}} (C_2, T_2, w_2)
\]
where the copies of $\Sigma_{g,n}$ are glued via $\phi$ and the copies of $\Sigma_{0,3}$ are glued by the identity. Recall that $d(\cdot , \cdot)$ is the simplicial distance function on the curve complex $\mathcal{C}(\Sigma_{g,n})$.

\begin{prop} \label{prop:nonvanishingformappingclasswithdistancebound}
If $\phi\in \textup{Mod}(\Sigma_{g,n})$ satisfies $d(\mathscr{D}, \phi(\mathscr{D}))>1$, then for all choices of the $1$-manifolds $w_1,w_2$ we have $I(Y_\phi,L_\phi,w_1\sqcup w_2)\neq 0$.
\end{prop}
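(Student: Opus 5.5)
We argue by contraposition using Proposition \ref{prop:instantonheegaarddetection}. Suppose $I(Y_\phi,L_\phi,w_1\sqcup w_2)=0$. The proposition then yields a curve class lying in one of $\mathscr{D}_1\cap\mathscr{A}_2$, $\mathscr{A}_1\cap\mathscr{D}_2$, $\mathscr{D}^0_1\cap\mathscr{D}^1_2$, or $\mathscr{D}^1_1\cap\mathscr{D}^0_2$. Here the disk sets are computed on the common gluing surface. Using the identification of $\partial_+C_1$ with $\Sigma_{g,n}$, we have $\mathscr{D}_1=\mathscr{D}$. Since $C_2$ is glued through $\phi$, we have $\mathscr{D}_2=\phi(\mathscr{D})$ and $\mathscr{A}_2=\phi(\mathscr{A})$, where $\mathscr{A}=\mathscr{A}(C,T)$. (Depending on conventions this may be $\phi^{-1}$, but this is harmless since $d(\mathscr{D},\phi(\mathscr{D}))=d(\phi^{-1}(\mathscr{D}),\mathscr{D})$.) The last two intersections lie inside $\mathscr{D}_1\cap\mathscr{D}_2=\mathscr{D}\cap\phi(\mathscr{D})$. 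So if either is nonempty, then $d(\mathscr{D},\phi(\mathscr{D}))=0$, contradicting the hypothesis.

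The remaining cases are the two mixed intersections. Take, say, $[x]\in\mathscr{A}_1\cap\mathscr{D}_2$; the other case is symmetric. The key step is the following claim: for any $[x]\in\mathscr{A}(C,T)$ there is a vertex of $\mathscr{D}(C,T)$ disjoint from $x$, that is, $d(\{[x]\},\mathscr{D})\le 1$. Granting this, there are $y\in\mathscr{D}$ with $d(y,x)\le1$ and $x\in\phi(\mathscr{D})$. Hence $d(\mathscr{D},\phi(\mathscr{D}))\le 1$, again a contradiction.

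To prove the claim, let $D$ be a disk in $C$ with $\partial D=x$ meeting $T$ transversely in one point $q$ on an arc $t$. The boundary of a regular neighborhood of $D\cup t$, or a band-sum construction, should produce a disk disjoint from $T$ whose boundary is disjoint from $x$. We split into cases according to the type of $t$.
\begin{enumerate}
\item[(a)] If $t$ is $\partial_+$-parallel, let $E$ be its parallelism disk. After isotoping $E$ to meet $D$ minimally, the frontier of a neighborhood of $D\cup E$ in $C$ gives a disk missing $T$. Its boundary lies in a neighborhood of $x\cup\partial E$ on $\partial_+C$, so it is disjoint from $x$ after pushing off.
\item[(b)] If $t$ is vertical (or the special arc in a type II pair), $D$ separates $C$. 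The component not containing $\partial_-C$ should be a handlebody piece whose boundary encloses handles or tangle arcs. From it we take either a cocore of a $1$-handle or the frontier of a neighborhood of a $\partial_+$-parallel arc together with its disk. Either way we get a disk missing $T$ and disjoint from $x$.
\end{enumerate}

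The main obstacle is ensuring the resulting curve is essential and non-peripheral, so that it is a genuine vertex of $\mathcal{C}(\Sigma_{g,n})$. The frontier curve could bound a disk or once-punctured disk on $\Sigma_{g,n}$. We handle this by counting punctures and handles on each side of $x$. Because $x$ is itself essential and non-peripheral, each side of $x$ contains either genus or at least two punctures, and this supplies a non-degenerate curve on one side. Small exceptional cases, such as $(g,n)=(0,5)$, may have to be checked by hand.
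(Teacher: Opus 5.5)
Your reduction matches the paper's. You argue by contraposition through Proposition~\ref{prop:instantonheegaarddetection}, dispose of the two $\mathscr{D}^\varepsilon$ intersections via $\mathscr{D}^\varepsilon_i\subset\mathscr{D}$, and reduce the mixed cases to the claim that every $[x]\in\mathscr{A}$ lies within distance $1$ of $\mathscr{D}$. Your one-line deduction from that claim is fine, even a bit cleaner than the paper's triangle inequality.

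The gap is in your proof of the claim. Case (b) rests on the assertion that a disk meeting a vertical or special arc once separates $C$, and this is false. In a type II pair, the cocore of the $1$-handle whose core is the special arc $t_0$ meets $T$ exactly once and is non-separating. In a type I pair with $g\ge 1$, band-summing a handle cocore with a small disk around a vertical arc gives a non-separating disk meeting $T$ once, whose boundary is non-separating on $\Sigma$. So the non-separating case is never treated. Even in the separating case, you give no argument that the tangle in the handlebody piece is trivial enough to yield the disks you want.

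Case (a) needs more than an isotopy of $E$: circles of $D\cap E$, and arcs with both ends on $e$, need not disappear by isotopy and must be removed by surgery. Your ``main obstacle'' is also left to counting and hand-checks, while the ingredient that actually controls it is never used. That ingredient is the fact that $C$ contains no sphere meeting $T$ transversely in one point, since such a sphere would force $\mathfrak{X}(C,T,w)=\emptyset$, contradicting \eqref{eq:lagrangianidentification}. A frontier disk with peripheral boundary would cap off to exactly such a sphere.

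The paper proves the claim with no case analysis on the arc type. Take $D$ with $\partial D=x$ meeting $T$ once, and $D'\subset C\smallsetminus T$ with $[\partial D']\in\mathscr{D}$, chosen so that the number of components of $D\cap D'$ is minimal.
\begin{itemize}
\item An innermost circle of $D\cap D'$ in $D'$ lets you surger $D$ into a sphere and a disk with boundary $x$. By the no-sphere fact, that disk still meets $T$ once, contradicting minimality.
\item Otherwise take an outermost arc cutting off a subdisk of $D$ missing $T$; one exists, since at most one outermost subdisk contains the point $D\cap T$. Surgering $D'$ along it gives two disks in $C\smallsetminus T$.
\item Either one new boundary bounds a disk in $\Sigma\smallsetminus\mathcal{P}$ and the other is isotopic to $\partial D'$, or, by the no-sphere fact, both are essential and non-peripheral and so lie in $\mathscr{D}$. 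Both outcomes contradict minimality.
\end{itemize}
Hence $D\cap D'=\emptyset$, so $\partial D'$ is disjoint from $x$. You need this argument, or some other uniform one, in place of (a)/(b).
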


\begin{proof}
In addition to $\mathscr{D}=\mathscr{D}(C,T)$, we have  $\mathscr{A}=\mathscr{A}(C,T)$ and  $\mathscr{D}_i^\varepsilon = \mathscr{D}^\varepsilon(C,T,w_i)$ for $\varepsilon\in \{0,1\}$ and $i\in \{1,2\}$. By Proposition \ref{prop:instantonheegaarddetection}, the non-vanishing of $I(Y_\phi, L_\phi, w_1\sqcup w_2)$ is equivalent to
\begin{equation}\label{eq:vanishingrequiredforproof}
	 \left(\phi(\mathscr{D})\cap \mathscr{A}\right) \cup \left(\phi(\mathscr{A})\cap \mathscr{D}\right) \cup \left( \phi(\mathscr{D}^0_1)\cap \mathscr{D}^1_2\right) \cup \left( \phi(\mathscr{D}^1_1)\cap \mathscr{D}^0_2\right) = \emptyset.
\end{equation}
Assume $d(\mathscr{D}, \phi(\mathscr{D}))>1$. Then $\mathscr{D}\cap \phi(\mathscr{D})$ is empty. As $\mathscr{D}_i^\varepsilon \subset \mathscr{D}$ we also have that $ \phi(\mathscr{D}^0_1)\cap \mathscr{D}^1_2$ and $\phi(\mathscr{D}^1_1)\cap \mathscr{D}^0_2$ are empty. It remains to argue that $ \phi(\mathscr{D})\cap \mathscr{A} $ and $ \phi(\mathscr{A})\cap \mathscr{D} $ are empty.

We first claim that for any $[x]\in \mathscr{A}$, there exists $[y]\in \mathscr{D}$ such that $d(x,y)=1$, i.e. $x$ and $y$ are not isotopic and can be made disjoint. Let $D$ be a disk in $C$ with $\partial D = x$ such that $D$ intersects $T$ transversely in one point. Let $D'$ be a disk in $C\smallsetminus T$ with $\partial D'=y$, where $[y]\in \mathscr{D}$. Let us assume, having fixed $[x]$, that $D$ and $D'$ are chosen in a way such that they are transverse and $D\cap D'$ has the minimal possible number of components. We will show that $d(x,y)=1$. 

If $D\cap D'$ contains any circles, then it contains a circle that bounds a disk $D''\subset D'$. Surgery of $D$ along $D''$ produces a disk $D_0$ and a sphere $S_0$. One of $D_0$ or $S_0$ intersects $T$ in one point, while the other misses $T$. If $S_0$ were to intersect $T$ in one point, the associated Lagrangian would be empty, contradicting the computations of the previous section. Thus $D_0$ intersects $T$ in one point. As $D_0$ has boundary $x$ and intersects $D'$ in fewer components than $D$, this gives a contradiction to the minimality assumption placed on $D, D'$. Thus $D\cap D'$ does not contain circles. 

Suppose $D\cap D'$ contains an arc. Then there exists such an arc which bounds a disk $D''\subset D$ which misses $T$. Surgery of $D'$ along $D''$ produces two disks $D'_1$ and $D'_2$ in $C\smallsetminus T$. Write $y_i = \partial D'_i$. If $y_1$ bounds a disk in $\Sigma\smallsetminus \mathcal{P}$, then $D'_2$ has boundary isotopic to $x$ and $D'_2$ intersects $D$ in fewer components than $D'$, a contradiction. Similarly if $y_2$ bounds a disk in $\Sigma\smallsetminus \mathcal{P}$. Next, suppose that $y_1$ bounds a disk in $\Sigma$ containing one marked point. Then this disk together with $D'_1$ forms a sphere intersecting the tangle in one point, leading to a contradiction as above. Similarly for $y_2$. Thus each of $y_1$ and $y_2$ are essential and non-peripheral, hence, given $y_i = \partial D_i'$, we have $[y_i]\in\mathscr{D}$. On the other hand, each of $D'_1$ and $D'_2$ intersect $D$ in few components than $D'$, a contradiction. Thus $D\cap D'=\emptyset$. It follows that $x$ and $y$ are disjoint, establishing $d(x,y)=1$.

Next, let $[x]\in \mathscr{D}$ and $[x']\in\mathscr{A}$ be such that $d(\mathscr{D},\phi(\mathscr{A}))=d(x,\phi(x'))$. By the established claim above, there exists $[y]\in\mathscr{D}$ such that $d(\phi(x'),\phi(y))=1$. Then
    \[
        d(\mathscr{D},\phi(\mathscr{A})) = d(x,\phi(x')) \geq d(x,\phi(y)) - d(\phi(x'),\phi(y)) \geq d(\mathscr{D},\phi(\mathscr{D})) - 1 > 0,
    \]
    where we have used our assumption $d(\mathscr{D},\phi(\mathscr{D}))>1$. Thus $\mathscr{D}\cap \phi(\mathscr{A})$ is empty. A similar argument shows that $\mathscr{A}\cap \phi(\mathscr{D})$ is empty. Having established \eqref{eq:vanishingrequiredforproof}, the proof is complete.
\end{proof}
 
If $3g + n \leq 4$, then $(g,n)\in \{(0,1), (0,3), (1,1)\}$. Standard pairs are not defined in the case $(g,n)=(0,1)$, so Proposition \ref{prop:nonvanishingformappingclasswithdistancebound} does not apply. If $(g,n)$ is equal to either $(0,3)$ or $(1,1)$, then it is easy to see that the relevant disk sets $\mathscr{D}$ are always empty, and so the hypothesis $d(\mathscr{D},\phi(\mathscr{D})) = \min (\emptyset ) = +\infty >1$ of Proposition \ref{prop:nonvanishingformappingclasswithdistancebound} is automatically satisfied.

\begin{prop}\label{prop:vanishingforidentitymappingclass}
	If $3g+n>4$, then there are $w_1,w_2$ such that $ I(Y_\textup{id},L_\textup{id},w_1\sqcup w_2)=0$.
\end{prop}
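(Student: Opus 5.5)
We plan to apply Proposition \ref{prop:instantonheegaarddetection} with $\phi=\textup{id}$. With $\phi=\textup{id}$ we have $\mathscr{D}_1=\mathscr{D}_2=\mathscr{D}$. By that proposition, $I(Y_\textup{id},L_\textup{id},w_1\sqcup w_2)=0$ as soon as some class $[x]\in\mathscr{D}$ lies in $\mathscr{D}^0_1\cap\mathscr{D}^1_2$. So it suffices to find a disk $D\subset C\smallsetminus T$ with essential, non-peripheral boundary $x$, together with $1$-manifolds $w_1,w_2$ satisfying $[D]\cdot[w_1]\equiv 0$ and $[D]\cdot[w_2]\equiv 1 \pmod 2$.

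The argument has two steps.

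\emph{Step 1: $\mathscr{D}\neq\emptyset$ when $3g+n>4$.} We work with the standard pair $(C,T)$ fixed in \S\ref{sec:detectionmappingclasses}.
\begin{itemize}
\item If $g\geq 1$ and the pair is type I, take a cocore disk of a $1$-handle. Its boundary is a non-separating curve on $\Sigma$, hence essential and non-peripheral.
\item If $g\geq1$ and the pair is type II, take the cocore of a $1$-handle not containing the special arc when $g\geq 2$. When $g=1$ and $n\geq 3$, take the disk separating off a $\partial_+$-parallel arc, as below.
\item If $g=0$, then $n\geq 5$ and the type I pair has at least one $\partial_+$-parallel arc. Take the frontier of a regular neighborhood of the parallelism disk. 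This is a disk in $C\smallsetminus T$ whose boundary encloses exactly two marked points on one side. On the other side it encloses at least $3$ points, or positive genus. Such a curve is essential and non-peripheral.
\end{itemize}
We must also treat type II with $(g,n)=(1,1)$ excluded, and check that type II with $g=1$, $n\geq 3$ has a $\partial_+$-parallel arc. Both hold since $3g+n>4$.

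\emph{Step 2: choose $w_1,w_2$.} Take $w_1=\emptyset$, so $D\in\mathscr{D}^0_1$. For $w_2$ we need a $1$-manifold in $\operatorname{int}(C)$, with endpoints on $T$ or closed, meeting $D$ an odd number of times.
\begin{itemize}
\item In the cocore case, let $w_2$ be a closed loop running once through the $1$-handle, parallel to its core. It meets the cocore once.
\item In the $\partial_+$-parallel arc case, $D$ separates $C$, with one side containing the arc $t$. Let $w_2$ be an arc starting on $t$, crossing $D$ once, and ending on some other component of $T$ on the other side. Such a component exists: a vertical arc in type I, or the vertical arc in type II.
\end{itemize}
In either case $w_2\cap T=\partial w_2$ and $[D]\cdot[w_2]=1$. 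Hence $[x]\in\mathscr{D}^1_2$, and Proposition \ref{prop:instantonheegaarddetection} gives the vanishing. Alternatively, gluing the two copies of $D$ yields a sphere disjoint from $L$ with odd pairing with $w$, and we conclude directly from Theorem \ref{prop:detection}.

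The main point needing care is Step 1's case analysis: confirming that $\partial D$ is non-peripheral and essential in each low-complexity case. In particular, for $g=0$, $n=5$, the curve must cut off $2$ points from $3$.
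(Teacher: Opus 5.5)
Your proposal is correct and is essentially the paper's argument: the paper uses the same disk $D$ (a cocore of a $1$-handle not containing the special arc, or the disk cutting off a $\partial_+$-parallel arc), puts a $w$ crossing $D$ once on one side and the empty $1$-manifold on the other, doubles $D$ to a sphere disjoint from $L_{\textup{id}}$ with odd pairing with $w$, and applies Theorem~\ref{thm:detectionintro}, which is your ``alternative'' ending. On that direct route your Step~1 check that $\partial D$ is essential and non-peripheral is not needed, since the doubled sphere works either way; the check only matters for placing $[x]$ in $\mathscr{D}$ so that Proposition~\ref{prop:instantonheegaarddetection} applies.
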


\begin{proof}
	The assumption $3g+n>4$ guarantees that $(C,T)$ has either a $1$-handle (without a special arc) or a $\partial_+$-parallel arc. In the first case, let $D\in \mathscr{D}$ be a cocore of the $1$-handle and let $w_1$ be a circle which runs once along the core of the $1$-handle. In the second case, let $D$ be a properly embedded disk which together with $\partial_+ C$ encloses a trivial ball neighborhood of a $\partial_+$-parallel arc, and let $w_1$ be an arc which connects the $\partial_+$-parallel arc to any other arc in $T$. In either case set $w_2=\emptyset$. Then the double of $D$ inside $Y_{\text{id}}=C\cup C$ is a $2$-sphere which is disjoint from $L_\text{id}=T\cup T$ and intersects $w_1\sqcup w_2$ transversely in one point. The result then follows from Theorem \ref{thm:detectionintro}.	
\end{proof}

\begin{prop}\label{prop:nonsymplecticisotopicformappingclassesviadistance}
	Suppose $3g+n>4$ and $\phi\in \textup{Mod}(\Sigma_{g,n})$ satisfies $d(\mathscr{D}, \phi(\mathscr{D}))>1$. Then the symplectomorphism of $M_{g,n}$ induced by $\phi$ is not Hamiltonian isotopic to the identity.
\end{prop}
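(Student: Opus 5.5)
We argue by contradiction, comparing the two Lagrangians coming from the standard pair. Fix the standard $(g,n)$ pair $(C,T)$ with disk set $\mathscr{D}$. By Proposition \ref{prop:vanishingforidentitymappingclass}, since $3g+n>4$ there are $1$-manifolds $w_1,w_2$ such that $I(Y_{\textup{id}},L_{\textup{id}},w_1\sqcup w_2)=0$.

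Form the Lagrangians $\Lambda_i=\mathfrak{X}(C_i,T_i,w_i)\subset M_{g,n}$. These are smooth and embedded by Proposition \ref{unperturbed-3-man-Lag}, since for a standard pair the map $\pi_1(\partial_+)\to\pi_1(C\smallsetminus(T\cup w))$ is surjective. Their diffeomorphism types are given by \eqref{eq:lagrangianidentification}, so no perturbation is needed.

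The gluing defining $(Y_\phi,L_\phi,w_1\sqcup w_2)$ identifies the $\Sigma_{g,n}$ boundaries via $\phi$. Restriction to $\partial_+$ therefore realizes the two pieces as the Lagrangians $\Lambda_1$ and $\phi_*(\Lambda_2)$ in $M_{g,n}$, where $\phi_*$ is the symplectomorphism induced by $\phi$; the precise direction depends on the convention. The second boundary component contributes the point $M_{0,3}$. Theorem \ref{thm:lagrangianinequalityresultgeneral} then gives
\[
\mathfrak{n}(\Lambda_1,\phi_*\Lambda_2)\ \geq\ \tfrac12\,\mathrm{rank}\, I(Y_\phi,L_\phi,w_1\sqcup w_2).
\]
By Proposition \ref{prop:nonvanishingformappingclasswithdistancebound} and the hypothesis $d(\mathscr{D},\phi(\mathscr{D}))>1$, the right-hand side is nonzero. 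To be careful, we use any integral domain coefficients, say $\F$: the proof of Theorem \ref{prop:detection} shows non-vanishing over $\F$, so we apply the $\F$-version of the inequality. Hence $\mathfrak{n}(\Lambda_1,\phi_*\Lambda_2)\geq 1$.

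Now suppose $\phi_*$ were Hamiltonian isotopic to the identity, i.e. $\phi_*\in\mathrm{Ham}(M_{g,n})$. The condition $\phi_*\in\mathrm{Ham}$ is what we use; since $M_{g,n}$ is simply connected, symplectic isotopy to the identity is equivalent to this. Then $\phi_*\Lambda_2$ is Hamiltonian isotopic to $\Lambda_2$, so $\mathfrak{n}(\Lambda_1,\phi_*\Lambda_2)=\mathfrak{n}(\Lambda_1,\Lambda_2)$, because $\mathfrak{n}$ is defined as a minimum over $\mathrm{Ham}$. The pair $(\Lambda_1,\Lambda_2)$ is exactly the Lagrangian pair attached to $(Y_{\textup{id}},L_{\textup{id}},w_1\sqcup w_2)$, whose instanton homology vanishes. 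The final clause of Theorem \ref{thm:lagrangianinequalityresultgeneral} gives $\mathfrak{n}(\Lambda_1,\Lambda_2)=0$: the perturbation data are trivial here, so they are certainly "sufficiently small". This contradicts $\mathfrak{n}\geq1$.

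The main point to verify is the identification of the glued triple with the pair $(\Lambda_1,\phi_*\Lambda_2)$, including orientation reversal on $C_2$ and the fact that $\phi$ acts on $M_{g,n}$ by the inverse pullback. The argument is insensitive to this convention, since replacing $\phi$ by $\phi^{-1}$ preserves both the hypothesis, as $d(\mathscr{D},\phi^{-1}\mathscr{D})=d(\phi\mathscr{D},\mathscr{D})$, and the conclusion. Another point to check is that the vanishing clause of Theorem \ref{thm:lagrangianinequalityresultgeneral} applies to the unperturbed, already transversely cut out $\Lambda_i$. Its proof only uses that $\mathfrak{X}(Y_{\textup{id}},L_{\textup{id}},w)$ is empty, which follows from the existence of the detecting sphere, so the fiber product of $\Lambda_1,\Lambda_2$ is empty and $\mathfrak{n}=0$ directly.
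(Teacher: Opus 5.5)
Your proposal is correct and follows essentially the paper's own argument. You take the pair $(\Lambda_1,\Lambda_2)$ from Proposition~\ref{prop:vanishingforidentitymappingclass}, which is disjoint and so has $\mathfrak{n}=0$, and compare it with the $\phi$-twisted pair, for which Theorem~\ref{thm:lagrangianineq} together with Proposition~\ref{prop:nonvanishingformappingclasswithdistancebound} forces $\mathfrak{n}>0$; Hamiltonian invariance of $\mathfrak{n}$ then gives the contradiction. Your additional remarks only make explicit details the paper leaves implicit: working over $\F$ to sidestep possible torsion, and noting that the argument is unaffected by whether the convention uses $\phi$ or $\phi^{-1}$.
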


\begin{proof}
Let $(C,T)$ be a standard $(g,n)$ pair and let $w_1,w_2$ be as in Proposition \ref{prop:vanishingforidentitymappingclass}. Denote by $\Lambda_i = \mathfrak{X}(C,T,w_i)$ the associated Lagrangians in $M_{g,n}$. By Theorem \ref{thm:lagrangianineq}, $\mathfrak{n}(\Lambda_1,\Lambda_2)=0$; indeed, $\Lambda_1\cap \Lambda_2=\emptyset$. On the other hand, by Theorem \ref{thm:lagrangianineq} and Proposition \ref{prop:nonvanishingformappingclasswithdistancebound}, we have $\mathfrak{n}(\phi(\Lambda_1),\Lambda_2)>0$. As $\mathfrak{n}(\Lambda_1,\Lambda_2)$ is an invariant of $\Lambda_1,\Lambda_2$ up to Hamiltonian isotopies, the result follows.
\end{proof}

\subsection{The case of pseudo-Anosov mapping classes}\label{subsec:pseudoanasov}

In order to leverage Proposition \ref{prop:nonsymplecticisotopicformappingclassesviadistance}, some control over $d(\mathscr{D},\phi(\mathscr{D}))$ is needed. This is done here for the case of pseudo-Anosov mapping classes.

Given the marked surface $\Sigma_{g,n}=(\Sigma,\mathcal{P})$, denote by $\Sigma^\circ$ the surface with boundary obtained from $\Sigma$ by removing disk neighborhoods of the points in $\mathcal{P}$. (This slighly diverges from earlier notation, where $\Sigma^\circ$ was the punctured surface.) A diffeomorphism $\phi:\Sigma^\circ\to \Sigma^\circ$ is  {\emph{pseudo-Anosov}} if it is the identity on the boundary, and there exists a pair of measured foliations $\mathscr{F}^u$ and $\mathscr{F}^s$ on $\Sigma^\circ$ such that $\phi(\mathscr{F}^u) = \lambda\mathscr{F}^u $ and $\phi(\mathscr{F}^s) = \lambda^{-1}\mathscr{F}^s $ for some constant $\lambda>1$. The measured foliations $\mathscr{F}^u$ and $\mathscr{F}^s$ are transverse on the interior of $\Sigma^\circ$ and each component of $\partial \Sigma^\circ$ is a cycle of leaves of $\mathscr{F}^u$ (resp. $\mathscr{F}^s$) and contains a singularity of $\mathscr{F}^u$ (resp. $\mathscr{F}^s$). See \mbox{\cite[\S 11]{flp}}. Let $(C,T)$ be a standard $(g,n)$ pair of either type I or II, equipped with an identification $\partial_+(C,T)\cong \Sigma_{g,n}$, and denote by $\mathscr{D}\subset \mathcal{C}(\Sigma_{g,n})^{(0)}$ the corresponding disk set.

\begin{prop}\label{prop:highdistance}
    Suppose $\phi:(\Sigma,\mathcal{P})\to (\Sigma,\mathcal{P})$ restricts to a pseudo-Anosov diffeomorphism of $\Sigma^\circ = \Sigma\smallsetminus\textup{nbhd}(\mathcal{P})$. Then there exists a diffeomorphism $\psi$ of $(\Sigma,\mathcal{P})$ such that 
    \[
        \limsup_{m\to \infty}\;\; d(\mathscr{D}, \; \psi \phi^m \psi^{-1}(\mathscr{D} )) =\infty.
    \]
\end{prop}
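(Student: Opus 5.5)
The plan is to follow the Hempel / Abrams--Schleimer strategy in the punctured form adapted by Ichihara--Saito. It rests on the dynamics of a pseudo-Anosov map on the space $\mathcal{PML}$ of projective measured laminations of $\Sigma^\circ$ (or of the punctured surface).

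The key input is the following standard fact. If $\lambda^u,\lambda^s$ denote the unstable and stable laminations of $\phi$, then $\phi^m(x)\to [\lambda^u]$ in $\mathcal{PML}$ for every curve $x$ not equal to $\lambda^s$. Moreover, Masur--Minsky (or Klarreich) give a continuity statement: if a sequence of curves $y_m$ converges in $\mathcal{PML}$ to a filling (arational) lamination, then $d(x_0,y_m)\to\infty$ for any fixed vertex $x_0$.

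The second input concerns the closure $\overline{\mathscr{D}}\subset \mathcal{PML}$ of the disk set. Following Masur's description of the limit set of the handlebody/compression-body disk set, adapted to trivial tangles as in \cite{ichihara-saito}, I would show that $\overline{\mathscr{D}}$ is a closed subset whose complement is nonempty and open. The reason is that the disk set of a standard pair is a proper subset: for instance, curves in $\mathscr{D}$ all bound disks missing $T$, so they lie in the kernel of a specific map on $\pi_1$.

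Then I choose $\psi$ so that $\psi(\lambda^u)$ and $\psi(\lambda^s)$ both lie outside $\overline{\mathscr{D}}$. This is possible since the mapping class group orbit of a point of $\mathcal{PML}$ is dense (minimality of the action) and the complement of $\overline{\mathscr{D}}$ is open. The map $\psi\phi\psi^{-1}$ is pseudo-Anosov with laminations $\psi(\lambda^{u}),\psi(\lambda^{s})$, so I rename $\phi := \psi\phi\psi^{-1}$.

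Now I argue by contradiction. Suppose $d(\mathscr{D},\phi^m(\mathscr{D}))\le K$ along all large $m$. Then there are $x_m\in\mathscr{D}$, $y_m\in\mathscr{D}$ with $d(x_m,\phi^m(y_m))\le K$. Pass to subsequences so that $x_m\to\mu\in\overline{\mathscr{D}}$ and $y_m\to\nu\in\overline{\mathscr{D}}$ in $\mathcal{PML}$.

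Uniform north--south dynamics of a pseudo-Anosov on $\mathcal{PML}$ applies because $\nu\neq[\lambda^s]$ (as $[\lambda^s]\notin\overline{\mathscr{D}}$). Since convergence is uniform on compact sets avoiding $[\lambda^s]$, it gives $\phi^m(y_m)\to[\lambda^u]$. Bounded curve-complex distance together with one endpoint converging to a filling lamination forces the other to converge to the same lamination; this is the Klarreich boundary theorem, since the Gromov boundary of $\mathcal{C}$ is the space of ending laminations. Hence $x_m\to[\lambda^u]$, so $[\lambda^u]\in\overline{\mathscr{D}}$, a contradiction. In fact this gives $\lim$ rather than just $\limsup$.

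The main obstacle is the second input: showing that the limit set $\overline{\mathscr{D}}$ has nonempty open complement, or at least misses the images of $\lambda^{u},\lambda^s$ after applying some $\psi$, in the presence of punctures, vertical arcs, and (for type II) the special arc. My first attempt would be to reduce to the unpunctured handlebody case by a branched double cover or by capping. Failing that, I would quote the tangle version from \cite{ichihara-saito}, checking that their trivial-tangle hypothesis covers type II pairs after removing the special arc's neighborhood (which only shrinks $\mathscr{D}$). A secondary care point is that $\mathcal{C}(\Sigma_{g,n})$ refers to the punctured surface while the pseudo-Anosov lives on $\Sigma^\circ$; peripheral curves are excluded, so the two curve complexes agree.
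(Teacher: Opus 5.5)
Your outer framework matches the paper's. The divergence statement for a pseudo-Anosov whose invariant laminations avoid $\overline{\mathscr{D}}$ is exactly what the paper imports as Claim 2 of Ichihara--Saito, and your Klarreich-style derivation of it is fine. You should add that $\lambda^u$ is uniquely ergodic, so that ``every accumulation point of $x_m$ is supported on $\lambda^u$'' becomes $x_m\to[\lambda^u]$.

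The genuine gap is the step you flag yourself, and it is the actual content of the proposition: you never exhibit a point of $\mathcal{PML}$ outside $\overline{\mathscr{D}}$. Your stated reason is that $\mathscr{D}$ is a proper set of curves cut out by a kernel condition on $\pi_1$. That says nothing about its closure. By the very minimality you invoke, the $\mathrm{Mod}$-orbit of a single curve is already dense in $\mathcal{PML}$. For example, separating curves on a closed surface of genus $\geq 2$ all lie in the kernel of $\pi_1\to H_1$, yet they are dense.

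The fallbacks do not work as stated either:
\begin{itemize}
\item $\mathcal{P}$ has odd cardinality, so $\Sigma$ has no double cover branched over $\mathcal{P}$.
\item Capping the punctures changes the disk set completely.
\item Ichihara--Saito work with handlebodies, an even number of points, and only $\partial_+$-parallel arcs. They do not handle vertical arcs running to an inner $\Sigma_{0,3}$, nor the special arc of a type II pair.
\end{itemize}

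The missing idea, which the paper supplies, is a concrete certificate. Lemma \ref{lemma:wave} is an innermost-circle/outermost-arc argument against the disks bounded by the pants-type system $\mathscr{C}$. It shows that every curve of $\mathscr{D}$ outside the finite set $\mathscr{C}$ has a wave in some interior pair of pants. Now take $\lambda$ to contain every seam of every interior pair of pants. Suppose $\delta_i\in\mathscr{D}$ converged to $[\lambda]$. After passing to a subsequence, the $\delta_i$ share one fixed wave in one fixed pants $F$, while for large $i$ they also contain every seam of $F$. That wave crosses one of these seams, contradicting embeddedness. Hence $[\lambda]\notin\overline{\mathscr{D}}$.

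This also repairs a smaller flaw in your choice of $\psi$. Density of a single orbit produces $\psi$ with $\psi(\lambda^u)\notin\overline{\mathscr{D}}$, but gives no simultaneous control of $\psi(\lambda^s)$. The paper takes $\psi=\tau_\lambda^N$. Since both invariant laminations are filling, $\tau_\lambda^N$ drives both of them toward $[\lambda]$ at once. So for $N$ large both lie in the open set $\mathcal{PML}\smallsetminus\overline{\mathscr{D}}$, and the Hempel-type claim applies to $\tau_\lambda^N\phi\,\tau_\lambda^{-N}$.
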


The strategy used to prove this proposition is an adaptation of one from Ichihara--Saito \cite{ichihara-saito}, which itself relies on ideas from \cite{hempel, abrams-schleimer, mms} as further explained below. Our setup is similar but slightly different than that of Ichihara--Saito. For example, in their work, the number of points $|\mathcal{P}|$ on $\Sigma$ is even, and the subset $\mathscr{D}$ of $\mathcal{C}(\Sigma_{g,n})^{(0)}$ is instead the curves bounded by disks in a standard handlebody with some $\partial_+$-parallel arcs removed.

\begin{figure}[t]
\centering
\labellist
  \pinlabel {$\gamma_k$} [r] at 233 147
  \pinlabel {$D_k''$} [r] at 163 127
  \pinlabel {$\alpha_{(n-1)/2}$} [r] at 362 129
  \pinlabel {$\beta_{(n-3)/2}$} [r] at 359 38
  \pinlabel {$\alpha_1$} [r] at 574 146
  \pinlabel {$\beta_1$} [r] at 555 27
\endlabellist
\includegraphics[scale=0.72]{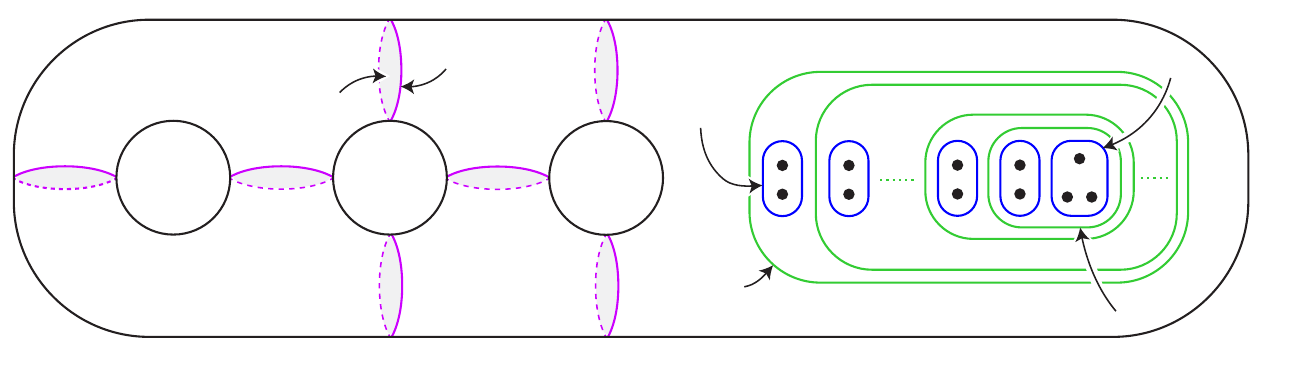}
\caption{\small The collection of curves $\mathscr{C}$ on $\Sigma_{g,n}$ associated to a standard pair $(C,T)$ of type I.}
\label{fig:compbody1}
\end{figure}

To set the stage, we give a more explicit presentation of a standard pair $(C,T)$. We consider the type I case. Consider a standard genus $g$ handlebody $H$ with boundary $\Sigma$ and $n$ points $\mathcal{P}=\{p_1,\ldots,p_{n}\}\subset \Sigma$. Consider a set of simple closed curves in $\Sigma^\circ=\Sigma\smallsetminus \text{nbhd}(\mathcal{P})$ denoted
\[
    \mathscr{C}=\{\alpha_i , \beta_j , \gamma_k \; \mid \; 1\leq i \leq (n-1)/2, 1\leq j \leq (n-3)/2, 1\leq k\leq 3g-2\},
\]
whose elements are  pairwise disjoint and non-isotopic, and satisfy the following:
\begin{enumerate}[label=(\roman*)]
    \item  $\alpha_i$ bounds a disk $\Delta_i\subset \Sigma$ with $\{\delta\in \mathscr{C} \, \mid \, \delta\subset \text{int} (\Delta_i) \} = \emptyset$; \label{list:type1specificpresentation.1}
    \item  the disk $\Delta_1$ satisfies $\Delta_1\cap \mathcal{P}=\{p_1,p_2,p_3\}$; \label{list:type1specificpresentation.2}
    \item for $2\leq i \leq (n-1)/2$, the disk $\Delta_i$ satisfies $\Delta_i\cap \mathcal{P}=\{p_{2i},p_{2i+1}\}$; \label{list:type1specificpresentation.3}
      \item  $\beta_j$ bounds a disk $\Delta_j'\subset \Sigma$ with $\{\delta\in \mathscr{C} \, \mid \, \delta\subset \text{int} \Delta_j' \} = \{\alpha_1,\ldots,\alpha_{j+1}, \beta_1,\ldots,\beta_{j-1}\}$; \label{list:type1specificpresentation.4}
    \item $\gamma_k$ bounds a disk $D_k''$ in $H$; \label{list:type1specificpresentation.5}
    \item $\text{int}(\Sigma^\circ)\smallsetminus \bigcup_{\delta\in \mathscr{C}} \delta = Q\sqcup \bigsqcup_{\ell} F_\ell$ where $Q$ is a $4$-holed sphere and each $F_\ell$ is a pair of pants. \label{list:type1specificpresentation.6}
\end{enumerate}
More specifically, we may choose $\mathscr{C}$ to be the collection of curves depicted in Figure \ref{fig:compbody1}. Note $Q$ and $F_\ell$ are open, and one should take their closures to include their boundary curves. Note $\overline{Q} = \Delta_1\cap \Sigma^\circ$. Call a pair of pants $F_\ell$ {\emph{exterior}} if its closure intersects $\partial \Sigma^\circ$ and {\emph{interior}} otherwise. The exterior pairs of pants are given by $\Delta_i\cap \Sigma^\circ$ for $i\geq 2$.

By slightly pushing the disks $\Delta_i$ and $\Delta_j'$ into $H$ we obtain properly emebedded disks $D_i$ and $D_j'$ in $H$. This is done so that the disks $D_i,D_j',D_k''$ are pairwise disjoint. 

Denote by $B_i\subset H$ the ball bounded by $D_i\cup \Delta_i$. Let $B'_1$ be an open ball whose closure is contained in $B_1$. Let $C=H\smallsetminus B'_1$ so that $C$ is a compression body with $\partial_+C = \Sigma$ and $\partial_- C= S^2$. For $i\geq 2$, take an arc between $p_{2i}$ and $p_{2i+1}$ and push it into $B_i$ to obtain a $\partial_+$-parallel arc. Take three properly embedded arcs in $B_1\cap C$ each intersecting both $\Sigma$ and $S^2$. Then the union of all these arcs gives a trivial tangle $T\subset C$, and $(C,T)$ is a standard $(g,n)$ pair of type I. 

A similar setup can be specified for a standard pair of type II, as follows. Let $(C,T)$ be a standard $(g-1,n+2)$ pair of type I as detailed above. Form $C'$ by attaching a $1$-handle to small disk neighborhoods of $p_2$ and $p_3$ which are contained in $\Delta_1$, and form $T'$ be connecting the vertical arcs of $p_2$ and $p_3$ by the core of the attached $1$-handle. Then $(C',T')$ is a standard $(g,n)$ pair of type II, with induced curve set $\mathscr{C}$ and disks as described for $(C,T)$. Note that in the type II case, $Q$ is instead a torus with one puncture and one open disk removed.

We review some terminology from \cite{mms}. A {\emph{seam}} (resp. {\emph{wave}}) on a pair of pants $F$ is an essential properly embedded arc in $F$ with endpoints on two separate components (resp. one component) of $\partial F$. Up to isotopy rel boundary, $F$ has three distinct seams and three distinct waves. If $F$ is a pair of pants inside a surface $S$, then a curve $\gamma\subset S$ has a {\emph{seam}} (resp. {\emph{wave}}) in $F$ if it intersects $F$ minimally in its isotopy class and some component of $\gamma\cap F $ is a seam (resp. wave).

\begin{lemma}\label{lemma:wave}
    If $[\delta]\in \mathscr{D}$ is not isotopic to a curve in $\mathscr{C}$, it has a wave in an interior pair of pants.
\end{lemma}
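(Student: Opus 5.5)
The plan is to adapt the standard ``wave'' argument of Masur--Minsky--Schleimer and Ichihara--Saito. Let $[\delta]\in\mathscr{D}$ be a curve not isotopic to any element of $\mathscr{C}$, and let $D\subset C\smallsetminus T$ be a disk with $\partial D=\delta$. Together with the disks $D_i$, $D_j'$, $D_k''$ (which are disjoint from $T$ and cut $C$ into pieces), I would choose $D$ transverse to these disks with $|D\cap(\bigcup D_i\cup\bigcup D_j'\cup\bigcup D_k'')|$ minimal in its isotopy class rel $T$. We may also put $\delta$ in minimal position with respect to $\mathscr{C}$.

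\textbf{Step 1: $\delta$ meets $\mathscr{C}$.} Since $\delta$ is essential, non-peripheral and not isotopic to a curve in $\mathscr{C}$, and the complementary regions are pairs of pants together with $Q$, the curve $\delta$ must intersect some curve of $\mathscr{C}$. If it did not, it would lie in one region. Pairs of pants contain no essential non-peripheral curves other than their boundary curves. In $Q$, an essential curve either separates $\{p_1,p_2,p_3\}$ nontrivially, so that the disk it bounds in $C$ would meet the vertical arcs, or, in the type II case, it is a curve in the punctured torus. I would rule these out by hand using the explicit structure of $B_1\cap C$: any such disk would have to intersect $T$ or cut off a sphere meeting $T$ once, contradicting the emptiness and nonemptiness computations \eqref{eq:lagrangianidentification}.

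\textbf{Step 2: an outermost arc.} Standard innermost-circle arguments, together with irreducibility of $C\smallsetminus T$ away from the tangle, remove circle components of $D\cap(\text{disks})$. Then take an outermost arc of $D\cap E$ in $D$, where $E$ is one of the disks $D_i,D_j',D_k''$. This arc cuts off a subdisk $D_0\subset D$ whose interior misses all the disks and whose boundary arc $a=\partial D_0\cap\Sigma$ is an arc of $\delta$ with both endpoints on the same curve $\partial E\in\mathscr{C}$. The arc $a$ lies in one complementary region, so it is a wave there, provided it is essential. Essentiality follows from minimality: if $a$ were inessential, an isotopy would reduce the intersection count.

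\textbf{Step 3: the wave lies in an interior pair of pants.} This is the main obstacle. We must show that $a$ cannot lie in an exterior pair of pants or in $Q$. In an exterior pants $\Delta_i\cap\Sigma^\circ$ with $i\geq2$, the two boundary components other than $\alpha_i$ are peripheral circles around $p_{2i},p_{2i+1}$. A wave on $\alpha_i$ separates these two punctures, so $D_0$ together with part of $D_i$ would give a disk in the ball $B_i$ separating the endpoints of the $\partial_+$-parallel arc. That disk must meet $T$, a contradiction. A wave based at a peripheral boundary is impossible because $\delta$ avoids the marked disks and $D_0$ would lie in the region cut off by $E$. For $Q$ (with $\partial\overline Q=\alpha_1$ together with the circles around $p_1,p_2,p_3$), a wave on $\alpha_1$ separates $\{p_1,p_2,p_3\}$ into groups of $1$ and $2$. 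Then $D_0\cup(\text{part of }D_1)$ is a disk in $B_1\cap C$ separating the vertical arcs, which meets $T$ or $\partial_-C$ essentially, again a contradiction. In type II, the argument in the punctured torus $Q$ uses the special arc $t_0$ in the same manner. With these cases excluded, the wave found in Step 2 lies in an interior pair of pants.
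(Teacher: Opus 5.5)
Your proposal is correct and follows the paper's own argument. Both choose $D$ to minimize the number of components of $D\cap S$, remove circle components, and take an outermost arc whose subdisk lies in a single complementary piece; this gives a wave exactly when that piece corresponds to an interior pair of pants. The one real difference is how the exterior pants and $Q$ are excluded. You observe that $D_0$ together with half of the cutting disk would be a disk separating endpoints of $T$, and so would have to meet $T$. The paper instead notes that the corresponding pieces of $C^\circ\smallsetminus S$ are a thickened annulus and a thickened pair of pants, out of which the subdisk can be pushed, contradicting minimality. In type~II, $Q$ is a punctured torus and a wave there may be non-separating, so a pure separation argument does not cover every case. There it is cleaner to close the wave up with a suitable arc of $\alpha_1$ and apply your Step~1 to the resulting curve in $Q$.
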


\begin{proof}
We prove the statement in the case that $(C,T)$ is type I. The type II case is similar. Let $S\subset C$ be the union of the disjoint disks $D_i,D_j',D_k''$ whose boundaries are the curves in $\mathscr{C}$. Write $C^\circ$ for the complement of a regular neighborhood of $T$ in $C$ such that $C^\circ \cap \Sigma = \Sigma^\circ$ and $S\subset C^\circ$. Let $[\delta]\in \mathscr{D}$. Then $\delta=\partial D\subset \Sigma^\circ$ where $D\subset C^\circ$ is a properly embedded disk. Suppose $D$ is chosen such that it is transverse to $S$ and it has the minimal number of components in $D\cap S$ among all such disks with $\partial D$ in the isotopy class of $\delta\subset \Sigma^\circ$. 

    The closures of the connected components of $C^\circ \smallsetminus S$ are of three types:
    \begin{itemize}
        \item[(i)] a ball, whose boundary is the union of an interior pair of pants and three disks from $S$;
        \item[(ii)] a thickened annulus $A_i$ $(i\geq 2)$, with $\partial A_i$ the union of $D_i\subset S$ and $\Delta_i\cap \Sigma^\circ$;
        \item[(iii)] a thickened pair of pants, whose boundary contains $\overline{Q}=\Delta_1\cap \Sigma^\circ$.  
    \end{itemize}
    
    Suppose $D\cap S$ is empty. Then $D$ is contained in one of the three types of components above. Since $\delta=\partial D$ is essential and non-boundary parallel in $\Sigma^\circ$, it is easy to see that $\delta$ must be isotopic in $\Sigma^\circ$ to a curve in $\mathscr{C}$ in each case.

    Suppose $D\cap S$ is nonempty. If $D\cap S\subset D$ contains a circle, choose an innermost circle $c$, intersecting $S$ in one of its defining disks $D'$. Then $c$ bounds a disk $D''\subset D$ which is properly embedded in one of the above components $C'$ of (i)--(iii). In each case, an isotopy of $D$ through properly embedded disks in $C$ is obtained by pushing $D''$ through $D'$ to lie outside of $C'$, removing $c$ from $D\cap S$ and contradicting the minimality assumption. Thus $D\cap S$ has no circles.

    Thus $D\cap S$ is a nonempty union of arcs in $D$. Let $a\subset D\cap S$ be an outermost arc. Then there is an arc $b\subset \partial D$ such that $a\cap b=\partial a=\partial b$ and $a\cup b$ bounds a disk $D''\subset D$ with $\text{int}(D'')$ disjoint from $S$. Moreover, $a$ is contained in one of the disks $D'$ forming $S$, with $\partial D'=\varepsilon \in \mathscr{C}$. In particular, $\partial b \subset \varepsilon$. The disk $D''$ is properly embedded in one of the components $C'$ in (i)--(iii) above. In case (i), the arc $b$ cannot be boundary parallel in the pair of pants to $\varepsilon$, for otherwise an isotopy of $D$ through properly embedded disks in $C$ may be induced by pushing $D''$ through $D'$, contradicting the minimality assumption. Thus $b$ is a wave in the pair of pants, as $b$ is an essential arc and $\partial b\subset \varepsilon$ is contained in a single boundary component of the pair of pants. Finally, $D''$ lying in a component $C'$ from either case (ii) or (iii) leads to a contradiction on the minimality assumption, as in these cases $D''$ can be properly pushed out of $C'$. 
\end{proof}

\begin{figure}[t]
\centering
\includegraphics[scale=0.71]{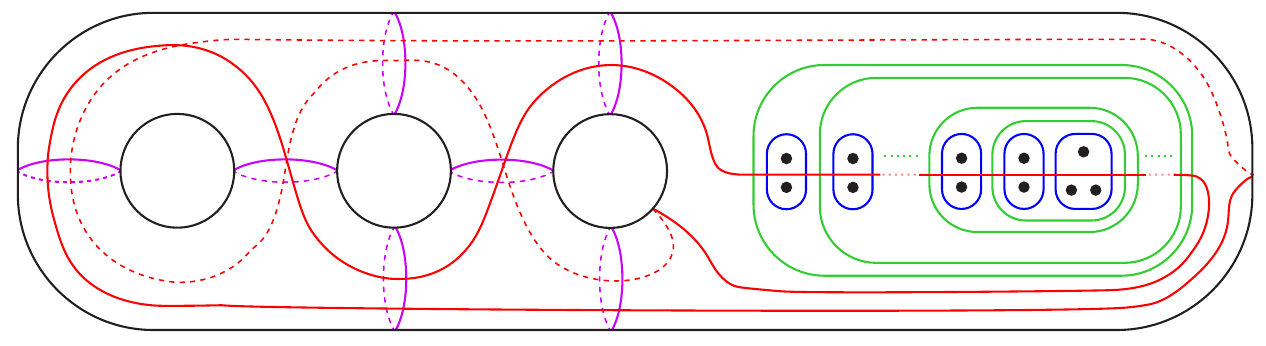}
\caption{\small A simple closed closed curve $\lambda$ which has every seam in every interior pair of pants.}
\label{fig:compbody1}
\end{figure}

\begin{proof}[Proof of Proposition \ref{prop:highdistance}]
    The argument is an adaptation of the one from \cite[\S 3.3]{ichihara-saito}.  Denote by $\overline{\mathscr{D}}$ the closure of $\mathscr{D}$ as viewed as a subset of $\mathcal{P}\mathcal{M}\mathcal{L}(\Sigma^\circ)$, the space of projective measured laminations of $\Sigma^\circ$. Suppose $\eta:\Sigma^\circ\to \Sigma^\circ$ is a pesudo-Anosov diffeomorphism such that its stable and unstable foliations $\mathcal{F}^s_\eta$ and $\mathcal{F}^u_\eta$ satisfy $[\mathcal{F}^s_\eta]\not\in\overline{\mathscr{D}}$ and $[\mathcal{F}^u_\eta]\not\in\overline{\mathscr{D}}$. Then 
    \[
        \limsup_{m\to \infty}\;\; d(\mathscr{D}, \; \eta^m(\mathscr{D} )) =\infty.
    \]
    This follows from the same argument given for Claim 2 in \cite{ichihara-saito}, which is a straightforward extension of the result in the case without tangles present, the latter of which is due to Hempel \cite{hempel} and explained by Abrams--Schleimer \cite[Theorem 2.4]{abrams-schleimer}.

    Choose a simple closed curve $\lambda\subset \Sigma^\circ$ which has every seam in every interior pair of pants in $\Sigma^\circ$. An example is shown in Figure \ref{fig:compbody1}. Denote by $\tau:\Sigma^\circ\to \Sigma^\circ$ the Dehn twist along $\lambda$. For a positive integer $N$, the pseudo-Anosov diffeomorphism
    \[
        \phi_N = \tau^N \circ \phi \circ \tau^{-N}
    \]
    has stable and unstable laminations given by $\tau^N(\mathcal{F}^s)$ and $\tau^{N}(\mathcal{F}^u)$ where $\mathcal{F}^s$ and $\mathcal{F}^u$ are the stable and unstable laminations of $\phi$. To complete the proof it suffices to show that $\tau^N(\mathcal{F}^s)\not\in \overline{\mathscr{D}}$ and $\tau^N(\mathcal{F}^u)\not\in \overline{\mathscr{D}}$ for sufficiently large $N$. The argument is a mild modification of the one for Claim 1 in \cite{ichihara-saito}, which uses ideas from  \cite{mms}.

    First, as $\mathcal{F}^u$ and $\mathcal{F}^s$ are the stable and unstable laminations of a pseudo-Anosov map $\phi$, they are filling, in the sense that they intersect every essential non-boundary parallel curve in $\Sigma^\circ$. From this, the computations of \cite[\S 6]{flp} imply that applying $N$ Dehn twists along $\lambda$ and taking $N\to \infty$, the resulting laminations $\tau^N(\mathcal{F}^s)$ and $\tau^N(\mathcal{F}^u)$ converge to $[\lambda]$ as elements in $\mathcal{P}\mathcal{M}\mathcal{L}(\Sigma^\circ)$. Thus to prove $\tau^N(\mathcal{F}^s)\not\in \overline{\mathscr{D}}$ and $\tau^N(\mathcal{F}^u)\not\in \overline{\mathscr{D}}$ for large $N$, it suffices to show that $[\lambda]\not\in \overline{\mathscr{D}}$. 

    To this end, consider a sequence $[\delta_1], [\delta_2],\ldots $ in $\mathscr{D}$. As $\lambda$ is not isotopic to any curve in $\mathscr{C}$, we may pass to a subsequence such that each $\delta_i$ is not isotopic to a curve in $\mathscr{C}$. By Lemma \ref{lemma:wave}, each $\delta_i$ has a wave in an interior pair of pants. By passing again to a subsequence, we may assume that all $\delta_i$ have the same wave $w$ in the same interior pair of pants $F\subset \Sigma^\circ$. As $\lambda$ has every seam in every interior pair of pants, so too do the $\delta_i$ for large enough $i$. But the wave $w$ intersects one of the seams in $F$ non-trivially, implying that such $\delta_i$ have non-trivial self-intersection, a contradiction.
\end{proof}

\subsection{An application to the symplectic mapping class group of $M_{g,n}$}\label{sec:mappingclassproof}

\begin{proof}[Proof of Theorem \ref{thm:mappingclassgroupaction}]

	Let $K$ be the intersection of $\text{ker}(\rho_{g,n})$ with the subgroup $\text{Mod}(\Sigma_{g,n})\subset\Gamma_{g,n}$. Then $K$ is a normal subgroup of $\text{Mod}(\Sigma_{g,n})$. By \cite[\S 11.5]{ivanov}, a finite normal subgroup of $\text{Mod}(\Sigma_{g,n})$ lies in the center of $\text{Mod}(\Sigma_{g,n})$, and by \cite[\S 8.15]{ivanov} the center of $\text{Mod}(\Sigma_{g,n})$ is trivial given the assumption that $3g+n>4$ and $n$ is odd. Therefore if $K$ is non-trivial, it must be infinite. Then, by Theorem 1 and Corollary 7.13 of \cite{ivanov}, $K$ contains an element which corresponds to a pseudo-Anosov diffeomorphism $\phi:\Sigma^\circ\to \Sigma^\circ$. Note that in the cited reference, ``pseudo-Anosov'' does not require $\phi$ to be the identity on the boundary, but some power of $\phi$ satisfies this property. Therefore Proposition \ref{prop:highdistance} applies to yield $\psi\in \text{Mod}(\Sigma_{g,n})$ such that
	    \[
        \limsup_{m\to \infty}\;\; d(\mathscr{D}, \; \psi \phi^m \psi^{-1}(\mathscr{D} )) =\infty.
    \]
Proposition \ref{prop:nonsymplecticisotopicformappingclassesviadistance} then implies $\rho_{g,n}(\psi \phi^m \psi^{-1})\neq 1$ for some $m\geq 1$. But since
\[
	\rho_{g,n}(\psi \phi^m \psi^{-1}) = \rho_{g,n}(\psi ) \rho_{g,n}(\phi)^m \rho_{g,n}(\psi )^{-1}
\]
this implies $\rho_{g,n}(\phi)\neq 1$. Thus $K$ is the trivial subgroup of $\text{Mod}(\Sigma_{g,n})$.

It remains to show that intersection of $\text{ker}(\rho_{g,n})$ with the subgroup $H^1(\Sigma^\circ;\Z/2)\subset \Gamma_{g,n}$ is also trivial. Let $\zeta\in H^1(\Sigma^\circ;\Z/2)$ be nonzero. Let $(C,T)$ be a standard pair of type I or II with a choice of $1$-manifold $w$. Choose an embedded $1$-manifold with boundary $w_\zeta \subset \Sigma$ with $\partial w_\zeta = w_\zeta \cap \mathcal{P}$ such that $\zeta(z)=[w_\zeta]\cdot z$ for all $z\in H_1(\Sigma^\circ;\Z/2)$. Then
\[
	\zeta\left( \mathfrak{X}(C,T,w) \right) = \mathfrak{X}(C,T,w \sqcup w_\zeta) 
\]
where $w_\zeta$ is viewed in a copy of $\Sigma$ which is slightly pushed into the interior of $C$. This follows from the description of the $H^1(\Sigma^\circ;\Z/2)$ action on $M_{g,n}$ given in \S \ref{subsec:monotonecaselag}. Choose $w=\emptyset$ and write $\mathfrak{X}(C,T)=\mathfrak{X}(C,T,w)$. 

Similar to the proof of Proposition \ref{prop:vanishingforidentitymappingclass}, using $3g+n>4$ and $\zeta\neq 0$, we may choose $(C,T)$ such that either $w_\zeta$ has odd intersection with the cocore of a $1$-handle, or has an odd number of boundary points on some $\partial_+$-parallel arc of $T$. Write $(Y_{\text{id}},L_{\text{id}})$ for the double of $(C,T)$. The existence of a sphere in $Y_{\text{id}}\smallsetminus L_{\text{id}}$ which has odd intersection with $w_\zeta$ implies $I(Y_{\text{id}},L_{\text{id}},w_\zeta)=0$, or equivalently
\begin{equation}\label{eq:lagrangiandisplacementbyelementofh1}
	\zeta\left( \mathfrak{X}(C,T) \right) \cap \mathfrak{X}(C,T) = \emptyset.
\end{equation}

On the other hand, we claim that $I(Y_{\text{id}},L_{\text{id}}) \neq 0$. First observe that
\begin{equation}\label{eq:doubleidentification}
	(Y_{\text{id}},L_{\text{id}}) \cong \begin{cases} ( \#^{g}S^1\times S^2  \# S^1\times S^2, U \sqcup L) \quad & \textup{(type I)} \\[2mm]
		( \#^{g-1}S^1\times S^2  \# S^1\times S^2\# S^1\times S^2, U' \sqcup L' \sqcup L'') \quad  & \textup{(type II)}  \end{cases}
\end{equation}
In the type I case, $U$ is an $(n-3)/2$ component unlink and $L$ is three parallel copies of $S^1$ in the last $S^1\times S^2$ summand. In the type II case, $U'$ is an $(n-1)/2$ component unlink, $L'$ is a copy of $S^1$ in the last $S^1\times S^2$ summand, and $L''$ is a knot in the last two summands $ S^1\times S^2\# S^1\times S^2$ which is in the homotopy class $aba^{-1}b^{-1}$ in a presentation of $\pi_1( S^1\times S^2\# S^1\times S^2)$, based at a point in the separating $2$-sphere, where $a$ and $b$ are the two $S^1$ factors. These descriptions are easily obtained by taking the doubles of the standard pairs in Figure \ref{fig:standardpieces}, for example. For type I, the link $L$ is formed by the gluing the two triples of vertical arcs in each copy of $(C,T)$. For type II, $L'$ is the gluing of the two vertical arcs and $L''$ that of the two special arcs.

To show $I(Y_{\text{id}},L_{\text{id}}) \neq 0$, by Theorem \ref{thm:detectionintro} it suffices to show that there are no $2$-spheres in $Y_{\text{id}}$ that intersect $L_{\text{id}}$ in a single point. In the type I case, as such a 2-sphere has odd homological pairing with $L_{\text{id}}$, it has odd pairing with each of the three parallel components of $L$, and thus intersects $L_{\text{id}}$ in at least $3$ points, yielding the desired claim. 

Suppose then that $(C,T)$ is type II. Let $S\subset Y$ be a $2$-sphere that intersects $L_\text{id}$ in a single point. Let $S'$ be the $2$-sphere which separates the summand $ \#^{g-1}S^1\times S^2$ from $S^1\times S^2\# S^1\times S^2$ in \eqref{eq:doubleidentification}. Suppose $S$ is chosen among all $2$-spheres which intersect $L_\text{id}$ in one point, with $S\cap S'$ transverse, so that the number of components in $S\cap S'$ is minimal. If $S\cap S'$ is nonempty, choose a component of $S\cap S'$ which bounds a disk in $S'$. Doing surgery of $S$ along this disk yields two new spheres, one of which intersects $L_\text{id}$ in one point and has fewer components of intersection with $S'$ than did $S$, a contradiction. Thus we may assume $S\cap S'$ is empty. 

As $U'$ and $L''$ are nullhomologous, $S$ must intersect the component $L'$. Together with the conclusion of the previous paragraph, the problem is reduced to showing that there are no $2$-spheres in $(S^1\times S^2\# S^1\times S^2, L'\sqcup L'')$ that intersect $L'$ once and miss $L''$. Given such a $2$-sphere $S$, the complement of a regular neighborhood of $S\cup L'$ implies that $L''$ lies in a copy of $S^1\times S^2\smallsetminus B^3$. Since $L''$ is nullhomologous, it must be nullhomotopic in $S^1\times S^2\smallsetminus B^3$. This contradicts the fact that $L''$ is in the non-trivial homotopy class $aba^{-1}b^{-1}$ in $\pi_1( S^1\times S^2\# S^1\times S^2)$.

Having shown $I(Y_{\text{id}},L_{\text{id}}) \neq 0$, we conclude from Theorem \ref{thm:lagrangianineq} that 
\[
		\mathfrak{n}(\mathfrak{X}(C,T),\mathfrak{X}(C,T)) > 0.
\]
Together with \eqref{eq:lagrangiandisplacementbyelementofh1}, we obtain that $\mathfrak{X}(C,T)$ and $\zeta(\mathfrak{X}(C,T))$ are not Hamiltonian isotopic, and thus the symplectomorphism on $M_{g,n}$ induced by $\zeta$ is not Hamiltonian isotopic to the identity. This completes the argument that the intersection of $\text{ker}(\rho_{g,n})$ with the subgroup $H^1(\Sigma^\circ;\Z/2)\subset \Gamma_{g,n}$ is the trivial group, and completes the proof of the theorem.
\end{proof}

%!TEX root = main.tex

\section{The Lagrangian sphere complex of the del Pezzo surface $\text{Bl}_5\mathbb {CP}^2$}\label{sec:lagspherecomp}

In this section, we study Lagrangian spheres in the symplectic manifold $M_{0,5}$. The main theme of this section is a connection between the intersection theory of Lagrangian spheres in $M_{0,5}$, modulo symplectic isotopy, with the intersection theory of essential simple closed curves in $\Sigma_{0,5}$, considered up to isotopy. As pointed out in the introduction, $M_{0,5}$ is symplectomorphic to the monotone del Pezzo surface $\textup{Bl}_5\mathbb {CP}^2$, the projective plane blown up at $5$ generic points. Specifically, a diffeomorphism between $M_{0,5}$ and $\textup{Bl}_5\mathbb {CP}^2$ follows from \cite{kirk-klassen}, as explained in \cite{kirk}, and by \cite[Theorem 1.2]{mcduff}, any two symplectic forms $\omega_1$ and $\omega_2$ on $\textup{Bl}_5\mathbb {CP}^2$ satisfying $[\omega_i] = c_1(\textup{Bl}_5\mathbb {CP}^2)$ are symplectomorphic. The cohomology class $c_1(\textup{Bl}_5\mathbb {CP}^2)$ is given by
\begin{equation}\label{symplectic-cohom-class}
  [\omega]=3H-E_1+\cdots-E_5, 
\end{equation}
where $H$ is the hyperplane class and $E_i$ are the exceptional classes of the blowups.

Let $\gamma$ be an essential, non-peripheral simple closed curve in $\Sigma_{0,5}$.  Attaching a $2$-handle to a thickened $\Sigma_{0,5}$ along $\gamma$ produces a $3$-manifold diffeomorphic to a $3$-ball with two disjoint open $3$-balls removed. The $5$ marked points determine a tangle in this manifold whose three boundary components contain five, three, and two marked points, respectively. Cap off the latter boundary sphere by gluing in a ball with a $\partial_+$-parallel arc, to obtain the pair $(C_\gamma,T_\gamma)$. Then $C_\gamma$ is the compression body  $S^2\times [0,1]$ and $T_\gamma$ is a trivial tangle with one $\partial_+$-parallel boundary component and three vertical boundary components. In particular, $(C_\gamma,T_\gamma)$ is a standard $(0,5)$ pair of type I. Thus to the curve $\gamma$ in $\Sigma_{0,5}$ we have an associated Lagrangian
\[
  \Lambda_\gamma^+ : =\mathfrak{X}(C_\gamma,T_\gamma) \subset M_{g,n}.  
\]
Let $w_\gamma$ be an embedded compact 1-manifold in $C_\gamma$ with $\partial w_\gamma=w_\gamma \cap T_\gamma$ the union of a point on the $\partial_+$-parallel boundary component of $T_\gamma$ and a point on one of the vertical boundary components. From this we obtain another Lagrangian associated to the curve $\gamma$:
\[
 \Lambda_\gamma^- :=\mathfrak{X}(C_\gamma,T_\gamma,w_\gamma) \subset M_{g,n}.
\]
From \eqref{eq:lagrangianidentification}, we see that $\Lambda^\pm_\gamma$ are Lagrangian spheres. Explicitly, applying a diffeomorphism of $\Sigma_{0,5}$, we may assume that $\gamma$ is as in Figure \ref{gamma}. With respect to a standard presentation of $\pi_1(S^2\smallsetminus \mathcal P)$ with generators $c_1,\ldots,c_5$ and relation $c_1c_2c_3c_4c_5=1$, the moduli space $M_{0,5}$ consists of the conjugacy classes of tuples $(C_1,\dots,C_5)$ of traceless elements in $SU(2)$ with $C_1C_2 C_3 C_4C_5=1$, and $\Lambda_\gamma^{\pm}$ corresponds to the subspace defined by the conditions
\begin{equation}\label{eq:lagrangiansphereconcrete}
	C_1 C_2 = \pm 1 , \qquad C_3 C_4 C_5 = \pm 1.
\end{equation}
Note that although the choice of $w_\gamma$ is not unique, $\Lambda_\gamma^-$ is independent of this choice. Any given $[C_1,\ldots, C_5]\in \Lambda^{\pm}_\gamma$ has a unique representative with 
\[
	C_3 = \mp\left[\begin{array}{cc} i & 0 \\ 0 & -i \end{array}\right], \quad C_4 =  \left[\begin{array}{cc} 0 & 1 \\ -1 & 0 \end{array}\right], \quad  C_5 =\left[\begin{array}{cc} 0  & i  \\ i & 0  \end{array}\right],  
\]
and where $C_1=\mp C_2$ is any traceless element. Note that $\Lambda_\gamma^+$ and $\Lambda_\gamma^-$ are disjoint.

A {\emph{signed curve}}  in $\Sigma_{0,5}$ is a pair $(\gamma,\epsilon)$ of an essential simple closed curve $\gamma$ in $\Sigma_{0,5}$ and a choice of sign $\epsilon\in \{-,+\}$. Signed curves $(\gamma_1,\epsilon_1)$ and $(\gamma_2,\epsilon_2)$ are {\emph{isotopic}} if $\gamma_1$ and $\gamma_2$ are isotopic and $\epsilon_1=\epsilon_2$. The above construction associates to a signed curve $(\gamma,\epsilon)$ in $\Sigma_{0,5}$ a Lagrangian sphere $\Lambda_\gamma^\epsilon$ in $\smash{M_{0,5}=\textup{Bl}_5\mathbb {CP}^2}$ which clearly only depends on the isotopy class of $(\gamma,\epsilon)$.

\begin{figure}
\begin{center}
\begin{tikzpicture}
  \foreach \x in {0,2,4,6,8} {
    \fill (\x,0) circle (2pt);
  }
  \draw[violet, thick] (1,0) ellipse (1.8 and 0.9);
\node[above] at (0,0.1) {$p_1$};
\node[above] at (2,0.1) {$p_2$};
\node[above] at (4,0.1) {$p_3$};
\node[above] at (6,0.1) {$p_4$};
\node[above] at (8,0.1) {$p_5$};
\node[above] at (8,0.1) {$p_5$};
\node[above,violet] at (1,1) {$\gamma$};
\end{tikzpicture}
\end{center}
\caption{After identifying $S^2$ with the plane together with the point at infinity, and placing the marked points at the locations $p_i$ shown above, any essential curve may be mapped to the curve $\gamma$ by a suitable diffeomorphism of $\Sigma_{0,5}$.}
\label{gamma}
\end{figure}

\begin{theorem}\label{displace-characterization}
	Two Lagrangians $\Lambda_{\gamma_1}^{\epsilon_1}$ and $\Lambda_{\gamma_2}^{\epsilon_2}$ associated to non-isotopic $(\gamma_1,\epsilon_1)$ and $(\gamma_2,\epsilon_2)$ 
	can be displaced from each other by a Hamiltonian isotopy if and only if the simple closed curves $\gamma_1$ and $\gamma_2$ can be made disjoint from each other by 
	isotopy.
\end{theorem}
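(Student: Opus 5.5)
The plan is to reduce displaceability to the vanishing of singular instanton homology for a glued link, and then read off that vanishing from the disk sets of the standard pairs. Since $(C_{\gamma_i},T_{\gamma_i})$ is a standard $(0,5)$ pair of type I, Proposition \ref{unperturbed-3-man-Lag} shows that $\Lambda^{\epsilon_i}_{\gamma_i}=\mathfrak{X}(C_{\gamma_i},T_{\gamma_i},w_i)$ is already a smooth embedded Lagrangian with the trivial perturbation. Here $w_i=\emptyset$ or $w_{\gamma_i}$ according to the sign.

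Up to a diffeomorphism of $\Sigma_{0,5}$, we glue the two pairs via $\phi$ as in \S\ref{sec:detectionmappingclasses} to obtain an admissible link $(Y,L,w)$. Theorem \ref{thm:lagrangianinequalityresultgeneral} then gives the dichotomy we need:
\begin{itemize}
\item if $I(Y,L,w)\neq 0$, then $\mathfrak{n}\geq 1$, so the spheres cannot be displaced;
\item if $I(Y,L,w)=0$, then, because the perturbation data is trivial (and in particular arbitrarily small), $\mathfrak{n}=0$, so they can be displaced.
\end{itemize}
Hence the theorem reduces to: $I(Y,L,w)=0$ if and only if $\gamma_1,\gamma_2$ can be made disjoint by isotopy. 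By Proposition \ref{prop:instantonheegaarddetection}, this in turn reduces to computing the sets $\mathscr{D}$, $\mathscr{A}$ and $\mathscr{D}^\varepsilon$ for the pairs $(C_{\gamma_i},T_{\gamma_i},w_i)$, viewed in a common copy of $\mathcal{C}(\Sigma_{0,5})$.

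The key computational claims are the following.

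\emph{(a)} $\mathscr{D}(C_\gamma,T_\gamma)=\{[\gamma]\}$. A disk in $S^2\times[0,1]$ missing $T_\gamma$ cuts off a ball whose intersection with the tangle consists only of whole $\partial_+$-parallel arcs, and the only such arc is the one cobounded with $\gamma$. This can be checked with an innermost-circle and outermost-arc argument against the cocore disk bounded by $\gamma$, as in Lemma \ref{lemma:wave}.

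\emph{(b)} $\mathscr{D}^{\varepsilon}$ equals $\{[\gamma]\}$ for exactly one $\varepsilon$, namely $\varepsilon=0$ for the sign $+$ and $\varepsilon=1$ for the sign $-$. This holds because $w_\gamma$ meets the disk bounded by $\gamma$ exactly once.

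\emph{(c)} $\mathscr{A}(C_\gamma,T_\gamma)$ is exactly the set of vertices at distance $1$ from $[\gamma]$. For the inclusion of the distance-one vertices into $\mathscr{A}$: with $\gamma$ enclosing $p_1,p_2$ as in Figure \ref{gamma}, any curve disjoint from and non-isotopic to $\gamma$ encloses either $\{p_1,p_2,p_k\}$ or $\{p_a,p_b\}\subset\{p_3,p_4,p_5\}$, and in either case it bounds an explicit disk meeting a single vertical arc once. For the reverse inclusion, the disk-minimization argument in the proof of Proposition \ref{prop:nonvanishingformappingclasswithdistancebound} shows that every $[x]\in\mathscr{A}$ is disjoint from some element of $\mathscr{D}=\{[\gamma]\}$. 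Moreover $x\neq\gamma$, since otherwise gluing the two disks would produce a sphere in $C_\gamma$ meeting $T_\gamma$ once, and then $\mathfrak{X}(C_\gamma,T_\gamma)$ would be empty, contradicting \eqref{eq:lagrangianidentification}.

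Given (a)--(c), the union in Proposition \ref{prop:instantonheegaarddetection} is nonempty exactly in one of two situations:
\begin{itemize}
\item $\gamma_1$ and $\gamma_2$ are disjoint and non-isotopic, which is the $\mathscr{D}\cap\mathscr{A}$ terms;
\item $\gamma_1=\gamma_2$ with $\epsilon_1\neq\epsilon_2$, which is the $\mathscr{D}^0\cap\mathscr{D}^1$ terms. By hypothesis this is the only remaining non-isotopic possibility, and it is consistent with the direct observation that $\Lambda^+_\gamma\cap\Lambda^-_\gamma=\emptyset$.
\end{itemize}
In both situations the curves can be made disjoint by isotopy, and conversely. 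This completes the reduction.

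The step I expect to be the main obstacle is (c), together with (a): pinning down the disk sets exactly. The delicate points are the orientation and gluing conventions, since the second pair enters via $\phi$ with reversed orientation, and making sure that no exotic disks hitting a vertical arc once exist beyond those at distance $1$ from $\gamma$. I would handle both by the cut-and-paste minimal-intersection arguments already used in Lemma \ref{lemma:wave} and in the proof of Proposition \ref{prop:nonvanishingformappingclasswithdistancebound}.
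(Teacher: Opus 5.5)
Your proposal is correct and has the same architecture as the paper's proof. You glue the two type I pairs along $\Sigma_{0,5}\sqcup\Sigma_{0,3}$, invoke Theorem \ref{thm:lagrangianinequalityresultgeneral} and Proposition \ref{prop:instantonheegaarddetection}, and then compute the disk sets of $(C_\gamma,T_\gamma)$.

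The differences lie in how the disk-set lemma is proved.

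\emph{The set $\mathscr{D}$.} The paper gets $\mathscr{D}(C_\gamma,T_\gamma)=\{\gamma\}$ group-theoretically. It writes $\pi_1(C_\gamma\smallsetminus T_\gamma)\cong F_3$ as the quotient of $F_4$ by $\langle\!\langle\gamma\rangle\!\rangle$, and uses Hopficity plus Magnus's theorem. This shows that any essential curve which is merely \emph{null-homotopic} in $C_\gamma\smallsetminus T_\gamma$ is isotopic to $\gamma$.

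\emph{The set $\mathscr{A}$.} The paper needs that homotopy-level statement for $\mathscr{A}$. It tubes a once-punctured disk down to $\partial_-C_\gamma$ and band-sums with a peripheral loop, producing a possibly immersed disk bounded by $\gamma'\#\gamma_i$. That curve must therefore be $\gamma$, so $\gamma'$ can be made disjoint from $\gamma$.

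\emph{Your route.} You stay entirely within embedded cut-and-paste:
\begin{itemize}
\item for $\mathscr{D}$, innermost-circle/outermost-arc against the cocore of $\gamma$;
\item for $\mathscr{A}$, the minimal-intersection argument from Proposition \ref{prop:nonvanishingformappingclasswithdistancebound}.
\end{itemize}
One caution on $\mathscr{D}$: the observation that the cut-off ball contains the $p_1p_2$ arc only shows that the curve encloses $\{p_1,p_2\}$. That does not determine its isotopy class on $\Sigma_{0,5}$, so the reduction against the cocore is a necessary step, not just a check.

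\emph{What each approach buys.} Yours avoids Magnus's theorem. It also yields the exact description of $\mathscr{A}$ as the distance-one vertices. This includes the inclusion that curves disjoint from and non-isotopic to $\gamma$ lie in $\mathscr{A}$, which the paper uses in the converse direction without spelling out.

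\emph{The converse direction.} The paper does not pass through $I(Y,L,w)=0$. The sphere assembled from the two disks already forces $\Lambda^{\epsilon_1}_{\gamma_1}\cap\Lambda^{\epsilon_2}_{\gamma_2}=\mathfrak{X}(Y,L,w)=\emptyset$. Your detour through the last clause of Theorem \ref{thm:lagrangianinequalityresultgeneral} reaches the same conclusion, including that displaceable pairs are already disjoint.

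\emph{The gluing concern.} The orientation and gluing issue you flag is not a real obstacle. Both pairs are built on the same $\Sigma_{0,5}$ and glued by the identity, so no $\phi$ is needed, and the disk sets live in the curve complex of that common surface.
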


In preparation for the proof we have the following. Recall the notation from \S \ref{sec:heegaard}. 

\begin{lemma} \label{lemma:hamiltoniandisplacementviacurvesets}
For the compression body with trivial tangle $(C_\gamma, T_\gamma)$, we have:
	\begin{enumerate}[label=(\roman*)]
		\item $\mathscr{D}(C_\gamma,T_\gamma) = \{ \gamma \}$. \label{item:lemmadiskann1}
		\item Every $\gamma' \in  \mathscr{A}(C_\gamma, T_\gamma)$ may be displaced from $\gamma$. \label{item:lemmadiskann2}
	\end{enumerate}
\end{lemma}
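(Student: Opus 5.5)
Both items rest on the explicit model $C_\gamma=S^2\times[0,1]$. In this model $T_\gamma$ is the three vertical arcs $\{p_3,p_4,p_5\}\times[0,1]$ together with a $\partial_+$-parallel arc $t$ joining $p_1,p_2$ inside a ball $B$ cut off by the disk $D_\gamma$ with $\partial D_\gamma=\gamma$. Throughout we label points as in Figure \ref{gamma}.

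For \ref{item:lemmadiskann1}, the inclusion $\gamma\in\mathscr{D}$ holds by construction via $D_\gamma$. Conversely, let $D\subset C_\gamma\smallsetminus T_\gamma$ be a properly embedded disk with $\partial D=\delta$ essential and non-peripheral in $\Sigma_{0,5}$. Choose $D$ transverse to $D_\gamma$ with $|D\cap D_\gamma|$ minimal in the isotopy class of $\delta$. Circles of intersection are removed by innermost-disk surgery, since $C_\gamma\smallsetminus T_\gamma$ contains no sphere meeting $T_\gamma$ once (such a sphere would force $M_{0,1}$-type emptiness, which contradicts that $\Lambda^\pm_\gamma\ne\emptyset$). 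Arcs are removed by the outermost-arc argument used in Lemma \ref{lemma:wave}. The two pieces of $C_\gamma\smallsetminus D_\gamma$ are the ball $B$ containing one trivial arc, whose complement is a solid torus whose boundary meets $\Sigma$ in an annulus around $p_1,p_2$, and a product region $(S^2\times[0,1])\smallsetminus B$ with three vertical arcs. Consequently an outermost subdisk lives in one of these two pieces and can be pushed across $D_\gamma$, which contradicts minimality. Hence $D\cap D_\gamma=\emptyset$, and we split into two cases.
\begin{itemize}
\item If $D\subset B$, then $\delta$ bounds a disk in the annulus containing no essential curve other than $\gamma$, so $\delta\simeq\gamma$ or $\delta$ is peripheral.
\item If $D$ lies in the product part, then $\delta$ bounds a disk in the complement of three vertical arcs. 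In $S^2\times I$ with vertical arcs, compressing disks are boundary-parallel to $\partial_-$ or $\partial_+$ by the product structure and the incompressibility of $\partial_-\smallsetminus 3$ points, so $\delta$ is isotopic to $\gamma$, which is essential in the 4-punctured sphere side.
\end{itemize}
Thus $\mathscr{D}=\{\gamma\}$.

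For \ref{item:lemmadiskann2}, take $\gamma'=\partial D$ with $D\pitchfork T_\gamma$ a single point, and repeat the minimal-intersection argument with $D_\gamma$. The aim is to show that, after isotopy rel boundary class, $D\cap D_\gamma$ contains no arcs, so that $\gamma'\cap\gamma=\emptyset$. Circles are removed as before; the surgery keeps the single puncture on the disk component, since a sphere meeting $T_\gamma$ once is impossible. For arcs, choose an outermost arc of $D\cap D_\gamma$ in $D_\gamma$, which lies in the punctureless disk $D_\gamma$. Surgering $D$ along the subdisk of $D_\gamma$ it cuts off yields two disks $D_1,D_2$ with boundaries $y_1,y_2$. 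One of them meets $T_\gamma$ once and the other misses it, and each has fewer intersections with $D_\gamma$. If the missing one has essential non-peripheral boundary, then by \ref{item:lemmadiskann1} its boundary is isotopic to $\gamma$, and this disk is disjoint from $D_\gamma$ after isotopy. Otherwise that boundary is inessential or peripheral, and then the other disk has boundary isotopic to $\gamma'$. In both cases we reduce the intersection by induction, and we conclude that $\gamma'$ can be isotoped off $\gamma$. I would copy the case analysis from the proof of Proposition \ref{prop:nonvanishingformappingclasswithdistancebound} almost verbatim, with roles of $D,D'$ there matching $D,D_\gamma$ here.

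The main obstacle is the subcase in \ref{item:lemmadiskann2} where the surgered disk missing $T_\gamma$ has boundary isotopic to $\gamma$. There, the other disk's boundary is a band sum of $\gamma$ with something, and I need to check that this still forces $\gamma'$ disjoint from $\gamma$ rather than merely producing a different curve. I would handle it by the innermost/outermost choice on $D$ rather than on $D_\gamma$, mirroring Proposition \ref{prop:nonvanishingformappingclasswithdistancebound}: take the outermost arc cutting a subdisk of $D$ missing $T_\gamma$, and surger $D_\gamma$ instead, using \ref{item:lemmadiskann1} to see that both resulting curves are isotopic to $\gamma$ or inessential, which contradicts minimality.
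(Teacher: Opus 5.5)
Your proof is correct in outline, but it follows a genuinely different route from the paper's.

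\textbf{Comparison with the paper.} For (i) the paper argues algebraically. It uses $\pi_1(C_\gamma\smallsetminus T_\gamma)=F_4/\langle\!\langle\gamma\rangle\!\rangle\cong F_3$ together with the Hopfian property of free groups. Then Magnus's theorem (two elements of a free group with the same normal closure are conjugate up to inversion) shows that every essential curve which is merely \emph{nullhomotopic} in $C_\gamma\smallsetminus T_\gamma$ is isotopic to $\gamma$. That extra strength is what makes the paper's proof of (ii) short. One tubes $D$ down along the arc of $T_\gamma$ it meets, obtaining an annulus from $\gamma'$ to a small loop $\gamma_i$ around a puncture, and bands it into a possibly immersed disk bounded by $\gamma'\#\gamma_i$. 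Applying (i) gives $\gamma'\#\gamma_i\simeq\gamma$, so $\gamma'$ is $\gamma$ pushed across one puncture.

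Your cut-and-paste proof of (i) along $D_\gamma$ is more elementary, with no Magnus theorem. However, it only handles embedded disks, so it cannot feed the tube-and-band argument. Your proof of (ii) is instead the disk-surgery argument from the proof of Proposition~\ref{prop:nonvanishingformappingclasswithdistancebound}. Combined with $\mathscr{D}=\{\gamma\}$, that argument gives (ii) directly.

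\textbf{Points to tighten.}
\begin{enumerate}
\item In (i), circle removal needs no fact about spheres. Surgering a $T_\gamma$-disjoint $D$ along a subdisk of $D_\gamma$ gives a disk and a sphere that both miss $T_\gamma$, and you simply keep the disk.
\item The piece $B\smallsetminus N(t)$ is a solid torus, so it does have compressing disks. Saying an outermost subdisk ``can be pushed across $D_\gamma$'' therefore needs one more remark. The boundary of that subdisk lies in $\partial B$ minus neighborhoods of $p_1,p_2$. This is an annulus whose core is a meridian of $t$, hence essential in $B\smallsetminus t$. So the boundary arc $b$ cannot separate $p_1$ from $p_2$, and it cobounds a puncture-free bigon with $\gamma$.
\item In (ii), a $T_\gamma$-disjoint surgered disk with \emph{peripheral} boundary does not yield a disk with boundary isotopic to $\gamma'$. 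That case is simply impossible, because capping it off gives a sphere meeting $T_\gamma$ once.
\item The obstacle you flag is not real. If $y_2\simeq\gamma$, then $\gamma'$ is a band sum of the disjoint curves $y_1,y_2$. Pushing $y_2$ off itself on the side away from the band gives a copy of $\gamma$ disjoint from $\gamma'$; this is the same trick used in Lemma~\ref{lemma:quintic}.
\item Your fallback also works, but only if minimality is taken over \emph{pairs}: both $D$ and a $T_\gamma$-disjoint disk with boundary isotopic to $\gamma$ must be allowed to vary, as in Proposition~\ref{prop:nonvanishingformappingclasswithdistancebound}.
\end{enumerate}
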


\begin{proof}
	Recalling $\Sigma_{0,5} = (S^2, \mathcal{P})$ where $\mathcal{P}=\{p_1,\ldots,p_5\}$, note
	\[
	  \pi_1(S^2\smallsetminus \mathcal P)\cong F_4,\hspace{1cm}\pi_1(C_\gamma\smallsetminus T_\gamma)\cong F_3,
	\]
	where $F_n$ is the free group with $n$ generators. Let $\gamma'$ be an essential simple closed curve that is nullhomotopic in $C_\gamma\smallsetminus T_{\gamma}$ and pick 
	orientations for both of $\gamma$, $\gamma'$. Then $\gamma'$ determines a conjugacy class in $\pi_1(S^2\smallsetminus \mathcal{P})$ such that the inclusion of 
	$S^2\smallsetminus \mathcal{P}$ into $C_\gamma\smallsetminus T_\gamma$ maps this conjugacy class to the identity class in 
	 $\pi_1(C_\gamma\smallsetminus T_\gamma)$. Since the fundamental group of $C_\gamma\smallsetminus T_\gamma$ is the quotient of the free group $\pi_1(S^2\smallsetminus \mathcal{P})$ 
	 by the normal closure $\langle\!\langle \gamma \rangle\!\rangle$ determined by $\gamma$, the conjugacy class of $\gamma'$ belongs to 
	 $\langle\!\langle \gamma \rangle\!\rangle$. Therefore, the identity map of $\pi_1(S^2\smallsetminus \mathcal{P})$ induces a surjection 
	 \[
	   \pi_1(S^2\smallsetminus \mathcal{P})/\langle\!\langle \gamma' \rangle\!\rangle \to \pi_1(S^2\smallsetminus \mathcal{P})/\langle\!\langle \gamma \rangle\!\rangle.
	 \]
	 The domain and the codomain of this map are isomorphic to $F_3$, and the Hopfian property of free groups implies that this map is an isomorphism, hence 
	 $\langle\!\langle \gamma \rangle\!\rangle =\langle\!\langle \gamma' \rangle\!\rangle $. Using  \cite{Magnus}, we conclude that $\gamma'$ is conjugate to $\gamma$ or 
	 $\gamma^{-1}$. 
	 In particular, the underlying simple closed curves are isotopic to each other. It follows that $\mathscr{D}(C_\gamma,T_\gamma)=\{\gamma\}$, proving \ref{item:lemmadiskann1}.
	 
	 	 Next, let $\gamma'\in \mathscr{A}(C_\gamma,T_\gamma)$. Let $\gamma_i$ be a small loop around the marked point $p_i$.
	 By definition of $\mathscr{A}(C_\gamma,T_\gamma)$, there is a disk $D\subset  C_\gamma$ with $\partial D=\gamma'$ that intersects $T_\gamma$ transversely 
	 in one point, and we assume $p_i$ is one of the endpoints of the component of $T_\gamma$ intersecting $D$. Note that in the case of Figure \ref{gamma}, the point $p_i$ must be one of $p_3,p_4,p_5$. Cut out a small disk in $D$ which intersects $T_\gamma$ and replace it with a tube that runs down to $S^2$, yielding an annulus in $C_\gamma\smallsetminus T_\gamma$ with boundary components $\gamma'$ and $\gamma_i$. Do surgery on this annulus along a disk cobounding an arc on $S^2\smallsetminus \mathcal{P}$ and an arc on the annulus, each running from $\gamma'$ to $\gamma_i$. The result is a possibly immersed disk whose boundary is a band sum $\gamma'\#\gamma_i$. As this curve is essential in $S^2\smallsetminus \mathcal{P}$ and is null-homotopic in $C_\gamma\smallsetminus T_\gamma$, item \ref{item:lemmadiskann1} implies $\gamma'\#\gamma_i$ is isotopic to $\gamma$. From this, \ref{item:lemmadiskann2} follows.
\end{proof}

\begin{proof}[Proof of Theorem \ref{displace-characterization}]
	Suppose Lagrangians $\Lambda_{\gamma_1}^{\epsilon_1}$ and $\Lambda_{\gamma_2}^{\epsilon_2}$ for non-isotopic $(\gamma_1,\epsilon_1)$ and $(\gamma_2,\epsilon_2)$ 
	can be displaced by Hamiltonian isotopy. Let $(Y,L,w)$ be the triple obtained from gluing $(C_{\gamma_1},T_{\gamma_1},w_1)$ and 
	$(C_{\gamma_2},T_{\gamma_2},w_2)$ along their common boundaries, where $w_i=\emptyset$ if $\epsilon_i=+$ and $w_i=w_{\gamma_i}$ if $\epsilon_i=-$.
	Theorem \ref{thm:lagrangianineq} implies that $I(Y,L,w)=0$, and then by Proposition \ref{prop:instantonheegaarddetection} we have 
	\[
	  \left(\mathscr{D}_1\cap \mathscr{A}_2\right) \cup \left(\mathscr{A}_1\cap \mathscr{D}_2\right) \cup \left( \mathscr{D}^0_1\cap \mathscr{D}^1_2\right) \cup 
	  \left( \mathscr{D}^1_1\cap \mathscr{D}^0_2\right) \neq \emptyset
	\]
	where $\mathscr{D}_i=\mathscr{D}(C_i,T_i)$, $\mathscr{D}^\varepsilon_i=\mathscr{D}^\varepsilon(C_i,T_i,w_i)$, and $\mathscr{A}_i=\mathscr{A}(C_i,T_i)$. If
	$\mathscr{D}_1\cap \mathscr{A}_2$ is non-empty, then Lemma \ref{lemma:hamiltoniandisplacementviacurvesets} implies that $\gamma_1$ can be made disjoint from 
	$\gamma_2$ by an isotopy. We obtain a similar conclusion if $\mathscr{A}_1\cap \mathscr{D}_2$ is non-empty. If $\mathscr{D}^0_1\cap \mathscr{D}^1_2$ or 
	$\mathscr{D}^1_1\cap \mathscr{D}^0_2$ is non-empty, then Lemma \ref{lemma:hamiltoniandisplacementviacurvesets} implies $\gamma_1$ is isotopic to $\gamma_2$. In particular, $\gamma_1$ can be made disjoint from $\gamma_2$ 
	by an isotopy. 
	
	In the reverse direction, if  $(\gamma_1,\epsilon_1)$ and $(\gamma_2,\epsilon_2)$ are non-isotopic and $\gamma_1$ and $\gamma_2$ can be made disjoint from each other, then
	either $\gamma_1$ and $\gamma_2$ are isotopic to each other and $\epsilon_1\neq \epsilon_2$ or $\gamma_1$ and $\gamma_2$ are non-isotopic to each other. 
	In the former case, either $\mathscr{D}^0_1\cap \mathscr{D}^1_2$ or $\mathscr{D}^1_1\cap \mathscr{D}^0_2$ contains $\gamma_1=\gamma_2$, and hence there is 
	an embedded sphere $S$ in $Y$ that is disjoint from $L$ and $S\cdot w$ is odd. In particular, the character variety 
	$\mathfrak{X}(Y,L,w)=\Lambda_{\gamma_1}^{\epsilon_1}\cap \Lambda_{\gamma_2}^{\epsilon_2}$ is empty. 
	In the latter case that $\gamma_1$ and $\gamma_2$ are non-isotopic and they can be made disjoint from each other by isotopy, either $\mathscr{D}_1\cap \mathscr{A}_2$
	or $\mathscr{A}_1\cap \mathscr{D}_2$ is non-empty. In particular, there is an embedded sphere $S$ in $Y$ that intersects $L$ transversely in exactly one point. 
	This again implies $\mathfrak{X}(Y,L,w)=\Lambda_{\gamma_1}^{\epsilon_1}\cap \Lambda_{\gamma_2}^{\epsilon_2}$ is empty. 
\end{proof}

The symplectic mapping class group of the del Pezzo surface $\textup{Bl}_5\mathbb {CP}^2$ has been studied extensively. In particular, the isomorphism type of this group can be recovered from the results of \cite{evans-stein}. The basis $\{H, E_1,\cdots,E_5\}$ of $H_2(M_{0,5};\Z)$ consists of orthogonal elements with respect to the intersection form. Furthermore, $H\cdot H=1$ and $E_i\cdot E_i=-1$. Any symplectomorphism $\phi:M_{0,5}\to M_{0,5}$ determines an action on $H_2(M_{0,5};\Z)$ that preserves the intersection form and acts identically on the cohomology class $[\omega]$ given in \eqref{symplectic-cohom-class}.
In particular, any such $\phi$ determines an automorphism of the negative definite lattice $[\omega]^{\perp}\subset H_2(M_{0,5};\Z)$. This lattice is isomorphic to negative definite root lattice of type $D_5$, as exhibited by the basis in Figure \ref{five-lagrangians-homology}.
	\begin{figure}
\begin{center}
\begin{tikzpicture}[scale=1.2]
\fill (0,0) circle (2pt);
\fill (1.8,0) circle (2pt);
\fill (3.6,0) circle (2pt);
\fill (5,1) circle (2pt);
\fill (5,-1) circle (2pt);
\draw (0,0) -- (1.8,0);
\draw (1.8,0) -- (3.6,0);
\draw (3.6,0) -- (5,1);
\draw (3.6,0) -- (5,-1);
\node[above] at (0,0.1) {$E_1-E_2$};
\node[above] at (1.8,0.1) {$E_2-E_3$};
\node[above] at (4.54,-0.22) {$E_3-E_4$};
\node[above] at (5,1.1) {$E_4-E_5$};
\node[above] at (5,-1.66) {$H-E_1-E_2-E_3$};
\end{tikzpicture}
\end{center}
\caption{ A basis for the lattice $[\omega]^{\perp}$ realizing the $D_5$ Dynkin diagram }
\label{five-lagrangians-homology}
\end{figure}

The group $W(D_5)$ of isometries of the lattice $[\omega]^\perp$ is isomorphic to $S_5\ltimes H^1(\Sigma^{\circ};\Z/2)$. In particular, we have a homomorphism $\pi_0{\rm Symp}(M_{0,5})\to W(D_5)$, which is surjective, as follows from \cite{seidel-4d} or the proof of Proposition \ref{rho-0-5-iso} below. Recall that the kernel of this homomorphism, denoted by $\pi_0{\rm Symp}_h(M_{g,n})$, is the group of symplectomorphisms acting trivially on $H_2(M_{0,5};\Z)$. This group is isomorphic to the pure mapping class group $ {\rm PMod}(\Sigma_{0,5})$, see \cite{evans-stein}.

\begin{prop}\label{rho-0-5-iso}
	The map $\rho_{0,5}: \Gamma_{0,5} \to \pi_0{\rm Symp}(M_{0,5})$ is an isomorphism.
\end{prop}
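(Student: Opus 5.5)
Injectivity of $\rho_{0,5}$ is Theorem \ref{thm:mappingclassgroupaction} with $(g,n)=(0,5)$, since $3g+n=5>4$. It remains to prove surjectivity, which I will do by comparing two short exact sequences. On the symplectic side we have
\[
1\to \pi_0{\rm Symp}_h(M_{0,5}) \to \pi_0{\rm Symp}(M_{0,5}) \to W(D_5),
\]
and by \cite{evans-stein} the kernel is isomorphic to ${\rm PMod}(\Sigma_{0,5})$. On the topological side, $\Gamma_{0,5}$ sits in
\[
1\to {\rm PMod}(\Sigma_{0,5})\to \Gamma_{0,5}\to S_5\ltimes H^1(\Sigma^\circ;\Z/2)\to 1,
\]
where the map to $S_5$ records the permutation of marked points. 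Note that for $g=0$, $H^1(\Sigma^\circ;\Z/2)\cong(\Z/2)^4$, so the quotient has order $16\cdot 120=|W(D_5)|$.

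\textbf{Step 1.} Using the cohomological description at the end of \S\ref{sec:symplectofrommcg}, I check that the composite $\Gamma_{0,5}\to\pi_0{\rm Symp}(M_{0,5})\to W(D_5)$ is surjective. A mapping class acts on $H^2=\Q\alpha\oplus\Q\langle\mathcal P\rangle$ by permuting the $\delta_i$, and $\zeta$ acts by the sign changes $\delta_i\mapsto(-1)^{\zeta(c_i)}\delta_i$. Here I need a translation between the basis $\{\alpha,\delta_i\}$ and $\{H,E_i\}$, for instance $\delta_i$ proportional to $[\omega]/?-2E_i$-type classes whose span in $[\omega]^\perp$ forms the standard orthogonal frame $\pm e_i$ of $D_5$. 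Then $S_5$ acts by permutations and the flips by even sign changes, which together generate $W(D_5)\cong S_5\ltimes(\Z/2)^4$. Since the domain $S_5\ltimes H^1$ has the same order, this induced map is an isomorphism.

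\textbf{Step 2.} By Proposition \ref{prop:puremappingclassgroup}, $\rho_{0,5}^{-1}(\pi_0{\rm Symp}_h)={\rm PMod}(\Sigma_{0,5})$. I then need the restriction ${\rm PMod}(\Sigma_{0,5})\to\pi_0{\rm Symp}_h(M_{0,5})$ to be surjective, not just injective. For this I will use Evans' identification \cite{evans-stein}: it realizes $\pi_0{\rm Symp}_h(\textup{Bl}_5\mathbb{CP}^2)$ as the pure braid/mapping class group of 5 points on the conic through the blown-up points, generated by squared Lagrangian Dehn twists (or by monodromy of moving the five points). I would match these generators with $\rho_{0,5}$ of Dehn twists about curves $\gamma$ enclosing two points, using Seidel's observation \cite{seidel-4d} that the Dehn twist $\tau_\gamma$ acts on $M_{0,5}$ as a Lagrangian Dehn twist along the sphere $\Lambda_\gamma^{\pm}$ (squared, or the fibered twist in a Goldman twist flow, via Proposition \ref{prop:hamcomp}).

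\textbf{Step 3.} A five-lemma argument on the two sequences then gives surjectivity.

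The main obstacle is the surjectivity in Step 2: identifying Evans' generators with images of Dehn twists. If a direct comparison proves awkward, I would argue abstractly instead. Evans proves $\pi_0{\rm Symp}_h\cong{\rm PMod}(\Sigma_{0,5})$, and the image of $\rho_{0,5}$ in it is an injective copy of ${\rm PMod}(\Sigma_{0,5})$. I would then show this image has finite index and use co-Hopfian-type properties of ${\rm PMod}(\Sigma_{0,5})$, or better, check that the standard generating twists map to Evans' generators. Seidel's computation in \cite{seidel-4d} should supply exactly this identification.
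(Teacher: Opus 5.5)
Your architecture is the paper's: injectivity from Theorem \ref{thm:mappingclassgroupaction}, a comparison of ${\rm PMod}(\Sigma_{0,5})\subset\Gamma_{0,5}$ with $\pi_0{\rm Symp}_h(M_{0,5})\subset\pi_0{\rm Symp}(M_{0,5})$, and an order count of $1920$ for the finite quotients. But the step you flag as the main obstacle is not proved. That step is the surjectivity of ${\rm PMod}(\Sigma_{0,5})\to\pi_0{\rm Symp}_h(M_{0,5})$.

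Matching Evans' generators with images of Dehn twists would require unpacking the argument of \cite{evans-stein} and comparing it with $\rho_{0,5}$ through Seidel's Lagrangian Dehn twists. Your proposal does not do this.

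Your fallback points at the right tool but is misdirected. Finite index plays no role, and you give no way to prove it. It would not help anyway, since a group can be isomorphic to a proper finite-index subgroup of itself.

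The missing fact is that ${\rm PMod}(\Sigma_{0,5})$ is co-Hopfian: every injective endomorphism is an automorphism \cite[Corollary 3]{BM:Artin-coHopfian}. The restriction of $\rho_{0,5}$ to ${\rm PMod}(\Sigma_{0,5})$ is injective by Theorem \ref{thm:mappingclassgroupaction}. It lands in $\pi_0{\rm Symp}_h(M_{0,5})$ by Proposition \ref{prop:puremappingclassgroup}. Composing with Evans' abstract isomorphism $\pi_0{\rm Symp}_h(M_{0,5})\cong{\rm PMod}(\Sigma_{0,5})$ gives an injective endomorphism of ${\rm PMod}(\Sigma_{0,5})$, which is therefore surjective. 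This is how the paper closes the argument, with no generator matching.

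Step 1 also rests on an unverified translation between $\{\alpha,\delta_i\}$ and $\{H,E_i\}$; you leave it with a literal ``?''. You can avoid this entirely. Proposition \ref{prop:puremappingclassgroup} gives $\rho_{0,5}^{-1}(\pi_0{\rm Symp}_h(M_{0,5}))={\rm PMod}(\Sigma_{0,5})$. Hence the induced map
$\Gamma_{0,5}/{\rm PMod}(\Sigma_{0,5})\to\pi_0{\rm Symp}(M_{0,5})/\pi_0{\rm Symp}_h(M_{0,5})\hookrightarrow W(D_5)$
is injective. Both ends have order $1920$, so it is an isomorphism. This also gives the surjectivity of $\pi_0{\rm Symp}(M_{0,5})\to W(D_5)$ that your five-lemma step needs. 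With these two replacements, your Step 3 goes through as written.
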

\begin{proof}
	In the previous section, we showed that $\rho_{0,5}: \Gamma_{0,5}\to\pi_0{\rm Symp}(M_{g,n}) $ is injective, 
	and by Proposition \ref{prop:puremappingclassgroup}, this action maps the subgroup ${\rm PMod}(\Sigma_{0,5})$ of 
	$\Gamma_{0,5}$ into $\pi_0{\rm Symp}_h(M_{0,5})\cong {\rm PMod}(\Sigma_{0,5})$. 
	This implies that the restriction of $\rho_{0,5}$ to ${\rm PMod}(\Sigma_{0,5})$  is an isomorphism because ${\rm PMod}(\Sigma_{0,5})$ is co-Hopfian 
	\cite[Corollary 3]{BM:Artin-coHopfian}. 
	Thus $\rho_{0,5}$ induces an injective map 
	\[
	  \Gamma_{0,5}/{\rm PMod}(\Sigma_{0,5}) \to \pi_0{\rm Symp}(M_{0,5})/\pi_0{\rm Symp}_h(M_{0,5}).
	\]
	Since the domain and the codomian of this map are both isomorphic to the finite group $S_5\ltimes (\Z/2)^4$, this map has to be an isomorphism. 
	In particular, $\rho_{0,5}$ is an isomorphism. 
\end{proof}

\begin{remark}
	Given an essential non-peripheral simple closed curve $\gamma$, let $\tau_\gamma:\Sigma_{0,5} \to \Sigma_{0,5}$ be the diffeomorphism given by half twist along $\gamma$. 
	Then the symplectomorphism $\rho_{0,5}(\tau_\gamma)$ is given by {\it symplectic Dehn twist} along the Lagrangian sphere $\Lambda_\gamma^+$.
	This observation goes back to Seidel and it is discussed in detail in \cite[Theorem 3.11]{WW:ex-tri-Dehn-twist}. 
\end{remark}
\begin{prop}\label{transitive-action-Lag}
	$\Gamma_{0,5}$ acts transitively on symplectic isotopy classes of Lagrangian spheres in $M_{0,5}$. 
\end{prop}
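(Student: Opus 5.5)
The plan is to combine the transitivity result of Borman--Li--Wu with Proposition \ref{rho-0-5-iso}. By \cite[Corollary 1.2]{borman-li-wu}, the symplectic mapping class group $\pi_0\text{Symp}(\textup{Bl}_5\mathbb{CP}^2)$ acts transitively on symplectic isotopy classes of Lagrangian spheres in the monotone $\textup{Bl}_5\mathbb{CP}^2$. Using the symplectomorphism $M_{0,5}\cong \textup{Bl}_5\mathbb{CP}^2$ recalled at the start of this section, we transport this statement to $M_{0,5}$. Conjugating by a fixed symplectomorphism preserves both the notion of Lagrangian sphere and symplectic isotopy. So $\pi_0\text{Symp}(M_{0,5})$ acts transitively on symplectic isotopy classes of Lagrangian spheres in $M_{0,5}$.

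Next I will check that this action is well defined on isotopy classes. If $\psi_0,\psi_1$ are symplectically isotopic symplectomorphisms and $\Lambda$ is a Lagrangian sphere, then $\psi_t(\Lambda)$ is a symplectic isotopy from $\psi_0(\Lambda)$ to $\psi_1(\Lambda)$. Likewise, $\psi$ carries a symplectic isotopy of Lagrangians to another one. Thus $\pi_0\text{Symp}(M_{0,5})$ genuinely acts on the set of symplectic isotopy classes.

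Finally, Proposition \ref{rho-0-5-iso} says $\rho_{0,5}:\Gamma_{0,5}\to\pi_0\text{Symp}(M_{0,5})$ is an isomorphism, in particular surjective. Pulling the action back along $\rho_{0,5}$ therefore gives a transitive action of $\Gamma_{0,5}$. For concreteness, every Lagrangian sphere $\Lambda$ is then symplectically isotopic to $\rho_{0,5}(h)(\Lambda_\gamma^+)$ for some $h\in\Gamma_{0,5}$, where $\gamma$ is the curve of Figure \ref{gamma}.

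The main point needing care is making sure the hypotheses of \cite{borman-li-wu} match this setting. Their statement concerns $\textup{Bl}_5\mathbb{CP}^2$ with the monotone symplectic form, and any two symplectic forms in the class $c_1$ are symplectomorphic by \cite{mcduff}. I also need to confirm whether their transitivity is stated up to Hamiltonian or symplectic isotopy, and with respect to $\text{Symp}$ or $\text{Symp}_h$. Since $M_{0,5}$ is simply connected, Hamiltonian and symplectic isotopy coincide. A transitivity statement for a subgroup implies it for the full group, so no difficulty arises from that either. Beyond this bookkeeping the argument is formal.
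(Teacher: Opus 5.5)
Your argument has a gap at the citation of \cite[Corollary 1.2]{borman-li-wu}. As the paper's proof uses it, that result says that $\pi_0\text{Symp}_h(M_{0,5})$ acts transitively on symplectic isotopy classes of Lagrangian spheres \emph{in a fixed homology class}. Your bookkeeping remark, that transitivity for the subgroup $\text{Symp}_h$ implies it for $\text{Symp}$, handles the group but not this second restriction. The result by itself says nothing about two Lagrangian spheres in different homology classes. So it does not give transitivity of $\pi_0\text{Symp}(M_{0,5})$ on all Lagrangian spheres. (The introduction's attribution of full transitivity to Borman--Li--Wu compresses exactly the combination described below.)

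The missing step is homological.
\begin{itemize}
\item For a Lagrangian sphere $L$, $\omega|_L=0$ gives $\langle[\omega],[L]\rangle=0$.
\item The normal bundle of $L$ is isomorphic to $T^*L$, so $[L]\cdot[L]=-\chi(S^2)=-2$. Hence $[L]$ is a root of the negative definite $D_5$ lattice $[\omega]^\perp$.
\item $W(D_5)$ acts transitively on roots.
\item The map $\pi_0\text{Symp}(M_{0,5})\to W(D_5)$ is surjective. This follows from \cite{seidel-4d}, or from the order count $|\Gamma_{0,5}/\textup{PMod}(\Sigma_{0,5})|=1920=|W(D_5)|$ in the proof of Proposition \ref{rho-0-5-iso}, or from Picard--Lefschetz applied to the Dehn twists along the $D_5$ configuration of Figure \ref{five-lagrangians}.
\end{itemize}
Together these give a symplectomorphism $\phi$ with $[\phi(\Lambda_{\gamma_0}^{\epsilon_0})]=[L]$. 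Only then does Borman--Li--Wu supply $\psi\in\text{Symp}_h(M_{0,5})$ with $\psi\phi(\Lambda_{\gamma_0}^{\epsilon_0})$ symplectically isotopic to $L$. Your final step, lifting $\psi\phi$ through the surjection $\rho_{0,5}$, then completes the proof. This is the paper's argument, with the last lift (left implicit there) made explicit.
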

\begin{proof}
	For any Lagrangian sphere $L$ in $M_{0,5}$, the homology class $\zeta$ of $L$ has self-intersection $-2$ and pairs trivially with \eqref{symplectic-cohom-class}.
	In particular, $\zeta$ belongs to the lattice of $D_5$ described above.
	The action of the Weyl group $W(D_5)$ on the lattice is transitive on the points with self-intersection $-2$. This implies that starting with a fixed 
	$(\gamma_0,\epsilon_0)$, there is a symplectomorphism $\phi:M_{0,5} \to M_{0,5}$ such that $\phi(\Lambda_{\gamma}^{\epsilon})$ is in the same homology class as $\zeta$. 
	Now the result follows from \cite[Corollary 1.2]{borman-li-wu} which states that the action of $\pi_0{\rm Symp}_h(M_{0,5})$ on symplectic isotopy classes of Lagrangian 
	spheres in the same homology class is transitive.
\end{proof}

The homological $D_5$ configuration of Figure \ref{five-lagrangians-homology} may be realized geometrically as follows. First, suppose $\gamma_1$ and $\gamma_2$ are essential non-peripheral simple closed curves in $\Sigma_{0,5}$ with minimal geometric intersection number $2$. After a diffeomorphism, we may assume $\gamma_1$ and $\gamma_2$ are as in Figure \ref{five-lagrangians}. Choose signs $\epsilon_1$ and $\epsilon_2$. Then, using the description of $\Lambda_{\gamma_1}^{\epsilon_1}$ given in \eqref{eq:lagrangiansphereconcrete}, and the analogous description of $\Lambda_{\gamma_2}^{\epsilon_2}$, we obtain that $\Lambda_{\gamma_1}^{\epsilon_1} \cap \Lambda_{\gamma_2}^{\epsilon_2} $ intersects transversely in a single point represented by 
\[
	C_1= -\epsilon_1 C_2 =  \epsilon_1 \epsilon_2 C_3 = \left[\begin{array}{cc} i & 0 \\ 0 & -i \end{array}\right], \quad C_4 =  -\epsilon_2  \left[\begin{array}{cc} 0 & 1 \\ -1 & 0 \end{array}\right], \quad C_5 =  \left[\begin{array}{cc} 0 & i \\ i & 0 \end{array}\right].
\]
It follows that the Lagrangian spheres associated to the signed curves in Figure \ref{five-lagrangians} form a $D_5$ configuration. That is, after an automorphism of the lattice, the homology classes of the associated Lagrangian spheres align with the vertices in Figure \ref{five-lagrangians-homology}, and two Lagrangian spheres intersect transversely in a single point if their vertices are connected by an edge, and are otherwise disjoint.

	\begin{figure}
\begin{center}
\begin{tikzpicture}
  \foreach \x in {0,2,4,6,8} {
    \fill (\x,0) circle (2pt);
  }
  \draw[magenta, thick] (1,0) ellipse (1.4 and 0.9);
    \draw[green!65!black, thick] (3,0) ellipse (1.4 and 0.9);
    \draw[red, thick] (5,0) ellipse (1.4 and 0.9);
    \draw[violet, thick] (7,0) ellipse (1.68 and 1.43);
    \draw[blue, thick] (7,0) ellipse (1.4 and 0.65);    
\node[magenta] at (1,1.2) {$(\gamma_1,+)$};
\node[green!65!black] at (3,1.2) {$(\gamma_2,+)$};
\node[red] at (5,1.2) {$(\gamma_3,+)$};
\node[violet] at (7,1.74) {$(\gamma_4,+)$};
\node[blue] at (7,0.95) {$(\gamma_5,-)$};
\end{tikzpicture}
\end{center}
\caption{A family of Lagrangians spheres inducing the graph $D_5$}
\label{five-lagrangians}
\end{figure}

Recall from the introduction the definitions of the signed curve complex $\widetilde{\mathcal{C}}(\Sigma_{0,5})$ and the Lagrangian sphere complex $\mathcal L(M_{0,5})$. The association of the Lagrangian sphere $\Lambda_{\gamma}^\epsilon$ to a signed curve $(\gamma,\epsilon)$ defines a map on the vertices of these simplicial complexes:
\[
	F: \widetilde{\mathcal{C}}(\Sigma_{0,5})^{(0)} \to  \mathcal L(M_{0,5})^{(0)}
\]
There is a natural simplicial action of $\pi_0\text{Symp}(M_{0,5})$ on $\mathcal L(M_{0,5})$. The natural action of ${\rm Mod}(\Sigma_{0,5})$ on $\widetilde{\mathcal{C}}(\Sigma_{0,5})$ extends to a simplicial $\Gamma_{0,5}$-action, where $(\phi,\zeta) \in \Gamma_{0,5}={\rm Mod}(\Sigma_{0,5})  \ltimes H^1(\Sigma^\circ ;\mathbb Z/2)$ maps a vertex $(\gamma,\epsilon)$ to $(\varphi_*(\gamma), \epsilon\cdot \zeta(\gamma))$, where  $\zeta(\gamma)$ is the pairing of the cohomology class $\zeta$ and the homology class of $\gamma$. These actions on the set of vertices of $\widetilde{\mathcal{C}}(\Sigma_{0,5})$ and $\mathcal L(M_{0,5})$ are intertwined via $\rho_{0,5}$ and the map $F$. The following implies Theorem \ref{thm:lagrangianspheres}.

\begin{theorem}\label{thm:lagrangianspheres-gen}
	There is a $\Gamma_{0,5}$-equivariant simplicial isomorphism $\widetilde {\mathcal C}(\Sigma_{0,5}) \cong \mathcal L(M_{0,5})$.
\end{theorem}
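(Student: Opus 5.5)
The plan is to show that the vertex map $F$, $(\gamma,\epsilon)\mapsto \Lambda_\gamma^\epsilon$, is a well-defined bijection on vertices and that both $F$ and $F^{-1}$ preserve simplices. Equivariance under $\Gamma_{0,5}$ is already noted above: the actions on vertices are intertwined by $\rho_{0,5}$ and $F$. Since both complexes are flag complexes by definition (a set of vertices spans a simplex if and only if its members are pairwise disjoint, respectively pairwise disjoint after Hamiltonian isotopy), it suffices to treat vertices and edges.

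\emph{Surjectivity.} Let $L$ be a Lagrangian sphere in $M_{0,5}$. By Proposition \ref{transitive-action-Lag} there is some $g\in\Gamma_{0,5}$ with $L$ symplectically (hence Hamiltonian, since $M_{0,5}$ is simply connected) isotopic to $\rho_{0,5}(g)(\Lambda_{\gamma_0}^{\epsilon_0})$. By equivariance this sphere is $\Lambda_{\gamma}^{\epsilon}$, where $(\gamma,\epsilon)=g\cdot(\gamma_0,\epsilon_0)$.

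\emph{Injectivity.} Suppose $(\gamma_1,\epsilon_1)$ and $(\gamma_2,\epsilon_2)$ are non-isotopic signed curves with $\Lambda_{\gamma_1}^{\epsilon_1}$ Hamiltonian isotopic to $\Lambda_{\gamma_2}^{\epsilon_2}$. Two cases arise.
\begin{itemize}
\item If $\gamma_1\simeq\gamma_2$, then $\epsilon_1\neq\epsilon_2$ and the spheres are disjoint, so $\Lambda_{\gamma_1}^{\epsilon_1}$ would be displaceable from itself. This contradicts $\mathfrak n(\Lambda,\Lambda)>0$, which follows from Theorem \ref{thm:lagrangianineq} and the nonvanishing of $I$ for the double of a type I standard pair, as established in the proof of Theorem \ref{thm:mappingclassgroupaction}.
\item If the curves are non-isotopic, the same self-intersection argument applies provided $\gamma_1,\gamma_2$ can be made disjoint: Theorem \ref{displace-characterization} then shows the spheres are displaceable, again a contradiction. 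If $\gamma_1,\gamma_2$ cannot be made disjoint, take a curve $\delta$ disjoint from $\gamma_1$ but not from $\gamma_2$. Such a $\delta$ exists in $\Sigma_{0,5}$; for instance, the curves disjoint from $\gamma_1$ are exactly those in the complementary four-holed sphere, and that family distinguishes $\gamma_2$. Then Theorem \ref{displace-characterization} shows that $\Lambda_\delta^+$ is displaceable from $\Lambda_{\gamma_1}^{\epsilon_1}$ but not from $\Lambda_{\gamma_2}^{\epsilon_2}$. Displaceability is a Hamiltonian isotopy invariant, so this is a contradiction.
\end{itemize}

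\emph{Edges.} Distinct vertices $(\gamma_1,\epsilon_1)$ and $(\gamma_2,\epsilon_2)$ span an edge exactly when $\gamma_1,\gamma_2$ can be made disjoint. By Theorem \ref{displace-characterization} this is equivalent to the corresponding distinct Lagrangian spheres being displaceable by a Hamiltonian isotopy, which is the edge condition in $\mathcal L(M_{0,5})$. Because both complexes are flag, this edge correspondence extends to all higher simplices.

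The main obstacle is the flag property of $\mathcal L$. Pairwise displaceability must give simultaneous disjoint representatives. Where pairwise displaceability comes from curves, one can take pairwise disjoint curves realizing the simplex in $\widetilde{\mathcal C}(\Sigma_{0,5})$; the spheres $\Lambda$ built from the corresponding disjoint curves are then genuinely pairwise disjoint, which the concrete descriptions \eqref{eq:lagrangiansphereconcrete} confirm. This is exactly how the sphere-side flag property would be proved.
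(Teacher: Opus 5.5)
Your proposal is correct and follows the paper's proof in all essentials. Surjectivity comes from Proposition \ref{transitive-action-Lag} together with equivariance, injectivity and the edge correspondence come from Theorem \ref{displace-characterization}, and higher simplices are handled by flagness, with the sphere-side flag property justified exactly as the paper does: realize the simplex by disjoint curves, so the spheres $\Lambda_{\gamma_i}^{\epsilon_i}$ are literally disjoint.

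The only differences are local, in the injectivity step. First, the paper takes the auxiliary vertex to be $(\gamma_1,-\epsilon_1)$, so your curve $\delta$ in the complementary four-holed sphere can simply be $\gamma_1$ with the opposite sign, and no existence argument for $\delta$ is needed. Second, the paper rules out self-displaceability using the homological self-intersection $-2$ of a Lagrangian sphere rather than instanton homology. That argument treats both signs uniformly, whereas the non-vanishing you cite from the proof of Theorem \ref{thm:mappingclassgroupaction} only covers the $\epsilon=+$ spheres directly.
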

\begin{proof}
	First we show that $F$ is an isomorphism. Since $F$ is $\Gamma_{0,5}$-equivariant and the action of $\Gamma_{0,5}$
	on the set of vertices of $\mathcal L(M_{0,5})$ is transitive by Proposition \ref{transitive-action-Lag}, the map $F$ is surjective. 
	To show injectivity, let $(\gamma,\epsilon)$ and  $(\gamma',\epsilon')$ be mapped to two Lagrangians in the same symplectic isotopy class. 
	Since $\Lambda_{\gamma}^{\epsilon}$ and $\Lambda_{\gamma}^{-\epsilon}$ can be displaced, we may displace $\Lambda_{\gamma}^{-\epsilon}$ and 
	$\Lambda_{\gamma'}^{\epsilon'}$. Therefore, Theorem \ref{displace-characterization} implies that $\gamma$ and $\gamma'$ 
	can be made disjoint from each other by an isotopy. 
	Another application of Theorem \ref{displace-characterization} shows that if $\gamma$ and $\gamma'$ are not isotopic to each other, then $\Lambda_{\gamma}^{\epsilon}$ and 
	$\Lambda_{\gamma'}^{\epsilon'}$ can be displaced from each other. 
	But this is impossible because these two Lagrangians are isotopic to each other and they have self-intersection
	$-2$. Thus $\gamma$ and $\gamma'$ are isotopic. Now if $\epsilon\neq \epsilon'$, then $\Lambda_{\gamma}^{\epsilon}$ and 
	$\Lambda_{\gamma'}^{\epsilon'}$ can be made disjoint from each other, which is another contradiction. We conclude that  $(\gamma,\epsilon)$ and $(\gamma',\epsilon')$
	represent the same vertices in $\widetilde {\mathcal C}(\Sigma_{0,5})$. Thus $F$ is an isomorphism.
	
	To show that $F$ can be extended to an isomorphism of simplicial complexes, we need to show that distinct vertices 
	$(\gamma_0,\epsilon_0), \ldots, (\gamma_n,\epsilon_n)$ span an 
	$n$-simplex in $\widetilde {\mathcal C}(\Sigma_{0,5})$ if and only if the Lagrangian spheres $\Lambda_{\gamma_0}^{\epsilon_0}$, 
	$\cdots$, $\Lambda_{\gamma_n}^{\epsilon_n}$ span an $n$-simplex in $\mathcal L(M_{0,5})$. For $n=1$, this is proved in Theorem \ref{displace-characterization}. 
	For $n>1$, we observe that $\smash{\widetilde {\mathcal C}(\Sigma_{0,5})}$ and $\mathcal L(M_{0,5})$ are both flag simplicial complexes: a set of vertices in 
	these complexes span a simplex if and only if every pair of such vertices are connected by an edge. In the case of $\mathcal L(M_{0,5})$, this property is already 
	established in the proof of Theorem \ref{displace-characterization}. 
	Indeed, the proof shows that if two Lagrangian spheres $\Lambda_{\gamma}^\epsilon$ and $\Lambda_{\gamma'}^{\epsilon'}$ can be displaced by 
	a symplectic isotopy, then they are already disjoint without applying a non-trivial isotopy. This property for $\smash{\widetilde {\mathcal C}(\Sigma_{0,5})}$
	follows from standard properties of simple closed curves on a punctured surface. 
\end{proof}

Finally, Corollary \ref{cor:lagrangianspheres} follows from Theorem \ref{thm:lagrangianspheres} and the following.

\begin{lemma}
	There is a homotopy equivalence $\smash{\widetilde {\mathcal C}(\Sigma_{g,n})} \simeq \smash{ {\mathcal C}(\Sigma_{g,n})}$.
\end{lemma}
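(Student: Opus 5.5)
The plan is to show that $\widetilde{\mathcal C}(\Sigma_{g,n})$ deformation retracts onto the subcomplex $\mathcal C^+$ spanned by vertices $(\gamma,+)$, which is canonically isomorphic to $\mathcal C(\Sigma_{g,n})$. The structural fact used throughout is that $\widetilde{\mathcal C}(\Sigma_{g,n})$ is the join-like "doubling" of $\mathcal C(\Sigma_{g,n})$. Concretely, a set of signed curves spans a simplex if and only if the underlying curve classes are pairwise disjoint-representable. Here the two signed copies $(\gamma,+)$ and $(\gamma,-)$ of the same curve are allowed together, since isotopic curves can be represented disjointly. Thus a simplex of $\widetilde{\mathcal C}$ is exactly a simplex $\sigma$ of $\mathcal C$ together with, for each vertex $\gamma\in\sigma$, a nonempty subset of $\{+,-\}$.

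Define the simplicial forgetful map $p:\widetilde{\mathcal C}\to\mathcal C$, $(\gamma,\epsilon)\mapsto\gamma$, and the inclusion $i:\mathcal C\to\widetilde{\mathcal C}$, $\gamma\mapsto(\gamma,+)$. Both are simplicial, and $p\circ i=\mathrm{id}$. It remains to show $i\circ p\simeq\mathrm{id}_{\widetilde{\mathcal C}}$. For this I use the standard contiguity criterion: two simplicial maps $f,g:K\to L$ such that for every simplex $\tau$ of $K$ the set $f(\tau)\cup g(\tau)$ spans a simplex of $L$ are homotopic, via the straight-line homotopy in the simplex $f(\tau)\cup g(\tau)$.

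Take $f=\mathrm{id}$ and $g=i\circ p$. For a simplex $\tau$ lying over $\sigma\subset\mathcal C$, the union $\tau\cup g(\tau)$ consists of $\tau$ together with the vertices $(\gamma,+)$ for $\gamma\in\sigma$. Its underlying curves are still those of $\sigma$, so by the description above it is a simplex. This gives the homotopy equivalence. In fact, the straight-line homotopy fixes $\mathcal C^+$ pointwise, so it is a deformation retraction, as asserted in the introduction.

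The only point needing care is the simplex description itself. One must check from the definition that $(\gamma,+)$ and $(\gamma,-)$ are joined by an edge; this holds because disjoint parallel copies of $\gamma$ represent the same isotopy class, which is consistent with $\dim\widetilde{\mathcal C}=2\dim\mathcal C+1$. One must also check that repeated underlying classes cause no issue in a simplex, since at most two vertices lie over each curve. Beyond this, the argument is purely formal. I expect no real obstacle; the main thing to get right is stating the contiguity lemma cleanly. Alternatively, one could present the retraction as an explicit piecewise-linear homotopy that slides the weight on each $(\gamma,-)$ over to $(\gamma,+)$.
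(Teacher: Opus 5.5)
Your proof is correct and is essentially the paper's argument. The straight-line homotopy from $\mathrm{id}$ to $i\circ p$ supplied by your contiguity criterion is exactly the paper's explicit formula $f(s,\sum t_i^{\epsilon}(\gamma_i,\epsilon))=\sum (t_i^+ + s t_i^-)(\gamma_i,+)+(1-s)\sum t_i^-(\gamma_i,-)$ on each simplex $\Delta_\star$; the paper just writes it out and glues along $(\Delta\cap\Delta')_\star=\Delta_\star\cap\Delta'_\star$ by hand.
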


\begin{proof}
	For a simplex $\Delta$ in $\smash{ {\mathcal C}(\Sigma_{g,n})}$ spanned by $k+1$ vertices $\gamma_0,\ldots,\gamma_k$ we denote by $\Delta_\star$ the simplex in $\smash{\widetilde {\mathcal C}(\Sigma_{g,n})}$ spanned by the $2k+2$ vertices $(\gamma_0,\pm),\ldots,(\gamma_k,\pm)$. The geometric realization of $\Delta_\star$ is the subset of the real vector space with basis the vertices of $\Delta_\star$ given by linear combinations
	\[
		\sum t_i^{\epsilon} (\gamma_i,\epsilon)
	\]
	where the sum is over $0\leq i\leq k$ and $\epsilon\in \{+,-\}$, and $\sum t_i^\epsilon = 1$,  $t_i^\epsilon\geq 0$. A deformation retraction $f:[0,1]\times\Delta_\star\to \Delta_\star$ onto the simplex spanned by the vertices $(\gamma_0,+),\ldots,(\gamma_k,+)$ is defined by
	\[
		f( s, \sum t_i^{\epsilon} (\gamma_i,\epsilon) ) =   \sum ( t_i^+ + s t_i^-) (\gamma_i,+) + (1-s) \sum t_i^- (\gamma_i,-) 
	\]
	Noting $(\Delta \cap \Delta')_\star =  \Delta_\star \cap \Delta'_\star$, we see that these deformation retractions combine to define a deformation retraction of $\smash{\widetilde {\mathcal C}(\Sigma_{g,n})}$ onto the subcomplex spanned by vertices with sign $+$, and the latter may be identified with the ordinary curve complex $\smash{ {\mathcal C}(\Sigma_{g,n})}$.
\end{proof}

For an alternative and more symmetric viewpoint, note that there is a simplicial involution
\[
	\sigma :\widetilde {\mathcal C}(\Sigma_{g,n})\to \widetilde {\mathcal C}(\Sigma_{g,n})
\]
induced by the map on vertices $\sigma(\gamma,\epsilon)=(\gamma,-\epsilon)$. The fixed point set of $\sigma$, while not a subcomplex of $\widetilde {\mathcal C}(\Sigma_{g,n})$, is homeomorphic to the ordinary curve complex $\smash{ {\mathcal C}(\Sigma_{g,n})}$. An argument similar to the one above shows that $\smash{\widetilde {\mathcal C}(\Sigma_{g,n})}$ deformation retracts onto the fixed point set of $\sigma$.

%!TEX root = main.tex

\section{Intersections of Lagrangians in $M_{g,n}$ and $SU(2)$-simple knots}\label{sec:intersectionsandsu2simple}

A knot $K\subset S^3$ is called {\emph{non-degenerate $SU(2)$-simple}} if every element of the traceless $SU(2)$ character variety $\mathfrak{X}(S^3,K)$ is non-degenerate and has binary dihedral image.  In \S \ref{subsec:stabheegaardint}, given a Heegaard decomposition of $(S^3,K)$ along $\Sigma_{g,n}$, we will associate Lagrangians $\Lambda_1$ and $\Lambda_2$ in $M_{g,n+1}$, and in the case that $K$ is non-degenerate $SU(2)$-simple, we compute the minimal intersection number of these Lagrangians, see Theorem \ref{thm:minimalintsu2simple}. In \S \ref{2-birdge-int-number}, we apply these methods to the case of $M_{0,5}$ to prove Theorem \ref{thm:preciseintersectionresult}. In the course of the proof of this theorem, we require a relationship between two different versions of singular instanton homology, which is established in \S \ref{con-sum-red-I-natural}.

\subsection{Connected sums and $I^\natural(Y,L)$} \label{con-sum-red-I-natural}
Following \cite{km-unknot}, for a given (not necessarily admissible) triple $(Y,L,w)$ and a choice of basepoint $p$ on the link $L$, define the reduced singular instanton homology group
\[
	I^\natural(Y,L) = I(Y, L\# H, w\sqcup w_H)
\]
where, as before, $H$ is a Hopf link and $w_H$ is an arc connecting the two components of $H$; the connected sum is taken at the chosen basepoint $p$ of $L$ with one of the components of $H$. This instanton homology group by definition is equal to $I(Y,L^\pi,w^\pi)$, introduced in \S \ref{sec:detection}, with $\pi=\{p\}$. The constructions of this paper are related to $I^\natural(Y,L)$ via the following.

\begin{prop}\label{prop:connectedsumtheoremforinatural}
	Let $(Y,L,w)$ be an admissible link, with a chosen basepoint on $L$. Then
	\[
		I( (Y,L,w) \# S^1\times \Sigma_{0,3} ) \cong I^\natural(Y,L,w) \oplus I^\natural(Y,L,w)
	\]
	 as $\Z/2$-graded abelian groups, where the connected sum is taken with respect to the chosen basepoint on $L$ and any basepoint on $S^1\times \{p_1,p_2,p_3\}$. 
	The same claim holds if $Y$ is an integer homology sphere, $L$ is a link with non-zero determinant and $w$ is empty.
\end{prop}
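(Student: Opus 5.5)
The plan is to compare both sides with a common intermediate object by excision and by the connected-sum/earring identities already invoked in \S\ref{sec:detection}. First, I will identify $(Y,L,w)\# S^1\times\Sigma_{0,3}$ in a form amenable to Floer's excision. Here $S^1\times \Sigma_{0,3}$ means $(S^1\times S^2, S^1\times\{p_1,p_2,p_3\})$, and the sum is taken at a point of $L$ and a point of $S^1\times\{p_1\}$. The resulting triple contains the torus $S^1\times \gamma$, where $\gamma\subset S^2$ encircles $p_2,p_3$. Cutting along this torus and along a torus surrounding the earring of $L^{\{p\}}$ is the configuration used for $I^\natural$ in the proof of Lemma \ref{lemma:frameddetection}. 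Its pairing with the singular locus is even, however, so I will instead use the sphere $\{pt\}\times S^2$. This sphere meets the link in three points and is what makes the summed triple admissible.

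The key step is an analogue of \eqref{eq:dimdoublesharptonatural}. I will show that summing with $(S^1\times S^2, S^1\times\{p_1,p_2,p_3\})$ at a point $p\in L$ has the same effect on instanton homology as summing with the Hopf-link earring, up to a tensor factor of rank $2$. Concretely, I apply the Kronheimer--Mrowka excision \cite[Proposition 5.7]{km-unknot} with two tori. The first is $T_1=\partial N(S^1\times\{p_2\}\cup S^1\times\{p_3\})$, modified to make it admissible by adding to $w$ a small arc joining the strands $S^1\times\{p_2\}$ and $S^1\times\{p_3\}$; this addition does not change $I$, because of the action of $H^1(Y\smallsetminus L;\Z/2)$. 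The second is the torus around the earring in $L^{\{p\}}$. Regluing exchanges the two local models and yields an isomorphism
\[
I((Y,L,w)\#S^1\times\Sigma_{0,3}) \cong I(Y,L^{\{p\}},w^{\{p\}})\otimes I(S^1\times S^2, S^1\times\{p_1,p_2,p_3\}),
\]
in which the second factor is computed from its unperturbed character variety. That variety is $M_{0,3}\times\{\pm\}$, that is, two points (two lifts of the $S^1$ holonomy), and these are non-degenerate. The second factor is therefore $\Z^2$, supported in two gradings that differ by $2$ modulo $4$. This gives the $\Z/2$-graded splitting as two copies of $I^\natural$. Equivalently, I will use the degree-$2$ involution induced by $\{pt\}\times S^2$, as in the introduction, to see that the two summands are exchanged and lie in the same $\Z/2$ grading.

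I expect the main obstacle to be the excision step: arranging that every torus used has odd pairing with $w$, that the gluing matches the identification of $I^\natural$ with $I(Y,L^\pi,w^\pi)$, and that the result is an isomorphism over $\Z$ rather than only over a field. If a direct excision is awkward, I would fall back on the mapping-cone description from \cite[\S 8.2]{ds1} together with an exact triangle argument. Another route would be a Künneth-type argument as in \cite[Corollary 5.9]{km-unknot}, using vanishing of the connecting maps forced by the grading shift of the involution.

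For the final claim (an integer homology sphere $Y$, a link $L$ with nonzero determinant, and $w=\emptyset$), $(Y,L)$ is not admissible, but every ingredient above uses only the admissibility of the summed triple and of $L^{\{p\}}$. The one place admissibility of $(Y,L,w)$ could enter is in ruling out reducibles in the local computation. There I would use that $\det L\neq 0$ excludes reducible $SU(2)$ flat connections on $(Y,L)$ with traceless meridians, other than the binary dihedral and abelian ones that are controlled as in \cite{km-unknot}. With that, the same excision argument applies verbatim.
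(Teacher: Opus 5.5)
\paragraph{The excision is set up with the wrong bundle.}
This is where the real content of the proposition lies. A small arc from $S^1\times\{p_2\}$ to $S^1\times\{p_3\}$ has even pairing with a torus that encloses both strands. More generally, any torus in the $S^1\times S^2$ summand that misses the strands is zero in $H_2(S^1\times S^2;\Z/2)$: it meets $S^1\times\{p_2\}$ evenly, while $\{pt\}\times S^2$ meets it once. So such a torus separates, and to have odd pairing $w$ must contain an arc crossing it from strand $1$ (that is, from $L$) to strand $2$ or $3$.

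Here is the correct setup. Take $T_1=S^1\times\gamma$ with $\gamma$ separating $p_1$ from $p_2,p_3$. Take an arc $a$ from $S^1\times\{p_1\}$ to $S^1\times\{p_2\}$ crossing $T_1$ once. Take $T_2=\partial N(E)$ around the earring $E$. With the natural gluing, the crosswise regluing is $0$-surgery on $E$, and you get
\[
I^\natural(Y,L,w)\otimes I(S^1\times\Sigma_{0,3},a)\cong I\bigl((Y,L,w)\# S^1\times\Sigma_{0,3},\,w\cup a'\bigr)\otimes I(S^3,H,w_H).
\]
Here $a'$ is an arc from $L$ to $S^1\times\{p_2\}$, and $I(S^3,H,w_H)\cong\Z$. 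The part of $w$ inside $S^1\times D_{23}$ must have an odd number of endpoints on $T_1$, so some extra arc of this kind is unavoidable. Excision therefore computes $I$ of the summed link with a different orbifold bundle, not the one in the statement.

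\paragraph{Why the extra arc is not harmless, and what is missing.}
The $H^1(Y\smallsetminus L;\Z/2)$-action acts by $\bA\mapsto\bA\otimes B_\sigma$ on $\mathcal{B}(Y,L,w)$ for a fixed $w$, so it never changes the bundle. An arc joining distinct link components has nonzero boundary in $H_0(L;\Z/2)$. Hence it is nonzero in $H_1(Y,L;\Z/2)$ and genuinely changes the bundle. Singular instanton homology does depend on $w$ in general; the paper has to invoke Gong's theorem to get $w$-independence of $I^\#$ even for alternating links.

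Your fallbacks do not close this gap either. What controls connected sums with $S^1\times\Sigma_{0,3}$ is its $v$-map, which has degree $2\pmod 4$. The two generators lie in degrees $0$ and $2$, so neither the grading nor the involution from $\{pt\}\times S^2$ forces $v$ to vanish. The missing input is an actual computation that $v=0$. The paper gets this from Kronheimer--Mrowka's relations $\epsilon_1\delta_1+\epsilon_2\delta_2+\epsilon_3\delta_3+2\alpha+2\varepsilon=0$, for signs with $\epsilon_1\epsilon_2\epsilon_3=1$, in \cite{km-relations}. Taking differences of these relations gives $\delta_i=0$. So $\widetilde C(S^1\times\Sigma_{0,3})$ is two copies of the $\mathcal{S}$-complex of $(H,w_H)$, shifted by $2$, and the connected sum theorem of \cite{ds1} then gives the statement directly, with no excision needed.

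\paragraph{The determinant hypothesis.}
For the second claim, $\det L\neq 0$ does not exclude reducibles. It makes the $2^{|L|-1}$ reducibles non-degenerate, which is exactly what the $\mathcal{S}$-complex framework needs.
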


\begin{proof}
	For the proof of the claim in the case that $(Y,L,w)$ is admissible or a knot in an integer homology sphere, we apply the connected sum theorem \cite[Theorem 8.8]{ds1}. We refer to that reference for the necessary background on $\mathcal{S}$-complexes. The same argument applied to the construction of \cite{ds3} treats the case that $L$ is a non-zero determinant in an integer homology sphere. To apply the connected sum theorem, we must describe the $\mathcal{S}$-complex $\widetilde C(S^1\times \Sigma_{0,3} )$. This may be done using some computations from \cite{km-relations}, which we now recall.

		The admissible link $S^1\times \Sigma_{0,3} $ has two irreducible critical points for the unperturbed Chern--Simons functional. Indeed, its traceless $SU(2)$ character variety consists of two copies of $M_{0,3}$ (each a point), distinguished by whether the holonomy around the $S^1$-factor is $\pm 1$. The Floer gradings of these generators differ by $2\pmod{4}$, and thus after choosing an absolute $\Z/4$-grading, the irreducible singular instanton chain complex may be written
	\[
		C(S^1\times \Sigma_{0,3} ) = \Z_{(0)} \oplus \Z_{(2)}
	\]
	where subscripts indicate $\Z/4$ gradings. The differential $d$ of this complex vanishes for degree reasons. To determine the $\mathcal{S}$-complex $\widetilde C(S^1\times \Sigma_{0,3} )$, it remains to determine the $v$-map $v:C(S^1\times \Sigma_{0,3} )\to C(S^1\times \Sigma_{0,3} )$, which is degree $2\pmod{4}$. A priori, $v$ depends on a choice of basepoint in $S^1\times \{p_1,p_2,p_3\}$. Up to a nonzero constant, it is equal to one of the operators $\delta_i$ in \cite{km-relations}, where $i\in \{1,2,3\}$ records which component of $S^1\times \{p_1,p_2,p_3\}$ contains the basepoint.

 Following \cite[\S 7.4]{km-relations} (with trivial local coefficient systems, setting the variables $\tau_i=1$), in addition to $\delta_1,\delta_2,\delta_3$, 
	there are operators $\alpha,\varepsilon$ acting on irreducible instanton homology $I(S^1\times \Sigma_{0,3} )= C(S^1\times \Sigma_{0,3} )$ that satisfy the relations
	\[
		\epsilon_1 \delta_1 + \epsilon_2\delta_2  + \epsilon_2\delta_3 + 2 \alpha + 2\varepsilon =0 
	\]
	for any $\epsilon_i\in \{-1,1\}$ with $\epsilon_1\epsilon_2\epsilon_3=1$. Taking the differences of two such relations gives $\delta_i + \delta_j=0$ for $i\neq j$, which leads to $\delta_i=0$ for $i\in \{1,2,3\}$. In particular, $v=0$.  
	
	Thus the $\mathcal{S}$-complex $\widetilde C(S^1\times \Sigma_{0,3} )$ splits as a direct sum of two copies of the $\mathcal{S}$-complex for the pair $(H,w_H)$ of the Hopf link, with the copies differing in grading by $2\pmod{4}$. The connected sum theorem applies just as in \cite[\S 8.2]{ds1} to yield the desired result.
\end{proof}

\subsection{Lagrangians from stabilized Heegaard splittings}\label{subsec:stabheegaardint}

Two embedded Lagrangians $\Lambda_1$ and $\Lambda_2$ in a symplectic manifold $M$ have {\emph{clean intersection}} if $\Lambda_1\cap \Lambda_2$ is a smooth submanifold of $M$, and for all $p\in \Lambda_1\cap \Lambda_2$,
\[
	T_p(\Lambda_1\cap \Lambda_2) = T_p\Lambda_1\cap T_p\Lambda_2.
\]

\begin{lemma}\label{lemma:pozniak}
	Suppose $\Lambda_1$ and $\Lambda_2$ are embedded Lagrangians in a symplectic manifold that have clean intersection, and choose a Morse function $f:\Lambda_1\cap \Lambda_2\to \R$. Then
	\[
		\mathfrak{n}(\Lambda_1,\Lambda_2) \leq |\textup{Crit}(f)|.
	\]
\end{lemma}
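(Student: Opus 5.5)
We will use the standard Weinstein neighborhood argument, essentially due to Po\'zniak. Set $K=\Lambda_1\cap\Lambda_2$, a closed submanifold of $\Lambda_1$ (we assume the Lagrangians are compact, as in all our applications). By Weinstein's Lagrangian neighborhood theorem, a neighborhood $U$ of $\Lambda_1$ in $M$ is symplectomorphic to a neighborhood of the zero section in $T^*\Lambda_1$, with $\Lambda_1$ mapping to the zero section. The goal is to find a Hamiltonian isotopy supported near $K$, after which $\Lambda_1\cap\phi(\Lambda_2)$ is transverse and in bijection with $\textup{Crit}(f)$.

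\textbf{Step 1: normal form for $\Lambda_2$ near $K$.} Cleanness gives $T_p\Lambda_1\cap T_p\Lambda_2=T_pK$ for $p\in K$. We choose the Weinstein identification so that near $K$, $\Lambda_2$ is the conormal bundle $N^*K\subset T^*\Lambda_1$. This is the key normal form, and the step requiring the most care. One approach is the relative Weinstein theorem: $N^*K$ and $\Lambda_2$ are Lagrangians meeting the zero section cleanly along $K$ with the same tangent spaces along $K$ (both equal $T_pK\oplus(\text{conormal fibre})$ after a linear symplectic adjustment). A relative Moser argument for the pair $(\Lambda_1,\Lambda_2)$ then yields a symplectomorphism near $K$ preserving $\Lambda_1$ and taking $\Lambda_2$ to $N^*K$. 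Alternatively, one can show directly that $\Lambda_2$ is, near $K$, the graph-like image over $N^*K$ given by an exact deformation.

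\textbf{Step 2: perturbation.} Extend $f$ to a smooth function $\tilde f$ on a tubular neighborhood of $K$ in $\Lambda_1$, constant in the normal directions via the tubular projection, then cut it off away from $K$. For small $s>0$, consider the Hamiltonian $H(q,p)=s\tilde f(q)$ pulled back to $T^*\Lambda_1$, cut off so that it is supported near $K$. Its time-one flow translates fibres by $s\,d\tilde f$. Equivalently, replace $\Lambda_1$ by the graph of $s\,d\tilde f$, which is Hamiltonian isotopic to $\Lambda_1$; $\mathfrak{n}$ is symmetric under such moves. Near $K$, the graph of $s\,d\tilde f$ meets $N^*K$ exactly where $d\tilde f$ annihilates $TK$ at points of $K$, namely at $\textup{Crit}(f)$. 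Transversality there follows from nondegeneracy of the Hessian of $f$ together with the conormal directions: in local coordinates $(x,y)$ with $K=\{y=0\}$, the condition reduces to $d f(x)=0$ with a nondegenerate Hessian.

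\textbf{Step 3: no new intersections.} Away from a small neighborhood of $K$, $\Lambda_1$ and $\Lambda_2$ are a positive distance apart, so a sufficiently small $s$ creates no further intersections. In the cutoff region, we must check that no spurious intersections arise. Since $\tilde f$ is pulled back from $K$, $d\tilde f$ vanishes on normal directions, and the cutoff is applied only in the zero-section directions far enough out. A compactness argument with small $s$ handles the transition region.

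We therefore obtain a Hamiltonian $\phi$ with $\Lambda_1\pitchfork\phi(\Lambda_2)$ and $\#(\Lambda_1\cap\phi(\Lambda_2))=|\textup{Crit}(f)|$, which gives $\mathfrak{n}(\Lambda_1,\Lambda_2)\le|\textup{Crit}(f)|$. The main obstacle is the normal form in Step 1; if it proves delicate, we would instead cite Po\'zniak's theorem directly.
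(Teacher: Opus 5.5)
Your proposal is correct and takes the same approach as the paper. The paper simply cites Po\'zniak's Theorem 3.4.11, and your Steps 1--3 sketch his proof: the clean-intersection normal form putting $\Lambda_2$ in the position $N^*K$ near $K$, followed by replacing $\Lambda_1$ with the graph of $s\,d\tilde f$, where $\tilde f$ is pulled back along the tubular projection.

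Step 3 is easier than you suggest. Inside the normal-form neighborhood $N^*K$ lies over $K$, so the graph can only meet it over $K$, where $\tilde f=f$. Outside that neighborhood, compactness and small $s$ suffice, so no separate transition-region argument is needed.
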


\begin{proof}
	Given the Morse function $f$, Pozniak shows in \cite[Theorem 3.4.11]{pozniak} how to perturb one of the Lagrangians by a small Hamiltonian isotopy so that the resulting intersection is transverse and in bijection with the critical points of $f$.
\end{proof}

Suppose $L$ is a link in a closed $3$-manifold $Y$, and $w\subset Y$ is a $1$-manifold satisfying $\partial w  = w\cap L$. We do not assume $(Y,L,w)$ is admissible. Choose a Heegaard decomposition 
\begin{equation}\label{eq:heegaarddecomplinkins3nondeg}
	(Y,L,w) = (\overline{C}_1,\overline{T}_1,w_1)\cup_{\Sigma_{g,n}} (\overline{C}_2,\overline{T}_2,w_2)
\end{equation}
where $w=w_1\cup w_2$; we may assume $w\cap \Sigma_{g,n}=\emptyset$, for homological reasons. Note $n$ must be even. Choose a basepoint on $L$ which lies on $\Sigma_{g,n}$, and form the connected sum of $(Y,L)$ with $S^1\times \Sigma_{0,3}$ with respect to this point and a basepoint on $S^1\times \{p_1\}$. We have an induced decomposition
\begin{equation}\label{eq:heegaarddecomplinkins3nondeg2}
	(Y,L,w)\# S^1\times \Sigma_{0,3} = (C_1, T_1,w_1) \cup_{\Sigma_{g,n+1} \sqcup  \Sigma_{0,3} } (C_2,T_2,w_2)
\end{equation}
where $(C_i,T_i)$ is formed by performing a boundary connect sum of $(\overline{C}_i,\overline{T}_i)$ with $I\times  \Sigma_{0,3}$, and these are glued in the obvious way. The compression bodies $(C_i,T_i,w_i)$ induce, in the familiar way, embedded Lagrangians $\Lambda_i = \mathfrak{X}(C_i,T_i,w_i)$ in $M_{g,n+1}$. 

As a variation of the traceless character variety $\mathfrak{X}(Y,L,w)$, define the representation variety
\[
	\mathfrak{R}(Y,L,w) = \{ \rho: \pi_1(Y\smallsetminus (L\cup w))\to SU(2) \; \mid \; \rho(\mu)=\left[ \begin{array}{cc} i & 0 \\ 0  & -i \end{array} \right],\; \rho( \nu ) = -1 \},
\]
where $\mu$ is a distinguished meridian around the basepoint of $L$, and $\nu$ ranges over all meridians of $w$. There is a circle action on $\mathfrak{R}(Y,L,w)$  by conjugating with respect to diagonal matrices of $SU(2)$, and the quotient is $\mathfrak{X}(Y,L,w)$. We may also view the elements of $\mathfrak{R}(Y,L,w)$ as singular flat connections ${\bf A}$ on $(Y,L,w)$ with holonomy approaching $\text{diag}(i,-i)$ around meridians of the preferred basepoint of $L$, modulo gauge transformations that preserve this condition; see \cite{ds1}.

The dimension of the Zariski tangent space of $[{\bf A}]\in\mathfrak{R}(Y,L,w)$ is 
\begin{equation}\label{eq:zariskicondition}
	\dim H^1_{\bf A} + 1- \dim H^0_{\bf A}.
\end{equation}
Note $\dim H^0_{\bf A} = \dim \textup{Stab}_{S^1}({\bf A})$ is either $0$ or $1$, depending on whether ${\bf A}$ is irredicible or not. We say $\mathfrak{R}(Y,L,w)$ is {\emph{Morse-Bott non-degenerate}} if it is a smooth manifold and the Zariski tangent space at each point has the same dimension as the component of $\mathfrak{R}(Y,L,w)$ in which the point lies. Equivalently, the relevant Chern--Simons functional on the space of singular connections for $(Y,L,w)$, framed at the basepoint of $L$, is Morse--Bott non-degenerate.

\begin{lemma}\label{lemma:morsebottclean}
	Suppose $\mathfrak{R}(Y,L,w)$  is Morse-Bott non-degenerate. Then for any Heegaard decomposition of $(Y,L,w)$ as in \eqref{eq:heegaarddecomplinkins3nondeg}, the induced Lagrangians $\Lambda_1,\Lambda_2\subset M_{g,n+1}$ have clean intersection. 
\end{lemma}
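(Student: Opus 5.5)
The plan is to identify the set-theoretic intersection $\Lambda_1\cap\Lambda_2$ with $\mathfrak{R}(Y,L,w)$, and then to compare tangent spaces using the deformation complexes already set up in \S\ref{sec:holonomyandlagrangians}.

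\emph{Step 1: identify the intersection.} By the fiber product identity \eqref{corr-relation}, applied with no perturbation, $\Lambda_1\cap\Lambda_2$ is the set of gauge classes of flat connections on
\[
(Y,L,w)\# S^1\times \Sigma_{0,3}
\]
modulo the involution $\sigma_0$, as in \eqref{identification-Lag-crit}. I will then compute the traceless character variety of the connected sum. The $S^1\times\Sigma_{0,3}$ side contributes two points, with holonomy $\pm1$ around the $S^1$ factor, and these are exchanged by $\sigma_0$. Gluing along the meridian at the basepoint, whose holonomy is fixed to be $\mathrm{diag}(i,-i)$, leaves a residual $S^1$-stabilizer ambiguity. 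This is absorbed because the three-punctured sphere representation has trivial stabilizer. The expected conclusion is a natural diffeomorphism-level bijection $\Lambda_1\cap\Lambda_2\cong\mathfrak{R}(Y,L,w)$, in the same way that framed character varieties appear in connected sum theorems.

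\emph{Step 2: compare tangent spaces.} Fix a point $x$ of the intersection and a flat connection $\mathbf{A}$ on $Y'=(Y,L,w)\#S^1\times\Sigma_{0,3}$ representing it. I will use the Mayer--Vietoris sequence for $Y'=C_1\cup C_2$ with twisted coefficients $\mathrm{ad}_{\mathbf A}$, together with the identifications from the proof of Proposition \ref{unperturbed-3-man-Lag}: $T\Lambda_i=H^1_{\mathbf A|C_i}$, and both $H^0$ and $H^2$ of the pieces vanish. This gives
\[
T_x\Lambda_1\cap T_x\Lambda_2 \;\cong\; \ker\big(H^1_{\mathbf A|C_1}\oplus H^1_{\mathbf A|C_2}\to H^1_{A}(\Sigma)\big)\;\cong\; H^1_{\mathbf A}(Y'),
\]
where the boundary surface is $\Sigma_{g,n+1}\sqcup\Sigma_{0,3}$ and the second isomorphism uses that $H^0$ of the surface vanishes by irreducibility. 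Next I will relate $H^1_{\mathbf A}(Y')$ to the Zariski tangent space \eqref{eq:zariskicondition} of $\mathfrak{R}(Y,L,w)$. This comes from a second Mayer--Vietoris sequence for the connected sum along the twice-punctured sphere (the annulus around the basepoint). The $S^1\times\Sigma_{0,3}$ piece has vanishing $H^1$, since its moduli is a nondegenerate point. The annulus contributes an $\mathfrak h\cong\R$ in degree $0$, and this accounts exactly for the $+1-\dim H^0_{\mathbf A}$ correction.

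\emph{Step 3: conclude.} By Morse--Bott non-degeneracy, $\dim\big(T_x\Lambda_1\cap T_x\Lambda_2\big)$ equals the local dimension of $\mathfrak R(Y,L,w)$ at $x$, and this is the dimension of the intersection as a manifold near $x$. Since $T_x(\Lambda_1\cap\Lambda_2)\subseteq T_x\Lambda_1\cap T_x\Lambda_2$ always holds, equality of dimensions gives equality of the spaces, which is clean intersection.

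\emph{Main obstacle.} The hardest part will be the bookkeeping in Step 2. The reducible elements of $\mathfrak R$ must be handled correctly, and the identification of Step 1 must be shown to be a diffeomorphism compatible with the cohomological identifications, not merely a bijection. For that compatibility, I would first check carefully that the connecting maps in the connected-sum Mayer--Vietoris sequence are exactly the infinitesimal $S^1$ action.
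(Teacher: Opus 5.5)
Your proposal is correct and matches the paper's proof step for step. The paper likewise identifies $\Lambda_1\cap\Lambda_2$ with $\mathfrak{X}((Y,L,w)\# S^1\times\Sigma_{0,3})$ modulo $\sigma_0$, and uses Mayer--Vietoris for $C_1\cup C_2$ to get $H^1_{\mathbf{A}^\#}=T\Lambda_1\cap T\Lambda_2$. It then combines the diffeomorphism $\mathfrak{R}(Y,L,w)\times\{\pm 1\}\cong\mathfrak{X}^\#$ with the Mayer--Vietoris sequence along the connected-sum sphere $(S^2,2\text{ pts})$, using $H^i_{\mathbf B}=0$, $H^0_\theta=\R$ and $H^1_\theta=0$, to match $\dim H^1_{\mathbf{A}^\#}$ with \eqref{eq:zariskicondition}. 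The smoothness and compatibility point you flag as the main obstacle is handled no more explicitly in the paper; it is settled by noting that the restriction map from this compact manifold to $M_{g,n+1}$ is an injective immersion, since its derivative $H^1_{\mathbf{A}^\#}\to H^1(\Sigma)$ is injective.
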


\begin{proof}
	As explained in \eqref{identification-Lag-crit}, $\Lambda_1\cap \Lambda_2$ can be identified with $\mathfrak{X}^\#=\mathfrak {X}((Y,L,w)\# S^1\times \Sigma_{0,3})$  modulo
	a free involution. 
	The Zariski tangent space of an element $[{\bf A}^\#]$ in $\mathfrak{X}^\#$ is equal to $H^1_{\bf A^\#}$. We have
	\[
		H^1_{\bf A^\#}= T_{\bf A^\# } \Lambda_1 \cap  T_{\bf A^\# } \Lambda_2,
	\]
	as follows from the Mayer-Vietoris exact sequence for the decomposition \eqref{eq:heegaarddecomplinkins3nondeg2} in cohomology with coefficients associated to the adjoint flat connections. From this we see that $\Lambda_1$ and $\Lambda_2$ intersect cleanly if and only if $\mathfrak{X}^\#$ is Morse-Bott non-degenerate.
	
	It remains to argue that $\mathfrak{X}^\#$ is Morse-Bott non-degenerate. First, there is a diffeomorphism
	\[
		\mathfrak{R}(Y,L,w)\times \{-1,+1\} \to \mathfrak{X}^\#
	\]
	which, from the viewpoint of flat connections, sends $([{\bf A}],\pm 1)$ to $[{\bf A}_{\pm}^\#]$, where ${\bf A}_{\pm}^\#$ is a singular flat connection on $(Y,L,w)\# S^1\times \Sigma_{0,3}$ obtained by gluing the unique singular flat connection ${\bf B}$ on $S^1\times \Sigma_{0,3}$ with holonomy $\pm 1$ around $S^1$ to the flat connection ${\bf A}$ on $(Y,L,w)$ using the framing at the basepoint of $L$. Thus $\mathfrak{X}^\#$ is a smooth manifold. To verify the Zariski tangent space condition, apply the Mayer-Vietoris sequence with local coefficients to the decomposition of $(Y,L,w)\# S^1\times \Sigma_{0,3} $ along the connected sum sphere $(S^2, 2 \text{ pts})$ to obtain the induced exact sequence 
	\[
		0 \to H^0_{\bf A} \to H^0_\theta \to H^1_{{\bf A}_{\pm}^\#} \to H^1_{{\bf A}}\to H^1_\theta
	\]
	where we have used the irreducibility and the non-degeneracy of ${\bf B}$, giving $H^i_{\bf B} =0$ for $i\in \{0,1\}$, and the irreducibility of ${\bf A}_{\pm}^\#$. Furthermore, $\theta$ is the unique singular flat connection on $(S^2, 2\text{ pts})$, and satisfies $H^0_\theta=\R$, $H^1_\theta=0$. It follows that $\dim H^1_{{\bf A}_{\pm}^\#}$ agrees with \eqref{eq:zariskicondition}.
\end{proof}

For a triple $(S^3,L,w)$, we say $w$ is non-trivial if $w$ intersects one of the components of $L$ in an odd number of points. Otherwise, we say $w$ is trivial. 
\begin{lemma}\label{lemma:detnonzeroinequality}
	Suppose $(S^3,L,w)$ has $\mathfrak{X}^\textup{irr}(S^3,L,w)$ non-degenerate. If $w$ is trivial and $\det(L)\neq 0$, then for the induced Lagrangians $\Lambda_1, \Lambda_2\subset M_{g,n+1}$ we have
	\[
		\mathfrak{n}(\Lambda_1, \Lambda_2) \leq 2^{|L|-1} + 2|\mathfrak{X}^\textup{irr}(S^3,L,w),
	\]
	where $|L|$ is the number of components of $L$. If $w$ is non-trivial , then we have 
			\[
		\mathfrak{n}(\Lambda_1, \Lambda_2) \leq 2|\mathfrak{X}^\textup{irr}(S^3,L,w)|.
	\]
\end{lemma}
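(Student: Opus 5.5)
The plan is to combine Lemma \ref{lemma:pozniak} with Lemma \ref{lemma:morsebottclean}. The first step is to show that $\mathfrak{R}(S^3,L,w)$ is Morse--Bott non-degenerate under the hypotheses. Then $\Lambda_1$ and $\Lambda_2$ intersect cleanly, and $\mathfrak{n}(\Lambda_1,\Lambda_2)$ is bounded by the number of critical points of a Morse function on $\Lambda_1\cap\Lambda_2$.

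By the proof of Lemma \ref{lemma:morsebottclean}, $\Lambda_1\cap\Lambda_2$ is $\mathfrak{X}^\#$ modulo the free involution $\sigma_0$. Since $\mathfrak{X}^\#\cong \mathfrak{R}(S^3,L,w)\times\{\pm 1\}$ and $\sigma_0$ swaps the two sign choices, this gives $\Lambda_1\cap\Lambda_2\cong\mathfrak{R}(S^3,L,w)$. The next step is to analyze this space.

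Irreducible points have trivial $S^1$-stabilizer, so each irreducible class in $\mathfrak{X}^{\rm irr}$ contributes a free $S^1$-orbit, that is, a circle. Reducible points arise only when $w$ is trivial. In that case a reducible traceless representation has image in $U(1)$ after conjugation, sending meridians to $\pm \mathrm{diag}(i,-i)$. Fixing the distinguished meridian, these are indexed by sign choices on the remaining components modulo linking constraints, giving $2^{|L|-1}$ isolated fixed points of the $S^1$-action.

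If $w$ is non-trivial, then some component of $L$ meets $w$ oddly, and an abelian representation would need $-1$ on a meridian of $w$ in a way incompatible with abelian holonomy along that component. So no reducibles occur.

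Next I check Morse--Bott non-degeneracy. For irreducibles, non-degeneracy of $\mathfrak{X}^{\rm irr}$ gives $H^1_{\bf A}=0$, so \eqref{eq:zariskicondition} equals $1$, matching the circle. For reducibles, $H^0_{\bf A}$ is one-dimensional, and I need $H^1_{\bf A}=0$ so that the Zariski dimension is $0$. This is the step I expect to be the main obstacle.

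To handle it, I would split the adjoint representation into the trivial line and the complex line on which ${\bf A}$ acts by a $\pm$ character. The trivial summand contributes cohomology of the complement relative to the traceless condition, which vanishes. The twisted summand's $H^1$ is governed by the $\Z/2$-twisted homology of the double branched cover, and it vanishes exactly when $\det(L)\neq 0$. This is the standard argument, as in \cite{km-unknot} or \cite{ds3}, that reducibles are non-degenerate when the determinant is nonzero. I would cite it rather than recompute.

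Finally, I choose a Morse function on $\mathfrak{R}(S^3,L,w)$. On each circle take the standard height function, with $2$ critical points. On each isolated point there is $1$ critical point. Lemma \ref{lemma:pozniak} then gives the bound $2^{|L|-1}+2|\mathfrak{X}^{\rm irr}(S^3,L,w)|$ when $w$ is trivial, and $2|\mathfrak{X}^{\rm irr}(S^3,L,w)|$ when $w$ is non-trivial.
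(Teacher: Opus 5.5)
Your proposal is correct and follows the same route as the paper. You identify $\Lambda_1\cap\Lambda_2$ with $\mathfrak{R}(S^3,L,w)$ and show it is a Morse--Bott non-degenerate union of $2^{|L|-1}$ points and $|\mathfrak{X}^{\textup{irr}}(S^3,L,w)|$ circles, where the points are the reducibles, which exist only for trivial $w$ and are non-degenerate when $\det(L)\neq 0$ by \cite[Prop. 3.1]{ds3}. You then apply Lemmas \ref{lemma:morsebottclean} and \ref{lemma:pozniak} with a Morse function having two critical points on each circle and one on each point. One small correction: there are no ``linking constraints'' in the reducible count, since the abelian traceless representations are cut out only by the vertex relations at the endpoints of $w$, so each component of $L$ contributes a free sign.
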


\begin{proof}
If $w$ is trivial and $\det(L)\neq 0$, the character variety $\mathfrak{X}(S^3,L,w) $ has $2^{|L|-1}$ reducible classes, all non-degenerate \cite[Prop. 3.1]{ds3}. Together with the non-degeneracy assumption on $\mathfrak{X}^\textup{irr}(S^3,L,w)$, we obtain that $\mathfrak{R}(S^3,L,w)$ is Morse-Bott non-degenerate for any choice of basepoint on $L$, and is the disjoint union of $2^{|L|-1}$ points and $|\mathfrak{X}^\textup{irr}(S^3,L,w)|$ circles. The result then follows from Lemmas \ref{lemma:pozniak} and \ref{lemma:morsebottclean}, by choosing $f$ so that it is a standard Morse function with two critical points on each circle. The argument in the case that $w$ is non-trivial is the same, except that there are no reducibles in $\mathfrak{X}^\textup{irr}(S^3,L,w)$.
\end{proof}

\begin{theorem}\label{thm:minimalintsu2simple}
	If $K\subset S^3$ is a non-degenerate $SU(2)$-simple knot, then $\Lambda_1,\Lambda_2\subset M_{g,n+1}$ satisfy
	\[
		\mathfrak{n}(\Lambda_1, \Lambda_2) = \det(K).
	\]
\end{theorem}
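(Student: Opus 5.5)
We prove the two inequalities $\mathfrak{n}(\Lambda_1,\Lambda_2)\leq \det(K)$ and $\mathfrak{n}(\Lambda_1,\Lambda_2)\geq \det(K)$ separately, taking $w=\emptyset$ throughout.

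\emph{Upper bound.} Since $K$ is a knot, $|K|=1$ and $\det(K)$ is odd, so $\det(K)\neq 0$. The traceless character variety $\mathfrak{X}(S^3,K)$ consists of
\begin{itemize}
\item one reducible class, which is abelian, with image in $\{\pm 1\}\cdot U(1)$ conjugate to a binary dihedral group;
\item some number $m$ of irreducible binary dihedral classes.
\end{itemize}
The standard count of binary dihedral traceless representations gives $1+2m=\det(K)$, i.e. $m=(\det(K)-1)/2$. By the $SU(2)$-simple hypothesis these are all the classes, and all are non-degenerate. Lemma \ref{lemma:detnonzeroinequality} then applies to the decomposition \eqref{eq:heegaarddecomplinkins3nondeg2}, giving
\[
\mathfrak{n}(\Lambda_1,\Lambda_2)\leq 2^{0}+2m=\det(K).
\]

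\emph{Lower bound.} The glued triple $(S^3,K,\emptyset)\# S^1\times\Sigma_{0,3}$ is admissible, because the glued $\Sigma_{0,3}$ meets the link in three points. Applying Theorem \ref{thm:lagrangianinequalityresultgeneral} to the decomposition \eqref{eq:heegaarddecomplinkins3nondeg2} gives
\[
\mathfrak{n}(\Lambda_1,\Lambda_2)\geq \tfrac12\operatorname{rank} I\big((S^3,K)\# S^1\times\Sigma_{0,3}\big).
\]
This uses that $\Lambda_i=\mathfrak{X}(C_i,T_i)$ are embedded Lagrangians by Proposition \ref{unperturbed-3-man-Lag}, so the perturbation data may be taken to be trivial. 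By Proposition \ref{prop:connectedsumtheoremforinatural}, in the case of a knot in an integer homology sphere, the right-hand side equals $\operatorname{rank} I^\natural(S^3,K)$.

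It remains to show $\operatorname{rank} I^\natural(S^3,K)\geq \det(K)$. The Euler characteristic of $I^\natural(S^3,K)$ has absolute value $\det(K)$: this is the Kronheimer--Mrowka computation, which holds over $\Z$ with the $\Z/2$-grading. Hence the rank is at least $\det(K)$. Combining with the upper bound yields equality.

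The step most in need of care is the lower bound. One must check that Theorem \ref{thm:lagrangianinequalityresultgeneral} and Proposition \ref{prop:connectedsumtheoremforinatural} are compatible on the $\Z/2$-graded ranks, and that the Euler characteristic statement is available in the form used. If only the Euler characteristic up to sign is known, the bound $\operatorname{rank}\geq|\chi|$ still suffices.
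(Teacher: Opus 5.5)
Your architecture is the same as the paper's, and the upper bound is correct: Lemma~\ref{lemma:detnonzeroinequality} with $|K|=1$, combined with the count $(\det(K)-1)/2$ of irreducible binary dihedral classes, gives $\mathfrak{n}(\Lambda_1,\Lambda_2)\leq\det(K)$. For the lower bound, the reduction via Theorem~\ref{thm:lagrangianineq} and Proposition~\ref{prop:connectedsumtheoremforinatural} to $\textup{rank}_\Z I^\natural(K)\geq\det(K)$ is also the paper's route.

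The gap is in how you justify that last inequality. The $\Z/2$-graded Euler characteristic of $I^\natural(K)$ does \emph{not} have absolute value $\det(K)$. It is $\pm\Delta_K(1)=\pm1$. You can see this directly for the knots at hand. The generators come from $\mathfrak{R}(S^3,K)$, which consists of one reducible point and $(\det(K)-1)/2$ circles. A Morse--Bott circle contributes two generators in gradings of opposite parity, so the circles cancel in $\chi$. For example, the trefoil has $\textup{rank}\, I^\natural=3$ but $\chi=\pm1$. So $\textup{rank}\geq|\chi|$ only gives $1$, and your fallback remark does not rescue the argument.

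The missing ingredient is the Alexander grading, which is the Kronheimer--Mrowka result the paper cites. One has $I^\natural(K;\C)\cong KHI(K)$ and $\sum_j\chi(KHI(K,j))\,t^j=\pm\Delta_K(t)$. Writing $\Delta_K(t)=\sum_j a_j t^j$ and using universal coefficients,
\[
\textup{rank}_\Z I^\natural(K)=\dim_\C KHI(K)\geq\sum_j|a_j|\geq\Bigl|\sum_j(-1)^j a_j\Bigr|=|\Delta_K(-1)|=\det(K).
\]
Put differently, $\det(K)$ is the Euler characteristic for the instanton $\Z/2$-grading twisted by the parity of the Alexander grading, not for the instanton grading alone. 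With this replacement your proof coincides with the paper's.
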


\begin{proof}
	By Theorem \ref{thm:lagrangianineq}, the decomposition \eqref{eq:heegaarddecomplinkins3nondeg2}, and Proposition \ref{prop:connectedsumtheoremforinatural}, we have 
	\begin{equation}\label{eq:lagrankineqintro}
		\mathfrak{n}(\Lambda_1,\Lambda_2) \geq \frac{1}{2}\, \textup{rank}_\Z \; I((S^3,K)\# S^1\times \Sigma_{0,3} ) = \textup{rank}_\Z \; I^\natural(K) 
	\end{equation}
	Furthermore, by \cite{km-alexander}, $\textup{rank}_\Z \; I^\natural(K)\geq\det(K)$. The reverse inequality $\mathfrak{n}(\Lambda_1, \Lambda_2) \leq  \det(K)$ is implied by Lemma \ref{lemma:detnonzeroinequality} and the fact that for any knot $K$ the number of irreducible binary dihedral $SU(2)$ representations, up to conjugacy, is $(\det(K)-1)/2$; see \cite{klassen}.
\end{proof}

The basic examples of non-degenerate $SU(2)$-simple knots are two-bridge knots. For more details, including constructions of non-two-bridge examples, see \cite{zentner-simple}.

\subsection{Intersection numbers in $\Sigma_{0,5}$ and two-bridge links}\label{2-birdge-int-number}

We now turn to the proof of Theorem \ref{thm:preciseintersectionresult}. Let $(\gamma,\epsilon)$ and $(\gamma',\epsilon')$ be non-isotopic signed curves in $\Sigma_{0,5}$. Suppose that there is a disk $D\subset \Sigma_{0,5}$ containing exactly two of the marked points and which is disjoint from $\gamma$ and $\gamma'$. Referring to Figure \ref{five-lagrangians}, after a diffeomorphism of $\Sigma_{0,5}$ we may assume that $\gamma=\gamma_1$ and that the disk $D$ is the compact disk interior to $\gamma_5$. 

The isotopy class of $\gamma'\subset \Sigma_{0,5}\setminus D$ may be understood as follows. By collapsing $D$ to a point, we may view $\gamma'$ as a curve in the $4$-punctured sphere $\Sigma_{0,4}$. Isotopy classes of essential simple closed curves in $\Sigma_{0,4}$ are in correspondence with $\Q \cup \{\infty\}$, see \cite[\S 2.2.5]{farb-margalit}. In our setting, $\gamma'$ corresponds to the reduced fraction $p/q$ where, upon orienting $\gamma_1,\gamma_2, \gamma'$, the minimal oriented geometric intersection number of $\gamma'$ with $\gamma_1$ is $2p$, and that of $\gamma'$ with $\gamma_2$ is $2q$.

In this case, the closed pair $(Y,L) = ({C}_{\gamma},{T}_{\gamma}) \cup_{\Sigma_{0,5}\sqcup \Sigma_{0,3}} ({C}_{\gamma'},{T}_{\gamma'})$ may be identifed as
\begin{equation}\label{eq:connectedsumtwobridgeid}
	  (Y,L) = (S^3, L_{p/q}) \# S^1\times \Sigma_{0,3}
\end{equation}
where $L_{p/q}$ is a two-bridge link of type $p/q$, and the connected sum is of links. The particular components that are joined in the connected sum will not be important. 

To explain the identification \eqref{eq:connectedsumtwobridgeid}, view $\gamma$ and $\gamma'$ in $\Sigma_{0,4}$ by collapsing $D$ to a point. Let $(\overline{C}_\gamma,\overline{T}_\gamma)$ be a $4$-ball $\overline{C}_\gamma$ with tangle $\overline{T}_\gamma$ consisting of two $\partial_+$-parallel arcs such that $\partial(\overline{C}_\gamma,\overline{T}_\gamma)=\Sigma_{0,4}$ and the two pairs of points in $\Sigma_{0,4}$ which are divided by $\gamma$ are respectively connected by the arcs of $\overline{T}_\gamma$. Define $(\overline{C}_{\gamma'},\overline{T}_{\gamma'})$ similarly. Then gluing these $4$-balls, each containing a trivial $2$-tangle, in the obvious way gives a standard description of the two-bridge link $L_{p,q}$ via rational tangles:
\[
	(S^3, L_{p/q}) = (\overline{C}_{\gamma},\overline{T}_{\gamma}) \cup_{\Sigma_{0,4}} (\overline{C}_{\gamma'},\overline{T}_{\gamma'})
\]
The effect of remembering that one of the four points is the collapsing of the disk $D$, and replacing $(\overline{C}_\gamma,\overline{T}_\gamma)$ by the original compression body $(C_\gamma,T_\gamma)$ and similarly for $\gamma'$, we obtain the connected sum description of $(Y,L)$ given above. With this background we proceed to the proof.

\begin{proof}[Proof of  Theorem \ref{thm:preciseintersectionresult}]
If $p/q=0$, then $\gamma$ and $\gamma'$ are isotopic, and this is covered by Theorem \ref{displace-characterization}. If $p/q=\infty$, then after a diffeomorphism, $\gamma$ and $\gamma'$ are $\gamma_1$ and $\gamma_2$ in Figure \ref{five-lagrangians}, a case already treated above. Henceforth assume $p/q \in \Q\smallsetminus \{0\}$. Note $\det(L_{p/q})=|p|$ and $i(\gamma,\gamma')=2|p|$.

The Lagrangians $\Lambda_\gamma^\epsilon$, $\Lambda^{\epsilon'}_{\gamma'}$ in $M_{0,5}$ are exactly the sort considered in the previous section. If $p$ is odd, so that $L_{p/q}$ is a knot, then Theorem \ref{thm:minimalintsu2simple} directly applies to yield the desired result.

To handle the case in which $p$ is even and $w$ is possibly non-trivial, first note that by Theorem \ref{thm:lagrangianineq}, the decomposition \eqref{eq:connectedsumtwobridgeid}, and Proposition \ref{prop:connectedsumtheoremforinatural}, we have
		\begin{equation}\label{eq:lagrankineqintro}
		 \mathfrak{n}(\Lambda_\gamma^\epsilon,\Lambda^{\epsilon'}_{\gamma'})  \geq \frac{1}{2}\, \textup{dim} \; I(Y,L,w;\Z/2) = \textup{dim} \; I^\natural(L_{p/q},w;\Z/2).
	\end{equation}
Using \cite[Cor. 8.17]{ds1}, which holds for any choice of $w$, we have an isomorphism
	 \[
	 	I^\#(L_{p/q},w; \Z/2 ) \cong  I^\natural(L_{p/q}, w;\Z/2) \oplus I^\natural(L_{p/q}, w_0;\Z/2).
	 \] 
	 Then, by Corollary 1.5 of \cite{gong}, since $L_{p/q}$ is non-split alternating, $I^\#(L_{p/q},w; \Z/2 )$ does not depend on $w$, and hence neither does $I^\natural(L_{p/q},w; \Z/2 ) $. By the proof of Corollary 1.6 in \cite{km-unknot},
	\[
		\textup{dim} I^\natural(L_{p/q},w; \Z/2) = |p|,
	\]
	 since $L_{p/q}$ is non-split alternating and $\det(L_{p/q})=|p|$. Thus $\mathfrak{n}(\Lambda_\gamma^\epsilon,\Lambda^{\epsilon'}_{\gamma'}) \geq |p|$.
	 
	It remains to establish the reverse inequality $\mathfrak{n}(\Lambda_\gamma^\epsilon,\Lambda^{\epsilon'}_{\gamma'}) \leq |p|$. For this, we use
	 \[
	|\mathfrak{X}^{\textup{irr}}(L_{p/q},w)| = \frac{1}{2}(\det(L_{p/q}) - 2^{|L_{p/q}|-1}), 
	\]
	and that these character varieties are non-degenerate. This is standard in the case that $w$ is trivial, and one proof, just as in the case for knots, follows by passing to adjoint $SO(3)$ representations and pulling back to the branched double covers, which are lens spaces with cyclic fundamental group. The case of more general $w$ may be treated similarly, or we may alternatively appeal to \cite[Theorem 1.8]{gong}. The desired inequality then follows from Lemma \ref{lemma:detnonzeroinequality}.
\end{proof}

%!TEX root = main.tex

\section{Lagrangians in the odd character variety $M_g^{\text{odd}}$ }\label{sec:genus}

We turn to adapt some of the above methods to the case of the odd character varieties $M_g^{\text{odd}}$. In \S \ref{subsec:oddchdetection}, in parallel with results from \S \ref{sec:mappingclassgroupthm}, we study certain Lagrangians in $M_{g}^{\text{odd}}$ that come from compression bodies, and describe when two such Lagrangians can be displaced via Heegaard splitting data. As mentioned in the introduction, in the genus $2$ case, the symplectic manifold $M_2^{\text{odd}}$ may be identified with the intersection of two generic quadrics $Q_1$ and $Q_2$ in $\mathbb{CP}^2$ equipped with a monotone symplectic form. In \S \ref{subsec:genus2} we study this case, and prove Theorem \ref{thm:lagrangianspheresinquadricintersection}. Finally, in \S \ref{subsec:su2cyclic} we study the relationship between intersection numbers of the Lagrangians in $M_g^{\text{odd}}$ and $SU(2)$-cyclic $3$-manifolds, and derive an explicit result in the genus $2$ case.

\subsection{A general detection result for Lagrangians}\label{subsec:oddchdetection}

Assume $n$ is odd. Suppose $C$ is a compression body obtained from $(g-1)$-many $1$-handles to a thickened torus $\Sigma_1\times [0,1]$ along $\Sigma_1\times \{1\}$. Let $w = w'\sqcup w'' \subset C$ where $w'$ is the union of $n$ disjoint arcs $\{\text{pt}\}\times [0,1]$ (located away from the 1-handle attaching regions) and $w''$ is a closed $1$-manifold in the interior of $C$. Identifying $\partial_+(C,w)$ with $\Sigma_{g,n}$ we obtain a Lagrangian
\begin{equation}\label{eq:lagrangianoddcharactervariety}
	(S^3)^{g-1} \cong \mathfrak{X}(C,w) \subset M_g^{\text{odd}} := M_{g,n}(1,\ldots,1),
\end{equation}
where the marked points on $\Sigma_{g,n}$ are labelled with the conjugacy class of $-1\in SU(2)$. In particular, the holonomy of a connection in $\mathfrak{X}(C,w) $ around any meridian of $w$ is $-1$.  These Lagrangians were studied in \cite{dsodd}. In all of these constructions, the particular choice of $n$ is immaterial, apart from it being odd; the reader may prefer to assume $n=1$ throughout.

Recall that the sign-twisted curve complex ${\mathcal{C}}^\text{tw}(\Sigma_{g,n})$ has vertices equivalence classes $[\gamma,\epsilon]$ where $\gamma$ is a simple closed curve in $\Sigma_{g}\smallsetminus \mathcal{P}$ which is essential in $\Sigma_g$, and $(\gamma,\epsilon)\sim (\gamma',\epsilon')$ if and only if $\gamma$ and $\gamma'$ are isotopic through an isotopy which passes through $\mathcal{P}$ tranversely $m$ times, where the parity of $m$ agrees with the sign $\epsilon\epsilon'$. There is a simplicial involution induced by negation:
\[
	\sigma: {\mathcal{C}}^\text{tw}(\Sigma_{g,n}) \to {\mathcal{C}}^\text{tw}(\Sigma_{g,n}), \qquad \sigma \left( [\gamma,\epsilon ] \right) = [\gamma, - \epsilon].
\]
Also denote by $\mathcal{N}(\Sigma_g)$ the subcomplex of ${\mathcal{C}}^\text{tw}(\Sigma_{g,n}) $ spanned by non-separating curves of $\Sigma_g$. Thus
\[
	\mathcal{N}(\Sigma_g)^{(0)} = \left\{ [\gamma,\epsilon] \; \mid \; \Sigma_g\smallsetminus \gamma \text{ connected} \right\} \subset  {\mathcal{C}}^\text{tw}(\Sigma_{g,n})^{(0)} 
\]
Consider a compression body pair $(C,w)$ with $\partial_+(C,w)=\Sigma_{g,n}$ as above. For a properly embedded submanifold $A\subset C$ write $\partial_{\pm} A = A \cap \partial_\pm C$. Then $\partial A = \partial_- A \sqcup \partial_+ A$. Define
\begin{align*}
	\mathscr{N}(C,w) &= \{ [\gamma, \epsilon] \; \mid \gamma = \partial_+ D, \;  D \text{ a disk in } C, \; D\cdot w \equiv \tfrac{1}{2}(1-\epsilon\cdot 1) \, (\text{mod } 2) \} \subset \mathcal{N}(\Sigma_g)^{(0)}  \\[3mm]
	\underline{\mathscr{D}}(C) &= \{ [\gamma] \; \mid \; \gamma = \partial_+ D, \; D \text{ a disk in } C, \; \gamma \text{ non-separating }   \}  \subset \mathcal{C}(\Sigma_g)^{(0)} \\[3mm]
	\underline{\mathscr{A}}(C) &= \{ [\gamma] \; \mid \; \gamma = \partial_+ A, \; A \text{ an annulus in } C,  \; \partial_- A  \text{ essential in } \partial_- C \}  \subset \mathcal{C}(\Sigma_g)^{(0)}
\end{align*}
Note that there is natural $2$-to-$1$ map from ${\mathcal{C}}^\text{tw}(\Sigma_{g,n})^{(0)}$ to $\mathcal{C}(\Sigma_g)^{(0)}$ which forgets signs. Under this map, $\mathscr{N}(C,w) $ is sent bijectively to $\underline{\mathscr{D}}(C)$.

For $(C_1,w_1)$, $(C_2,w_2)$ two such compression body pairs as above, form the closed pair
\begin{equation*}  
	(Y,w) = (C_1,w_1) \cup_{\Sigma_{g,n} \sqcup \Sigma_{1,1} } (C_2,w_2)
\end{equation*}
Use the abbreviations $\mathscr{N}_i = \mathscr{N}(C_i,w_i)$, $\underline{\mathscr{D}}_i = \underline{\mathscr{D}}(C_i)$, and $\underline{\mathscr{A}}_i= \underline{\mathscr{A}}(C_i)$. The following detection result is an analogue of Proposition \ref{prop:instantonheegaarddetection}.

\begin{prop}
	Suppose $(Y,w)$ is formed by gluing two compression bodies as above. Then
	\[
		I(Y,w) = 0  \quad \Leftrightarrow \quad \left(\mathscr{N}_1\cap \sigma\left(\mathscr{N}_2\right) \right) \cup \left( \underline{\mathscr{D}}_1 \cap \underline{\mathscr{A}}_2\right)  \cup \left( \underline{\mathscr{A}}_1 \cap \underline{\mathscr{D}}_2\right) \neq \emptyset.
	\]
\end{prop}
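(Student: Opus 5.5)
The plan mirrors the proof of Proposition \ref{prop:instantonheegaarddetection}, with Theorem \ref{prop:detection} applied to the admissible pair $(Y,w)$ with $L=\emptyset$. Since $L$ is empty, Theorem \ref{prop:detection} says $I(Y,w)=0$ if and only if there is an embedded $2$-sphere $S'\subset Y$ with $S'\cdot w$ odd.

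\emph{The reverse direction} is immediate.
\begin{itemize}
\item If $[\gamma,\epsilon]\in \mathscr{N}_1\cap\sigma(\mathscr{N}_2)$, choose disks $D_1\subset C_1$, $D_2\subset C_2$ with boundary $\gamma$. Their $w$-intersection parities differ by one. This remains true after isotoping $\gamma$ across marked points, because crossing a point of $\mathcal P$ changes the parity of intersection with the vertical arc $w'$; this is exactly why the sign twisting is built into $\sim$. Gluing gives a sphere with odd pairing with $w$.
\item If $[\gamma]\in\underline{\mathscr{D}}_1\cap\underline{\mathscr{A}}_2$, glue the disk in $C_1$ to the annulus $A$ in $C_2$. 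This gives a disk whose boundary is an essential curve $c$ on the glued torus $\Sigma_{1,1}$, which meets $w$ in one point. Pick a dual curve $c^*$ on the torus meeting $c$ once. The boundary of a neighborhood of (disk $\cup$ torus) gives a sphere; I would instead take the disk together with a parallel copy on the other side of the torus and tube along $c^*$. This produces a sphere whose pairing with $w$ I will check is odd, since an $S^1\times S^2$-type summand in which the torus point $w$ threads once appears. Concretely: nonseparating disk plus annulus gives a nonseparating sphere meeting the torus-puncture arc; I expect the resulting sphere's $w$-pairing to be the number of times the vertical arcs of $w'$ cross it, which is odd.
\end{itemize}
I will verify this case carefully, and the symmetric case $\underline{\mathscr{A}}_1\cap\underline{\mathscr{D}}_2$ is identical.

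\emph{The forward direction.} Take $S'$ with odd $w$-pairing, transverse to the glued torus $(S,P)$ with $S\cong\Sigma_{1,1}$ and $P$ the single point where $w'$ crosses $S$, and minimizing $|S'\cap S|$.
\begin{itemize}
\item Inessential circles of $S'\cap S$ in $S\smallsetminus P$ are removed by $2$-surgery. One of the two resulting spheres keeps odd pairing, which contradicts minimality.
\item Circles bounding a disk in $S$ that contains $P$ are handled similarly: surgery along that disk changes pairings by $1$ on each piece, but the sum stays odd, so one piece is odd.
\item The remaining circles are essential on the torus $S$, so they are parallel nonseparating curves.
\end{itemize}

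\emph{Case $S'\cap S=\emptyset$.} Then $S'$ lies in $M=C_1\cup_{\Sigma_{g,n}}C_2$, which is a Heegaard splitting with the vertical $w'$ arcs as trivial tangle. Apply Lemma \ref{lemma:haken}(1) with the parity-preserving modification used in Proposition \ref{prop:instantonheegaarddetection}. This gives $S''$ meeting $\Sigma$ in one essential curve $\gamma$, split into disks $D_1,D_2$ of different $w$-parity. Then $[\gamma,\epsilon]\in\mathscr{N}_1\cap\sigma(\mathscr{N}_2)$, provided $\gamma$ is non-separating in $\Sigma_g$. If $\gamma$ is separating, I would argue, using the homology count of $w'$ endpoints, that the separating disk pair cannot carry odd $w$-pairing. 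I expect some care here.

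\emph{Case $S'\cap S\neq\emptyset$.} Take an outermost region of $S'$. It is a disk $E$ in some $C_i$ with boundary an essential curve on $\partial_-C_i$. Using Lemma \ref{lemma:haken}(2)-type arguments in the compression body, $E$ is a compressing disk for $\partial_-C_i$. A $\partial_-$-compressing disk in a compression body is, after $2$-surgeries along $\Sigma$, an annulus from $\partial_-$ to $\partial_+$ union a disk in the other side. This yields $[\gamma]\in\underline{\mathscr{A}}_i\cap\underline{\mathscr{D}}_j$.

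\emph{Main obstacle.} The main obstacle is this last reduction: producing, from a sphere crossing the torus, a curve on $\Sigma_{g,n}$ that bounds an annulus on one side and a disk on the other. I would first try the Hayashi--Shimokawa style argument on $(M,\emptyset)$, with its two torus boundary components, applied to the disk $E$.
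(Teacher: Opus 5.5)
\textbf{There is a genuine gap, and it is in the statement itself.} Your outline follows the paper's route: detection theorem, an odd sphere $S'$ minimizing $|S'\cap F|$, Haken/Casson--Gordon in $Y^\circ=C_1\cup_{\Sigma}C_2$, and gluing for the converse. The step that fails is excluding a separating $\gamma$, in the case $S'\cap F=\emptyset$, by counting endpoints of $w'$. If $\gamma$ separates, each $D_i$ separates $C_i$ into a compression body $X_i\supset\partial_-C_i$ and a handlebody $V_i$. Closed components of $w$ cross $D_i$ evenly, and a vertical arc crosses $D_i$ oddly exactly when its endpoint lies on the $V_i$ side. Hence $(D_1\cup D_2)\cdot w\equiv 0$ when $X_1,X_2$ abut the same side of $\gamma$, and $\equiv n\equiv 1$ when they abut opposite sides. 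So odd separating spheres really occur.

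In the opposite-side case, let $R\subset\Sigma$ be the side lying in $X_1$, hence in $V_2$. The natural way to finish is to find a curve in $R$ bounding a meridian disk of $V_2$ and a spanning annulus in $X_1$, i.e.\ to compress the torus boundary of $X_1\cup V_2$. If $R$ has genus one, $X_1\cup V_2$ is a solid torus and its meridian lies in $\underline{\mathscr{A}}_1\cap\underline{\mathscr{D}}_2$; the symmetric argument works if the other side has genus one. Both sides have positive genus, so this covers all $g\le 3$, including the case $g=2$ used in \S\ref{subsec:genus2}. There the separating case is absorbed into the annulus sets rather than excluded.

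For $g\ge 4$ no argument can close this case, because the forward implication fails. Take $n=1$ and genus-two splittings $E=X_1\cup V_2$, $E'=X_2\cup V_1$ of two trefoil exteriors, each $X_i$ a thickened torus plus one $1$-handle. Set $C_1=X_1\natural V_1$ and $C_2=X_2\natural V_2$, and glue $\partial_+C_1$ to $\partial_+C_2$ by the two splitting maps. Choose the vertical arc in $C_2$ to cross the boundary-sum disk $D_2$ once, and the one in $C_1$ to miss $D_1$. Then $D_1\cup D_2$ meets $w$ once, so $I(Y,w)=0$.

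However, $Y^\circ=E\# E'$ has incompressible boundary and no non-separating sphere. So no curve bounds a disk on one side and a spanning annulus on the other, and no non-separating curve bounds disks on both sides: all three sets are empty. The paper's own proof asserts at exactly this point that odd pairing forces $\gamma$ to be non-separating, so it has the same gap. The statement needs either separating classes admitted in $\mathscr{N}$, in which case the Haken sphere itself is a witness, or a restriction on $g$.

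Smaller points:
\begin{itemize}
\item Lemma~\ref{lemma:haken}(1) concerns spheres missing the tangle, so you must take the tangle empty (Casson--Gordon), not $w'$.
\item When $S'\cap F\neq\emptyset$ the innermost disk lies in $Y^\circ$, not in one $C_i$. Your fallback of Lemma~\ref{lemma:haken}(2) with empty tangle is exactly the paper's step.
\item In the converse, the right sphere is $D\cup A\cup D'$. Here $D$ is the disk-plus-annulus, $D'$ a pushoff, and $A\subset F$ the annulus between $\partial D$ and $\partial D'$ containing the marked point; its pairing with $w$ is $1+2\,(D\cdot w)$. Your tube along $c^*$ does not give a sphere.
\end{itemize}
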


\begin{proof}
First, suppose $I(Y,w)=0$. By Theorem \ref{thm:detectionintro}, there is a $2$-sphere $S\subset Y$ which has $[S]\cdot [w]$ odd. Denote by $F\subset Y$ the torus obtained by gluing the tori $\partial_- C_i \cong \Sigma_1$. Suppose $S\subset Y$ is a $2$-sphere with $[S]\cdot [w]$ odd for which $F$ is transverse to $S$ and the number of components in $S\cap F$ is minimal. If some component of $S\cap F$ bounds a disk in $F$, surgery on $S$ along this disk yields two $2$-spheres, both of which have less components of intersection with $F$ than $S$, and one of which has odd intersection with $w$, a contradiction. Thus no components in $S\cap F$ bound a disk in $F$.  

Thus $S\cap F$, if non-empty, consists of a collection of parallel disjoint copies of some essential simple closed curve $\gamma'\subset F$. Cutting $S$ along $S\cap F$ gives a union of planar surfaces, and among these at least one component  is a disk. We may view this disk $D$ inside
\[
	 Y^\circ = C_1  \cup_{\Sigma_{g,n} } C_2
\] 
which, from another viewpoint, is $Y$ cut along $F$. From $D$ we obtain, using \cite[Lemma 1.1]{casson-gordon}, a disk $D' \subset Y^\circ$ which intersects $\Sigma_g$ in a single essential simple closed curve $\gamma$. In particular, $D'$ is either divided into a disk in $C_1$ and an annulus $A$ in $C_2$ with $\partial_+ A = \gamma$ and $\partial_- A =\gamma'$, implying
\[
	[\gamma] \in \underline{\mathscr{D}}_1 \cap \underline{\mathscr{A}}_2,
\]	
or $D'$ is divided by an annulus in $C_1$ and a disk in $C_2$, implying $[\gamma] \in \underline{\mathscr{A}}_1 \cap \underline{\mathscr{D}}_2$. Note that $\gamma$ is non-separating because every curve in $\underline{\mathscr{A}}_i$ is automatically non-separating. Indeed, an annulus $A\subset C_i$ having $\partial_- A\subset \partial_- C_i$ an essential closed curve guarantees that the simple closed curve $\partial_+ A\subset \partial_+ C_i$ is homologically non-trivial, hence non-separating.

If on the other hand $S\cap F$ is empty, then $S$ may be viewed as a $2$-sphere in $Y^\circ$. Applying \cite[Lemma 1.1]{casson-gordon}, we obtain a $2$-sphere $S'$ in $Y^\circ$ which intersects $\Sigma_g$ in a single essential simple closed curve $\gamma$ in $\Sigma_g$. The curve $\gamma$ divides $S'$ into two disks, one in $C_1$ and another in $C_2$. The condition that the union of these disks has odd intersection with $w$ implies that $\gamma$ is non-separating and $ [\gamma,\epsilon]\in  \mathscr{N}_1\cap \sigma\left(\mathscr{N}_2\right)$ for some sign $\epsilon$.

Now consider the converse. We explain how each of the conditions gives rise to a $2$-sphere $S\subset Y$ with $[S]\cdot [w]$ odd, which implies $I(Y,w)=0$.  First, given $[\gamma,\epsilon]\in \mathscr{N}_1\cap \sigma\left(\mathscr{N}_2\right)$, gluing the relevant disks $D_i\subset C_i$ with $\partial D_i = \gamma$ gives the desired $2$-sphere. Next, given $[\gamma]\in \underline{\mathscr{D}}_1 \cap \underline{\mathscr{A}}_2$, glue the relevant disk and annulus to obtain an embedded disk $D$ in $Y$ such that $\partial D$ is an essential simple closed curve on $F=\Sigma_1$ away from the marked point. Take $D'$ to be a parallel pushoff of $D$ whose boundary is a small parallel pushoff of $\partial D$ on $F$. In particular, $D\cap D'=\emptyset$. Then $\partial D$ and $\partial D'$ cobound two annuli on $F$. Choose the annulus $A$ that contains the marked point of $F$. Then gluing $D\cup A \cup D'$ gives the desired $2$-sphere. The case $ \underline{\mathscr{D}}_2 \cap \underline{\mathscr{A}}_1\neq \emptyset$ is similar.
\end{proof}

\begin{corollary}\label{cor:lagdisploddchar}
	Given two compression body pairs $(C_1,w_2)$ and $(C_2,w_2)$ as above, the displacibility of the associated Lagrangians $\Lambda_i = \mathfrak{X}(C_i,w_i)\subset M_{g}^\textup{odd}$ is determined by:
	\[
		\mathfrak{n}(\Lambda_1,\Lambda_2) = 0  \quad \Leftrightarrow \quad \left(\mathscr{N}_1\cap \sigma\left(\mathscr{N}_2\right) \right) \cup \left( \underline{\mathscr{D}}_1 \cap \underline{\mathscr{A}}_2\right)  \cup \left( \underline{\mathscr{A}}_1 \cap \underline{\mathscr{D}}_2\right) \neq \emptyset.
	\]
\end{corollary}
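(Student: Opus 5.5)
The plan is to combine the preceding proposition with Theorem \ref{thm:lagrangianineq}, following the pattern of the $M_{0,5}$ case in Theorem \ref{displace-characterization}. Here the glued triple is $(Y,w)=(C_1,w_1)\cup_{\Sigma_{g,n}\sqcup\Sigma_{1,1}}(C_2,w_2)$ with $L=\emptyset$. It is admissible because the glued torus $F=\partial_-C_1\cup\partial_-C_2$ meets $w$ in an odd number of points, namely the $n$ vertical arcs. The Lagrangians $\Lambda_i=\mathfrak{X}(C_i,w_i)$ are embedded and unperturbed, by Proposition \ref{unperturbed-3-man-Lag}; the surjectivity of $\pi_1(\Sigma_{g,n}\smallsetminus\mathcal P)\to\pi_1(C_i\smallsetminus w_i)$ is clear for a compression body whose $w'$ consists of vertical arcs. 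The moduli space for $\Sigma_{1,1}$ with holonomy $-1$ is a point, so these Lagrangians indeed live in $M_g^{\text{odd}}$. The odd-holonomy version of Theorems \ref{thm:hamiltoniandiffeo-general} and \ref{thm:lagrangianinequalityresultgeneral} applies verbatim, since those results are stated for general holonomy parameters and the involution argument uses only irreducibility on the boundary surfaces.

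For the direction ($\Rightarrow$), suppose $\mathfrak{n}(\Lambda_1,\Lambda_2)=0$. The inequality \eqref{eq:lagrankineqgeneralproof} then gives $\operatorname{rank} I(Y,w)\le 0$, so $I(Y,w)=0$. The preceding proposition now yields the disjunction of nonempty intersections.

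For the direction ($\Leftarrow$), the preceding proposition gives $I(Y,w)=0$. I would argue more directly, as in Theorem \ref{displace-characterization}: each of the three conditions produces, by the converse part of that proof, an embedded $2$-sphere $S\subset Y$ with $[S]\cdot[w]$ odd. Any flat connection on $(Y,w)$ would restrict to $S$ as a flat $SO(3)$ connection with nontrivial $w_2$. That restriction would be a point of $M_0^{\text{odd}}=\emptyset$, so $\mathfrak{X}(Y,w)=\emptyset$. By \eqref{identification-Lag-crit} with trivial perturbations, $\Lambda_1\cap\Lambda_2$ is in bijection with $\mathfrak{X}(Y,w)/\sigma_0$, so the Lagrangians are already disjoint and $\mathfrak{n}(\Lambda_1,\Lambda_2)=0$ with $\phi=\text{id}$. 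Alternatively, one can cite the last clause of Theorem \ref{thm:lagrangianineq} with zero perturbation.

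The main obstacle is a technical one: confirming that the machinery of \S\ref{sec:inequality}, which is written for decompositions along $\Sigma_{g,n}\sqcup\Sigma_{g',n'}$ with traceless labels, transfers to a boundary component $\Sigma_{1,1}$ labeled $-1$ and to $\Sigma_{g,n}$ with all labels $1$. To handle this, I would check that the surface $\Sigma_{g,n}$ still forces the relevant involution $\sigma_0$ to act freely; this requires that elements of $M_g^{\text{odd}}$ have no $SO(2)$ reductions, which holds. I would also check that Theorem \ref{thm:hamiltoniandiffeo-general} supplies the needed Hamiltonian approximation on $M_{g,n}(1,\dots,1)$, which it does since $\mathbf t=(1,\ldots,1)$ satisfies the non-integral assumption when $n$ is odd.
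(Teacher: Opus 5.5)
Your proposal is correct and matches the argument the paper intends (the corollary is stated without a separate proof): the forward direction combines the inequality of Theorem~\ref{thm:lagrangianineq} with the preceding proposition, and the reverse direction is the same ``a sphere with odd $w$-pairing forces $\mathfrak{X}(Y,w)=\emptyset$, hence $\Lambda_1\cap\Lambda_2=\emptyset$ by \eqref{identification-Lag-crit}'' argument used in Theorem~\ref{displace-characterization} and in the last clause of Theorem~\ref{thm:lagrangianinequalityresultgeneral}. Two small slips: first, $\operatorname{rank}_\Z I(Y,w)=0$ only says $I(Y,w)$ is torsion, so argue instead that $\mathfrak{n}=0$ leaves the perturbed chain complex with no generators at all (or use the $\Z/2$ version of the inequality); second, $\pi_1(\Sigma_{g,n}\smallsetminus\mathcal P)\to\pi_1(C_i\smallsetminus w_i)$ is not literally surjective when $w_i''\neq\emptyset$, but it is onto $\pi_1(C_i\smallsetminus w_i')$, which suffices because meridians of $w_i''$ map to the central element $-1$.
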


\subsection{Lagrangians in the intersection of two quadrics in $\mathbb{C}\mathbb{P}^5$}\label{subsec:genus2}

For $g=2$, the construction \eqref{eq:lagrangianoddcharactervariety} gives Lagrangian spheres in $M_2^{\text{odd}}$. These may be described alternatively as follows. Start with $\Sigma_{2,n} = (\Sigma_2,\mathcal{P})$ and choose a non-separating simple closed curve $\gamma\subset \Sigma_{2}\smallsetminus \mathcal{P}$ which is essential on $\Sigma_2$, the underlying closed surface. Attach a two-handle to $\Sigma_2\times [0,1]$ to obtain a compression body $C_\gamma$. Choose a circle $w'_-$ in the interior of $C_\gamma$, disjoint from $\mathcal{P}\times [0,1]$, which intersects the core of the two-handle transversely in one point. Let $w'_+=\emptyset$. For a given sign $\epsilon\in \{-,+\}$ define $w_\epsilon = w'_\epsilon \cup \mathcal{P}\times [0,1]$. We obtain a Lagrangian $3$-sphere
\begin{equation}\label{eq:lagrangiangenus2desc}
	  \Lambda_\gamma^\epsilon	:= \mathfrak{X}(C_\gamma,w_\epsilon ) \subset M_2^{\text{odd}}
\end{equation}
This Lagrangian may be described as the flat $SU(2)$ connections in $M_2^{\text{odd}}$ whose holonomy around $\gamma$ is $\epsilon \cdot \text{id}$. If $\gamma$ is isotoped to a curve $\gamma'$ by an isotopy that crosses a single marked point in $\mathcal{P}$ transversely, then the holonomy of a given connection in $\Lambda_\gamma^\epsilon$ along $\gamma'$ is $-\epsilon \cdot \text{id}$.

\begin{lemma} \label{lemma:quintic}
For the compression body pair $(C_\gamma, w_\epsilon)$, we have:
	\begin{enumerate}[label=(\roman*)]
		\item $\mathscr{N}(C_\gamma,w_{\epsilon}) = \{ [\gamma,\epsilon] \}$. \label{lemma:quintic1}
		\item $\underline{\mathscr{D}}(C_\gamma) = \{\gamma\}$. \label{lemma:quintic2}
		\item Every $\gamma'\subset \Sigma_g$ which bounds a disk $D'\subset C_\gamma$ may be displaced from $\gamma$. \label{lemma:quintic2.5}
		\item Every $\gamma' \in \underline{\mathscr{A}}(C_\gamma)$ may be displaced from $\gamma$ and is not isotopic to $\gamma$. \label{lemma:quintic3}
	\end{enumerate}
\end{lemma}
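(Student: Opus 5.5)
I will model the argument on Lemma \ref{lemma:hamiltoniandisplacementviacurvesets}, replacing the free-group computation for $\Sigma_{0,5}$ with the analogous one for the closed genus $2$ surface. Here $C_\gamma$ is $\Sigma_2\times[0,1]$ with a $2$-handle attached along $\gamma$, so $\partial_- C_\gamma$ is a torus. Moreover $\pi_1(C_\gamma)\cong \pi_1(\Sigma_2)/\langle\!\langle\gamma\rangle\!\rangle$, and since $\gamma$ is non-separating this group is $\pi_1(T^2)*\Z\cong \Z^2*\Z$.

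For \ref{lemma:quintic2} and \ref{lemma:quintic2.5}, let $\gamma'=\partial_+D'$ be essential in $\Sigma_2$. I will use the standard disk-set argument for compression bodies. Take $D'$ transverse to the cocore disk $D_\gamma$ of the $2$-handle, so that $\partial D_\gamma=\gamma$, and minimize $|D'\cap D_\gamma|$. Innermost circles are removed using irreducibility of $C_\gamma$. If an arc of intersection remains, an outermost arc in $D_\gamma$ lets us surger $D'$ into two disks. Each of these is either inessential, contradicting minimality, or essential in $C_\gamma$. In $C_\gamma$, however, every essential compressing disk of $\partial_+$ is isotopic to $D_\gamma$: cutting $C_\gamma$ along $D_\gamma$ gives $T^2\times I$, which has no compressing disks for its $\partial_+$ boundary. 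Hence $D'\cap D_\gamma=\emptyset$, so $\gamma'$ lies in $\partial(T^2\times I)$ minus the two scars. It is therefore inessential in $T^2\times I$, so it bounds a disk in the boundary and is isotopic in $\Sigma_2$ either to $\gamma$ or to a curve separating $\gamma$'s two scars (a band-sum of two parallel copies of $\gamma$). That second curve is separating, so in either case $\gamma'$ is disjoint from $\gamma$ after isotopy, which proves \ref{lemma:quintic2.5}. Restricting to non-separating $\gamma'$ gives $\gamma'\simeq\gamma$ in $\Sigma_2$, which proves \ref{lemma:quintic2}.

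For \ref{lemma:quintic1}, I will track marked points and the parity of $w_\epsilon$. By \ref{lemma:quintic2}, any $[\gamma',\epsilon']\in\mathscr N(C_\gamma,w_\epsilon)$ has $\gamma'$ isotopic to $\gamma$ in $\Sigma_2$, say by an isotopy crossing $\mathcal P$ exactly $m$ times. The disk $D'$ and the pushed cocore $D_\gamma$ then cobound, up to isotopy in $C_\gamma$, a region $\Sigma$-annulus$\times I$ that is swept out through the vertical arcs $\mathcal P\times[0,1]$. Hence $D'\cdot w_\epsilon\equiv D_\gamma\cdot w_\epsilon+m\pmod 2$. Since $D_\gamma\cdot w'_-=1$ and $D_\gamma\cdot w'_+=0$, we get $D_\gamma\cdot w_\epsilon\equiv\tfrac12(1-\epsilon)$. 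This forces $\epsilon'\epsilon\equiv(-1)^m$, that is, $[\gamma',\epsilon']=[\gamma,\epsilon]$. The converse inclusion is immediate from $D_\gamma$ itself.

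For \ref{lemma:quintic3}, let $A$ be an annulus with $\partial_-A$ essential in the torus. I will minimize $A\cap D_\gamma$ in the same way. Circles are removed using irreducibility together with incompressibility of $A$, which holds because $\partial_-A$ is essential in $\pi_1(T^2)\hookrightarrow\pi_1(C_\gamma)$. Outermost arcs with both endpoints on $\partial_+A$ are removed by boundary compression and minimality. Once $A$ misses $D_\gamma$, it lies in $T^2\times I$, where incompressible annuli from $\partial_-$ to $\partial_+$ are vertical. So $\gamma'$ is isotopic to a curve $c\times\{1\}$ that avoids the two scars, and is therefore disjoint from $\gamma$. It is not isotopic to $\gamma$, since $\gamma$ bounds a disk while $\gamma'$ is homotopic to an essential curve of $\partial_-C$, which is nontrivial in $\pi_1(C_\gamma)$. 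I expect the main obstacle to be the surgery/minimality bookkeeping for the annulus, in particular excluding arcs of $A\cap D_\gamma$ that run from $\partial_+A$ to itself while still enclosing part of $A$.
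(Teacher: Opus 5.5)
Your argument for (ii) and (iii) has a genuine gap at its central step. After surgering $D'$ along an outermost subdisk of $D_\gamma$, you invoke the claim that every essential compressing disk of $\partial_+C_\gamma$ is isotopic to $D_\gamma$. This claim is false. If a curve bounds a one-holed torus in $\Sigma_2$ containing $\gamma$ (a band sum of two parallel copies of $\gamma$), then it bounds an essential separating disk in $C_\gamma$ that cuts off a solid torus and is not isotopic to $D_\gamma$. You produce exactly these curves yourself a few lines later. Restricted to non-separating disks, the claim is precisely (ii), so invoking it is circular. Your justification, that $C_\gamma$ cut along $D_\gamma$ is $T^2\times I$ with incompressible $\partial_+$, only applies to disks that are already disjoint from $D_\gamma$. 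It says nothing about the surgered disks $D'_1,D'_2$, which may still meet $D_\gamma$ and may be separating. So the conclusion ``hence $D'\cap D_\gamma=\emptyset$'' does not follow. You derive (i) from (ii), and (iv) rests on the same minimization plus the unexplained step ``removed by boundary compression and minimality''. That step is exactly where wave-type arcs must be excluded, as you suspected, so the gap affects all four items.

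The paper avoids this by proving (i) and (ii) algebraically before doing any surgery. The group $\pi_1(C_\gamma\smallsetminus\mathcal P\times[0,1])\cong F_{2+n}$ is the quotient of $\pi_1(\Sigma_2\smallsetminus\mathcal P)\cong F_{3+n}$ by $\langle\!\langle\gamma\rangle\!\rangle$. For a non-separating $\gamma'$ that is null-homotopic there, the surjection $F_{3+n}/\langle\!\langle\gamma'\rangle\!\rangle\to F_{3+n}/\langle\!\langle\gamma\rangle\!\rangle$ is an isomorphism by the Hopfian property. Magnus' theorem then makes $\gamma'$ conjugate to $\gamma^{\pm1}$, and band sums with loops around marked points handle disks meeting $\mathcal P\times[0,1]$. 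Only afterwards does the paper surger, using (ii) to identify the non-separating pieces with $\gamma$ in (iii) and (iv).

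Alternatively, you can repair your geometric route as follows.
\begin{itemize}
\item Put $\gamma'$ in minimal position with $\gamma$, remove circles, and take an arc of $D'\cap D_\gamma$ outermost in $D'$ rather than in $D_\gamma$.
\item The cut-off subdisk lies in $T^2\times I$, so its boundary (an arc $b\subset\gamma'$ plus a chord of one scar) bounds a disk in $T^2\times\{1\}$.
\item If this disk misses the other scar, you get a bigon.
\item If it contains the other scar, then $b$ together with one of the arcs of $\gamma$ between its endpoints is a curve isotopic to $\gamma$. After a small push it meets $\gamma'$ in at most $|\gamma\cap\gamma'|-2$ points.
\end{itemize}
Both cases contradict minimal position, so $\gamma\cap\gamma'=\emptyset$. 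This gives (ii) and (iii) together, and the same argument with $A$ in place of $D'$ settles (iv).

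Once (ii) is available, your parity argument for (i) is a valid alternative to the paper's band-sum argument, but justify it by irreducibility. After isotoping $\partial D'$ onto $\gamma$ through $m$ crossings of $\mathcal P\times[0,1]$ near $\partial_+C_\gamma$, the sphere formed with $D_\gamma$ bounds a ball missing $\partial w_\epsilon$. Hence it meets $w_\epsilon$ in an even number of points. Also, the disk bounded by $\gamma$ is the core of the $2$-handle extended by $\gamma\times[0,1]$, not its cocore.
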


\begin{proof}
	Similar to the proof of Lemma \ref{lemma:hamiltoniandisplacementviacurvesets}, we begin with the observation
	\[
		\pi_1(\Sigma_2 \smallsetminus \mathcal{P} ) \cong F_{3+n} , \qquad \pi_1(C_\gamma \smallsetminus \mathcal{P}  \times [0,1]) \cong F_{2+n}
	\]
	where the latter fundamental group is the quotient of the former by the normal closure of $\gamma$. Let $[\gamma', \epsilon']\in \mathscr{N}(C_\gamma,w_{\epsilon}) $. If $\gamma'$ bounds a disk in $C_\gamma \smallsetminus \mathcal{P}  \times [0,1]$ then, since $\gamma'$ is an essential simple closed curve which is non-separating, the argument of Lemma \ref{lemma:hamiltoniandisplacementviacurvesets} \ref{item:lemmadiskann1} implies that $\gamma'$ is isotopic to $\gamma$, and $\epsilon=\epsilon'$ since the isotopy does not cross any marked points. In the more general case, we have that $\gamma'$ bounds a disk $D\subset C_\gamma$ that intersects $\mathcal{P}\times [0,1]$ transversely in $m$ points where the parity of $m$ agrees with $\epsilon \epsilon'$. The argument of Lemma \ref{lemma:hamiltoniandisplacementviacurvesets} \ref{item:lemmadiskann2} shows that $\gamma'$ may be related to $\gamma$ by performing $m$ consecutive band sums, at each step involving a small loop around one of the marked points. In particular, $\gamma'$ is isotopic to $\gamma$ as a curve on $\Sigma_2$ through an isotopy that passes through $\mathcal{P}$ transversely $m$ times. Thus $[\gamma,\epsilon]=[\gamma',\epsilon']$. This proves \ref{lemma:quintic1}, which also implies \ref{lemma:quintic2}.

	Next, we consider \ref{lemma:quintic2.5}. If $\gamma'$ is non-separating, this follows from the previous paragraph, so assume $\gamma'$ is separating. Isotope $\gamma'$ to have minimal intersection with $\gamma$ and suppose $\gamma\cap \gamma'\neq \emptyset$. Let $D\subset C_\gamma$ be a disk with $\partial D=  \gamma$ which intersects $D'$ transversely. If there is a circle component in $D\cap D'$, then an innermost circle component in $D\cap D'\subset D$ bounds a disk in $D$ with ints interior disjoint from $D'$. Doing surgery on $D'$ along such a disk produces a disk $D''$ with $\partial D'' = \gamma'$ that intersects $D$ in a smaller number of components. For this reason, we may assume $D\cap D'$ has no circle components. Thus there exists an outermost arc in $D\cap D'\subset D$ which bounds a disk in $D$. Do surgery on $D'$ along this disk to obtain two disks $D_1$ and $D_2$ in $C_\gamma$ and write $\gamma_i  = \partial D_i$. Thus $\gamma'$ is a band sum of two disjoint curves $\gamma_1$ and $\gamma_2$. By the minimality assumption on $\gamma\cap \gamma'$, each of $\gamma_1$ and $\gamma_2$ is essential.  Since two disjoint separating essential simple closed curves on $\Sigma_2$ are isotopic, and necessarily have null-homotopic band sum, one of $\gamma_1$ or $\gamma_2$ must be non-separating. In fact, the triviality of the homology class of $\gamma'$ implies that both $\gamma_1$ and $\gamma_2$ are non-separating. By \ref{lemma:quintic2}, $\gamma_1$, $\gamma_2$ are isotopic to $\gamma$. Thus after an isotopy, we may assume that $\gamma'$ is the band sum of $\gamma$ and another curve disjoint from $\gamma$. Now if we push $\gamma$ in this band sum in the direction of the band, we obtain a curve isotopic to $\gamma'$ and disjoint from $\gamma$.
	
	Finally, we prove \ref{lemma:quintic3}. Isotope $\gamma'$ to have minimal intersection with $\gamma$ and suppose $\gamma\cap \gamma'\neq\emptyset$. Let $A\subset C_\gamma$ be an annulus with boundary components $\gamma' = \partial_+ A$ and $\partial_-A$. Our assumption on $\underline{\mathscr{A}}(C_\gamma)$ implies that the homology class $[\partial_-A]=[\gamma']$ in $C_\gamma$ is non-trivial. Since $\gamma$ represents the trivial homology class in $C_\gamma$, the closed curve $\gamma' $ is not isotopic to $\gamma$. Let $D\subset C_\gamma$ be a disk which intersects $A$ transversely and $\partial D = \gamma$. As in the previous paragraph, if $A\cap C$ contains a circle component, we may do a surgery on $A$ along a disk $D'\subset D$ that $D'\cap A=\partial D'$. If $D'\cap A$ is essential in $A$, then this surgery produces a disc with boundary $\gamma'$. In particular, $\gamma'$ is null-homologous, which is a contradiction as we agued above. Thus this surgery produces another annulus with the same boundary components as $A$ that intersects the disk $D$ in smaller number of components. Iterating this surgery process, we may assume that $A\cap D$ has no circle components. 
	
	There is an outermost arc in $A\cap D\subset D$ which bounds a disk in $D$. Surgery on $A$ along this disk gives a disk $D_1$ and an annulus $A_2$. Write $\gamma_1=\partial D_1$, $\gamma_2 = \partial_+ A_2$. By \ref{lemma:quintic2}, $\gamma_1$ is either isotopic to $\gamma$ or separating. In the former case and after applying an isotopy, we conclude that $\gamma'$ is a band sum of $\gamma$, $\gamma_2$, and as in the proof of \ref{lemma:quintic2.5} we may displace $\gamma'$ from $\gamma$. In the latter case, the minimality assumption on $\gamma\cap \gamma'$ implies that $\gamma_1$ is an essential separating curve.  By \ref{lemma:quintic2.5} and after applying an isotopy,
we may assume that $\gamma_1$ is disjoint from $\gamma$. The space $C_\gamma\smallsetminus D_1$ contains two connected components and their closures in $C_\gamma$ can be identified with a solid torus and a thickened torus $I\times \Sigma_1$. Since $A_2$ intersects $\partial_-C_\gamma$ non-trivially, $A_2$ is a subset of the thickened torus component of $C_\gamma\smallsetminus D_1$. The homology class of $\gamma$ in $C_\gamma$ is trivial, and hence $\gamma$ is contained in the solid torus component of $C_\gamma\smallsetminus D_1$. In particular, $\gamma$ and $\gamma_2$ are in different connected components of $\Sigma_2\smallsetminus \gamma_1$, and the curve $\gamma'$ obtained by a band sum of $\gamma_1$ and $\gamma_2$ is disjoint from $\gamma$. This proves \ref{lemma:quintic3}.	
\end{proof} 

Recall $\mathcal{N}(\Sigma_2)$ is the subcomplex of $ {\mathcal{C}}(\Sigma_{g,n})^\text{tw}$ spanned by vertices $[\gamma,\epsilon]$ with $\gamma$ non-separating. The construction \eqref{eq:lagrangiangenus2desc} induces a well-defined map
\begin{equation}\label{vertex-map}
	  \mathcal{N}(\Sigma_2)^{(0)} \to \mathcal{L}(M_{2}^{\text{odd}})^{(0)}
\end{equation}
by associating the Lagrangian sphere $\Lambda_\gamma^\epsilon$ to the equivalence class $[\gamma,\epsilon]$. We now prove that this map induces an injection of simplicial complexes, which implies Theorem \ref{thm:lagrangianspheresinquadricintersection}.

\begin{proof}[Proof of Theorem \ref{thm:lagrangianspheresinquadricintersection}]
First we prove an analogue of Theorem \ref{displace-characterization} in the present setup by showing that for $[\gamma_1,\epsilon_1] \neq [\gamma_2,\epsilon_2]$, the Lagrangians $\Lambda_1 = \Lambda_{\gamma_1}^{\epsilon_1}$ and $\Lambda_2= \Lambda_{\gamma_2}^{\epsilon_2}$ can be displaced by a Hamiltonian isotopy in $M_g^{\text{odd}}$ if and only if $\gamma_1$ and $\gamma_2$ can be made disjoint from each other by an isotopy. By Corollary \ref{cor:lagdisploddchar}, $\Lambda_1$ and $\Lambda_2$ are displaceable by a Hamiltonian isotopy if and only if
\[
\left(\mathscr{N}_1\cap \sigma\left(\mathscr{N}_2\right) \right) \cup \left( \underline{\mathscr{D}}_1 \cap \underline{\mathscr{A}}_2\right)  \cup \left( \underline{\mathscr{A}}_1 \cap \underline{\mathscr{D}}_2\right) \neq \emptyset.
\]
By Lemma \ref{lemma:quintic} \ref{lemma:quintic1}, we have $\mathscr{N}_i=\{[\gamma_i,\epsilon_i]\}$. Thus if $\mathscr{N}_1\cap \sigma\left(\mathscr{N}_2\right) \neq\emptyset$, then $\gamma_1$ and $\gamma_2$ are isotopic, and hence are displaceable on $\Sigma_2$.  Lemma \ref{lemma:quintic}  \ref{lemma:quintic2} implies $\underline{\mathscr{D}}_1 = \{\gamma_1\}$ and Lemma \ref{lemma:quintic}  \ref{lemma:quintic3} implies any curve in $\underline{\mathscr{A}}_2$ is displaceable from $\gamma_2$. Thus if $\underline{\mathscr{D}}_1\cap \underline{\mathscr{A}}_2  \neq\emptyset$, then $\gamma_1$ and $\gamma_2$ are displaceable. The case $\underline{\mathscr{D}}_2\cap \underline{\mathscr{A}}_1  \neq\emptyset$ is similar. For the converse, suppose $\gamma_1$ and $\gamma_2$ are disjoint. If they are isotopic, then $\epsilon_1=-\epsilon_2$ and $\mathscr{N}_1\cap \sigma(\mathscr{N}_2)\neq \emptyset$. If $\gamma_1$ and $\gamma_2$ are not isotopic, we see that both $\underline{\mathscr{D}}_1\cap \underline{\mathscr{A}}_2$ and $\underline{\mathscr{D}}_2\cap\underline{\mathscr{A}}_1$ are non-empty by Lemma \ref{lemma:quintic}  \ref{lemma:quintic2} and \ref{lemma:quintic3}.

Now the rest of the proof follows a similar plan as in the proof of Theorem \ref{thm:lagrangianspheres-gen}. To show that \eqref{vertex-map} is injective, let $[\gamma_1,\epsilon_1]$, $[\gamma_2,\epsilon_2]$ be vertices in $\mathcal{N}(\Sigma_2)^{(0)}$ that induce Hamiltonian isotopic Lagrangians $ \Lambda_{\gamma_1}^{\epsilon_1}$ and $ \Lambda_{\gamma_2}^{\epsilon_2}$. Thus, $ \Lambda_{\gamma_1}^{\epsilon_1}$ and $ \Lambda_{\gamma_2}^{-\epsilon_2}$ are displaceable because $\Lambda_{\gamma_2}^{\epsilon_2}$ and $\Lambda_{\gamma_2}^{-\epsilon_2}$  can be displaced by a Hamiltonian isotopy. The previous paragraph implies that $\gamma_1$ and $\gamma_2$ can be made disjoint from each other by an isotopy. If $\gamma_1$ and $\gamma_2$ are not isotopic to each other, then by another application of the previous paragraph, the Hamiltonian isotopic Lagrangians $ \Lambda_{\gamma_1}^{\epsilon_1}$ and $ \Lambda_{\gamma_2}^{\epsilon_2}$ are displaceable. This is a contradiction because it can be checked easily using Corollary \ref{cor:lagdisploddchar} and Lemma \ref{lemma:quintic} that a Lagrangian sphere of the form $\Lambda_{\gamma}^{\epsilon}$ in $M_g^{\text{odd}}$ cannot be displaced from itself. Thus, $\gamma_1$ and $\gamma_2$ are isotopic. This implies that $[\gamma_1,\epsilon_1] =[\gamma_2,\epsilon_2]$, otherwise $ \Lambda_{\gamma_1}^{\epsilon_1}$ and $ \Lambda_{\gamma_2}^{\epsilon_2}$ are displaceable which leads again to a contradiction. Finally we may also check that the map \eqref{vertex-map} preserves simplices by observing as in the Proof of Theorem \ref{thm:lagrangianspheres-gen} that $ \mathcal{N}(\Sigma_2)$ and $\mathcal{L}(M_{2}^{\text{odd}})$ are both flag simplicial complexes and then using again the result of the previous paragraph.
\end{proof}

\subsection{Intersections of Lagrangians in $M_g^{\text{odd}}$ and $SU(2)$-cyclic $3$-manifolds}\label{subsec:su2cyclic}

Given a closed $3$-manifold $Y$ with $1$-manifold $w\subset Y$, consider a Heegaard decomposition
\[
	(Y,w) = (\overline{C}_1,\overline{w}_1) \cup_{\Sigma_g}  (\overline{C}_2,\overline{w}_2)
\]
where $\overline{C}_1$ and $\overline{C}_2$ are handlebodies of genus $g$ with boundary $\Sigma_g$, and $\overline{w}_i$ is a closed $1$-manifold in the interior of $\overline{C}_i$. Choose a basepoint in $Y$ that lies on $\Sigma_g$ and form the connected sum of $(Y,w)$ with $S^1\times \Sigma_{1,1}$. We have an induced decomposition
\[
	(Y,w)\# S^1\times \Sigma_{1,1} = (C_1,w_1) \cup_{\Sigma_{g+1,1} \sqcup \Sigma_{1,1} } (C_2,w_2)
\]
where $C_i$ is formed by a boundary sum of $\overline{C}_i$ and $I\times \Sigma_{1,1}$ and $w_i$ is the union of $\overline{w}_i$ and $I\times \{p\}$ where $p$ is the marked point on $\Sigma_{1,1}$. We have induced Lagrangians 
\[
	\Lambda_i = \mathfrak{X}(C_i,w_i)
\subset M_{g+1}^{\text{odd}}
\]
 Call $(Y,w)$ {\emph{non-degenerate $SU(2)$-cyclic}} if $b_1(Y)=0$ and $\mathfrak{X}(Y,w)$ consists entirely of representations with abelian image, all of which are non-degenerate.

\begin{lemma}\label{lemma:lagrangianssu2cyclic}
Consider Lagrangians $\Lambda_1$ and $\Lambda_2$ in $M_g^{\textup{odd}}$ associated to a Heegaard decomposition of $(Y,w)$ as above. If $b_1(Y)=0$, then we have the inequality
\[
	\mathfrak{n}(\Lambda_1,\Lambda_2) \geq | H_1(Y;\Z) |.
\]
If moreover $(Y,w)$ is non-degenerate $SU(2)$-cyclic, then equality holds.
\end{lemma}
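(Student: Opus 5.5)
The plan mirrors the proof of Theorem \ref{thm:minimalintsu2simple}, with $S^1\times\Sigma_{1,1}$ in place of $S^1\times\Sigma_{0,3}$ and framed instanton homology in place of $I^\natural$. For the lower bound, apply Theorem \ref{thm:lagrangianineq} (in the form of Theorem \ref{thm:lagrangianinequalityresultgeneral}, adapted to the odd character varieties as indicated there) to the decomposition of $(Y,w)\# S^1\times\Sigma_{1,1}$ along $\Sigma_{g+1,1}\sqcup\Sigma_{1,1}$. This triple is admissible because the glued torus $\Sigma_{1,1}$ meets $w$ once, and it gives
\[
\mathfrak{n}(\Lambda_1,\Lambda_2)\geq \tfrac12\,\textup{rank}\, I((Y,w)\# S^1\times \Sigma_{1,1}).
\]
Next I need a connected sum identification analogous to Proposition \ref{prop:connectedsumtheoremforinatural}, namely $I((Y,w)\#S^1\times\Sigma_{1,1})\cong I^\#(Y,w)\oplus I^\#(Y,w)$. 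The approach is to note that $S^1\times\Sigma_{1,1}$ with its $w$ has exactly two irreducible critical points, which differ in degree by $2 \pmod 4$, and that its $v$-map vanishes. The vanishing follows from the relations of Kronheimer--Mrowka for $S^1\times\Sigma$ with $w=S^1\times\textup{pt}$ (the eigenvalue computation in the genus-$1$ case). The connected sum theorem of \cite{ds1} then applies exactly as before. Alternatively, Floer excision should identify $I((Y,w)\#S^1\times T^2, w')$ directly with two copies of $I^\#(Y,w)$.

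Given this, it remains to show $\textup{rank}\, I^\#(Y,w)\geq |H_1(Y;\Z)|$ when $b_1(Y)=0$. Since $\chi(I^\#(Y,w))=|H_1(Y;\Z)|$ (Scaduto's Euler characteristic computation, which is insensitive to $w$ up to sign), the rank is at least $|\chi|=|H_1(Y;\Z)|$, giving the inequality.

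For equality, I would follow Lemma \ref{lemma:detnonzeroinequality}. First adapt Lemma \ref{lemma:morsebottclean}: if $\mathfrak{R}(Y,w)$, the representations framed at a basepoint, is Morse--Bott nondegenerate, then $\Lambda_1,\Lambda_2$ intersect cleanly, with $\Lambda_1\cap\Lambda_2$ identified with $\mathfrak{R}(Y,w)$ via gluing to the unique (up to the $\pm$ choice, quotiented by $\sigma_0$) flat connection on $S^1\times\Sigma_{1,1}$. The Mayer--Vietoris argument is unchanged, using that $M_1^{\textup{odd}}$ is a point with vanishing $H^0$ and $H^1$. In the non-degenerate $SU(2)$-cyclic case, each abelian representation $\rho$ has framed orbit $SU(2)/\textup{Stab}$, which is a point if $\rho$ is central and a $2$-sphere $S^2$ otherwise. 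Nondegeneracy makes these orbits Morse--Bott. Choosing a perfect Morse function on each orbit, Lemma \ref{lemma:pozniak} bounds $\mathfrak{n}$ by the total number of critical points, which is $\#\{\text{central}\}+2\#\{\text{noncentral classes}\}$.

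The count $\#\{\text{central}\}+2\#\{\text{noncentral classes}\}$ equals the number of abelian representations $H_1(Y)\to U(1)$ satisfying the $w$-twist, where one keeps both $\pm$ lifts and uses that conjugation pairs $\chi$ with $\bar\chi$. That number is $|H_1(Y;\Z)|$ because the twisted condition is a torsor over $\textup{Hom}(H_1(Y),U(1))$ (or empty, in which case the inequality already forces a contradiction, so it is nonempty). I expect the main obstacle to be the connected sum step: verifying that the $v$-map for $S^1\times\Sigma_{1,1}$ vanishes and that the resulting splitting holds over $\Z$ rather than only over a field. If this proves delicate, I would work over $\F$, which suffices since the rank bound only needs some integral domain coefficient ring.
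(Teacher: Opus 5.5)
Your outline follows the paper's proof step for step. The lower bound is Theorem \ref{thm:lagrangianineq} applied to $(Y,w)\#S^1\times\Sigma_{1,1}$, with the factor $\tfrac12$ absorbed into $I^\#(Y,w)$, followed by Scaduto's $\chi(I^\#(Y,w))=|H_1(Y;\Z)|$. For equality, $\Lambda_1\cap\Lambda_2$ is identified with the framed variety $\mathcal{R}(Y,w)$, clean intersection comes from a Mayer--Vietoris argument as in Lemma \ref{lemma:morsebottclean}, and Lemma \ref{lemma:pozniak} is applied with two critical points on each $2$-sphere. Your torsor count of abelian characters is a correct, self-contained substitute for the paper's citation of \cite[Proposition 2.7]{mme}. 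Nonemptiness can also be seen directly: $b_1(Y)=0$ gives $H_2(Y;\Z)=0$, so the meridians of $w$ are independent in $H_1(Y\smallsetminus w)$.

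The one step where you go beyond the paper does not work as written. You justify $I((Y,w)\#S^1\times\Sigma_{1,1})\cong I^\#(Y,w)^{\oplus 2}$ by analogy with Proposition \ref{prop:connectedsumtheoremforinatural}, but that imports the singular framework into a non-singular situation:
\begin{itemize}
\item Here there is no link, since all marked points carry holonomy $-1$ and belong to $w$. The connected sum is taken at a point of $Y$ off any link, and the framing group is $SO(3)$ rather than $S^1$. The $\mathcal{S}$-complex connected sum theorem of \cite{ds1}, which concerns sums along points of a link, does not apply.
\item The two critical points of $S^1\times\Sigma_{1,1}$ differ by the involution induced by the surface $\Sigma_{1,1}$. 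That surface is disjoint from the (empty) link and meets $w$ once, so by the remark in the introduction the involution has degree $0\pmod 4$, not $2$.
\item The genus-one case of Kronheimer--Mrowka's eigenvalue computation for $S^1\times\Sigma$ gives the opposite of what you claim. It shows that $\mu(\mathrm{pt})$ acts on $I(T^3)_w$ (over $\C$) with eigenvalues $\pm2$, so the point-class operator is not zero. In fact it is exactly what splits $I((Y,w)\#T^3)$ into two isomorphic halves.
\end{itemize}

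The paper avoids all of this. It takes $\tfrac12\,\textup{rank}\, I((Y,w)\#S^1\times\Sigma_{1,1})=\textup{rank}\, I^\#(Y,w)$ as the definition of framed instanton homology, and applies Scaduto's Euler characteristic formula in that normalization. You should do the same, or supply a genuine excision argument, rather than appeal to Proposition \ref{prop:connectedsumtheoremforinatural} with a vanishing $v$-map.
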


\begin{proof}
	For the first statement, an application of Theorem \ref{thm:lagrangianineq} adapted to the present setting gives 
		\begin{equation*} 
		\mathfrak{n}(\Lambda_1,\Lambda_2) \geq \frac{1}{2}\, \textup{rank}_\Z \; I((Y,w)\# S^1\times \Sigma_{1,1}) = \textup{rank}_\Z \; I^\#(Y,w) \geq |H_1(Y;\Z)|,
	\end{equation*}
	where the middle equality is the definition of framed instanton homology $I^\#(Y,w)$, and the last inequality follows from the Euler characteristic computation $\chi( I^\#(Y,w))=|H_1(Y;\Z)|$ in the case $b_1(Y)=0$, see \cite[Corollary 1.4]{scaduto-thesis}.
	
	For the second statement, consider the representation variety
	\[
	\mathcal{R}(Y,w) = \{ \rho: \pi_1(Y\smallsetminus w ) \to SU(2) \; \mid \; \rho(\nu) = -1\},
\]
which has an $SO(3)$-action induced by conjugation, and the quotient is $\mathfrak{X}(Y,w)$. There is a natural double covering from $\mathfrak{X}((Y,w)\# S^1\times \Sigma_{1,1})$ to $\mathcal{R}(Y,w) $. In particular, $\Lambda_1\cap \Lambda_2$ is identified with $\mathcal{R}(Y,w)$. Moreover, by a similar argument as given for Lemma \ref{lemma:morsebottclean}, $\Lambda_1\cap \Lambda_2$ is a clean intersection if and only if $\mathcal{R}(Y,w) $ is Morse-Bott non-degenerate.

Assume $(Y,w)$ is non-degenerate $SU(2)$-cyclic. Then $\mathcal{R}(Y,w) $ is automatically Morse-Bott non-degenerate. If $[w]=0$ in $H_1(Y;\Z/2)$, then $\mathcal{R}(Y,w)$ consists of $|H_1(Y;\Z/2)|$ points and $ (|H_1(Y;\Z)|-|H_1(Y;\Z/2)|)/2$ many $2$-spheres. If $[w]\neq 0$ in $H_1(Y;\Z/2)$, then $\mathcal{R}(Y,w)$ consists of $|H_1(Y;\Z)|/2$ many $2$-spheres. See \cite[Proposition 2.7]{mme}. In either case, by choosing a Morse function which on each two-sphere has exactly $2$ critical points, we obtain from Lemma \ref{lemma:pozniak} that  $\mathfrak{n}(\Lambda_1,\Lambda_2) \leq | H_1(Y;\Z) |$, completing the proof.
\end{proof}

\begin{theorem}\label{thm:int-odd-g=2}
	Suppose $[\gamma_1,\epsilon_1]$ and $[\gamma_2,\epsilon_2]$ are distinct vertices in $\mathcal{N}(\Sigma_2)$. In particular, $\gamma_1$ and $\gamma_2$ are non-separating essential simple closed curves in $\Sigma_2$. Suppose moreover that there is a separating essential simple closed curve in $\Sigma_2$ which is disjoint from $\gamma_1$ and $\gamma_2$. Then
	\[
		\mathfrak{n}(\Lambda_{\gamma_1}^{\epsilon_1}, \Lambda_{\gamma_2}^{\epsilon_2}) = i(\gamma_1,\gamma_2).
	\]
\end{theorem}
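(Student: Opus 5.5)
The plan is to reduce to Lemma \ref{lemma:lagrangianssu2cyclic}, in parallel with the proof of Theorem \ref{thm:preciseintersectionresult}. First, the curve configuration is normalized. The separating curve $\delta$ splits $\Sigma_2$ into two one-holed tori $F$ and $F'$. Since $\gamma_1,\gamma_2$ are non-separating and disjoint from $\delta$, each lies in one of them.

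If they lie in different pieces, they are disjoint and non-isotopic. Then $i(\gamma_1,\gamma_2)=0$, and the analogue of Theorem \ref{displace-characterization} proved above (in the proof of Theorem \ref{thm:lagrangianspheresinquadricintersection}) shows that $\mathfrak{n}=0$. The same argument covers the case where $\gamma_1,\gamma_2$ are isotopic on $\Sigma_2$ with $[\gamma_1,\epsilon_1]\neq[\gamma_2,\epsilon_2]$.

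So assume both curves lie in $F$, with $i(\gamma_1,\gamma_2)=|p|\geq 1$. After a diffeomorphism, $\gamma_1$ has slope $0$ and $\gamma_2$ has slope $p/q$ in $F$. We may also move the marked points into $F'$; this only changes signs, which affect the $w$ data.

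Next, the glued manifold is identified. Compressing $\gamma_i$ gives $C_{\gamma_i}$. Gluing $(C_{\gamma_1},w_1)\cup(C_{\gamma_2},w_2)$ along $\Sigma_{2,n}\sqcup\Sigma_{1,1}$ should give
\[
(Y,w)\# S^1\times\Sigma_{1,1}, \qquad Y=L(p,q)\ \text{or}\ L(p,q)\# S^3 .
\]
Explicitly, the $F$ part yields the genus-one Heegaard splitting of $L(p,q)$ with meridians $\gamma_1,\gamma_2$. The $F'$ part, together with the two copies of $\partial_- C\cong\Sigma_{1,1}$, yields $S^1\times\Sigma_{1,1}$, as in the construction preceding Lemma \ref{lemma:lagrangianssu2cyclic}. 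The closed curve $w$ is some class in $H_1(L(p,q);\Z/2)$ determined by the signs $\epsilon_i$ and the parity of marked-point crossings. I would carry out this identification carefully, mimicking the two-bridge identification \eqref{eq:connectedsumtwobridgeid}. I expect this to be the main bookkeeping obstacle: the compression bodies $C_{\gamma_i}$ must genuinely match the stabilized handlebodies of Lemma \ref{lemma:lagrangianssu2cyclic}, up to boundary sum with $I\times\Sigma_{1,1}$.

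With this identification, $b_1(Y)=0$ and $|H_1(Y;\Z)|=|p|=i(\gamma_1,\gamma_2)$. Lemma \ref{lemma:lagrangianssu2cyclic} then gives the lower bound $\mathfrak{n}\geq |p|$.

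For equality, it remains to check that $(L(p,q),w)$ is non-degenerate $SU(2)$-cyclic for every $w$. The fundamental group is cyclic, so every representation is abelian. Non-degeneracy follows from the vanishing of $H^1(L(p,q);\mathrm{ad}\rho)$, because the twisted cohomology of a rational homology sphere with finite $\pi_1$ vanishes in degree one.

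Lemma \ref{lemma:lagrangianssu2cyclic} therefore gives $\mathfrak{n}(\Lambda_{\gamma_1}^{\epsilon_1},\Lambda_{\gamma_2}^{\epsilon_2})=|p|=i(\gamma_1,\gamma_2)$.
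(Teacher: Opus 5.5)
Your proposal is correct and is essentially the paper's proof: you split along the separating curve, handle the case of curves in different pieces with the displacement criterion from the proof of Theorem~\ref{thm:lagrangianspheresinquadricintersection}, identify the glued pair as $(L(p,q),w)\# S^1\times\Sigma_{1,1}$, and apply Lemma~\ref{lemma:lagrangianssu2cyclic}. You are more careful than the paper in two places: you treat the isotopic case $p=0$ separately, where $b_1\neq 0$ and the lemma does not apply, and you justify $SU(2)$-cyclicity and non-degeneracy. For the latter, the relevant group is $\pi_1(L(p,q)\smallsetminus w)$ rather than $\pi_1(L(p,q))$; abelianness still follows because the $-1$ condition makes the adjoint representation factor through the cyclic group $\pi_1(L(p,q))$.
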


\begin{proof}
The proof is similar to that of Theorem \ref{thm:preciseintersectionresult}. First, using the separating curve we may write $\Sigma_2$ as the connected sum of two genus $1$ surfaces $\Sigma_1'$ and $\Sigma_1''$. If $\gamma_1$ and $\gamma_2$ are separated by this curve then they are disjoint, and the result follows from Theorem \ref{thm:lagrangianspheresinquadricintersection}. So we may assume both $\gamma_1$ and $\gamma_2$ are on the side of the $\Sigma_1'$-summand. Using a diffeomorphism we may write $\Sigma_1=S^1\times S^1$ and, viewing $\gamma_1$ and $\gamma_2$ as curves in $\Sigma_1$, we may assume $\gamma_1$ and $\gamma_2$ are in the homology classes $\pm (0,1)$ and $\pm(p,q)$ in $H_1(\Sigma_1;\Z)\cong \Z\oplus \Z$.

In gluing the two compression bodies defining the Lagrangians, we then have
\[
	(C_{\gamma_1},w_{\epsilon_1}) \cup_{\Sigma_{2,n} \sqcup \Sigma_{1,1}} (C_{\gamma_2},w_{\epsilon_2}) \cong (  L(p,q), w ) \# S^1\times \Sigma_{1,1}
\] 
where $L(p,q)$ is a lens space, $w\subset L(p,q)$ is some closed $1$-manifold. The result then follows from Lemma \ref{lemma:lagrangianssu2cyclic} and the fact that $(L(p,q),w)$ is non-degenerate $SU(2)$-cyclic.
\end{proof}

% \bib, bibdiv, biblist are defined by the amsrefs package.

\Addresses

\end{document}